\DeclareMathOperator{\oD}{d}
\def\D{\oD\!}
\def\dif{\,\D}
\def\R{{\mathbf R}}
\def\Rpd{{\mathbf R_{+}^{d}}}
\def\Rps{\R_{+}^{*}}
\def\S{{\mathbf S}}
\def\N{{\mathbf N}}
\def\car#1{{\mathbf 1}}
\newcommand{\esp}[1]{{\mathbf E}\left[#1\right]}
\def\L{\mathbf L}
\def\B{{\mathcal B}}%Tribu borélienne
\def\/{\,|\,} %Espérance conditionnelle
\newcommand{\var}{\operatorname{Var}}%variance
\def\car{\mathbf 1}
\newtheoremstyle{hermesexercises}{11pt}{11pt}{\normalfont}{0pt}{\scshape}{.--}{.5em}{}
{\theoremstyle{hermesexercises}
}
\newtheoremstyle{hermesremark}{11pt}{11pt}{\normalfont}{0pt}{\scshape}{.--}{.5em}{}
\newtheoremstyle{myhermesremark}{11pt}{11pt}{\normalfont}{0pt}{\scshape}{.--}{.5em}{}
{\theoremstyle{myhermesremark}
  \newtheorem{remark}{Remark}
\newtheorem{example}{Example}
 %Exemple sans numéro

}
\newtheoremstyle{hermestheorem}{11pt}{11pt}{\normalfont}{0pt}{\scshape}{.--}{.5em}{}
{\theoremstyle{hermestheorem}
\newtheorem{theoremT}{Theorem}
\newtheorem{definitionT}{Definition}
\newtheorem{corollaryT}[theoremT]{Corollary}
\newtheorem{lemmaT}[theoremT]{Lemma}}
\definecolor{fonce}{HTML}{323031}
\definecolor{cadet}{HTML}{084C61}
\definecolor{vertdeau}{HTML}{177E89}
\definecolor{ocre}{HTML}{FFC857}
\definecolor{creme}{HTML}{DB3A34}
\newenvironment{lemma}{%
     \begin{lemmaT}}{\end{lemmaT}}
   \newenvironment{definition}{%
     \begin{definitionT}}{\end{definitionT}}
 \newenvironment{theorem}{%
     \begin{theoremT}}{\end{theoremT}}
 \newenvironment{proposition}{%
     \begin{theoremT}}{\end{theoremT}}
      \newenvironment{corollary}{%
     \begin{corollaryT}}{\end{corollaryT}}
\newcommand{\iid}{\textnormal{i.i.d.\ }}
\newcommand{\lbra}{[\![}
\newcommand{\rbra}{]\!]}
\renewcommand{\esp}{\mathbb{E}}
\renewcommand{\var}{\mathbb{V}}
\newcommand{\Cov}{\mathbb{C}\mathrm{ov}}
\newcommand{\prob}{\mathbb{P}}
\newcommand{\ind}{\mathds{1}}
\newcommand{\lb}{\{}
\newcommand{\rb}{\}}
\newcommand\numberthis{\addtocounter{equation}{1}\tag{\theequation}}
\newcommand{\dom}{\operatorname{Dom}}
\def\eqdis{\overset{\mathrm{d}}{=}}
\def\pms{\mathbb{P}}
\def\Pa{\mathbf{P}^{\alpha}}
\def\La{\mathscr{L}_{\alpha}}
\def\Da{\mathbf{D}_{\alpha}}
\def\Pi{\mathbf{P}^{1}}
\def\Li{\mathscr{L}_{1}}
\def\Di{\mathbf{D}_{1}}
\def\Pan{\mathbf{P}^{\alpha, \nu}}
\def\Lan{\mathscr{L}_{\alpha, \nu}}
\def\Dan{\mathbf{D}_{\alpha, \nu}}
\def\Panbm{\mathbf{P}^{\bm{\alpha}, \nu}}
\def\Lanbm{\mathscr{L}_{\bm{\alpha}, \nu}}
\def\Danbm{\mathbf{D}_{\bm{\alpha}, \nu}}
\def\Pn{\mathbf{P}^{1,\nu}}
\def\Ln{\mathscr{L}_{1,\nu}}
\def\Dn{\mathbf{D}_{1,\nu}}
\def\cic{\mathcal{C}_{c}^{1}(\Eos)}
\def\cilog{\mathcal{C}_{\text{log}}^{1}(\Eos)}
\def\cilogi{\mathcal{C}_{\text{log}}^{1}(\Rps)}
\def\Epol{E_{\mathrm{pol}}}
\def\El{E_{\bm{\ell}}}
\def\Eo{E_{\bm{0}}}
\def\Els{E^{*}_{\bm{\ell}}}
\def\Eos{E^{*}_{\bm{0}}}
\def\m{\bm{m}}
\def\eqdis{\overset{\mathrm{d}}{=}}
\newcommand{\mxint}[2]{\prescript{\mathrlap{e}}{}{\int_{#1}^{#2}}}
\title{Functional analysis of multivariate max-stable distributions}
\author{B. Costacèque \and L. Decreusefond}
\date{2025}
\begin{document}
\maketitle{}
%\keywords{
\noindent{Keywords: Stein's method, generator approach, smart path method, Mehler formula, functional inequalities, extreme-value theory, max-stable distributions}
%\msc{60G55}
\begin{abstract}
	We study the connections existing between max-infinitely divisible distributions and Poisson processes from the point of view of functional analysis. More precisely, we derive functional identities for the former by using well-known results of Poisson stochastic analysis. We also introduce a family of Markov semi-groups whose stationary measures are the so-called multivariate max-stable distributions. Their generators thus provide a functional characterization of extreme valued distributions in any dimension. Additionally, we give a few functional identities associated to those semi-groups, namely a Poincaré identity and commutation relations. Finally, we present a stochastic process whose semi-group corresponds to the one we introduced and that can be expressed using extremal stochastic integrals.
  %
  %In this paper, we explore the functional analysis of multivariate max-stable distributions, focusing on their connections to Poisson processes through the lens of Stein's method and generator approaches. We derive functional identities for max-infinitely divisible distributions by leveraging established results from Poisson stochastic analysis. A key contribution is the introduction of a family of Markov semi-groups, whose stationary measures correspond to multivariate max-stable distributions, thereby providing a functional characterization of extreme value distributions across dimensions. We present several functional identities related to these semi-groups, including a Poincaré identity and commutation relations. Additionally, we introduce a stochastic process whose semi-group aligns with our findings and can be expressed using extremal stochastic integrals. Our results contribute to the understanding of functional inequalities and extreme-value theory, offering new insights into the behavior of max-stable distributions.
\end{abstract}

%\begin{abstract}
%	We study the connections existing between max-infinitely divisible distributions and Poisson processes from the point of view of functional analysis. More precisely, we derive functional identities for the former by using well-known results of Poisson stochastic analysis. We also introduce a family of Markov semi-groups whose stationary measures are the multivariate max-stable distributions. As such, they constitute a tool to apply the generator approach of Stein's method to those distributions. We compute the generators of those semi-groups and deduce a Stein's identity characterizing max-stable distributions in higher dimensions. We solve the associated Stein's equation and study the properties of its solution. We then apply those results to obtain a rate of convergence in Wasserstein distance for the convergence of de Haan-LePage series. Additionally, we give a few functional identities associated to our semi-group, namely a Poincaré identity and commutation relations.   
%\end{abstract}

%\noindent{Keywords: Coupon collector's problem, Gumbel distribution, Stein's method, generator approach}

\noindent{Math Subject Classification: 39B62, 47D07,60E07}
\setcounter{tocdepth}{1}
\tableofcontents
%%%%%%%%%%%%%%%%%%
%\input{JFA_Funct_char/fc_jfa_core}

\section{Introduction}

Stochastic modeling is frequently grounded in the theory of Markov processes, which are characterized primarily by their infinitesimal generator~\cite{Ethier86}. According to the Hille-Yosida theorem, the dynamics of a Markov process are fully determined by its associated semi-group. In practice, the stationary distribution, when it exists, plays a central role, as it describes the long-term behavior of the process. From a more formal perspective, it is well known that, given any one of the following three objects (a Markov process, a generator satisfying the Hille-Yosida conditions, or a strongly continuous semi-group on a Banach space) one can, at least abstractly, construct the other two~\cite{Ma_1992}. Associated to this triptych are the Dirichlet form and the carré du champ operator~\cite{Bouleau1991,fukushima2011dirichlet}, which open the way to potential theory. These concepts are fundamental in the analysis and geometry of Markov diffusion processes, as developed in~\cite{Bakry14}. The so-called $\Gamma$-calculus, detailed in this reference, leads to fundamental functional inequalities (such as the Poincaré and log-Sobolev inequalities) and to concentration inequalities for the stationary measure. As emphasized in the introduction of~\cite{Bakry14} and clearly explained in~\cite{chafai2004entropies}, the techniques developed therein rely crucially on the locality and symmetry of the semi-group with respect to the stationary measure, as well as on the diffusion property, which ensures that the carré du champ is a true derivation.

Stochastic quantization, initially introduced by physicists~\cite{Parisi81,Namiki92}, addresses the inverse problem: given a probability measure, one seeks a Markov process for which this measure is stationary. This approach offers the possibility of deriving functional inequalities for the chosen measure using the techniques of~\cite{Bakry14}. This was one of the main of the two motivations for the present work. Our  initial motivation stemmed from considerations related to Stein's method. In its modern formulation (see~\cite{Decreusefond15,Decreusefond22}), this method is based on the identity
\begin{equation}
  \label{eq_dalphas_core:1}
  \int_{E} f\,\mathrm{d}\mu - \int_{E} f\,\mathrm{d}\nu = \int_{E} \int_{0}^{\infty} L P_{t}f\,\mathrm{d}t\,\mathrm{d}\nu,
\end{equation}
where $L$ is the generator of the semi-group associated to the target measure~$\mu$ by quantization, and $\nu$ is any other probability measure on $(E, \mathcal{E})$. For the standard Gaussian measure on $\R^{n}$, the classical operator is
\begin{equation*}
  Lf(x) = -\langle x, \nabla f(x)\rangle + \Delta f(x),
\end{equation*}
with the associated Ornstein-Uhlenbeck semi-group given by
\begin{equation*}
  P_{t}f(x) = \int_{\R^{n}} f\bigl(e^{-t}x + \sqrt{1-e^{-2t}}\,y\bigr)\,\mathrm{d}\mu(y).
\end{equation*}
If $\mu$ denotes the law of an $\alpha$-stable distribution on $\R$, the corresponding semi-group is
\begin{equation*}
  P_{t}f(x) = \int_{\R} f\big(e^{-t/\alpha}x + (1-e^{-t})^{1/\alpha}y\big)\,\mathrm{d}\mu(y),
\end{equation*}
with generator
\begin{equation*}
  Lf(x) = -\frac{1}{\alpha} x f'(x) + \Delta^{\alpha/2} f(x),
\end{equation*}
where $\Delta^{\alpha/2}$ denotes the fractional Laplacian (see~\cite{Chen22,Xu19}).

A crucial observation is that the semi-group property of these operators is a direct consequence of the stability property of the underlying measures: for any $\alpha$-stable law with $\alpha \in (0,2]$,
\begin{equation}
  \label{eq_coupon_core:3}
  a X' + b X'' \stackrel{\mathrm{d}}{=} X,
\end{equation}
where $X', X''$ are independent copies of $X$, for any $a,b\ge 0$ such that $a^\alpha+b^\alpha=1$. The case $\alpha=2$ corresponds to the Gaussian distribution. Formally, equation~\eqref{eq_coupon_core:3} can be written as
\begin{equation*}
  D_{a}X' \oplus D_{b}X'' \stackrel{\mathrm{d}}{=} X,
\end{equation*}
where $D_{a}$ denotes multiplication by $a$, and $\oplus$ is ordinary addition. The algebraic structure here is that of a semi-group (addition) together with a group $(D_{a})_{a \in T}$ of automorphisms satisfying $D_{a} \circ D_{b} = D_{ab}$. In the seminal work~\cite{Davydov08}, such a structure is called a convex cone. It is shown there that many other examples of stable distributions arise by changing the meaning of $\oplus$ and the group of automorphisms. These distributions are of interest because their stability implies their appearance in various limit theorems. In this work, we focus on max-stable distributions, motivated by their wide range of applications in fields such as meteorology, hydrology, epidemiology, and finance. In light of the preceding discussion, this leads us to consider the semi-group defined by
\begin{equation*}
  P_{t}f(x) = \int_{E} f\big(D_{e^{-t/\alpha}}x \oplus D_{(1-e^{-t})^{1/\alpha}}y\big)\,\mathrm{d}\mu_{\alpha}(y),
\end{equation*}
where $\mu_{\alpha}$ is an $\alpha$-max-stable distribution. We can then compute
the generator, carré du champ operator, and Dirichlet form associated with this
semi-group, and even identify the underlying Markov process using the notion of
stochastic extremal integral. However, the resulting Dirichlet form is neither
local, symmetric, nor diffusive, so the full machinery developed
in~\cite{Bakry14} is not directly applicable. Nevertheless, a fundamental result
states that a random variable with a stable law can be represented as a
functional of a marked Poisson point process (see~\eqref{prelim_LePage_1}
below). This identity, known as the de Haan-LePage representation, allows us to
leverage functional identities for the Poisson process and to establish Poincaré
and log-Sobolev inequalities for max-stable distributions. We first address the
multivariate setting, which is significantly more intricate than the univariate
case. The former's properties are strongly influenced by the spectral measure.

Our approach has some resemblance to \cite{Arras19b,Houdré98},
which examines univariate infinitely divisible random variables through the lens
of the Lévy-Khinchin formula. These two papers primarily focus on the covariance
representation (see~\eqref{3_cov_id_max2} for our version) in the univariate
context, which they apply to  Stein's method. We here start from the relation
given by the stability hypothesis and analyse deeply the  structure of the
Dirichlet space associated to max stable random variables.

The remainder of this paper is organized as follows. Section~\ref{sec:preliminaries} introduces the notations and preliminary results required for the sequel. Section~\ref{sec:stoch-analys-max} explores the connections between max-infinitely divisible random vectors and stochastic analysis for Poisson processes. Section~\ref{sec:max-stable-ornstein} presents the max-stable analogue of the Ornstein-Uhlenbeck semi-group $(P_{t})_{t\geq 0}$ and investigates its properties.

\section{Preliminaries}
\label{sec:preliminaries}
\subsection{Max-stable and max-id random variables}

The set of integers between $n$ and $m$ is denoted by $\lbra n,m \rbra$. Let
$\bm{x} = (x^{1},\dots, x^{d})$ and $\bm{y} = (y^{1},\dots, y^{d})$ be two
vectors in $\R^{d}$, with $x^{j} \leq y^{j}$ for all $j \in \lbra 1 ,d \rbra$.
We set:
\[ [\bm{x}, \bm{y}] \coloneq \prod_{j=1}^{d}[x^{j}, y^{j}].
\]
Likewise, we take $[\bm{x}, \bm{y}) \coloneq \prod_{j=1}^{d}[x^{j}, y^{j})$. Let
$\El$ be the set of vectors in $[\bm{\ell}, +\bm{\infty})$, minus $\bm{\ell}$
itself:
\[
  \El \coloneq [\bm{\ell}, +\bm{\infty}) \setminus \lb \bm{\ell} \rb.
\]
We will also need to work with the vectors $\bm{x}$ that are strictly greater
than $\bm{\ell}$, in the sense that $x^{j} > \ell^{j}$ for all
$j \in \lbra 1,d \rbra$. We denote the set of such vectors by:
\[
  \Els \coloneq (\bm{\ell}, +\bm{\infty}).
\]
In the sequel, the notation $\bm{x} \leq \bm{y}$ means that the coordinates
$x^{j}$ of $\bm{x}$ are less than or equal to their corresponding coordinates
$y^{j}$ of $\bm{y}$, while $\bm{x} \nleq \bm{y}$ signifies that at least one
coordinate of $\bm{x}$ is greater than its counterpart of $\bm{y}$. The
following notations come from tropical geometry:
\[
  \bm{x}\oplus\bm{y} = \big(\max(x^{1}, y^{1}),\dots, \max(x^{d}, y^{d})\big)
\]
and
\[
  \bm{x}\odot\bm{y} = \big(\min(x^{1}, y^{1}),\dots, \min(x^{d}, y^{d})\big).
\]
Besides $\max \bm{x} \coloneq \max(x^{1},\dots,x^{d})$ (respectively
$\min \bm{x} \coloneq \min(x^{1},\dots,x^{d})$) denotes the greatest coordinate
(respectively least) of $\bm{x}$. Consequently, it is always a scalar.

We say that a random vector $\bm{Z}$ is \textit{max-stable} if for all vectors
$\bm{a}$, $\bm{b}$ in $\R_{+}^{d}$, there exists
$\bm{c}, \bm{d} \in \R_{+}^{d}$, such that
\begin{align}\label{prelim_ms_hyp}
  \bm{a}\bm{Z}\oplus \bm{b}\bm{Z}' \eqdis \bm{c}\bm{Z} + \bm{d},
\end{align}
where $\bm{Z}'$ is an \iid copy of $\bm{Z}$. In \eqref{prelim_ms_hyp}, the sum
and the multiplication between vectors are defined in a coordinate-wise way. A
basic result in extreme value theory (see \cite{Resnick87} or \cite{Embrechts13}
for instance) states that the marginals $Z^{j}$ of such a random vector
$\bm{Z} = (Z^{1},\dots,Z^{d})$ are necessarily either Fréchet, Gumbel or Weibull
random variables. The Fréchet distribution $\mathcal{F}(\alpha, \sigma)$ with
shape parameter $\alpha > 0$ and scale parameter $\sigma > 0$ has c.d.f.
\begin{align}\label{prelim_Fréchet}
  F(x) = \begin{cases}
           e^{-\big(\frac{\sigma}{x}\big)^{\alpha}} & \quad \text{if\ } x \geq 0 \\
           0                                        & \quad \text{otherwise.}
         \end{cases}
\end{align}
When $\sigma = 1$, we will simply note $\mathcal{F}(\alpha)$. In the sequel, we
will assume that the $Z^{j}$ all have the same Fréchet distribution
$\mathcal{F}(\alpha)$ for some $\alpha > 0$. When $\alpha = 1$, it is common to
call such a random vector \textit{simple}. We will keep using this terminology
for max-stable vectors whose marginals all have the same Fréchet
$\mathcal{F}(\alpha)$ distribution. Simple max-stable random vectors have
support on $\Eos$ and satisfy:
\begin{align}\label{prelim_stability}
  \bm{a}\bm{Z}\oplus \bm{b}\bm{Z}' \eqdis \big(\bm{a}^{\alpha} + \bm{b}^{\alpha})\bm{Z},
\end{align}
where $\bm{x}^{\alpha}$ must be understood in a component-wise manner. We say a
Radon measure $\mu$ on $\Eo$ possesses \textit{the $\alpha$-homogeneity
  property} if for all $t>0$:
\begin{align}\label{prelim_homo}
  \mu\big(t^{\frac{1}{\alpha}}B\big) = t^{-1}\mu(B),\ B \in B(\Eo),
\end{align}
where $\B(\Eo)$ denotes the Borel $\sigma$-field of $\Eo$. Note that a Radon
measure on $\Eo$ is $\sigma$-finite. We then have the most important theorem:

\begin{theorem}[de Haan-LePage representation]
  Let $\alpha > 0 $ and $\bm{Z}$ a max-stable random vector with Fréchet
  $\mathcal{F}(\alpha)$ marginals. Then there exists
  $\eta = (\bm{y}_{i})_{i\geq 1}$ a Poisson process on $\Eo$ with
  intensity measure $\mu$ such that the following equality in distribution
  holds:
  \begin{align}\label{prelim_LePage_1}
    \bm{Z} \eqdis \bigoplus_{i = 1}^{\infty}\bm{y}_{i}.
  \end{align}
\end{theorem}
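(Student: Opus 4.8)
The plan is to combine the avoidance-probability formula for Poisson processes with the exponent-measure representation of max-infinitely divisible laws. First I would record the elementary but decisive computation behind \eqref{prelim_LePage_1}: if $\eta = (\bm{y}_{i})_{i\ge 1}$ is a Poisson process on $\Eo$ with intensity $\mu$, then for every $\bm{x}\in\Eos$ the event $\{\bigoplus_{i}\bm{y}_{i}\le\bm{x}\}$ coincides with the event that $\eta$ places no point in the upper region
\[
  A_{\bm{x}} \coloneq \{\bm{y}\in\Eo : \bm{y}\nleq\bm{x}\} = \Eo\setminus[\bm{0},\bm{x}],
\]
so that, by the Poisson void probability,
\[
  \prob\Bigl(\bigoplus_{i}\bm{y}_{i}\le\bm{x}\Bigr) = \prob\bigl(\eta(A_{\bm{x}})=0\bigr) = \exp\bigl(-\mu(A_{\bm{x}})\bigr).
\]
The whole theorem then reduces to exhibiting a Radon measure $\mu$ on $\Eo$ for which $-\log\prob(\bm{Z}\le\bm{x}) = \mu(A_{\bm{x}})$ on $\Eos$.

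Second, I would show that $\bm{Z}$ is max-infinitely divisible. Iterating the stability relation \eqref{prelim_stability} over $n$ i.i.d.\ copies $\bm{Z}_{1},\dots,\bm{Z}_{n}$ with the common scalar scaling $n^{-1/\alpha}$ gives $\bigoplus_{i=1}^{n} n^{-1/\alpha}\bm{Z}_{i}\eqdis\bm{Z}$, whence $F(\bm{x})=\bigl[F(n^{1/\alpha}\bm{x})\bigr]^{n}$, where $F\coloneq\prob(\bm{Z}\le\,\cdot\,)$; thus $F^{1/n}$ is again a distribution function (that of $n^{-1/\alpha}\bm{Z}$), i.e.\ $\bm{Z}$ is max-id. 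Writing $V(\bm{x})\coloneq-\log F(\bm{x})$, the core analytic step is to realise $V$ as the $A_{\bm{x}}$-content of a unique Radon measure $\mu$ on $\Eo$: one assigns to each half-open rectangle the corresponding inclusion--exclusion increment of $V$ and extends by Carathéodory, the crucial point being that max-infinite divisibility forces all these increments to be non-negative, so that $\mu\ge 0$ (this is the classical exponent-measure construction, see \cite{Resnick87}). I expect this to be the main obstacle; the delicate issues are the non-negativity of every rectangle increment and the behaviour of $\mu$ near the lower boundary, where its total mass is infinite while $\mu$ must remain Radon on $\Eo$.

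Third, the finiteness $\mu(A_{\bm{x}})<\infty$ for $\bm{x}\in\Eos$—needed both for $\exp(-\mu(A_{\bm{x}}))$ to be a non-degenerate c.d.f.\ and for $\bigoplus_{i}\bm{y}_{i}$ to be finite almost surely—follows from the Fréchet marginals: since $A_{\bm{x}}\subseteq\bigcup_{j=1}^{d}\{\bm{y}:y^{j}>x^{j}\}$ and the $j$-th marginal exponent has mass $(x^{j})^{-\alpha}$ on $\{y^{j}>x^{j}\}$, one obtains $\mu(A_{\bm{x}})\le\sum_{j=1}^{d}(x^{j})^{-\alpha}<\infty$. Feeding the exponent form back into \eqref{prelim_stability} yields the functional relation $V(t^{1/\alpha}\bm{x})=t^{-1}V(\bm{x})$, first for integer $t$ from the $n$-fold identity and then for all $t>0$ by the additivity of $V$ together with monotonicity; since $A_{t^{1/\alpha}\bm{x}}=t^{1/\alpha}A_{\bm{x}}$ and the sets $[\bm{0},\bm{x}]$ form a generating $\pi$-system of $\B(\Eo)$, a standard monotone-class argument upgrades this to $\mu(t^{1/\alpha}B)=t^{-1}\mu(B)$ for all Borel $B$, which is exactly the $\alpha$-homogeneity \eqref{prelim_homo}.

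Finally I would take $\eta$ to be a Poisson process on $\Eo$ with this $\sigma$-finite intensity $\mu$. The avoidance formula of the first step gives $\prob(\bigoplus_{i}\bm{y}_{i}\le\bm{x})=e^{-V(\bm{x})}=F(\bm{x})$ for all $\bm{x}\in\Eos$, and since the law of a vector supported on $\Eos$ is determined by these values, we conclude $\bm{Z}\eqdis\bigoplus_{i=1}^{\infty}\bm{y}_{i}$, which is \eqref{prelim_LePage_1}. An alternative, more constructive route would factor $\mu$ in polar coordinates through a spectral measure on the unit simplex $S$ and build the $\bm{y}_{i}$ directly from a Poisson process on $\Rps\times S$ via the mapping theorem, recovering the genuine series form of the LePage representation; but the max-id route above is the most economical given the tools already at hand.
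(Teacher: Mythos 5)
Your overall route is the classical one: the paper itself never proves this theorem (it quotes it from \cite{Resnick87} and \cite{Embrechts13}), and the standard literature proof is exactly the chain you propose --- max-stability $\Rightarrow$ max-infinite divisibility, max-infinite divisibility $\Rightarrow$ existence of an exponent measure $\mu$ with $-\log F(\bm{x})=\mu([\bm{0},\bm{x}]^{c})$, then the Poisson void probability $\prob\big(\bigoplus_{i}\bm{y}_{i}\leq \bm{x}\big)=e^{-\mu([\bm{0},\bm{x}]^{c})}$ identifies the two laws. Your steps 1 and 2, the finiteness bound $\mu([\bm{0},\bm{x}]^{c})\leq \sum_{j}(x^{j})^{-\alpha}$, and the scaling argument for the homogeneity of $V=-\log F$ are all correct.

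The genuine gap is in the construction of $\mu$ itself, which you rightly identify as the core of the proof but then sketch incorrectly. Assigning to each half-open rectangle the inclusion--exclusion increment of $V$ and extending by Carath\'eodory only makes sense for rectangles whose lower corner lies in $\Eos$: if some coordinate of a corner vanishes, then $V=+\infty$ there and the increment is of the form $\infty-\infty$. Restricting to rectangles inside $\Eos$ does produce a measure, but it is the restriction $\mu|_{\Eos}$, and this misses precisely the mass that $\mu$ may carry on the faces $\{\bm{y}\in\Eo:\ y^{j}=0\ \text{for some}\ j\}$. This is not a pathology: for the completely independent simple max-stable vector, $F(\bm{x})=\exp\big(-\sum_{j}(x^{j})^{-\alpha}\big)$, every rectangle increment of $\log F$ inside $\Eos$ is zero and the \emph{entire} exponent measure sits on the coordinate axes, so your construction returns $\mu=0$ and the void-probability identity fails. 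The classical arguments circumvent this either by building $\mu$ as the vague limit of $n\,\prob(\bm{Z}_{n,1}\in\cdot)$, where $\bm{Z}_{n,1}$ has c.d.f.\ $F^{1/n}$ (Resnick's proof), or by working with the ``L-shaped'' sets $[\bm{0},\bm{b}]\setminus[\bm{0},\bm{a}]$, $\bm{a}\leq\bm{b}$ in $\Eos$, whose masses $V(\bm{a})-V(\bm{b})$ are finite and do capture face mass. Your non-negativity argument (increments of $F^{1/n}$ are probabilities, constants have zero increment, so increments of $\log F=\lim_{n}n(F^{1/n}-1)$ are $\geq 0$) is fine, but only where the increments are defined. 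A second, smaller repair: in the monotone-class step you invoke the $\pi$-system of lower orthants $[\bm{0},\bm{x}]$, but each of these has \emph{infinite} $\mu$-measure, so Dynkin's uniqueness theorem does not apply to them; use instead the sets $[\bm{0},\bm{x}]^{c}$, whose finite intersections form a $\pi$-system of finite measure (by inclusion--exclusion, since $[\bm{0},\bm{x}]^{c}\cup[\bm{0},\bm{y}]^{c}=[\bm{0},\bm{x}\odot\bm{y}]^{c}$) exhausting $\Eo$ along $[\bm{0},n^{-1}\bm{1}]^{c}$.
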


In the sequel, $\mu$ will be called the \textit{exponent measure} of $\bm{Z}$.
We refer to \cite{Last17}, \cite{Privault09} and the references therein for more
about the Poisson process.

Thanks to the so-called \textit{polar decomposition}, it is possible to give
more information about $\mu$. Fix a norm $\Vert \cdot \Vert$ on $\R^{d}$
(henceforth called the \textit{reference norm}) and set
$\Epol \coloneq \Rps \times \S_{+}^{d-1}$, where $\S_{+}^{d-1}$ is the positive
orthant of the sphere with respect to $\Vert\cdot\Vert$, \textit{i.e.}
\[
  \S_{+}^{d-1} \coloneq \big\lb \bm{x} \in \R_{+}^{d},\ \Vert \bm{x} \Vert = 1\big\rb.
\]
For simplicity, we will assume that $\Vert\cdot\Vert$ is normalized so that
$\S_{+}^{d-1} \subseteq [0,1]^{d}$. Define the transformation $T$
\begin{align*}
  \begin{array}{ccccc}
    T & : & \Rps \times \S_{+}^{d-1} & \to     & \Eo                         \\
      &   & (r, \bm{u})              & \mapsto & r\bm{u}^{\frac{1}{\alpha}}.
  \end{array}
\end{align*}
Let  $\mu$ be a measure on $\Eo$, as stated in \cite{Resnick87} (proposition
5.11),  there exists $\nu$ a finite measure on $\S_{+}^{d-1}$
satisfying
\begin{equation}\label{prelim_moment_constraints}
  \int_{\S^{d-1}_{+}}u^{j}\dif\nu(\bm{u}) = 1,\ j \in \lb 1,\dots,d \rb.
\end{equation}
and such that
\begin{align}\label{prelim_polar}
  \mu = T_{*}(\rho_{1}\otimes \nu)
\end{align}
where the right-hand side denotes the pushforward measure of
$\rho_{1}\otimes \nu$ by $T$ and $\rho_{\alpha}$ is the measure on $\Rps$
defined by
\begin{align}\label{prelim_rho}
  \rho_{\alpha}[x, +\infty) \coloneq \frac{1}{x^{\alpha}}\cdotp
\end{align}
Equation~\eqref{prelim_polar} is called the polar decomposition of~$\mu$.
The previous result has the following consequence on the de Haan representation:
there exists a marked Poisson process $\eta = ((r_{i}, \bm{u}_{i}))_{i\geq 1}$
on $\Epol$ such that
\begin{align}\label{prelim_LePage_2}
  \bm{Z} \eqdis \bigoplus_{i=1}^{\infty}r_{i}\bm{u}_{i}^{\frac{1}{\alpha}}.
\end{align}
The scalar $\alpha$ is called the \textit{stability index} of $\bm{Z}$, while
$\nu$ will be referred as the \textit{angular measure} of $\bm{Z}$. Since the
distribution of a simple max-stable random vector is characterized equivalently
by $\mu$ alone or $\alpha$ and $\nu$, we will parametrize it with either of
them. We denote this by $\bm{Z} \sim \mathcal{MS}(\mu)$ and
$\bm{Z} \sim \mathcal{MS}(\alpha, \nu)$ respectively.

Max-infinitely divisible distributions generalize the concept of max-stable
random variables: the distribution of a random vector $\bm{Z}$ is said to be
\textit{max-infinitely divisible} (max-id) if $F_{Z}(\bm{x})^{t}$ is a c.d.f.
for any positive power $t$, where
$F_{\bm{Z}}(\bm{x}) = \prob(\bm{Z} \leq \bm{x})$. This is equivalent to asking
that for every $n \in \N^{*}$, there exist $n$ \iid random vectors
$\bm{Z}_{n,1},\dots,\bm{Z}_{n,n}$ such that
\[
  \bm{Z} \eqdis \bigoplus_{i=1}^{n}\bm{Z}_{n,i}.
\]
In dimension $1$, any probability distribution is max-infinitely divisible, but
this is not true in higher dimension. Identity \eqref{prelim_LePage_1} still
holds for max-id distributions. More precisely, a random vector $\bm{Z}$ is
max-id if and only if there exists $\bm{\ell} \in [-\infty, +\infty)^{d}$ such
that
\[
  \bm{Z} \eqdis \bigoplus_{i = 1}^{\infty}\bm{y}_{i}.
\]
where $(\bm{y}_{i})_{i\geq 1}$ is a Poisson process on $\El$ whose intensity
measure $\mu$ satisfies
\[
  \mu[-\bm{\infty}, \bm{x}]^{c} = -\log F_{\bm{Z}}(\bm{x}),\ \bm{x} \in \El.
\]
In the special case of a simple max-stable random vector, because of
\eqref{prelim_Fréchet}, $\mu[\bm{0}, \bm{x}]^{c}$ is infinite as soon as any of
the coordinates of $\bm{x}$ is null. The converse is true:

\begin{lemma}
  Let $\mu$ be the exponent measure of a simple max-stable random vector. Then
  $\mu[\bm{0}, \bm{x}]^{c}$ is finite if and only if $\bm{x} \in \Eos$.
\end{lemma}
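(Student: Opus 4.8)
The plan is to prove the two implications separately, noting that the ``only if'' direction is essentially the remark already recorded just before the statement: if some coordinate $x^{j}$ vanishes, then by the Fréchet c.d.f.~\eqref{prelim_Fréchet} one has $\prob(Z^{j} \leq x^{j}) = 0$, hence $F_{\bm{Z}}(\bm{x}) = 0$ and $\mu[\bm{0},\bm{x}]^{c} = -\log F_{\bm{Z}}(\bm{x}) = +\infty$. So the substance lies in showing that $\bm{x} \in \Eos$ forces $\mu[\bm{0},\bm{x}]^{c} < \infty$.

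The key computation I would carry out first is the value of $\mu$ on the coordinate half-spaces. Writing $H_{j}(t) \coloneq \{\bm{y} \in \Eo : y^{j} > t\}$, the de Haan--LePage representation~\eqref{prelim_LePage_1} gives $Z^{j} = \sup_{i} y_{i}^{j}$, so that $\{Z^{j} \leq t\}$ is exactly the event that no atom of $\eta$ falls in $H_{j}(t)$. Since the number of such atoms is Poisson with mean $\mu(H_{j}(t))$, this yields $\prob(Z^{j} \leq t) = e^{-\mu(H_{j}(t))}$, and comparing with the Fréchet $\mathcal{F}(\alpha)$ marginal gives $\mu(H_{j}(t)) = t^{-\alpha}$ for every $t > 0$.

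With this in hand the finite direction is immediate: for $\bm{x} \in \Eos$ one has the set identity
\[
  [\bm{0},\bm{x}]^{c} = \{\bm{y} \in \Eo : \bm{y} \nleq \bm{x}\} = \bigcup_{j=1}^{d} H_{j}(x^{j}),
  \qquad
  \mu[\bm{0},\bm{x}]^{c} \leq \sum_{j=1}^{d} \mu\bigl(H_{j}(x^{j})\bigr) = \sum_{j=1}^{d} (x^{j})^{-\alpha},
\]
using subadditivity, and the right-hand side is finite precisely because every $x^{j} > 0$. Conversely, if $x^{j_{0}} = 0$ then $H_{j_{0}}(0) \subseteq [\bm{0},\bm{x}]^{c}$, and letting $t \downarrow 0$ in $\mu(H_{j_{0}}(t)) = t^{-\alpha}$ (equivalently, using monotone continuity together with the homogeneity~\eqref{prelim_homo}) gives $\mu(H_{j_{0}}(0)) = +\infty$, hence $\mu[\bm{0},\bm{x}]^{c} = +\infty$; this recovers the reverse implication.

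The only delicate point is the identification $\mu(H_{j}(t)) = t^{-\alpha}$: I must check that the projection of the marked Poisson process onto a single coordinate is again Poisson with the correctly restricted intensity, and that atoms of $\mu$ sitting on boundary faces (where some coordinate vanishes) do not interfere with the estimates. Neither is a genuine obstacle, since subadditivity only ever requires the upper bound and the coordinate tails are pinned down by the prescribed Fréchet marginals; the remainder of the argument is routine.
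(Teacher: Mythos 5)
Your proof is correct, but it takes a genuinely different route from the paper's for the substantive (``if'') direction. The paper proves finiteness by inserting the polar decomposition $\mu = T_{*}(\rho_{1}\otimes\nu)$ (after reducing to $\alpha = 1$) and computing the resulting integral over the sphere,
\[
  \mu[\bm{0},\bm{x}]^{c} \;=\; \int_{\S_{+}^{d-1}}\int_{0}^{\infty}\ind_{\lb r\bm{u}\nleq\bm{x}\rb}\,\frac{1}{r^{2}}\dif r\dif\nu(\bm{u})
  \;=\; \int_{\S_{+}^{d-1}}\max\frac{\bm{u}}{\bm{x}}\dif\nu(\bm{u}),
\]
which is finite because the angular measure $\nu$ is finite and the integrand is bounded by $1/\min\bm{x}$ when $\bm{x}\in\Eos$. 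You never touch the polar decomposition: instead you pin down the half-space masses $\mu(\lb y^{j}>t\rb) = t^{-\alpha}$ directly from the Fréchet marginals via the Poisson void probability $\prob(Z^{j}\leq t) = e^{-\mu(\lb y^{j}>t\rb)}$, and then conclude by writing $[\bm{0},\bm{x}]^{c}$ as the union of the $d$ half-spaces and invoking subadditivity. Your approach is somewhat more elementary and self-contained: it needs only the de Haan--LePage representation and the definition of simple max-stable (not the finer polar decomposition of Resnick), it handles all $\alpha>0$ at once without the paper's reduction to $\alpha=1$, it yields the clean explicit bound $\sum_{j}(x^{j})^{-\alpha}$, and the same half-space identity also gives the ``only if'' direction by monotone continuity, so both implications flow from one computation. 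What the paper's route buys in exchange is an explicit display of how the angular measure controls $\mu[\bm{0},\bm{x}]^{c}$ --- a computation pattern (integrating $\ind_{\lb r\bm{u}\nleq\bm{x}\rb}$ against $r^{-2}\dif r\dif\nu$) that the paper reuses several times later, e.g.\ in Lemma~\ref{prelim_esp_mu} and in the analysis of the operator $\Dn$. One cosmetic remark: your closing worry about projections of the marked process is unnecessary --- the void-probability identity you actually use requires no projection argument at all.
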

\begin{proof}
  The direct implication has already been proved. To get the reverse statement,
  assume $\alpha = 1$ for simplicity. Since $\bm{x} \in \Eos$, the scalar
  $\min(x^{1},\dots,x^{d})$ is positive and
  \begin{align*}
    \mu[\bm{0},\bm{x}]^{c} & = \int_{\S_{+}^{d-1}}\int_{0}^{\infty}\ind_{\lb r\bm{u} \nleq \bm{x}\rb} \frac{1}{r^{2}}\dif r \dif\nu(\bm{u})             \\
                           & = \int_{\S_{+}^{d-1}}\int_{0}^{\infty}\ind_{\bigcup_{j=1}^{d}\lb ru^{j} > x^{j} \rb} \frac{1}{r^{2}}\dif r \dif\nu(\bm{u}) \\
                           & = \int_{\S_{+}^{d-1}}\int_{\min \frac{\bm{x}}{\bm{u}}}^{\infty} \frac{1}{r^{2}}\dif r \dif\nu(\bm{u})                      \\
                           & = \int_{\S_{+}^{d-1}} \max \frac{\bm{u}}{\bm{x}}\dif\nu(\bm{u}) \leq \frac{1}{\min \bm{x}}.
  \end{align*}
\end{proof}
Another useful lemma regarding the exponent measure of a max-stable random
vector is the following:

\begin{lemma}\label{prelim_esp_mu}
  Let $\bm{Z} \sim \mathcal{MS}(\mu)$ and $k \in \N$. Let
  $\bm{1} \coloneq (1,\dots,1) \in \Eos$. Then one has:
  \[
    \esp\big[\big(\mu[\bm{0},\bm{Z}]^{c}\big)^{k}\big] \leq d\big(\mu[\bm{0},\bm{1}]^{c}\big)^{k}k!.
  \]
  Furthermore, define $\log \bm{x}$ as $(\log x^{1},\dots, \log x^{j})$ for
  $\bm{x} \in \Eos$. Then
  \[
    \esp\big[\Vert \log \bm{Z} \Vert_{1}^{k}\big] < +\infty,
  \]
  where $\Vert \bm{x} \Vert_{1} = \sum_{j=1}^{d}\vert x^{j} \vert$. Finally, the
  following is true:
  \[
    \int_{\Eo} \esp\big[\Vert \log (\bm{Z}\oplus \bm{y}) \Vert_{1}^{k} \ind_{\lb \bm{y} \nleq \bm{Z}\rb}\big]\dif\mu(\bm{y}) < +\infty.
  \]
\end{lemma}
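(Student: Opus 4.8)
The plan is to dispatch the three assertions in turn, reducing each to a one-dimensional computation by exploiting the homogeneity of $\mu$ and the explicit Fréchet marginals. Throughout I write $V(\bm x) := \mu[\bm 0,\bm x]^{c}$ and $W := V(\bm Z)$, and I record two structural facts: $V$ is \emph{non-increasing} in $\bm x$ (the sets $[\bm 0,\bm x]^{c}$ shrink as $\bm x$ grows), and by \eqref{prelim_homo} it is homogeneous of degree $-\alpha$, i.e.\ $V(s\bm x) = s^{-\alpha}V(\bm x)$ for $s>0$. Note also $V(\bm 1) = \mu[\bm 0,\bm 1]^{c} < \infty$ by the previous lemma, since $\bm 1 \in \Eos$.

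For the first inequality, the key observation is that $\bm Z \ge (\min\bm Z)\,\bm 1$ coordinatewise, so monotonicity and homogeneity give $W \le (\min\bm Z)^{-\alpha}\,V(\bm 1)$. It then suffices to control the upper tail of $(\min\bm Z)^{-\alpha}$: since $\{(\min\bm Z)^{-\alpha} > w\} = \{\min\bm Z < w^{-1/\alpha}\} = \bigcup_{j}\{Z^{j} < w^{-1/\alpha}\}$, a union bound combined with the Fréchet c.d.f.\ $\prob(Z^{j} < z) = e^{-z^{-\alpha}}$ yields $\prob\big((\min\bm Z)^{-\alpha} > w\big) \le d\,e^{-w}$. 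Hence $\prob(W > t) \le d\,e^{-t/V(\bm 1)}$, and integrating the tail gives $\esp[W^{k}] = \int_{0}^{\infty} k\,t^{k-1}\prob(W>t)\dif t \le d\,k\int_{0}^{\infty} t^{k-1} e^{-t/V(\bm 1)}\dif t = d\,V(\bm 1)^{k}\,k!$, exactly the claim.

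For the second assertion I would pass to $E_{j} := (Z^{j})^{-\alpha}$, which is a standard exponential variable because $\prob(E_{j} > w) = \prob(Z^{j} < w^{-1/\alpha}) = e^{-w}$. Then $\log Z^{j} = -\tfrac{1}{\alpha}\log E_{j}$, so $\esp[|\log Z^{j}|^{k}] = \alpha^{-k}\esp[|\log E_{j}|^{k}] < \infty$, the integral $\int_{0}^{\infty}|\log s|^{k} e^{-s}\dif s$ converging both at $0^{+}$ and at $+\infty$. The convexity bound $\Vert\log\bm Z\Vert_{1}^{k} = \big(\sum_{j}|\log Z^{j}|\big)^{k} \le d^{k-1}\sum_{j}|\log Z^{j}|^{k}$ then gives $\esp[\Vert\log\bm Z\Vert_{1}^{k}] < \infty$ (and, applied with exponent $2k$, also $\esp[\Vert\log\bm Z\Vert_{1}^{2k}] < \infty$, which I shall need below).

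The third statement is the main obstacle, precisely because $\mu$ carries infinite mass accumulating near the faces of $\Eo$ where a coordinate vanishes, and there $\log y^{j}$ diverges. The naive triangle inequality $|\log\max(Z^{j},y^{j})| \le |\log Z^{j}| + |\log y^{j}|$ is too lossy on those faces; instead I would use the \emph{exact} coordinatewise identity $\log\max(Z^{j},y^{j}) = \log Z^{j}$ on $\{y^{j}\le Z^{j}\}$ and $= \log y^{j}$ on $\{y^{j} > Z^{j}\}$, which yields $\Vert\log(\bm Z\oplus\bm y)\Vert_{1} \le \Vert\log\bm Z\Vert_{1} + \sum_{j}|\log y^{j}|\,\ind_{\{y^{j}>Z^{j}\}}$, so that $\log y^{j}$ enters only where $y^{j} > Z^{j} > 0$. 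Raising to the power $k$ (convexity, with a constant $2^{k-1}d^{k-1}$), multiplying by $\ind_{\{\bm y\nleq\bm Z\}}$, and using $\ind_{\{y^{j}>Z^{j}\}}\,\ind_{\{\bm y\nleq\bm Z\}} = \ind_{\{y^{j}>Z^{j}\}}$, the integrand splits into two pieces. The first, after Fubini and $\int_{\Eo}\ind_{\{\bm y\nleq\bm Z\}}\dif\mu(\bm y) = \mu[\bm 0,\bm Z]^{c} = W$, becomes $\esp[\Vert\log\bm Z\Vert_{1}^{k}\,W]$, finite by Cauchy--Schwarz together with the first two parts. The second piece is $\sum_{j}\int_{\Eo}|\log y^{j}|^{k}\,\prob(Z^{j}<y^{j})\dif\mu(\bm y)$; pushing $\mu$ forward by the $j$-th coordinate — whose image law is $\alpha x^{-\alpha-1}\dif x$ on $(0,\infty)$, since the void probability forces $\mu(\{y^{j}>x\}) = x^{-\alpha}$ — reduces each term to $\int_{0}^{\infty}|\log x|^{k} e^{-x^{-\alpha}}\,\alpha x^{-\alpha-1}\dif x$. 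This one-dimensional integral converges at $+\infty$ (where $|\log x|^{k}x^{-\alpha-1}$ is integrable) and at $0^{+}$ (where $e^{-x^{-\alpha}}$ annihilates the singularity, so the faces $\{y^{j}=0\}$ contribute nothing), completing the proof. The one delicate point is thus replacing the triangle inequality by the coordinatewise splitting, which is exactly what tames the boundary behaviour of $\mu$.
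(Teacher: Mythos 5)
Your proof is correct and follows essentially the same approach as the paper's: the same inclusion $[\bm{0},\bm{Z}]^{c}\subseteq[\bm{0},(\min\bm{Z})\bm{1}]^{c}$ combined with the homogeneity of $\mu$ for the first bound, the exponential/Gumbel log-moments for the second, and the same coordinatewise case split (according to whether $Z^{j}$ or $y^{j}$ realizes the maximum) followed by Fubini and Cauchy--Schwarz for the third. The only cosmetic deviations are that you obtain $\esp\big[(\mu[\bm{0},\bm{Z}]^{c})^{k}\big]\leq d\,(\mu[\bm{0},\bm{1}]^{c})^{k}k!$ via a tail estimate and the layer-cake formula rather than by computing the negative moments of $\min\bm{Z}$ directly, and you settle the $\vert\log y^{j}\vert^{k}$ term through the marginal identity $\mu(\{y^{j}>x\})=x^{-\alpha}$ instead of the paper's polar decomposition --- both amount to the same one-dimensional integrability check.
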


\begin{proof}
  Each marginal $Z^{j}$ of $\bm{Z}$ has the Fréchet distribution
  $\mathcal{F}(1)$, so that $1/Z^{j}$ has the exponential distribution
  $\mathcal{E}(1)$ and $\esp[(Z^{j})^{-k}] = k!$. Observe that we have:
  \[ [\bm{0}, \bm{x}]^{c} \subseteq [\bm{0}, (\min \bm{x})\bm{1}]^{c}.
  \]
  The homogeneity property of $\mu$ then yields
  \begin{align*}
    \esp\big[\big(\mu[\bm{0},\bm{Z}]^{c}\big)^{k}\big] & \leq \esp\big[\big(\mu[\bm{0},(\min \bm{Z})\bm{1}]^{c}\big)^{k}\big]   \\
                                                       & = \big(\mu[\bm{0},\bm{1}]^{c}\big)^{k}\esp\big[(\min \bm{Z})^{-k}\big] \\
                                                       & = \big(\mu[\bm{0},\bm{1}]^{c}\big)^{k}\esp[\max \bm{Z}^{-k}]           \\
                                                       & \leq \big(\mu[\bm{0},\bm{1}]^{c}\big)^{k}d\esp[(Z^{j})^{-k}],
  \end{align*}
  by bounding $\max 1/\bm{Z}$ by $\sum_{j=1}^{d}1/Z^{j}$, since all the $Z^{j}$
  are positive. The second statement is a direct consequence of the fact that if
  $Z \sim \mathcal{F}(1)$, then $\log Z$ has the Gumbel distribution. Its
  non-negative moments are thus all finite. We will get that the last
  expectation is finite if we can prove that
  \[
    \int_{\Eo} \esp\big[\vert \log (Z^{j}\oplus y^{j}) \vert^{k} \ind_{\lb \bm{y} \nleq \bm{Z}\rb}\big]\dif\mu(\bm{y}) < +\infty
  \]
  for every $j \in \lbra 1,d \rbra$. A simple case distinction yields that
  result:
  \begin{align*}
    \esp\big[\vert \log (Z^{j}\oplus & y^{j}) \vert^{k} \ind_{\lb \bm{y} \nleq \bm{Z}\rb}\big]                                                                                                                                                                                        \\
                                     & = \esp\big[\vert \log (Z^{j}\oplus y^{j}) \vert^{k} \ind_{\lb \bm{y} \nleq \bm{Z}\rb}\ind_{\lb Z^{j} > y^{j}\rb}\big] + \esp\big[\vert \log (Z^{j}\oplus y^{j}) \vert^{k} \ind_{\lb \bm{y} \nleq \bm{Z}\rb}\ind_{\lb Z^{j} \leq y^{j}\rb}\big] \\
                                     & = \esp\big[\vert \log Z^{j} \vert^{k} \ind_{\lb \bm{y} \nleq \bm{Z}\rb}\ind_{\lb Z^{j} > y^{j}\rb}\big] + \vert \log y^{j} \vert^{k}\prob(Z^{j} \leq y^{j})                                                                                    \\
                                     & \leq \esp\big[\vert \log Z^{j} \vert^{k} \ind_{\lb \bm{y} \nleq \bm{Z}\rb}\big] + \vert \log y^{j} \vert^{k}e^{-\frac{1}{y^{j}}}.
  \end{align*}
  The second term is integrable with respect to $\bm{y}$ on $\Eo$, as one can
  see by using the polar decomposition:
  \[
    \int_{\Eo}\vert \log y^{j} \vert^{k}e^{-\frac{1}{y^{j}}} \dif\mu(\bm{y}) = \int_{\Epol}\vert \log ru^{j} \vert^{k}e^{-\frac{1}{ru^{j}}}\frac{1}{r^{2}}\dif r \dif\nu(\bm{u}).
  \]
  As for the first part, a Fubini argument gives
  \begin{align*}
    \int_{\Eo}\esp\big[\vert \log Z^{j} \vert^{k} \ind_{\lb \bm{y} \nleq \bm{Z}\rb}\big]  \dif\mu(\bm{y}) = \esp\big[\vert \log Z^{j} \vert^{k} \mu[\bm{0}, \bm{Z}]^{c}\big].
  \end{align*}
  One then concludes using Cauchy-Schwarz inequality and the first two points of
  this lemma.
\end{proof}

\subsection{Stochastic extremal integrals}\label{prelim_SEI}

We give a short account of the notion of stochastic extremal integral. The
interested reader is referred to \cite{Stoev05} for a much more thorough
presentation. We focus of the properties of the stochastic extremal integral we
will need of later. In this subsection, we will denote the scale parameter
$\sigma$ of a Fréchet random variable $Z$ by
$\Vert Z \Vert_{\alpha} \coloneq \sigma$.

\begin{definition}

  Let $(E, \mathcal{E},\mu)$ be a measure space,
  $\mathcal{E}_{0} \coloneq \lb A\in \mathcal{E},\ \mu(A) < \infty \rb$ and
  $\mathbf{L}^{0}(\Omega)$ the set of real random variables on $\Omega$. Let
  $\alpha > 0$. We say that a function
  $M_{\alpha} : \mathcal{E}_{0} \to \mathbf{L}^{0}(\Omega)$ is a \textit{random
    sup-measure with control measure $\mu$} if it satisfies the three following
  conditions:

  \begin{enumerate}

    \item (independently scattered) For any collection of disjoint sets
          $(A_{j})_{1\leq j \leq n}$ in $\mathcal{E}_{0}$, the random variables
          $(M_{\alpha}(A_{j}))_{1\leq j \leq n}$ are independent.

    \item ($\alpha$-Fréchet) For any
          $A\in \mathcal{E}_{0},\ M_{\alpha} \sim \mathcal{F}\big(\alpha, \mu(A)^{1/\alpha})\big)$.

    \item ($\sigma$-sup-additive) For any collection of disjoint sets
          $(A_{i})_{i\geq 1}$ in $\mathcal{E}_{0}$ such that
          $\bigcup_{i}A_{i} \in \mathcal{E}_{0}$, we have:
          \[
            M_{\alpha}\Big(\biguplus_{i = 1}^{\infty}A_{i}\Big) = \bigoplus_{i=1}^{\infty}M_{\alpha}(A_{i})\ \mathrm{a.s.}.
          \]
  \end{enumerate}
\end{definition}

Using this definition, we introduce the extremal integral for simple functions.

\begin{definition}
  Let $f$ be a simple function on $E$:
  \[
    f(x) = \sum_{i=1}^{n}a_{i}\ind_{A_{i}}(x),\ x \in E
  \]
  where $a_{i}$ are non-negative numbers and the $A_{i}$ are disjoint. The
  \textit{extremal integral} of $f$ with respect to the random sup-measure
  $M_{\alpha}$ is defined as:
  \[
    \mxint{E}{}f(x)\dif M_{\alpha}(x) \coloneq \bigoplus_{i=1}^{n}a_{i}M_{\alpha}(A_{i}).
  \]

\end{definition}

This definition can be extended to more general integrands, as explained in
\cite{Stoev05}. Now we list the most important properties of this integral for
our purposes:

\begin{theorem}

  Let $f$ be a non-negative function such that
  $\int_{E}f(x)^{\alpha}\dif \mu(x)$ is finite. Then the extremal integral of
  $f$ on $E$ exists and is a random variable $Z$ with Fréchet distribution
  $\mathcal{F}(\alpha, \Vert Z \Vert_{\alpha})$, where
  \[
    \Vert Z \Vert_{\alpha}^{\alpha} = \Big\Vert \mxint{E}{} f(x)\dif M_{\alpha}(x) \Big\Vert_{\alpha}^{\alpha} = \int_{E}f(x)^{\alpha}\dif \mu(x).
  \]
\end{theorem}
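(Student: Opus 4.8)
The plan is to first establish the claim for simple integrands by a direct computation, and then to extend it to a general $f \in L^\alpha_+(E,\mu)$ by approximation. The whole argument rests on two elementary facts about the Fréchet family. First, scaling: if $Z \sim \mathcal{F}(\alpha,\sigma)$ and $a > 0$, then $aZ$ has c.d.f. $x \mapsto \exp(-(a\sigma/x)^{\alpha})$, so $aZ \sim \mathcal{F}(\alpha, a\sigma)$ and $\Vert aZ\Vert_\alpha^\alpha = a^\alpha \Vert Z\Vert_\alpha^\alpha$ (the case $a=0$ being degenerate at $0$, which does not affect a maximum). Second, max-stability: if $Z_1,\dots,Z_n$ are independent with $Z_i \sim \mathcal{F}(\alpha,\sigma_i)$, then $\bigoplus_{i} Z_i = \max_i Z_i$ has c.d.f. $\prod_i \exp(-(\sigma_i/x)^\alpha) = \exp(-(\sum_i \sigma_i^\alpha)/x^\alpha)$, whence $\bigoplus_i Z_i \sim \mathcal{F}(\alpha,(\sum_i \sigma_i^\alpha)^{1/\alpha})$ and $\Vert \bigoplus_i Z_i\Vert_\alpha^\alpha = \sum_i \Vert Z_i\Vert_\alpha^\alpha$.

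For a simple $f = \sum_{i=1}^n a_i \ind_{A_i}$ with $a_i \geq 0$ and the $A_i$ disjoint, the defining identity gives $\mxint{E}{} f \dif M_\alpha = \bigoplus_i a_i M_\alpha(A_i)$. The random variables $M_\alpha(A_i)$ are independent by independent scattering and satisfy $M_\alpha(A_i) \sim \mathcal{F}(\alpha, \mu(A_i)^{1/\alpha})$. Combining the two facts above, $Z := \mxint{E}{} f \dif M_\alpha$ is Fréchet with $\Vert Z\Vert_\alpha^\alpha = \sum_i a_i^\alpha \mu(A_i)$; since $f^\alpha = \sum_i a_i^\alpha \ind_{A_i}$, this is exactly $\int_E f^\alpha \dif\mu$, which proves the theorem in this case.

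To treat a general non-negative $f$ with $\int_E f^\alpha \dif\mu < \infty$, that is $f \in L^\alpha_+(E,\mu)$, I would approximate $f$ by simple non-negative functions $f_n \uparrow f$, so that dominated convergence ($|f_n - f|^\alpha \leq f^\alpha$) gives $\int_E |f_n - f|^\alpha \dif\mu \to 0$. The key quantitative step is a contraction estimate: writing two simple functions $f,g$ on a common refinement $\{B_k\}$, the inequality $|\max_k x_k - \max_k y_k| \leq \max_k |x_k - y_k|$ applied to $x_k = a_k M_\alpha(B_k)$, $y_k = b_k M_\alpha(B_k)$ yields, almost surely, $\big| \mxint{E}{} f \dif M_\alpha - \mxint{E}{} g \dif M_\alpha\big| \leq \bigoplus_k |a_k - b_k| M_\alpha(B_k)$, and the right-hand side is Fréchet with scale $(\int_E |f-g|^\alpha \dif\mu)^{1/\alpha}$ by the simple-function case. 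Since $1 - e^{-u} \le u$, this gives $\prob\big( |\mxint{E}{} f \dif M_\alpha - \mxint{E}{} g \dif M_\alpha| > \varepsilon\big) \leq \varepsilon^{-\alpha}\int_E |f-g|^\alpha\dif\mu$. Hence $(\mxint{E}{} f_n \dif M_\alpha)_n$ is Cauchy in probability and converges to a limit $Z$, which we take as the definition of $\mxint{E}{} f \dif M_\alpha$; the same estimate applied to the difference of two approximating sequences shows $Z$ does not depend on the chosen $(f_n)$.

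Finally, to identify the law of $Z$, note that $\int_E f_n^\alpha \dif\mu \to \int_E f^\alpha\dif\mu$, so the c.d.f. of $Z_n := \mxint{E}{} f_n \dif M_\alpha$ converges pointwise to $x \mapsto \exp\big(-(\int_E f^\alpha\dif\mu)/x^\alpha\big)$; since $Z_n \to Z$ in probability, hence in distribution, $Z \sim \mathcal{F}\big(\alpha, (\int_E f^\alpha\dif\mu)^{1/\alpha}\big)$, giving $\Vert Z\Vert_\alpha^\alpha = \int_E f^\alpha\dif\mu$ as claimed. I expect the main obstacle to be the extension step rather than the computation: one must set up the contraction estimate on a common refinement and, in particular, handle the regime $\alpha < 1$, where $f \mapsto (\int |f|^\alpha\dif\mu)^{1/\alpha}$ is only a quasi-norm, so that the metric governing convergence is $\int |f-g|^\alpha\dif\mu$ rather than a genuine norm; checking that the limit exists and is sequence-independent is where the care is needed.
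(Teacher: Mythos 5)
Your proof is correct. The paper itself does not prove this theorem --- it is recalled in the preliminaries and delegated to the cited work of Stoev and Taqqu --- and your argument is essentially the standard construction from that reference: the exact law computation for simple integrands via scaling and max-stability of the Fréchet family, followed by extension through the contraction estimate $\big\vert \bigoplus_k x_k - \bigoplus_k y_k \big\vert \leq \bigoplus_k \vert x_k - y_k\vert$, the tail bound $1-e^{-u}\leq u$, and completeness of the space of random variables under convergence in probability. The only detail worth making explicit is that the monotone approximants $f_n \uparrow f$ must be admissible simple functions, \emph{i.e.} their non-zero level sets must lie in $\mathcal{E}_{0}$; this is automatic since $\mu(\lb f > c \rb) \leq c^{-\alpha}\int_{E} f^{\alpha}\dif\mu < \infty$ for every $c > 0$, so the approximation step you describe goes through.
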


This theorem can be roughly seen as an 'extremal' counterpart of the It{\=o}
isometry, although $\alpha$-Fréchet random variables never belong to
$\mathbf{L}^{\alpha}(\R^{*}_{+})$. In the sequel we will use the notation
\[
  \mathbf{L}^{\alpha}_{+}(E, \mu) \coloneq \Big\lb f : E \to \R_{+},\ \int_{E}f(x)^{\alpha}\dif \mu(x) < +\infty \Big\rb.
\]

\begin{proposition}

  The stochastic extremal integral satisfies the following properties.

  \begin{enumerate}

    \item (Max-linearity) For all $f,g \in \mathbf{L}_{+}^{\alpha}(E, \mu)$, we
          have
          \[
            \Big(\lambda \mxint{E}{} f\dif M_{\alpha}\Big) \oplus \Big(\mu \mxint{E}{} g\dif M_{\alpha}\Big) = \mxint{E}{} \big(\lambda f \oplus \mu g\big) \dif M_{\alpha},\ \lambda, \mu \geq 0.
          \]

    \item (Independence) The extremal integrals of $f$ and $g$ are independent
          if and only if $f$ and $g$ have disjoint supports, that is:
          \[
            \mxint{E}{} f\dif M_{\alpha}\ \mathrm{and\ } \mxint{E}{} g\dif M_{\alpha}\ \mathrm{are\ independent\ if\ and\ only\ if\ } fg = 0\ \mu\mathrm{-a.e.}
          \]

    \item (Monotonicity)
          \[
            f\leq g\ \mu\mathrm{-a.e.\ if\ and\ only\ if\ } \mxint{E}{} f\dif M_{\alpha} \leq \mxint{E}{} g\dif M_{\alpha}\ \mu\mathrm{-a.e.}
          \]
          In particular, $f = g\ \mu$-a.e. if and only if the associated
          extremal integrals are equal $\mu$-a.e.

  \end{enumerate}
\end{proposition}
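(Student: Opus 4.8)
The plan is to reduce every assertion to the case of simple integrands, where the defining formula can be applied directly, and then to lean on the isometry theorem (which identifies the scale of $\mxint{E}{} f \dif M_{\alpha}$ with $\bigl(\int_{E} f^{\alpha}\dif\mu\bigr)^{1/\alpha}$) to obtain the two ``only if'' statements almost for free. I would begin with max-linearity. For two simple functions I would pass to a common refinement, writing $f = \sum_{k} a_{k}\ind_{C_{k}}$ and $g = \sum_{k} b_{k}\ind_{C_{k}}$ over one partition $(C_{k})$ of disjoint sets in $\mathcal{E}_{0}$. Then $\lambda f \oplus \mu g = \sum_{k}\max(\lambda a_{k}, \mu b_{k})\ind_{C_{k}}$, and both sides of the claimed identity collapse, by the definition of the extremal integral, to $\bigoplus_{k}\max(\lambda a_{k}, \mu b_{k})M_{\alpha}(C_{k})$; the only facts used are that scalar multiplication distributes over $\oplus$ and that $\oplus$ is associative and commutative. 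I would then extend to arbitrary $f, g \in \mathbf{L}_{+}^{\alpha}(E,\mu)$ by approximating with increasing sequences of simple functions and invoking the convergence theory of the extremal integral from~\cite{Stoev05}, using the continuity of $\oplus$ and of scalar multiplication.

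Next I would deduce monotonicity from max-linearity together with the isometry. For the forward direction, if $f \leq g$ $\mu$-a.e.\ then $f \oplus g = g$ $\mu$-a.e., so max-linearity yields $\mxint{E}{}g\dif M_{\alpha} = \mxint{E}{}f\dif M_{\alpha} \oplus \mxint{E}{}g\dif M_{\alpha}$, which is precisely $\mxint{E}{}f\dif M_{\alpha} \leq \mxint{E}{}g\dif M_{\alpha}$ almost surely. For the converse I would run this identity backwards: assuming the integrals are ordered a.s., max-linearity gives $\mxint{E}{}(f\oplus g)\dif M_{\alpha} = \mxint{E}{}g\dif M_{\alpha}$ a.s., so the two integrals share the same law and hence, by the isometry, the same scale, forcing $\int_{E}(f\oplus g)^{\alpha}\dif\mu = \int_{E}g^{\alpha}\dif\mu$. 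Since $(f\oplus g)^{\alpha} \geq g^{\alpha}$ pointwise, equality of the integrals forces $f \oplus g = g$ $\mu$-a.e., that is $f \leq g$ $\mu$-a.e.

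For independence I would treat the two implications separately. If $fg = 0$ $\mu$-a.e., I would approximate $f$ and $g$ by simple functions supported in $\lb f > 0\rb$ and $\lb g > 0\rb$ respectively, so that the approximants have disjoint supports; their extremal integrals are then functions of $M_{\alpha}$ evaluated on disjoint sets, hence independent by the independently scattered property, and independence passes to the limit. For the ``only if'' direction I would again exploit the isometry: if the two integrals are independent, then $\mxint{E}{}(f\oplus g)\dif M_{\alpha} = \mxint{E}{}f\dif M_{\alpha} \oplus \mxint{E}{}g\dif M_{\alpha}$ is the maximum of two independent Fréchet variables, whose scale raised to the power $\alpha$ equals the sum of the individual scales raised to the power $\alpha$; by the isometry this reads $\int_{E}(f\oplus g)^{\alpha}\dif\mu = \int_{E}f^{\alpha}\dif\mu + \int_{E}g^{\alpha}\dif\mu$. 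As $(f\oplus g)^{\alpha} = \max(f^{\alpha}, g^{\alpha}) \leq f^{\alpha}+g^{\alpha}$ pointwise, this equality forces $\min(f^{\alpha}, g^{\alpha}) = 0$ $\mu$-a.e., i.e.\ $fg = 0$ $\mu$-a.e.

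The computations on simple functions are entirely routine. The one genuinely delicate point is the passage from simple to general integrands: I would need the continuity of $f \mapsto \mxint{E}{}f\dif M_{\alpha}$ in the appropriate topology, together with the fact that both independence and almost-sure ordering are stable under the corresponding convergence of the approximating integrals. This is exactly the place where I would invoke the approximation machinery of~\cite{Stoev05} rather than reconstruct it.
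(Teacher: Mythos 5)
The paper offers no proof of this proposition to compare against: it is stated in the preliminaries as a quoted result, with the reader referred to \cite{Stoev05} for the theory of extremal integrals. Judged on its own, your argument is correct, and it is essentially the standard proof from that reference. The simple-function computation for max-linearity over a common refinement is routine (note it also quietly uses $\sigma$-sup-additivity of $M_{\alpha}$, to identify the integral over the refined partition with the integral over the original one), and your two converse implications are cleanly and correctly extracted from the isometry: for monotonicity, $\mxint{E}{}f\dif M_{\alpha}\leq \mxint{E}{}g\dif M_{\alpha}$ a.s.\ forces $\int_{E}(f\oplus g)^{\alpha}\dif\mu=\int_{E}g^{\alpha}\dif\mu$, and since $(f\oplus g)^{\alpha}\geq g^{\alpha}$ pointwise this gives $f\leq g$ $\mu$-a.e.; for independence, the max of two independent Fréchet variables has scale satisfying $\sigma^{\alpha}=\sigma_{1}^{\alpha}+\sigma_{2}^{\alpha}$, which via the isometry yields $\int_{E}\min(f^{\alpha},g^{\alpha})\dif\mu=0$. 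The one substantive gap is exactly the one you flag yourself: the passage from simple to general integrands, i.e.\ continuity of $f\mapsto\mxint{E}{}f\dif M_{\alpha}$ in the appropriate metric, the fact that the integral depends only on the $\mu$-a.e.\ class of the integrand (which you use when replacing $f\oplus g$ by $g$ inside an integral), and the stability of independence and of a.s.\ ordering under convergence in probability. Delegating this to \cite{Stoev05} is legitimate in context, but be aware that this delegation carries essentially all the analytic weight of the proposition; the parts you write out are the algebraic and distributional shell around it.
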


\section{Stochastic analysis for max-id distributions}
\label{sec:stoch-analys-max}
Let $\mathfrak{N}_{\El}$ be the space of configurations of $\El$. If $\eta$ is a
Poisson process on $\El$, we will denote by:
\[
  \L^{2}(\mathfrak{N}_{\El}, \prob_{\eta}) \coloneq \L^{2}(\prob_{\eta})
\]
the set of $\prob_{\eta}$-square-integrable functions from $\mathfrak{N}_{\El}$
to $\R$. Let $\m : \mathfrak{N}_{\El} \to \El$ be the coordinate-wise maximum
over $\mathfrak{N}_{\El}$:
\[
  \m(\phi) \coloneq \bigoplus_{\bm{y} \in \phi}\bm{y},
\]
where $\phi$ is some configuration on $\El$.

The de Haan-LePage representation \eqref{prelim_LePage_1} implies that any
functional $f(\bm{Z})$ of a max-id random vector $\bm{Z}$ can be realized as a
functional $(f\circ \m)(\eta) = \bar{f}(\eta)$ of some underlying Poisson
process $\eta = (\bm{y}_{i})_{i\geq 1}$ on $\El$, for some
$\bm{\ell} \in [-\infty, +\infty)^{d}$:
\[ \begin{tikzcd}[row sep=3.9em, column sep=5em] \mathfrak{N}_{\El} \arrow{r}{\m} \arrow[swap]{dr}{\bar{f}} & \El \arrow{d}{f}
    \\%
    & \R
  \end{tikzcd}
\]
The application $\m$ satisfies the elementary but important relation:
\begin{align}\label{3_cup_oplus}
  \m(\phi + \delta_{\bm{y}}) = \m(\phi)\oplus \bm{y}.
\end{align}
for every $\bm{y} \in \El$ and configuration $\phi \in \mathfrak{N}_{\El}$, with
$\delta_{\bm{y}}$ the Dirac measure at $\bm{y}$.

The homogeneity property of the exponent measure of max-stable random vectors
provides us with the following expansion:

\begin{proposition}
  Let $\bm{x} \in \R_{+}^{d}$, $\sigma > 0$ and $\bm{Z}$ a random vector of
  $\R_{+}^{d}$. If $\bm{Z} \sim \mathcal{MS}(1, \nu)$ and $\mu$ is its
  associated exponent measure, then for all non-negative $f : \Eos \to \R_{+}$
  and $\bm{x} \in \Eos$:
  \begin{multline}
\label{3_chaos_MS}
\esp\big[f(\bm{x}  \oplus \sigma\bm{Z})\big]    \\
\begin{aligned}
    &= F_{\sigma\bm{Z}}(\bm{x})f(\bm{x}) + F_{\sigma\bm{Z}}(\bm{x})\sum_{n=1}^{\infty}\frac{\sigma^{n}}{n!}\int_{([\bm{0}, \bm{x}]^{c})^{n}}f\big(\bm{x}\oplus \bm{y}_{1} \oplus \dots \oplus \bm{y}_{n}\big)\prod_{i=1}^{n}\dif \mu(\bm{y}_{i})\\
                      & = e^{-\sigma\mu[\bm{0}, \bm{x}]^{c}}f(\bm{x}) + e^{-\sigma\mu[\bm{0}, \bm{x}]^{c}}\sum_{n=1}^{\infty}\frac{\sigma^{n}}{n!}\int_{([\bm{0}, \bm{x}]^{c})^{n}}f\big(\bm{x}\oplus \bm{y}_{1} \oplus \dots \oplus \bm{y}_{n}\big)\prod_{i=1}^{n}\dif \mu(\bm{y}_{i}). %\numberthis
    \end{aligned}
\end{multline}
  Conversely, if equality \eqref{3_chaos_MS} holds for all non-negative
  $f : \Eos \to \R_{+}$, then $\bm{Z} \sim \mathcal{MS}(1, \nu)$.
\end{proposition}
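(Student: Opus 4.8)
The plan is to prove the forward implication by realizing $\sigma\bm{Z}$ as the coordinate-wise maximum of a Poisson process and then splitting its atoms according to whether they lie in $[\bm{0},\bm{x}]$ or not. First I would invoke the de Haan-LePage representation \eqref{prelim_LePage_1} to write $\bm{Z}\eqdis\m(\eta)$ with $\eta$ Poisson of intensity $\mu$; pushing $\eta$ forward by the dilation $\bm{y}\mapsto\sigma\bm{y}$ and using the $1$-homogeneity \eqref{prelim_homo} of $\mu$ (which gives $\mu(\sigma^{-1}B)=\sigma\mu(B)$), I obtain $\sigma\bm{Z}\eqdis\m(\eta_{\sigma})$ for a Poisson process $\eta_{\sigma}$ of intensity $\sigma\mu$. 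The key structural remark is that, by \eqref{3_cup_oplus}, any atom of $\eta_{\sigma}$ lying in $[\bm{0},\bm{x}]$ contributes nothing to $\bm{x}\oplus\m(\eta_{\sigma})$, its coordinates being dominated by those of $\bm{x}$; hence only the atoms in $[\bm{0},\bm{x}]^{c}$ matter and $\bm{x}\oplus\sigma\bm{Z}\eqdis\bm{x}\oplus\m\bigl(\eta_{\sigma}|_{[\bm{0},\bm{x}]^{c}}\bigr)$.

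Next I would apply the restriction theorem for Poisson processes: $\eta_{\sigma}|_{[\bm{0},\bm{x}]^{c}}$ is again Poisson, with intensity $\sigma\mu|_{[\bm{0},\bm{x}]^{c}}$ of finite total mass $\lambda:=\sigma\mu[\bm{0},\bm{x}]^{c}$, the finiteness being exactly what the preceding lemma guarantees because $\bm{x}\in\Eos$. Writing $N$ for its (a.s.\ finite) number of atoms, $N$ is Poisson$(\lambda)$ and, conditionally on $N=n$, the atoms are \iid with law $\mu|_{[\bm{0},\bm{x}]^{c}}/\mu[\bm{0},\bm{x}]^{c}$. Conditioning on $N$ and summing then gives
\[
  \esp\bigl[f(\bm{x}\oplus\sigma\bm{Z})\bigr]
  =\sum_{n=0}^{\infty}\frac{e^{-\lambda}\lambda^{n}}{n!}\,
  \frac{1}{(\mu[\bm{0},\bm{x}]^{c})^{n}}\!\int_{([\bm{0},\bm{x}]^{c})^{n}}\!\!f\bigl(\bm{x}\oplus\bm{y}_{1}\oplus\dots\oplus\bm{y}_{n}\bigr)\prod_{i=1}^{n}\dif\mu(\bm{y}_{i}),
\]
where the $n=0$ term is $e^{-\lambda}f(\bm{x})$. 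Since $\lambda^{n}/(\mu[\bm{0},\bm{x}]^{c})^{n}=\sigma^{n}$ and $e^{-\lambda}=e^{-\sigma\mu[\bm{0},\bm{x}]^{c}}$, this is precisely the second form of \eqref{3_chaos_MS}; all manipulations are legitimate as an identity in $[0,+\infty]$ since every integrand is non-negative (Tonelli). The first form follows once I identify $F_{\sigma\bm{Z}}(\bm{x})=\prob(\bm{Z}\leq\bm{x}/\sigma)=e^{-\mu[\bm{0},\bm{x}/\sigma]^{c}}=e^{-\sigma\mu[\bm{0},\bm{x}]^{c}}$, using again homogeneity together with $\mu[\bm{0},\bm{x}]^{c}=-\log F_{\bm{Z}}(\bm{x})$.

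For the converse I would feed the identity the indicators $f=\ind_{[\bm{0},\bm{t}]}$ with $\bm{t}\in\Eos$ and take $\bm{x}=\bm{t}$. Then every integrand of order $n\geq1$ vanishes, since it requires atoms simultaneously in $[\bm{0},\bm{t}]^{c}$ and in $[\bm{0},\bm{t}]$, so the right-hand side collapses to $e^{-\sigma\mu[\bm{0},\bm{t}]^{c}}f(\bm{t})=e^{-\sigma\mu[\bm{0},\bm{t}]^{c}}$, whereas the left-hand side equals $\prob(\bm{t}\oplus\sigma\bm{Z}\leq\bm{t})=\prob(\sigma\bm{Z}\leq\bm{t})=F_{\sigma\bm{Z}}(\bm{t})$. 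Taking $\sigma=1$ yields $F_{\bm{Z}}(\bm{t})=e^{-\mu[\bm{0},\bm{t}]^{c}}$ for every $\bm{t}\in\Eos$; by right-continuity of c.d.f.'s this determines $F_{\bm{Z}}$ on all of $\R_{+}^{d}$, and it coincides with the distribution function of the simple max-stable law with exponent measure $\mu$, i.e.\ $\bm{Z}\sim\mathcal{MS}(1,\nu)$.

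The main obstacle is the rigorous treatment of the splitting step in the forward direction: one must argue that the coordinate-wise maximum over the (possibly infinitely many) atoms of $\eta_{\sigma}$ inside $[\bm{0},\bm{x}]$ is harmless. This is handled by \eqref{3_cup_oplus}, which shows that this maximum is dominated by $\bm{x}$ componentwise, so that $\bm{x}\oplus\sigma\bm{Z}$ depends only on the finitely many atoms falling in $[\bm{0},\bm{x}]^{c}$; the a.s.\ finiteness of $N$ and of $\m(\eta_{\sigma})$ is inherited from the a.s.\ finiteness of $\bm{Z}$ in \eqref{prelim_LePage_1}. Once this reduction is in place, the remainder is the standard conditional description of a finite-intensity Poisson process.
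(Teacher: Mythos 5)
Your proof is correct and follows essentially the same route as the paper's: de Haan--LePage representation, homogeneity of $\mu$ to absorb $\sigma$ and reduce to the finite-intensity Poisson process on $[\bm{0},\bm{x}]^{c}$, followed by the expansion over the number of atoms, with the converse handled by the same indicator-function trick. The only cosmetic difference is that you re-derive the expansion by conditioning on the Poisson number of points, whereas the paper cites this expansion directly as identity \eqref{3_chaos_PPP}.
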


\begin{proof}
 The second equality is an easy consequence of the homogeneity property
  \eqref{3_chaos_MS} of the exponent measure $\mu$:
  \[
    F_{\sigma\bm{Z}}(\bm{x}) = F_{\bm{Z}}(\sigma^{-1}\bm{x}) = e^{-\mu[\bm{0}, \sigma^{-1}\bm{x}]^{c}} = e^{-\sigma\mu[\bm{0}, \bm{x}]^{c}}.
  \]
  Thanks to the fundamental equality in law \eqref{prelim_LePage_1}, it stands
  true that
  \[
    \bm{x}\oplus \sigma\bm{Z} \eqdis \bm{x}\oplus \sigma\bigoplus_{i=1}^{\infty}\bm{y}_{i} = \m(\bm{x}\oplus \sigma\eta).
  \]
  Recall the following result, which can be found for instance in
  \cite{Privault09}: a random measure $\eta$ with finite intensity measure $\pi$
  on a subset $E$ of $\R^{d}$ is a Poisson process if and only if
  \begin{align}\label{3_chaos_PPP}
    \esp[\bar{f}(\eta)] & = e^{-\pi(E)}\bar{f}(\emptyset) + e^{-\pi(E)}\sum_{n=1}^{\infty}\frac{1}{n!}\int_{E^{n}}\bar{f}(\lb \bm{y}_{1},\dots,\bm{y}_{n}\rb)\dif\pi^{n}(\bm{y}_{1},\dots,\bm{y}_{n}),
  \end{align}
  for all non-negative $\bar{f} : \mathfrak{N}_{E} \to \R_{+}$. We cannot apply
  identity \eqref{3_chaos_PPP} immediately, as the Poisson process
  $\eta = (\bm{y}_{i})_{i\geq 1}$ does not have finite intensity on $\Eo$. But
  thanks to the homogeneity property \eqref{prelim_homo} of $\rho_{1}$, we have
  \[
    \bm{x}\oplus \sigma\bigoplus_{i=1}^{\infty}\bm{y}_{i} \eqdis \bm{x}\oplus\bigoplus_{i=1}^{\infty}\bm{w}_{i},
  \]
  where $(\bm{w}_{i})_{i\geq 1}$ is a Poisson point process on
  $[\bm{0}, \bm{x}]^{c}$, with intensity measure
  $\sigma\mu(\cdot \cap [\bm{0}, \bm{x}]^{c})$. Since $\bm{x}$ belongs to
  $\Eos$, we know that $\mu[\bm{0}, \bm{x}]^{c}$ is finite. We deduce the
  announced result by applying \eqref{3_chaos_PPP} to
  $\pi = \sigma\mu(\cdot \cap [\bm{0}, \bm{x}]^{c})$.

  To prove the reverse implication, fix $\bm{x} \in \Eos$, take
  $f = \ind_{[\bm{0}, \sigma\bm{x}]}$ and evaluate identity \eqref{3_chaos_MS}
  at $\sigma\bm{x}$, so that
  \[
    F_{\bm{Z}}(\bm{x}) = e^{-\sigma\mu[\bm{0}, \sigma\bm{x}]^{c}}\ind_{[\bm{0}, \sigma\bm{x}]}(\sigma\bm{x}) = e^{-\mu[\bm{0}, \bm{x}]^{c}}
  \]
  thanks to the homogeneity property \eqref{prelim_homo} of the exponent measure
  $\mu$. The right-hand side is the c.d.f. of the max-stable distribution
  $\mathcal{MS}(1, \nu)$.
\end{proof}

% \begin{remark}
%   One can easily prove that identity \eqref{3_chaos_MS} holds true for
%   $\mu$-integrable functions. Actually, this assumption can be further relaxed
%   by asking that $f$ belongs to
%   $\mathbf{L}^{1}([\bm{\ell}, +\bm{\infty}),\mu)$ for some $\ell \in \Eos$.
% \end{remark}

Following \cite{Last17}, we define the \textit{discrete gradient} on
$\mathfrak{N}_{\El}$by:
\[
  D_{\bm{y}}\bar{f}(\phi) \coloneq \bar{f}(\phi + \delta_{\bm{y}}) - \bar{f}(\phi),
\]
with $\bar{f} : \mathfrak{N}_{\El} \to \El$ and $\bm{y} \in \El$. Next, set
\[
  D^{\oplus}_{\bm{y}}f(\bm{x}) \coloneq f(\bm{x}\oplus \bm{y}) - f(\bm{x}).
\]
If $\phi$ is a configuration on $\El$ and $\bm{x} = \m(\phi)$, the two previous
definitions coincide, as we get from \eqref{3_cup_oplus} that:
\[
  D^{\oplus}_{\bm{y}}f(\bm{x}) = D_{\bm{y}}\bar{f}(\phi),
\]
where $\bar{f} = f\circ \m$. More generally, we denote by
$D^{\oplus}_{\bm{y}_{1},\dots,\bm{y}_{n}}$ the composition
$D^{\oplus}_{\bm{y}_{1}} \circ \dots \circ D^{\oplus}_{\bm{y}_{n}}$.

As a consequence of the chaos decomposition on the Poisson space, we have a
covariance identity for max-id random vectors:

\begin{proposition}
  Let $\bm{Z}$ be a max-id random vector with exponent measure $\mu$ on $\Els$,
  for some $\ell \in [-\bm{\infty}, +\bm{\infty})$. Let
  $f,g \in \mathbf{L}^{2}(\prob_{\bm{Z}})$. Set:
  \[
    T^{\oplus}_{n}f(\bm{x}_{1},\dots,\bm{x}_{n}) \coloneq \esp\big[D^{\oplus}_{\bm{x}_{1},\dots, \bm{x}_{n}}f(\bm{Z})\big]
  \]
  and for all $u, v \in \mathbf{L}^{2}(\El^{n}, \mu^{\otimes n})$:
  \[
    \langle u,v \rangle_{n} \coloneq \int_{\El^{n}}u(\bm{x}_{1},\dots,\bm{x}_{n})v(\bm{x}_{1},\dots,\bm{x}_{n})\dif^{n}\mu(\bm{x}_{1},\dots,\bm{x}_{n}).
  \]
  We have the following identity:
  \begin{align}\label{3_cov_id_max1}
    \Cov\big(f(\bm{Z}), g(\bm{Z})\big) = \sum_{n=1}^{\infty}\frac{1}{n!}\langle T^{\oplus}_{n}f, T^{\oplus}_{n}g \rangle_{n}.
  \end{align}
\end{proposition}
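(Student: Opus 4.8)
The plan is to transport everything to the Poisson space via the de Haan--LePage representation \eqref{prelim_LePage_1} and then to invoke the covariance identity coming from the chaos (Fock space) decomposition of square-integrable Poisson functionals. Writing $\bm{Z} \eqdis \m(\eta)$ for a Poisson process $\eta$ on $\El$ with intensity $\mu$, I set $\bar f = f\circ \m$ and $\bar g = g\circ \m$. Since $\esp[\bar f(\eta)^{2}] = \esp[f(\bm{Z})^{2}] < +\infty$, the hypotheses $f,g \in \mathbf{L}^{2}(\prob_{\bm{Z}})$ immediately give $\bar f, \bar g \in \mathbf{L}^{2}(\prob_\eta)$, and moreover $\Cov\big(f(\bm{Z}), g(\bm{Z})\big) = \Cov\big(\bar f(\eta), \bar g(\eta)\big)$. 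It thus suffices to expand the right-hand side as a Poisson covariance.

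The key step is to identify the Fock kernels of $\bar f$ with the functions $T^{\oplus}_{n}f$. For a single point, relation \eqref{3_cup_oplus} gives $\bar f(\phi + \delta_{\bm y}) = f(\m(\phi)\oplus \bm y)$, whence $D_{\bm y}\bar f(\phi) = D^{\oplus}_{\bm y}f(\m(\phi))$. Because the add-one-point operators $\phi \mapsto \phi + \delta_{\bm y}$ commute in $\bm y$, iterating this identity yields
\[
  D_{\bm{x}_{1},\dots,\bm{x}_{n}}\bar f(\phi) = D^{\oplus}_{\bm{x}_{1},\dots,\bm{x}_{n}}f(\m(\phi)),
\]
where $D_{\bm{x}_{1},\dots,\bm{x}_{n}} = D_{\bm{x}_{1}}\circ\cdots\circ D_{\bm{x}_{n}}$ is the $n$-fold composition analogous to $D^{\oplus}_{\bm{x}_{1},\dots,\bm{x}_{n}}$. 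I would prove this by a short induction on $n$, each successive difference operator turning an evaluation $\bar f(\cdot + \delta_{\bm x})$ into $f(\m(\cdot)\oplus \bm x)$ through \eqref{3_cup_oplus}. Taking expectations and using $\m(\eta)\eqdis \bm{Z}$ then gives exactly
\[
  \esp\big[D_{\bm{x}_{1},\dots,\bm{x}_{n}}\bar f(\eta)\big] = \esp\big[D^{\oplus}_{\bm{x}_{1},\dots,\bm{x}_{n}}f(\bm{Z})\big] = T^{\oplus}_{n} f(\bm{x}_{1},\dots,\bm{x}_{n}),
\]
and likewise for $g$.

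Finally, I would apply the covariance identity for square-integrable Poisson functionals (the $\mathbf{L}^{2}$ form of the chaos expansion, see \cite{Last17}), which states that for $F, G \in \mathbf{L}^{2}(\prob_\eta)$ one has $\Cov(F,G) = \sum_{n\geq 1}\frac{1}{n!}\langle T_{n} F, T_{n} G\rangle_{n}$ with $T_{n} F(\bm{x}_{1},\dots,\bm{x}_{n}) = \esp[D_{\bm{x}_{1},\dots,\bm{x}_{n}}F(\eta)]$. Taking $F = \bar f$, $G = \bar g$ and inserting the identification of the previous paragraph gives precisely \eqref{3_cov_id_max1}. The point deserving care --- and the main obstacle --- is that $\mu$ has infinite total mass on $\El$, so the covariance identity must be used in the version valid for an arbitrary $\sigma$-finite intensity measure rather than the finite-intensity formula \eqref{3_chaos_PPP}; since $\mu$ is Radon, hence $\sigma$-finite, this causes no difficulty, and the membership $\bar f, \bar g \in \mathbf{L}^{2}(\prob_\eta)$ already secured guarantees both the convergence of the series and that the kernels $T^{\oplus}_{n} f, T^{\oplus}_{n} g$ belong to $\mathbf{L}^{2}(\El^{n}, \mu^{\otimes n})$.
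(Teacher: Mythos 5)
Your proposal is correct and follows essentially the same route as the paper: transport $f,g$ to the Poisson space via $\bar f = f\circ\m$, $\bar g = g\circ\m$, invoke the Fock space covariance identity for Poisson processes with $\sigma$-finite intensity, and identify the kernels $T_{n}\bar f$ with $T^{\oplus}_{n}f$ through the relation $D_{\bm y}\bar f(\phi) = D^{\oplus}_{\bm y}f(\m(\phi))$ coming from \eqref{3_cup_oplus}. Your explicit induction establishing $D_{\bm{x}_{1},\dots,\bm{x}_{n}}\bar f = D^{\oplus}_{\bm{x}_{1},\dots,\bm{x}_{n}}f\circ\m$ merely spells out the step the paper summarizes as ``we recognize the terms inside the series.''
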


\begin{proof}
  Recall the Fock space representation for Poisson processes with
  $\sigma$-finite intesity measures:
  \begin{align}\label{3_Fock}
    \Cov\big(\bar{f}(\eta), \bar{g}(\eta)\big) = \sum_{n=1}^{\infty} \frac{1}{n!}\langle T_{n}\bar{f}, T_{n}\bar{g} \rangle_{n},
  \end{align}
  where
  \[
    T_{n}f(\bm{x}_{1},\dots,\bm{x}_{n}) \coloneq \esp\big[D_{x_{1},\dots,x_{n}}f(\eta)\big].
  \]
  Applying this identity to the Poisson process $\eta$ with intensity measure
  $\mu$ while taking $\bar{f} = f\circ \m$ and $\bar{g} = g\circ \m$. Clearly
  $\bar{f}$ and $\bar{g}$ belong to $\mathbf{L}^{2}(\prob_{\eta})$. Finally, we
  recognize the terms inside the series in \eqref{3_Fock} by using identity
  \eqref{prelim_LePage_1}.
\end{proof}

In dimension $1$, it is possible to greatly simplify the previous covariance
identity thanks to the next lemma:

\begin{lemma}
  Let $f : \R \to \R$ and $x, r_{1},\dots,r_{n} \in \R$ for some $n \geq 1$. Set
  $r_{(n)} \coloneq \min(r_{1},\dots, r_{n})$. We have
  \begin{align}\label{3_grad_max_itrted}
    D^{\oplus}_{r_{1},\dots,r_{n}}f(x) = (-1)^{n-1}D^{\oplus}_{x \odot r_{(n)}}f(x) =
    \begin{cases}
      (-1)^{n-1}D^{\oplus}_{r_{(n)}}f(x) & \text{if\ } x\leq r_{(n)} \\
      0                                  & \text{otherwise}.
    \end{cases}
  \end{align}
\end{lemma}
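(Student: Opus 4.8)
The plan is to prove the single clean identity
\[
  D^{\oplus}_{r_{1},\dots,r_{n}}f(x) = (-1)^{n-1}\big(f(x\oplus r_{(n)}) - f(x)\big),
\]
valid for all $x,r_{1},\dots,r_{n}$ with no case split, and then read off the displayed statement from it. Indeed, since $x\oplus r_{(n)}=x$ when $x>r_{(n)}$, the right-hand side vanishes there, whereas for $x\le r_{(n)}$ it equals $(-1)^{n-1}(f(r_{(n)})-f(x))=(-1)^{n-1}D^{\oplus}_{r_{(n)}}f(x)$; thus the case distinction appears only at the very end, when $D^{\oplus}_{r_{(n)}}f(x)$ is unfolded. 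Recall that in dimension one $\oplus=\max$, $\odot=\min$, and $D^{\oplus}_{r}f(x)=f(x\oplus r)-f(x)$.

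I would argue by induction on $n$. The base case $n=1$ is exactly the definition of $D^{\oplus}_{r_{1}}$ (with $r_{(1)}=r_{1}$). For the step, write $D^{\oplus}_{r_{1},\dots,r_{n}}f=D^{\oplus}_{r_{1}}(g)$ with $g\coloneq D^{\oplus}_{r_{2},\dots,r_{n}}f$, and apply the induction hypothesis to the $(n-1)$-fold operator $g$, whose parameter minimum is $s\coloneq\min(r_{2},\dots,r_{n})$: this gives $g(\,\cdot\,)=(-1)^{n-2}\big(f(\,\cdot\,\oplus s)-f(\,\cdot\,)\big)$. Evaluating $D^{\oplus}_{r_{1}}g(x)=g(x\oplus r_{1})-g(x)$ then yields the second-order expression
\[
  D^{\oplus}_{r_{1},\dots,r_{n}}f(x)=(-1)^{n-2}\big[\,f(x\oplus r_{1}\oplus s)-f(x\oplus r_{1})-f(x\oplus s)+f(x)\,\big].
\]

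The heart of the argument, and the only step that is not bookkeeping, is to collapse this four-term second difference back to a first difference. Setting $m\coloneq r_{1}\odot s=r_{(n)}$, this amounts to the identity
\[
  f(x\oplus r_{1}\oplus s)+f(x\oplus m)=f(x\oplus r_{1})+f(x\oplus s).
\]
It holds because the map $a\mapsto x\oplus a$ is a homomorphism of the totally ordered (hence distributive) lattice $(\R,\oplus,\odot)$: it preserves both $\oplus$ and $\odot$, so $x\oplus(r_{1}\oplus s)$ and $x\oplus(r_{1}\odot s)$ are respectively the larger and the smaller of the two numbers $x\oplus r_{1}$ and $x\oplus s$. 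Hence $\{x\oplus(r_{1}\oplus s),\,x\oplus(r_{1}\odot s)\}=\{x\oplus r_{1},\,x\oplus s\}$ as a multiset, and the two sides, being sums of $f$ over the same multiset, agree. Substituting into the bracket above and using $(-1)^{n-2}=-(-1)^{n-1}$ turns the expression into $(-1)^{n-1}\big(f(x\oplus m)-f(x)\big)$ with $m=r_{(n)}$, closing the induction.

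I expect the multiset identity of the previous paragraph to be the main conceptual obstacle: one must notice that $x\oplus(\cdot)$ distributes over $\odot$, so the alternating second difference in $r_{1}$ and $s$ telescopes to a single difference against their minimum. As an alternative that avoids induction, one may first establish the closed form $D^{\oplus}_{r_{1},\dots,r_{n}}f(x)=\sum_{S\subseteq\{1,\dots,n\}}(-1)^{n-|S|}f\big(x\oplus\bigoplus_{i\in S}r_{i}\big)$; then the case $x>r_{(n)}$ follows by pairing each $S$ with $S\triangle\{i_{0}\}$ for an index $i_{0}$ attaining $r_{(n)}$ (adjoining $i_{0}$ leaves $x\oplus\bigoplus_{i\in S}r_{i}$ unchanged, so paired terms cancel), while for $x\le r_{(n)}$ one orders $r_{1}\le\dots\le r_{n}$, groups the nonempty subsets by the index realizing their maximum, and uses $\sum_{T\subseteq\{1,\dots,j-1\}}(-1)^{|T|}=0$ for $j\ge 2$ to leave only the surviving term $(-1)^{n-1}f(r_{(n)})$.
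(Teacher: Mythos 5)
Your proof is correct, and its skeleton is the same as the paper's: induction on $n$, expansion of the inductive step into the four-term second difference $f(x\oplus r_{1}\oplus s)-f(x\oplus r_{1})-f(x\oplus s)+f(x)$, then a collapse to a single difference. Where you diverge is in how the collapse is done, and your version is cleaner. The paper finishes with ``by distinguishing cases, depending on the rank of $r_{n+1}$ with respect to $x$ and $r_{(n)}$,'' i.e.\ an unwritten three-way case analysis, whereas you replace this by the single observation that $a\mapsto x\oplus a$ preserves both $\oplus$ and $\odot$, so $\{x\oplus(r_{1}\oplus s),\,x\oplus(r_{1}\odot s)\}=\{x\oplus r_{1},\,x\oplus s\}$ as a multiset and the four terms telescope with no cases at all. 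A second advantage: you phrase the induction around the case-free identity $D^{\oplus}_{r_{1},\dots,r_{n}}f(x)=(-1)^{n-1}\big(f(x\oplus r_{(n)})-f(x)\big)=(-1)^{n-1}D^{\oplus}_{r_{(n)}}f(x)$, which is exactly what gets used later (in the covariance identity \eqref{3_cov_id_max2}) and which sidesteps the paper's ambiguous middle expression $(-1)^{n-1}D^{\oplus}_{x\odot r_{(n)}}f(x)$ --- read literally with the paper's definition, that expression is identically zero since $x\oplus(x\odot r_{(n)})=x$, so your formulation is not just tidier but arguably the correct way to state the lemma. Finally, your sketched alternative via the inclusion--exclusion closed form $\sum_{S\subseteq\{1,\dots,n\}}(-1)^{n-|S|}f\big(x\oplus\bigoplus_{i\in S}r_{i}\big)$, with the pairing argument for $x>r_{(n)}$ and the grouping-by-maximum argument for $x\le r_{(n)}$, is a genuinely different, induction-free route that the paper does not contain; it is correct as sketched (the pairing on $S\mapsto S\triangle\{i_{0}\}$ and the vanishing of $\sum_{T}(-1)^{|T|}$ both check out), though it needs the ordering convention $r_{1}\le\dots\le r_{n}$ to make ``the index realizing the maximum'' well defined when there are ties.
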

\begin{proof}
  We prove this result by induction on $n$. The case $n=1$ is trivial. Assume
  the proposition holds true for some $n$. Then we have:
  \begin{align*}
    D^{\oplus}_{r_{1},\dots,r_{n}, r_{n+1}}f(x) & = D^{\oplus}_{r_{n+1}}D^{\oplus}_{r_{1},\dots,r_{n}}f(x)                                                            \\
                                                & = (-1)^{n-1}D^{\oplus}_{r_{n+1}}D^{\oplus}_{x \odot r_{(1)}}f(x)                                                    \\
                                                & = (-1)^{n-1}\Big[f\big(x \oplus r_{(n)} \oplus r_{n+1}\big) - f(x \oplus r_{(n)}) - f(x \oplus r_{n+1}) + f(x)\Big] \\
                                                & = (-1)^{n}D^{\oplus}_{x \odot r_{(n+1)}}f(x).
  \end{align*}
  because
  \begin{align*}
    D^{\oplus}_{r_{n+1}}D^{\oplus}_{x \odot r_{(1)}}f(x) = f\big(x \oplus (x \odot r_{(n)}) \oplus r_{n+1}\big) - f\big(x \oplus (x \odot r_{(n)})\big) - f(x \oplus r_{n+1}) + f(x).
  \end{align*}
  By distinguishing cases, depending on the rank of $r_{n+1}$ with respect to to
  $x$ and $r_{(n)}$, we find that this is equal to
  $-D^{\oplus}_{x \odot r_{(n+1)}}f(x)$.
\end{proof}

\begin{proposition}
  Let $Z$ be a max-id random variable on $(\ell, \infty)$. Let
  $f,g \in \mathbf{L}^{2}(\prob_{Z})$. Then
  \begin{align}\label{3_cov_id_max2}
    \Cov\big(f(Z), g(Z)\big) = \int_{\ell}^{\infty}\esp\big[D^{\oplus}_{r}f(Z)\big]\esp\big[D^{\oplus}_{r}g(Z)\big]\frac{\dif\mu(r)}{F_{Z}(r)}\cdotp
  \end{align}
\end{proposition}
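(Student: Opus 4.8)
The strategy is to specialize the general covariance identity \eqref{3_cov_id_max1} to dimension one and to collapse the resulting $n$-fold integrals by means of Lemma \eqref{3_grad_max_itrted}. Write $\phi_{f}(r) \coloneq \esp\big[D^{\oplus}_{r}f(Z)\big]$ and $\phi_{g}(r) \coloneq \esp\big[D^{\oplus}_{r}g(Z)\big]$. The first step is to show that the iterated kernel depends on its arguments only through their minimum $r_{(n)} = \min(r_{1},\dots,r_{n})$: applying the lemma inside the expectation gives $D^{\oplus}_{r_{1},\dots,r_{n}}f(Z) = (-1)^{n-1}D^{\oplus}_{r_{(n)}}f(Z)\,\ind_{\{Z \leq r_{(n)}\}}$, and since $D^{\oplus}_{r_{(n)}}f(Z)$ already vanishes on $\{Z > r_{(n)}\}$, taking expectations yields $T^{\oplus}_{n}f(r_{1},\dots,r_{n}) = (-1)^{n-1}\phi_{f}(r_{(n)})$, and similarly for $g$. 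In the product $T^{\oplus}_{n}f\cdot T^{\oplus}_{n}g$ the sign $(-1)^{2(n-1)} = 1$ disappears, leaving $\phi_{f}(r_{(n)})\phi_{g}(r_{(n)})$.

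The main step is then the evaluation of $\langle T^{\oplus}_{n}f, T^{\oplus}_{n}g\rangle_{n} = \int_{(\ell,\infty)^{n}} h(r_{(n)})\,\dif\mu^{\otimes n}$ with $h \coloneq \phi_{f}\phi_{g}$. I would push the product measure $\mu^{\otimes n}$ forward under the map $(r_{1},\dots,r_{n}) \mapsto r_{(n)}$; writing $U(r) \coloneq \mu\big((r,\infty)\big)$, the image measure $\nu_{n}$ has the explicit survival function $\nu_{n}\big((s,\infty)\big) = \mu^{\otimes n}(\{r_{1}>s,\dots,r_{n}>s\}) = U(s)^{n}$, so that $\langle T^{\oplus}_{n}f, T^{\oplus}_{n}g\rangle_{n} = \int_{(\ell,\infty)} h\,\dif\nu_{n}$. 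Summing over $n$ with the weights $\tfrac{1}{n!}$ from \eqref{3_cov_id_max1}, the measure $\nu_{\infty} \coloneq \sum_{n\geq 1}\tfrac{1}{n!}\nu_{n}$ has survival function $\sum_{n\geq 1}U(s)^{n}/n! = e^{U(s)}-1$. Recalling from the max-id representation that $U(r) = -\log F_{Z}(r)$, a one-dimensional change of variables (or Stieltjes integration by parts) gives, for every $s$,
\[
  \int_{(s,\infty)} e^{U(r)}\,\dif\mu(r) = e^{U(s)} - 1 = \nu_{\infty}\big((s,\infty)\big),
\]
so that $\dif\nu_{\infty} = e^{U}\,\dif\mu = \dif\mu/F_{Z}$. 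Hence $\Cov\big(f(Z),g(Z)\big) = \int_{(\ell,\infty)} h\,\dif\nu_{\infty} = \int_{\ell}^{\infty}\phi_{f}(r)\phi_{g}(r)\,\dif\mu(r)/F_{Z}(r)$, which is exactly \eqref{3_cov_id_max2}.

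The delicate point is justifying the interchange of summation and integration in $\sum_{n}\tfrac{1}{n!}\int h\,\dif\nu_{n} = \int h\,\dif\nu_{\infty}$, which is not immediate because $h = \phi_{f}\phi_{g}$ is signed. To circumvent this I would first establish the identity in the diagonal case $f = g$: there $h = \phi_{f}^{2} \geq 0$ and $U \geq 0$, so Tonelli applies verbatim and yields $\var\big(f(Z)\big) = \int_{\ell}^{\infty}\phi_{f}(r)^{2}\,\dif\mu(r)/F_{Z}(r)$. In particular $\phi_{f}$ belongs to $\mathbf{L}^{2}\big((\ell,\infty),\dif\mu/F_{Z}\big)$ for every $f \in \mathbf{L}^{2}(\prob_{Z})$, so that both sides of \eqref{3_cov_id_max2} are finite symmetric bilinear forms of $(f,g)$, by Cauchy--Schwarz in that space. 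Since $f \mapsto \phi_{f}$ is linear and both forms agree on the diagonal, the general identity follows by polarization, which simultaneously legitimizes the rearrangement. The remaining measure-theoretic subtlety is the identity $\dif\nu_{\infty} = \dif\mu/F_{Z}$: the change of variables above is transparent when $\mu$ is non-atomic (the case of a simple max-stable $Z$, for which $F_{Z}$ is continuous), whereas atoms of $\mu$ would demand the corresponding Stieltjes correction; the survival-function computation of $\nu_{\infty}$ itself, however, holds regardless.
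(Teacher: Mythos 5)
Your proposal is correct and follows essentially the same route as the paper: specialize the Fock-space covariance identity \eqref{3_cov_id_max1} to $d=1$, collapse the iterated gradients via Lemma \eqref{3_grad_max_itrted} so that only the minimum $r_{(n)}$ matters, compute the law of the minimum under $\mu^{\otimes n}$ (the paper does this by explicit domain decomposition and Fubini, you by pushforward survival functions, which is the same calculation), and sum the exponential series to produce $1/F_{Z}$. Your additions --- the diagonal-first/polarization argument to justify interchanging the signed sum and integral, and the explicit flag that non-atomicity of $\mu$ is needed (the paper's decomposition $\sum_{i}\ind_{\{r_{(n)}=r_{i}\}}=1$ also fails on ties) --- are refinements of the same proof rather than a different one, and are in fact more careful than the paper's own treatment.
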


\begin{proof}
  Thanks to \eqref{3_grad_max_itrted}, we have
  \begin{align*}
    T^{\oplus}_{n}f(r_{1},\dots,r_{n})T^{\oplus}_{n}g(r_{1},\dots,r_{n}) & = \esp\big[D^{\oplus}_{r_{1},\dots, r_{n}}f(Z)\big]\esp\big[D^{\oplus}_{r_{1},\dots, r_{n}}g(Z)\big] \\
                                                                         & = \esp\big[D^{\oplus}_{r_{(n)}}f(Z)\big]\esp\big[D^{\oplus}_{r_{(n)}}g(Z)\big].
  \end{align*}
  Hence:
  \begin{align*}
    \langle T^{\oplus}_{n}f, T^{\oplus}_{n}g \rangle_{n} & = \sum_{i=1}^{n}\int_{E_{\ell}^{n}}\esp\big[D^{\oplus}_{r_{i}}f(Z)\big]\esp\big[D^{\oplus}_{r_{i}}g(Z)\big]\ind_{\lb r_{(n)} = r_{i}\rb}\dif \mu(r_{1})\dots \dif\mu(r_{n})                       \\
                                                         & = n\int_{E_{\ell}^{n}}\esp\big[D^{\oplus}_{r_{n}}f(Z)\big]\esp\big[D^{\oplus}_{r_{n}}g(Z)\big]\ind_{\lb r_{(n) = r_{n}}\rb}\dif \mu(r_{1})\dots \dif\mu(r_{n})                                    \\
                                                         & = n\int_{E_{\ell}}\esp\big[D^{\oplus}_{r_{n}}f(Z)\big]\esp\big[D^{\oplus}_{r_{n}}g(Z)\big]\int_{r_{n}}^{\infty} \dots \int_{r_{n}}^{\infty} 1 \dif \mu(r_{1})\dots \dif\mu(r_{n-1})\dif\mu(r_{n}) \\
                                                         & = n\int_{E_{\ell}}\esp\big[D^{\oplus}_{r_{n}}f(Z)\big]\esp\big[D^{\oplus}_{r_{n}}g(Z)\big]\big(-\log F(r_{n})\big)^{n-1}\dif\mu(r_{n}),
  \end{align*}
  since $\mu[0, x]^{c} = -\log F(x)$. Finally
  \begin{align*}
    \Cov\big(f(Z), g(Z)\big) & = \sum_{n=1}^{\infty}\frac{1}{n!}\langle T^{\oplus}_{n}f, T^{\oplus}_{n}g \rangle_{n}                                                              \\
                             & = \sum_{n=0}^{\infty}\frac{1}{n!}\int_{E_{\ell}}\esp\big[D^{\oplus}_{r}f(Z)\big]\esp\big[D^{\oplus}_{r}g(Z)\big]\big(-\log F(r)\big)^{n}\dif\mu(r) \\
                             & = \int_{E_{\ell}}\esp\big[D^{\oplus}_{r}f(Z)\big]\esp\big[D^{\oplus}_{r}g(Z)\big]e^{-\log F(r)}\dif\mu(r).
  \end{align*}
\end{proof}

It seems that this simplification breaks in higher dimension. Nonetheless, the
Poincaré inequality still holds.

\begin{proposition}[Max-id Poincaré inequality]
  Let $\bm{Z}$ be a max-id random vector with exponent measure $\mu$ supported
  by $\El$ for some $\bm{\ell} \in [-\bm{\infty}, +\bm{\infty})$, and assume
  $f \in \mathbf{L}^{2}(\prob_{\bm{Z}})$. We have:
  \begin{align}\label{3_Poincaré_max-id}
    \var\big(f(\bm{Z})\big) \leq \int_{\El}\esp\big[\big(f(\bm{Z}\oplus \bm{x}) - f(\bm{Z})\big)^{2}\big]\dif\mu(\bm{x}).
  \end{align}
\end{proposition}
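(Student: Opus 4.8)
The plan is to transport the classical Poincaré inequality for Poisson functionals through the maximum map $\m$ and thereby reduce~\eqref{3_Poincaré_max-id} to an inequality on Poisson space. First I would set $\bar f \coloneq f\circ \m$. Since $f\in\L^{2}(\prob_{\bm Z})$ and $\bm Z\eqdis\m(\eta)$ by the de Haan--LePage representation~\eqref{prelim_LePage_1}, the functional $\bar f$ belongs to $\L^{2}(\prob_{\eta})$ and $\var\big(\bar f(\eta)\big)=\var\big(f(\bm Z)\big)$. The key observation is relation~\eqref{3_cup_oplus}: for any $\bm x\in\El$ and configuration $\phi$,
\[
  D_{\bm x}\bar f(\phi)=\bar f(\phi+\delta_{\bm x})-\bar f(\phi)=f\big(\m(\phi)\oplus\bm x\big)-f\big(\m(\phi)\big).
\]
Evaluating at $\eta$ and using $\m(\eta)\eqdis\bm Z$ gives $\int_{\El}\esp\big[(D_{\bm x}\bar f(\eta))^{2}\big]\dif\mu(\bm x)=\int_{\El}\esp\big[(f(\bm Z\oplus\bm x)-f(\bm Z))^{2}\big]\dif\mu(\bm x)$, which is precisely the right-hand side of~\eqref{3_Poincaré_max-id}.

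It then suffices to prove the Poisson Poincaré inequality $\var(\bar f(\eta))\le\int_{\El}\esp[(D_{\bm x}\bar f(\eta))^{2}]\dif\mu(\bm x)$, which I would derive directly from the chaos isometry underlying~\eqref{3_Fock}. Writing $F=\bar f$ and using that the difference operators commute, one has $T_{n}(D_{\bm x}F)(\bm x_{1},\dots,\bm x_{n})=\esp[D_{\bm x_{1},\dots,\bm x_{n},\bm x}F(\eta)]=T_{n+1}F(\bm x_{1},\dots,\bm x_{n},\bm x)$, while $\esp[D_{\bm x}F]=T_{1}F(\bm x)$. Feeding this into the $\L^{2}$ isometry $\esp[G^{2}]=\sum_{n\ge0}\frac{1}{n!}\langle T_{n}G,T_{n}G\rangle_{n}$ (the uncentered form of~\eqref{3_Fock}, with $T_{0}G=\esp[G]$) applied to $G=D_{\bm x}F$, then integrating in $\bm x$ and shifting the index, I obtain $\int_{\El}\esp[(D_{\bm x}F)^{2}]\dif\mu=\sum_{m\ge1}\frac{1}{(m-1)!}\langle T_{m}F,T_{m}F\rangle_{m}$. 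Comparing with $\var(F(\eta))=\sum_{m\ge1}\frac{1}{m!}\langle T_{m}F,T_{m}F\rangle_{m}$ and using $\frac{1}{m!}\le\frac{1}{(m-1)!}$ for every $m\ge1$ yields the inequality term by term.

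The main point to watch is integrability and the Fubini/Tonelli exchange: I must ensure that $\bar f$ and each $D_{\bm x}\bar f$ are square-integrable and that $\int_{\El}\dif\mu(\bm x)$ commutes with the chaos series, so that the identity $\int\esp[(D_{\bm x}F)^{2}]\dif\mu=\sum_{m\ge1}\frac{1}{(m-1)!}\langle T_{m}F,T_{m}F\rangle_{m}$ is licit. Because every term is non-negative, the interchange is justified by Tonelli and no sign cancellation can occur; if the right-hand side of~\eqref{3_Poincaré_max-id} is infinite the statement is trivial, and otherwise the dominated chaos sum converges. The care needed to keep the $n=0$ (mean) term when expanding $D_{\bm x}F$ is the only real subtlety; alternatively, one may simply cite the Poisson Poincaré inequality from~\cite{Last17} and combine it with the transport identity of the first paragraph.
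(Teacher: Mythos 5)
Your proposal is correct and follows essentially the same route as the paper: the paper's proof is exactly your closing remark, namely apply the Poisson Poincaré inequality of \cite{Last17} to $\bar{f} = f\circ\m$ and use \eqref{3_cup_oplus} to identify $D_{\bm{x}}\bar{f}(\eta)$ with $f(\bm{Z}\oplus\bm{x})-f(\bm{Z})$. Your additional self-contained derivation of the Poisson inequality from the Fock isometry (via $T_{n}(D_{\bm{x}}F)=T_{n+1}F(\cdot,\bm{x})$ and the term-by-term bound $1/m!\leq 1/(m-1)!$) is also sound, and is precisely the alternative the paper itself points to when it mentions proving the result through the covariance identity \eqref{3_cov_id_max1} in the manner of \cite{Last17}.
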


\begin{proof}
  The well-known Poincaré inequality for Poisson processes states that if $\eta$
  is a Poisson process on some measurable space $E$, with $\sigma$-finite
  intensity measure $\mu$, then one has
  \[
    \var\big(\bar{f}(\eta)\big) \leq \int_{E}\esp\big[\big(\bar{f}(\eta + \delta_{\bm{x}}) - f(\eta)\big)^{2}\big]\dif\mu(\bm{x}),
  \]
  for any $\bar{f} \in \mathbf{L}^{2}(\prob_{\eta})$. We apply this result to
  $\bar{f} \coloneq f \circ \m \in \mathbf{L}^{2}(\prob_{\eta})$, yielding:
  \begin{align*}
    \var\big(f(\bm{Z})\big) = \var\big(\bar{f}(\eta)\big) & \leq \int_{\El}\esp\big[\big(\bar{f}(\eta + \delta_{\bm{x}}) - \bar{f}(\eta)\big)^{2}\big]\dif\mu(\bm{x}) \\
                                                          & = \int_{\El}\esp\big[\big(f(\bm{Z}\oplus \bm{x}) - f(\bm{Z})\big)^{2}\big]\dif\mu(\bm{x}),
  \end{align*}
  thanks to identity \eqref{3_cup_oplus}. Alternatively, one could have proved
  this result by using the covariance identity \eqref{3_cov_id_max1}, the same
  way the original Poincaré inequality is demonstrated in \cite{Last17} (page
  193).
\end{proof}

What we have proved is sometimes called a \textit{first-order Poincaré
  inequality}. We now prove a \textit{second-order Poincaré inequality} for
max-id random vectors.

\begin{proposition}[Max-id second-order Poincaré inequality]
  Let $N \sim \mathcal{N}(0,1)$. Let $\bm{Z}$ be a max-id random vector with
  exponent measure $\mu$ on $\El$ for some
  $\bm{\ell} \in [-\bm{\infty}, +\bm{\infty})$, and $f : \Els \to \R$. Assume
  that
  \[
    \esp[f(\bm{Z})] = 0\ \text{and\ } \var\big(f(\bm{Z})\big) = 1.
  \]
  Then
  \[
    \mathrm{d}_{W}\big(f(\bm{Z}), N\big) \leq \gamma_{1} + \gamma_{2} + \gamma_{3},
  \]
  with
  \begin{equation*}
    \begin{split}
       & \gamma_{1} \coloneq 2\Big(\int_{\El^{3}}\big(\esp\big[(D^{\oplus}_{\bm{x}}f(\bm{Z}))^{2}(D^{\oplus}_{\bm{y}}f(\bm{Z}))^{2}\big]\big)^{\frac{1}{2}}\big(\esp\big[(D^{\oplus}_{\bm{x}, \bm{z}}f(\bm{Z}))^{2}(D_{\bm{y}, \bm{z}}f(\bm{Z}))^{2}\big]\big)^{\frac{1}{2}}\dif\mu^{3}(\bm{x}, \bm{y}, \bm{z})\Big)^{\frac{1}{2}} \\
       & \gamma_{2} \coloneq \Big(\int_{\El^{3}}\esp\big[(D^{\oplus}_{\bm{x}, \bm{z}}f(\bm{Z}))^{2}(D^{\oplus}_{\bm{y}, \bm{z}}f(\bm{Z}))^{2}\big]\dif\mu(\bm{x})\dif\mu^{2}(\bm{y}, \bm{z})\Big)^{\frac{1}{2}}                                                                                                                    \\
       & \gamma_{3} \coloneq \int_{\El}\esp\big[\vert D^{\oplus}_{\bm{x}}f(\bm{Z})\vert^{3}\big]\dif\mu(\bm{x}).
    \end{split}
  \end{equation*}
\end{proposition}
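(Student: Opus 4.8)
The plan is to transfer the statement to the Poisson space via the de Haan-LePage representation and then invoke the second-order Poincaré inequality already available for functionals of a Poisson process. Recall (see \cite{Last17,Decreusefond22} and the Mehler-formula approach to normal approximation on Poisson spaces) that if $\eta$ is a Poisson process with $\sigma$-finite intensity $\mu$ and $F = \bar f(\eta)$ is a square-integrable functional with $\esp[F]=0$, $\var(F)=1$ lying in the domain of the discrete gradient, then
\[
  \mathrm{d}_{W}(F, N) \le \tilde\gamma_{1} + \tilde\gamma_{2} + \tilde\gamma_{3},
\]
where $\tilde\gamma_{1},\tilde\gamma_{2},\tilde\gamma_{3}$ are defined exactly as $\gamma_{1},\gamma_{2},\gamma_{3}$ but with every occurrence of the max-gradient $D^{\oplus}$ replaced by the Poisson discrete gradient $D$ of Section~\ref{sec:stoch-analys-max}. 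It therefore suffices to apply this inequality to $\bar f \coloneq f\circ\m$ and to match the two families of gradients.

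The matching is the heart of the argument and rests on relation~\eqref{3_cup_oplus}. I would first prove, by induction on $n$ using $\m(\phi + \delta_{\bm y}) = \m(\phi)\oplus\bm y$, that for every configuration $\phi$ and every $\bm x_{1},\dots,\bm x_{n} \in \El$,
\[
  D_{\bm x_{1},\dots,\bm x_{n}}\bar f(\phi) = D^{\oplus}_{\bm x_{1},\dots,\bm x_{n}} f\big(\m(\phi)\big).
\]
The base case $n=1$ is precisely the identity $D_{\bm y}\bar f(\phi) = D^{\oplus}_{\bm y} f(\m(\phi))$ recorded in the text; the inductive step follows because, writing $D_{\bm x_{n+1}}\big(D_{\bm x_{1},\dots,\bm x_{n}}\bar f\big)$, adjoining the atom $\delta_{\bm x_{n+1}}$ to $\phi$ replaces $\m(\phi)$ by $\m(\phi)\oplus\bm x_{n+1}$ throughout, which is exactly the action of $D^{\oplus}_{\bm x_{n+1}}$ on the right-hand side. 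Specialising to $n=1$ and $n=2$, and using $f(\bm Z) \eqdis \bar f(\eta)$ with $\bm Z \eqdis \m(\eta)$ from~\eqref{prelim_LePage_1}, each expectation appearing in $\tilde\gamma_{i}$ equals the corresponding expectation with $D^{\oplus}_{\bm x}f(\bm Z)$ and $D^{\oplus}_{\bm x,\bm z}f(\bm Z)$ in place of the Poisson gradients. Substituting these identities turns $\tilde\gamma_{i}$ into $\gamma_{i}$ verbatim, giving the claimed bound.

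The step I expect to demand the most care is verifying that $\bar f$ meets the hypotheses of the Poisson second-order Poincaré inequality: one must check that $\bar f$ lies in the domain of the gradient, so that its chaos expansion is well behaved, and that the three quantities $\gamma_{1},\gamma_{2},\gamma_{3}$ are finite. The normalisations $\esp[f(\bm Z)]=0$ and $\var(f(\bm Z))=1$ transfer immediately to $\bar f$. For integrability the inequality is vacuous unless its right-hand side is finite, so one may assume the $\gamma_{i}$ finite; to justify the domain condition in concrete cases I would control the moments of the iterated gradients via the polar decomposition~\eqref{prelim_polar} together with the estimates of Lemma~\ref{prelim_esp_mu}, which ensure that the relevant $\mathbf{L}^{2}$ and $\mathbf{L}^{3}$ quantities are finite and that the abstract inequality may legitimately be applied.
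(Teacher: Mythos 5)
Your proposal is correct and follows essentially the same route as the paper: both apply the second-order Poincaré inequality for Poisson functionals (Last--Peccati--Schulte) to $\bar f = f\circ\m$ with intensity $\lambda=\mu$, identifying the Poisson gradients with the max-gradients via relation~\eqref{3_cup_oplus}. Your explicit induction showing $D_{\bm x_{1},\dots,\bm x_{n}}\bar f(\phi) = D^{\oplus}_{\bm x_{1},\dots,\bm x_{n}} f(\m(\phi))$ simply spells out the gradient matching that the paper leaves implicit.
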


\begin{proof}
  Recall the following theorem from \cite{Last16}: Let $\eta$ be a Poisson
  process over $\El$ with intensity measure $\lambda$. Denote by $\bar{f}(\eta)$
  a Poisson functional and assume that it is centered and has unit variance.
  Set:
  \begin{equation*}
    \begin{split}
       & \gamma_{1} \coloneq 2\Big(\int_{\El^{3}}\big(\esp\big[(D_{\bm{x}}\bar{f}(\eta))^{2}(D_{\bm{y}}\bar{f}(\eta))^{2}\big]\big)^{\frac{1}{2}}\big(\esp\big[(D_{\bm{x}, \bm{z}}\bar{f}(\eta))^{2}(D_{\bm{y}, \bm{z}}\bar{f}(\eta))^{2}\big]\big)^{\frac{1}{2}}\dif\lambda^{3}(\bm{x}, \bm{y}, \bm{z})\Big)^{\frac{1}{2}} \\
       & \gamma_{2} \coloneq \Big(\int_{\El^{3}}\esp\big[(D_{\bm{x}, \bm{z}}\bar{f}(\eta))^{2}(D_{\bm{y}, \bm{z}}\bar{f}(\eta))^{2}\big]\dif\lambda(\bm{x})\dif\lambda^{2}(\bm{y}, \bm{z})\Big)^{\frac{1}{2}}                                                                                                               \\
       & \gamma_{3} \coloneq \int_{\El}\esp\big[\vert D_{\bm{x}}\bar{f}(\eta)\vert^{3}\big]\dif\lambda(\bm{x}).
    \end{split}
  \end{equation*}
  If $\lambda$ is $\sigma$-finite, then:
  \[
    \mathrm{d}_{W}\big(\bar{f}(\eta), N\big) \leq \gamma_{1} + \gamma_{2} + \gamma_{3},
  \]
  where $N$ is a random variable with standard Gaussian distribution. We use
  this result for $\bar{f} = f \circ \m$ and $\lambda = \mu$
\end{proof}

The last result of this section is a modified logarithmic Sobolev inequality for
max-id distributions. The original result on the Poisson space has been found
and proved by Wu in \cite{Wu00}.

\begin{proposition}[Max-id modified logarithmic Sobolev inequality]
  Let $\bm{Z}$ be a max-id random vector on $\Els$, with
  $\bm{\ell} \in [-\bm{\infty}, +\bm{\infty})$, and exponent measure $\mu$ on
  $\El$. Set $\Phi(x) \coloneq x\log x$ for $x>0$ and
  \[
    \mathrm{Ent}_{\bm{Z}}(f) \coloneq \esp\big[(\Phi \circ f)(\bm{Z})\big] - \Phi\big(\esp\big[f(\bm{Z})\big]\big)
  \]
  for every $\prob_{\bm{Z}}$-almost surely positive, $\prob_{\bm{Z}}$-integrable
  $f$. Then one has:
  \[
    \mathrm{Ent}_{\bm{Z}}(f) \leq \int_{\El}\esp\big[D^{\oplus}_{\bm{y}}(\Phi \circ f)(\bm{Z}) - (\Phi' \circ f)(\bm{Z})D^{\oplus}_{\bm{y}}f(\bm{Z})\big]\dif\mu(\bm{y}).
  \]
\end{proposition}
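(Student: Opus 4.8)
The plan is to mirror exactly the transfer mechanism already used for the first-order Poincaré inequality: lift everything to the underlying Poisson process $\eta$ with intensity $\mu$ through the coordinate-wise maximum map $\m$, invoke Wu's modified logarithmic Sobolev inequality on Poisson space, and then translate each term back via the identity $\bm{Z} \eqdis \m(\eta)$. Concretely, Wu's result from \cite{Wu00} states that for a Poisson process $\eta$ on $\El$ with $\sigma$-finite intensity $\mu$ and a suitable $\prob_{\eta}$-almost surely positive, $\prob_{\eta}$-integrable functional $\bar{f}$, one has
\[
  \mathrm{Ent}\big(\bar{f}(\eta)\big) \leq \int_{\El}\esp\big[D_{\bm{y}}(\Phi \circ \bar{f})(\eta) - (\Phi' \circ \bar{f})(\eta)D_{\bm{y}}\bar{f}(\eta)\big]\dif\mu(\bm{y}),
\]
where the entropy on the left is $\esp[(\Phi \circ \bar{f})(\eta)] - \Phi(\esp[\bar{f}(\eta)])$. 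I would apply this to $\bar{f} \coloneq f \circ \m$, which is $\prob_{\eta}$-almost surely positive and $\prob_{\eta}$-integrable precisely because $f$ is assumed $\prob_{\bm{Z}}$-almost surely positive and $\prob_{\bm{Z}}$-integrable and $\bm{Z} \eqdis \m(\eta)$.

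The translation of the left-hand side is immediate: since $(\Phi \circ \bar{f})(\eta) = (\Phi \circ f)(\m(\eta))$ and $\bm{Z} \eqdis \m(\eta)$, we get $\esp[(\Phi \circ \bar{f})(\eta)] = \esp[(\Phi \circ f)(\bm{Z})]$ and $\esp[\bar{f}(\eta)] = \esp[f(\bm{Z})]$, so $\mathrm{Ent}(\bar{f}(\eta)) = \mathrm{Ent}_{\bm{Z}}(f)$. For the right-hand side I would use the key relation \eqref{3_cup_oplus}, namely $\m(\phi + \delta_{\bm{y}}) = \m(\phi) \oplus \bm{y}$, which gives $D_{\bm{y}}\bar{f}(\eta) = D^{\oplus}_{\bm{y}}f(\m(\eta))$ exactly as noted after the definition of the discrete gradient. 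The same computation applied to the functional $\Phi \circ f$ yields $D_{\bm{y}}(\Phi \circ \bar{f})(\eta) = D^{\oplus}_{\bm{y}}(\Phi \circ f)(\m(\eta))$, while $(\Phi' \circ \bar{f})(\eta) = (\Phi' \circ f)(\m(\eta))$ has no gradient and so passes through unchanged. Replacing $\m(\eta)$ by $\bm{Z}$ in distribution then turns Wu's right-hand side into exactly $\int_{\El}\esp[D^{\oplus}_{\bm{y}}(\Phi \circ f)(\bm{Z}) - (\Phi' \circ f)(\bm{Z})D^{\oplus}_{\bm{y}}f(\bm{Z})]\dif\mu(\bm{y})$, which is the claimed bound.

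The main obstacle is not the algebraic translation, which is routine given \eqref{3_cup_oplus}, but rather verifying that $\bar{f} = f \circ \m$ genuinely satisfies the integrability and regularity hypotheses under which Wu's inequality is valid. In particular one must check that the entropy of $\bar{f}(\eta)$ is finite and that the integrand on the right-hand side is well defined and integrable against $\mu$; these reduce, through the law identity $\bm{Z} \eqdis \m(\eta)$, to conditions on $f(\bm{Z})$, where the moment estimates of Lemma~\ref{prelim_esp_mu} can be brought to bear to control the terms $D^{\oplus}_{\bm{y}}f(\bm{Z})$ when $f$ has controlled growth. Provided $\mu$ is $\sigma$-finite on $\El$ (which holds since it is Radon on $\Eo$), the hypotheses of Wu's theorem transfer, and the inequality follows by direct substitution.
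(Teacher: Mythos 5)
Your proposal is correct and follows exactly the paper's own (one-line) argument: apply Wu's modified logarithmic Sobolev inequality on Poisson space to $\bar{f} = f \circ \m$ and translate the discrete gradient into $D^{\oplus}_{\bm{y}}$ via the relation $\m(\phi + \delta_{\bm{y}}) = \m(\phi)\oplus\bm{y}$. Your additional care in checking that the integrability hypotheses of Wu's theorem transfer through the law identity $\bm{Z} \eqdis \m(\eta)$ is a welcome detail the paper leaves implicit.
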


\begin{proof}
  Apply the modified logarithmic Sobolev inequality for Poisson processes stated
  in \cite{Wu00} to the function $\bar{f} = f \circ \m$.
\end{proof}

% \begin{remark}
%   In his paper, Wu gives an application of his main result to infinitely
%   divisible distributions (without Gaussian components) by realizing them on
%   the Poisson space through the Lévy–It\=o decomposition. The latter can be
%   seen as an additive de Haan decomposition for those distributions, making
%   Wu's approach and ours the same, though in different contexts. In
%   particular, the Lévy measure plays the same role for infinitely divisible
%   distributions than the exponent measure for max-id distributions.
% \end{remark}

\section{The max-stable Ornstein-Uhlenbeck operator}
\label{sec:max-stable-ornstein}
Let $\pms_{\alpha,\nu}$ denote the probability distribution of a max-stable
random vector $\bm{Z} \sim \mathcal{MS}(\alpha, \nu)$ and set:
\[
  \L^{p}(\pms_{\alpha, \nu}) \coloneq \L^{p}(\Eos, \pms_{\alpha,\nu}),\ p\in [1, +\infty].
\]
% Similarly, if $\mu$ is the exponent measure of some max-stable random vector
% $\bm{Z}$, we denote its law by $\pms_{\mu}$. The notation $\L^{p}(\pms_{mu})$
% is defined accordingly.

% We start by defining the set of test-functions with which we will work first.
% Recall that the Schwartz class on $\Eo$ is defined as
% \[
%   \mathscr{S}(\Eo) \coloneq \big\lb f \in \mathscr{C}^{\infty}(\Eo),\ \bm{x}^{k}\partial^{n}_{\bm{j}}\vert f(\bm{x}) \vert < \infty,\ \mathrm{for\ all\ } (\bm{k},n) \in \N^{d}\times \N,\ \bm{j}\in \N^{n} \big\rb,
% \]
% where $\bm{x}^{\bm{k}} = \prod_{j=1}^{d}(x^{j})^{k^{j}}$ and
% $\partial_{\bm{j}}^{n}f(\bm{x}) \coloneq \partial_{j^{1}}\dots\partial_{j^{n}}f(\bm{x})$.
% \begin{definition}
%
%   An appropriate set of test-functions for our needs will be the following:
%   \[
%     \cilog \coloneq \big\lb f \in \mathscr{S}(\Eo),\ \underset{\bm{x} \in (0,1]^{d}}{\sup}\bm{x}^{-k}\vert \partial^{n}_{\bm{j}} f(\bm{x}) \vert < \infty\ \mathrm{for\ all\ } (\bm{k},n) \in \N^{d}\times \N^{*},\ \bm{j}\in \N^{n} \big\rb.
%   \]
% \end{definition}

\subsection{The case $\alpha = 1$}

Fix a reference norm $\Vert\cdot\Vert$ on $\R^{d}$. Recall that $\nu$ is a
finite measure on $\S_{+}^{d-1}$ satisfying the moment constraints relation
\eqref{prelim_moment_constraints}, and $\mu$ denotes the exponent measure of a
max-stable random vector $\bm{Z} \sim \mathcal{MS}(1,\nu)$.

\begin{lemma}
  Let $\bm{Z} \sim \mathcal{MS}(1, \nu)$ and $\lambda \in [0,1]$. Assume that
  $f \in \L^{p}(\pms_{1, \nu})$ for every $p \in [1, +\infty]$. Then the
  application
  \begin{align*}
    f_{\lambda} : \bm{x} \mapsto \esp\big[f\big(\lambda\bm{x} \oplus (1-\lambda)\bm{Z}\big)\big]
  \end{align*}
  is well-defined on $\Eos$, in the sense that it does not depend of the
  representative of $f$. Furthermore, it is measurable and belongs to
  $\L^{p}(\pms_{1, \nu})$.
\end{lemma}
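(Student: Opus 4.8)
The plan is to derive all three assertions from a single observation: the mixing map leaves the law of $\bm Z$ invariant. Write $\sigma \coloneq 1-\lambda$ and let $\bm Z'$ be an \iid copy of $\bm Z$, independent of $\bm Z$. Applying the stability relation \eqref{prelim_stability} with $\alpha = 1$ and the constant vectors $\bm a = \lambda\bm 1$, $\bm b = \sigma\bm 1$ gives
\[
  \lambda\bm Z' \oplus \sigma\bm Z \eqdis (\lambda + \sigma)\bm Z = \bm Z .
\]
Thus, if we let the deterministic argument $\bm x$ of $f_\lambda$ be replaced by an independent $\bm Z' \sim \pms_{1,\nu}$, the averaged vector $\lambda\bm Z' \oplus \sigma\bm Z$ again has law $\pms_{1,\nu}$. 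This stationarity is precisely the feature that will later make $(f_\lambda)$ the building block of the max-stable Ornstein--Uhlenbeck semigroup, and it drives every part of the lemma.

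For membership in $\L^{p}(\pms_{1,\nu})$ I would fix a Borel representative of $f$ and combine conditional Jensen's inequality with Tonelli's theorem: for $p \in [1,\infty)$,
\[
  \int_{\Eos}\vert f_{\lambda}(\bm x)\vert^{p}\dif\pms_{1,\nu}(\bm x) \leq \int_{\Eos}\esp\big[\vert f(\lambda\bm x \oplus \sigma\bm Z)\vert^{p}\big]\dif\pms_{1,\nu}(\bm x) = \esp\big[\vert f(\lambda\bm Z' \oplus \sigma\bm Z)\vert^{p}\big] = \esp\big[\vert f(\bm Z)\vert^{p}\big],
\]
the final equality being the identity in law above. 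Hence $\Vert f_{\lambda}\Vert_{\L^{p}} \leq \Vert f\Vert_{\L^{p}} < \infty$; the case $p = \infty$ is similar, since $\vert f\vert \leq \Vert f\Vert_{\infty}$ holds $\pms_{1,\nu}$-a.e.\ and $\lambda\bm Z' \oplus \sigma\bm Z$ has marginal law $\pms_{1,\nu}$, so $\vert f(\lambda\bm Z' \oplus \sigma\bm Z)\vert \leq \Vert f\Vert_{\infty}$ almost surely. This shows at once that $f_{\lambda} \in \L^{p}(\pms_{1,\nu})$ and that $f \mapsto f_{\lambda}$ is a contraction on each $\L^{p}$.

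Independence of the representative follows from the very same identity. If $f = g$ $\pms_{1,\nu}$-a.e., then $f - g$ vanishes off a $\pms_{1,\nu}$-null set, and
\[
  \esp\big[\vert f_{\lambda}(\bm Z') - g_{\lambda}(\bm Z')\vert\big] \leq \esp\big[\vert (f-g)(\lambda\bm Z' \oplus \sigma\bm Z)\vert\big] = \esp\big[\vert (f-g)(\bm Z)\vert\big] = 0,
\]
so $f_{\lambda} = g_{\lambda}$ $\pms_{1,\nu}$-a.e.\ and $f_{\lambda}$ is well defined as an element of $\L^{p}(\pms_{1,\nu})$. I would flag here the one genuinely delicate point, which is the main obstacle: the law of $\lambda\bm x \oplus \sigma\bm Z$ carries an atom of positive mass $e^{-\sigma\mu[\bm 0,\lambda\bm x]^{c}}$ at the single point $\lambda\bm x$ (the event $\sigma\bm Z \leq \lambda\bm x$, visible as the leading term of the expansion \eqref{3_chaos_MS} with $\bm x$ replaced by $\lambda\bm x$). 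Consequently $f_{\lambda}(\bm x)$ does depend on the pointwise value $f(\lambda\bm x)$ and cannot be representative-independent for \emph{every} individual $\bm x$; independence holds, and is only needed, $\pms_{1,\nu}$-almost everywhere, which is why the statement must be understood at the level of $\L^{p}$-classes.

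Measurability is the most routine part. The map $(\bm x,\bm z) \mapsto \lambda\bm x \oplus \sigma\bm z$ is continuous, hence jointly measurable, so $(\bm x,\bm z) \mapsto f(\lambda\bm x \oplus \sigma\bm z)$ is jointly measurable for any Borel representative of $f$; the integrability established above then lets Fubini's theorem guarantee that $\bm x \mapsto f_{\lambda}(\bm x) = \int f(\lambda\bm x \oplus \sigma\bm z)\dif\pms_{1,\nu}(\bm z)$ is measurable. Alternatively, measurability can be read directly off the series expansion \eqref{3_chaos_MS}, each term of which is manifestly measurable in $\bm x$.
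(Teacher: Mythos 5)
Your proof is correct and follows essentially the same route as the paper: both rest on the max-stability identity $\lambda\bm{Z}'\oplus(1-\lambda)\bm{Z}\eqdis\bm{Z}$ from \eqref{prelim_stability} together with Fubini--Tonelli, the paper phrasing $f_{\lambda}(\bm{x})$ as the conditional expectation $\esp\big[f\big(\lambda\bm{Z}\oplus(1-\lambda)\bm{Z}'\big)\,\big\vert\,\bm{Z}=\bm{x}\big]$ where you argue via the null-set estimate and Jensen. Your explicit Jensen step for the $\L^{p}$ bound and your observation about the atom of mass $e^{-\sigma\mu[\bm{0},\lambda\bm{x}]^{c}}$ at $\lambda\bm{x}$ (explaining why well-definedness can only hold $\pms_{1,\nu}$-a.e.) are sound refinements of the paper's terser argument, not departures from it.
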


\begin{proof}
  The mesurability and integrability properties are a consequence of Fubini's
  theorem, since
  \[
    \esp\big[\vert f \vert\big(\lambda\bm{Z} \oplus (1-\lambda)\bm{Z}'\big) \big] = \esp\big[\vert f(\bm{Z}) \vert\big] < +\infty,
  \]
  thanks to the max-stability property \eqref{prelim_stability}. Thus
  $f_{\lambda}$, which satisfies
  \[
    f_{\lambda}(\bm{x}) = \esp\big[f\big(\lambda\bm{Z} \oplus (1-\lambda)\bm{Z}'\big) \/ \bm{Z} = \bm{x}\big],
  \]
  is well-defined $\pms_{1, \nu}$-a.s. Besides, again thanks to
  \eqref{prelim_stability}, we see that if $f = g$ $\pms_{1,\nu}$-a.s., then
  $f_{\lambda} = g_{\lambda}$ $\pms_{1,\nu}$-a.s. too.
\end{proof}

We can now introduce the main object of study of this paper:

\begin{definition}[MSOU]
  The \textit{standard max-stable Ornstein-Uhlenbeck semi-group}
  $(\Pn_{t})_{t\geq 0}$ on $\L^{p}(\pms_{1 ,\nu})$ is defined by
  \begin{align}\label{4_Pt_Frcht_mltvr}
    \Pn_{t}f(\bm{x}) \coloneq \esp\big[f\big(e^{-t}\bm{x} \oplus (1-e^{-t})\bm{Z}\big)\big],\ \bm{x} \in \Eos,\ t\geq 0
  \end{align}
  where $\bm{Z} \sim \mathcal{MS}(1, \nu)$.
\end{definition}

\begin{lemma}\label{4_stationarity+}
  Let $f : \Rpd \to \R_{+}$ be a Borel $\B(\Rpd)$ measurable non-negative
  function and $p \in [1, \infty]$. Then:
  \[
    \esp\big[\Pn_{t}f(\bm{Z})\big] = \esp[f(\bm{Z})],\ f \in \L^{p}(\pms_{1 ,\nu}), t \geq 0
  \]
  where $\bm{Z} \sim \mathcal{MS}(1,\nu)$.
\end{lemma}

\begin{proof}
  Let $\bm{Z}$ and $\bm{Z}'$ be \iid random vectors, both with distribution
  $\mathcal{MS}(1,\nu)$. Then by Fubini theorem, it is clear that:
  \[
    \esp\big[\Pn_{t}f(\bm{Z})\big] = \esp\big[f(e^{-t}\bm{Z}\oplus (1-e^{-t})\bm{Z}')\big] = \esp[f(\bm{Z})],
  \]
  thanks to \eqref{prelim_stability}.
\end{proof}

% \begin{remark}
%
%   We have defined $\Pn_{t}$ for the functions on $\R_{+}^{d}$. However notice
%   that the value of $\Pn_{t}f(\bm{x})$ depends only on the coordinates of
%   $\bm{x}$ that are (strictly) positive because $Z \sim \mathcal{MS}(1, \nu)$
%   has positive coordinates almost surely. As a result, we can always assume
%   that $f$ is defined on $\Eos$. This observation will especially important to
%   Stein's method, as we will want to compare random variables whose supports
%   are $\R^{d}$ to a simple random vector with support on $\Eos$. Think for
%   instance to a renormalized maximum of $n$ \iid random variables with Cauchy
%   distribution, whose limiting distribution is the Fréchet distribution
%   $\mathcal{F}(1)$.
%
% \end{remark}

\begin{proposition}

  Let $p \in [1, \infty]$. Then for every $t \in \R_{+}$ and
  $f \in \L^{p}(\pms_{1 ,\nu})$, the application $\Pn_{t}f$ belongs to
  $\L^{p}(\pms_{1 ,\nu}))$ and $\Pn_{t}$ is a contraction operator from
  $\L^{p}(\pms_{1 ,\nu})$ into itself:
  \begin{align}\label{4_contraction}
    \Vert \Pn_{t}f \Vert_{\L^{p}(\pms_{1 ,\nu})} \leq \Vert f \Vert_{\L^{p}(\pms_{1 ,\nu})}.
  \end{align}
  Furthermore, the family of operators $(\Pn_{t})_{t\geq 0}$ is a Markov
  semi-group on $\L^{p}(\pms_{1 ,\nu})$, for every $p \in [1, +\infty]$.
\end{proposition}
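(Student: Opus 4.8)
The goal is to show that $(\Pn_t)_{t\geq 0}$ is a contraction on every $\L^p(\pms_{1,\nu})$ and forms a Markov semi-group. Let me plan how I'd establish each piece.
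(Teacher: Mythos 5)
Your proposal contains no proof: after announcing the goal you stop before establishing anything. Every substantive step is missing, so there is nothing to verify against the paper.

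Concretely, three things need to be proved, and the paper handles them as follows. For the contraction \eqref{4_contraction}, one writes $\Vert \Pn_{t}f \Vert^{p}_{\L^{p}(\pms_{1,\nu})} = \esp\big[\vert \Pn_{t}f(\bm{Z})\vert^{p}\big]$, applies Jensen's inequality to the inner expectation defining $\Pn_{t}f$, and then invokes the stationarity lemma (lemma \ref{4_stationarity+}, itself a consequence of the max-stability relation \eqref{prelim_stability} and Fubini) to conclude
\begin{align*}
  \esp\big[\vert f\vert^{p}\big(e^{-t}\bm{Z}\oplus(1-e^{-t})\bm{Z}'\big)\big] = \esp\big[\vert f(\bm{Z})\vert^{p}\big],
\end{align*}
with $\bm{Z}'$ an independent copy of $\bm{Z}$. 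For the Markov property one checks linearity, positivity preservation, and $\Pn_{t}1 = 1$, all immediate from the Mehler-type formula \eqref{4_Pt_Frcht_mltvr}. For the semi-group law $\Pn_{t}\circ\Pn_{s} = \Pn_{t+s}$, the key point — and the only place where something nontrivial happens — is the distributional identity
\begin{align*}
  e^{-s}(1-e^{-t})\bm{Z}\oplus(1-e^{-s})\bm{Z}' \eqdis \big(e^{-s}(1-e^{-t}) + (1-e^{-s})\big)\bm{Z} = (1-e^{-(t+s)})\bm{Z},
\end{align*}
which is exactly the max-stability property \eqref{prelim_stability} of $\mathcal{MS}(1,\nu)$ applied with exponent $\alpha=1$. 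Without identifying this identity as the engine of the semi-group property, and without the Jensen--stationarity combination for the contraction, the proposition is not proved. You should supply these arguments in full.
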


\begin{proof}
  By Jensen's inequality and lemma \ref{4_stationarity+}:
  \begin{align*}
    \Vert \Pn_{t}f \Vert^{p}_{\L^{p}(\pms_{1 ,\nu})} & = \esp\big[\vert \Pn_{t}f(\bm{Z}) \vert^{p}\big]                                    \\
                                                     & \leq \esp\big[\vert f \vert^{p}\big(e^{-t}\bm{Z} \oplus (1-e^{-t})\bm{Z}'\big)\big] \\
                                                     & = \esp\big[\vert f(\bm{Z}) \vert^{p}\big]                                           \\
                                                     & = \Vert f \Vert^{p}_{\L^{p}(\pms_{1 ,\nu})}.
  \end{align*}
  $\Pn_{t}$ is a linear operator on $\L^{p}(\pms_{1 ,\nu})$. Moreover $\Pn_{t}f$
  is non-negative if $f$ is non-negative, and $\Pn_{t}1 = 1$, with $1$ the
  constant function equal to 1. Besides, the semi-group relation is satisfied:
  \begin{align*}
    (\Pn_{t}\circ \Pn_{s})f(\bm{x}) & = \esp\big[(\Pn_{s})f\big(e^{-t}\bm{x} \oplus (1-e^{-t})\bm{Z}\big)\big]                                    \\
                                    & = \esp\Big[f\Big(e^{-s}\big(e^{-t}\bm{x} \oplus (1-e^{-t})\bm{Z}\big) \oplus (1-e^{-s})\bm{Z}'\Big)\Big]    \\
                                    & = \esp\Big[f\Big(e^{-(t+s)}\bm{x} \oplus \big(e^{-s}(1-e^{-t})\bm{Z}\oplus (1-e^{-s})\bm{Z}'\big)\Big)\Big] \\
                                    & = \esp\big[f\big(e^{-(t+s)}\bm{x} \oplus (1-e^{-(t+s)})\bm{Z}\big)\big]
  \end{align*}
  where $\bm{Z}'$ is an independent copy of $\bm{Z}$. Using the max-stability
  property of $\mathcal{MS}(1,\nu)$, it is clear that
  $e^{-s}(1-e^{-t})\bm{Z}\oplus (1-e^{-s})\bm{Z}' \eqdis (1-e^{-(t+s)})\bm{Z}$.
\end{proof}

\begin{remark}
  Unlike the Ornstein-Uhlenbeck semi-group, the MSOU semi-group is not
  self-adjoint: let $\bm{Z}, \bm{Z}'$ be two \iid random vectors with
  distribution $\pms_{1, \nu}$, and $f,g \in \L^{2}(\pms_{1, \nu})$. Fubini's
  theorem yields
  \begin{align*}
    \big\langle \Pn_{t}f, g \rangle_{\L^{2}(\pms_{\mu})} & = \esp\big[\Pn_{t}f(\bm{Z})g(\bm{Z})\big]                                 \\
                                                         & = \esp\Big[f\big(e^{-t}\bm{Z}\oplus (1-e^{-t})\bm{Z}'\big)g(\bm{Z})\Big].
  \end{align*}
  Thus $\Pn_{t}$ is symmetric if and only if
  \[
    \esp\Big[f\big(e^{-t}\bm{Z}\oplus (1-e^{-t})\bm{Z}'\big)g(\bm{Z})\Big] = \esp\Big[f(\bm{Z})g\big(e^{-t}\bm{Z}\oplus (1-e^{-t})\bm{Z}'\big)\Big].
  \]
  This is equivalent to asking that
  \[
    \big(e^{-t}\bm{Z}\oplus (1-e^{-t})\bm{Z}', \bm{Z}\big) \eqdis \big(\bm{Z}, e^{-t}\bm{Z}\oplus (1-e^{-t})\bm{Z}'\big).
  \]
  However the c.d.f. of the left-hand side is not symmetric as soon as
  $t \in \R_{+}^{*}$:
  \begin{align*}
    F_{(e^{-t}\bm{Z}\oplus (1-e^{-t})\bm{Z}', \bm{Z})}(\bm{x}, \bm{y}) & = \prob\big(e^{-t}\bm{Z}\oplus (1-e^{-t})\bm{Z}' \leq \bm{x}, \bm{Z} \leq \bm{y}\big)     \\
                                                                       & = \prob\big(\bm{Z} \leq e^{t}\bm{x} \odot \bm{y}, \bm{Z}' \leq (1-e^{-t})^{-1}\bm{x}\big) \\
                                                                       & = e^{-\mu[\bm{0}, e^{t}\bm{x} \odot \bm{y}]^{c}}e^{-(1-e^{-t})\mu[\bm{0}, \bm{x}]^{c}},
  \end{align*}
  where $\bm{x}, \bm{y} \in \Eos$ and $\mu$ the exponent measure of $\bm{Z}$ and
  $\bm{Z}'$.
\end{remark}
In spite of this negative result, $(\Pn_{t})_{t\geq 0}$ shares several common
points with the Ornstein-Uhlenbeck semi-group, as shown in the next result,
which is an extension of lemma \ref{4_stationarity+}.

\begin{proposition}
  $(\Pn_{t})_{t\geq 0}$ is ergodic on $\L^{p}(\pms_{1 ,\nu})$ for every
  $p \in [1, \infty]$, and its stationary measure is the multivariate Fréchet
  distribution $\mathcal{MS}(1, \nu)$.
\end{proposition}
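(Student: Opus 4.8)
The plan is to read \emph{ergodicity} as convergence to equilibrium: $\Pn_t f \to \esp[f(\bm Z)]$ as $t \to \infty$, the limit being the constant function equal to the stationary mean. The stationarity half is already contained in Lemma~\ref{4_stationarity+}: since $\pms_{1,\nu}$ is the law of $\bm Z$, the identity $\esp[\Pn_t f(\bm Z)] = \esp[f(\bm Z)]$ says precisely that $\int_{\Eos} \Pn_t f \dif\pms_{1,\nu} = \int_{\Eos} f \dif\pms_{1,\nu}$, i.e. $\pms_{1,\nu}$ is invariant. Once convergence to equilibrium is established, ergodicity in the sense of triviality of the invariant functions is immediate: if $\Pn_t f = f$ for all $t$, letting $t \to \infty$ forces $f = \esp[f(\bm Z)]$ $\pms_{1,\nu}$-a.s., so $f$ is constant.

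The heart of the matter is an almost sure convergence of the argument of $\Pn_t$. Fix $\bm x \in \Eos$. In each coordinate $j$ one has $e^{-t}x^{j} \to 0$ and $(1-e^{-t})Z^{j} \to Z^{j}$, while $Z^{j} > 0$ almost surely because the marginals are Fréchet. Squeezing $(1-e^{-t})Z^{j} \le \max(e^{-t}x^{j}, (1-e^{-t})Z^{j}) \le \max(e^{-t}x^{j}, Z^{j})$ and letting $t\to\infty$ gives $e^{-t}\bm x \oplus (1-e^{-t})\bm Z \to \bm Z$ almost surely. Consequently, for every bounded continuous $f$, the continuity of $f$ together with dominated convergence yields the pointwise limit $\Pn_t f(\bm x) \to \esp[f(\bm Z)]$ for each $\bm x \in \Eos$.

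It remains to upgrade this to $\L^p$-convergence and to general integrands. For bounded continuous $f$ and $p \in [1,\infty)$, the bound $|\Pn_t f(\bm x) - \esp[f(\bm Z)]|^{p} \le (2\Vert f\Vert_\infty)^{p}$ is integrable against the probability measure $\pms_{1,\nu}$, so a second application of dominated convergence in the variable $\bm x$ gives $\Vert \Pn_t f - \esp[f(\bm Z)]\Vert_{\L^{p}(\pms_{1,\nu})} \to 0$. For arbitrary $f \in \L^{p}(\pms_{1,\nu})$ with $p<\infty$, pick bounded continuous $f_n \to f$ in $\L^{p}$ (such functions are dense) and run an $\varepsilon/3$ argument: the contraction property \eqref{4_contraction} controls $\Vert \Pn_t(f-f_n)\Vert_{\L^{p}} \le \Vert f - f_n\Vert_{\L^{p}}$, Hölder's inequality controls $|\esp[(f-f_n)(\bm Z)]| \le \Vert f - f_n\Vert_{\L^{1}} \le \Vert f - f_n\Vert_{\L^{p}}$, and the middle term $\Vert \Pn_t f_n - \esp[f_n(\bm Z)]\Vert_{\L^{p}}$ tends to $0$ for fixed $n$. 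The case $p=\infty$ is then covered through the inclusion $\L^{\infty}(\pms_{1,\nu}) \subset \L^{1}(\pms_{1,\nu})$: the $\L^{1}$-convergence established above suffices to conclude that any $\Pn_t$-invariant $f \in \L^{\infty}$ is $\pms_{1,\nu}$-a.s. constant.

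The main obstacle is not the limit of the argument but the passage from it to the limit of $f$ evaluated along it: for a merely measurable $f$, almost sure convergence of $e^{-t}\bm x \oplus (1-e^{-t})\bm Z$ to $\bm Z$ says nothing about $f(e^{-t}\bm x \oplus (1-e^{-t})\bm Z)$, which is why continuity of the test functions, and hence the density step, is essential. A secondary point is that genuine norm convergence $\Pn_t f \to \esp[f(\bm Z)]$ cannot be expected uniformly, i.e. in $\L^{\infty}$: taking $\bm x$ with large coordinates keeps $e^{-t}\bm x$ large for moderate $t$, so for $p=\infty$ ergodicity must be understood as triviality of the invariant functions rather than as norm convergence to the mean.
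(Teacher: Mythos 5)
Your proof is correct, and its skeleton is the same as the paper's: stationarity is quoted from Lemma \ref{4_stationarity+}, and ergodicity is obtained from the limit $\Pn_{t}f(\bm{x}) \to \esp[f(\bm{Z})]$ as $t\to\infty$, which the paper likewise derives ``by a dominated convergence argument''. The difference is that the paper's proof is a one-liner that applies dominated convergence to $f\big(e^{-t}\bm{x}\oplus(1-e^{-t})\bm{Z}\big)$ for an arbitrary element $f$ of $\L^{p}(\pms_{1,\nu})$, which is not legitimate as stated: almost sure convergence of the argument $e^{-t}\bm{x}\oplus(1-e^{-t})\bm{Z} \to \bm{Z}$ yields convergence of the integrand only when $f$ is continuous (or at least a.s.\ continuous for the law of $\bm{Z}$), a point you identify explicitly. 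Your additional layers --- first treating bounded continuous $f$, then extending to all of $\L^{p}(\pms_{1,\nu})$ for $p<\infty$ by density combined with the contraction bound \eqref{4_contraction}, and finally handling $p=\infty$ through the inclusion $\L^{\infty}(\pms_{1,\nu}) \subset \L^{1}(\pms_{1,\nu})$ --- supply exactly the approximation argument the paper leaves implicit. Your closing observation that convergence cannot hold in $\L^{\infty}$-norm (for fixed $t$, points $\bm{x}$ with large coordinates keep $e^{-t}\bm{x}$ large, so the supremum of $|\Pn_{t}f(\bm{x}) - \esp[f(\bm{Z})]|$ stays bounded away from zero) is also correct, and it shows that for $p=\infty$ ergodicity must indeed be read as triviality of the $\Pn_{t}$-invariant functions rather than as norm convergence --- a distinction the paper's statement glosses over. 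In short: same approach, but your version closes a genuine gap in the paper's one-line argument.
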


\begin{proof}
  By the definition of $\Pn_{t}f$ and a dominated convergence argument, we get
  \[
    \Pn_{t}f(\bm{x}) \underset{t\to \infty}{\longrightarrow} \esp[f(\bm{Z})],
  \]
  which means that $(\Pn_{t})_{t\geq 0}$ is ergodic. The stationarity of
  $\pms_{1, \nu}$ for $(\Pn_{t})_{t\geq 0}$ is proved by using the exact same
  arguments than in lemma \ref{4_stationarity+}.
\end{proof}

Let us define the set of test-functions we will use to compute the generator of
$(\Pn_{t})_{t\geq 0}$:

\begin{definition}[Log-Lipschitz function]
  A function is said to be \textit{log-Lipschitz} on $\Eos$ if it belongs to
  $\mathcal{C}^{1}(\Eos)$ and satisfies
  \begin{align}\label{4_cilog_dif}
    \vert f(\bm{x}) - f(\bm{y}) \vert \leq C\Vert \log \bm{x} - \log \bm{y} \Vert_{1}.
  \end{align}
  for some constant $C>0$ and
  $\Vert \bm{x} \Vert = \vert x^{1} \vert + \dots + \vert x^{d}\vert$. The set
  of log-Lipschitz functions on $\Eos$ is denoted by $\cilog$.
\end{definition}

An easy consequence of the definition is the following.

\begin{lemma}
  A function $f$ is log-Lipschitz if and only if
  $\bm{x} \mapsto f(\exp(\bm{x}))$ is Lipschitz on $\R^{d}$, where
  $\exp\bm{x} \coloneq (\exp x^{1},\dots, \exp x^{d})$. Besides, $\cilog$
  satisfies the following:
  \begin{align}\label{4_cilog_def}
    \cilog \coloneq \Big\lb f \in \mathcal{C}^{1}(\Eos),\ \exists C > 0,\ x^{j}\vert\partial_{j}f(\bm{x})\vert \leq C\ \text{for all } \bm{x} \in \Eos\ \text{and}\ j = 1,\dots,d \Big\rb.
  \end{align}
\end{lemma}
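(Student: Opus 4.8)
The plan is to reduce both assertions to the elementary fact that a $\mathcal{C}^{1}$ function on $\R^{d}$ is Lipschitz for the $\Vert\cdot\Vert_{1}$-norm if and only if its partial derivatives are uniformly bounded, the bridge being the exponential change of variables. First I would introduce the map $\Psi : \R^{d} \to \Eos$, $\bm{u} \mapsto \exp\bm{u} = (e^{u^{1}},\dots,e^{u^{d}})$, which is a smooth diffeomorphism onto $\Eos = (\bm{0},+\bm{\infty})$ with inverse $\bm{x}\mapsto \log\bm{x}$. Setting $g \coloneq f\circ\Psi$, composition with this diffeomorphism transfers $\mathcal{C}^{1}$ regularity both ways, so $f \in \mathcal{C}^{1}(\Eos)$ if and only if $g\in\mathcal{C}^{1}(\R^{d})$, and in that case the chain rule gives
\[
  \partial_{j} g(\bm{u}) = e^{u^{j}}\,\partial_{j} f(\exp\bm{u}) = x^{j}\,\partial_{j} f(\bm{x}), \qquad \bm{x} = \exp\bm{u}.
\]

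For the first equivalence I would substitute $\bm{x} = \exp\bm{u}$ and $\bm{y}=\exp\bm{v}$ into the defining inequality~\eqref{4_cilog_dif}. Since $\log\bm{x}-\log\bm{y} = \bm{u}-\bm{v}$, the bound $\vert f(\bm{x})-f(\bm{y})\vert\leq C\Vert\log\bm{x}-\log\bm{y}\Vert_{1}$ becomes exactly $\vert g(\bm{u})-g(\bm{v})\vert\leq C\Vert\bm{u}-\bm{v}\Vert_{1}$, and conversely every such $\ell^{1}$-Lipschitz estimate for $g$ yields back the log-Lipschitz bound for $f$. Because all norms on $\R^{d}$ are equivalent, $\ell^{1}$-Lipschitz continuity coincides with ordinary Lipschitz continuity, which establishes that $f$ is log-Lipschitz if and only if $g=f\circ\exp$ is Lipschitz.

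The second characterization then follows from the classical link between the $\ell^{1}$-Lipschitz constant and the sup-norm of the gradient. For a $\mathcal{C}^{1}$ function $g$, differentiating $\vert g(\bm{u})-g(\bm{u}+t e_{j})\vert\leq C\vert t\vert$ at $t=0$ gives $\vert\partial_{j} g\vert\leq C$ for each $j$; conversely, writing $g(\bm{u})-g(\bm{v}) = \int_{0}^{1}\nabla g(\bm{v}+t(\bm{u}-\bm{v}))\cdot(\bm{u}-\bm{v})\,\mathrm{d}t$ and bounding coordinatewise yields $\vert g(\bm{u})-g(\bm{v})\vert\leq \big(\sup_{j}\Vert\partial_{j} g\Vert_{\infty}\big)\Vert\bm{u}-\bm{v}\Vert_{1}$. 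Thus $g$ is $\ell^{1}$-Lipschitz if and only if $\sup_{j}\Vert\partial_{j} g\Vert_{\infty}<\infty$. Translating through $\partial_{j} g(\bm{u}) = x^{j}\partial_{j} f(\bm{x})$, and noting that $\bm{u}$ ranges over all of $\R^{d}$ exactly as $\bm{x} = \exp\bm{u}$ ranges over all of $\Eos$, the condition $\vert\partial_{j} g\vert\leq C$ is precisely $x^{j}\vert\partial_{j} f(\bm{x})\vert\leq C$ on $\Eos$, which is the stated description~\eqref{4_cilog_def} of $\cilog$.

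This argument is almost entirely routine; the only point deserving care is the $\ell^{1}$--$\ell^{\infty}$ duality in the last step, where one integrates the gradient along the segment joining $\bm{u}$ and $\bm{v}$ (legitimate since $\R^{d}$ is convex) rather than invoking a single mean-value point, while keeping track that one common constant $C$ serves all three formulations so that no information is lost in passing between them.
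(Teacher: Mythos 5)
Your proof is correct: the paper states this lemma without any proof (it is prefaced only by ``an easy consequence of the definition''), and your argument --- conjugating by the diffeomorphism $\exp : \R^{d} \to \Eos$ so that the log-Lipschitz bound \eqref{4_cilog_dif} for $f$ becomes an $\ell^{1}$-Lipschitz bound for $g = f\circ\exp$, and then characterizing the Lipschitz property of the $\mathcal{C}^{1}$ function $g$ through $\sup_{j}\vert\partial_{j}g(\bm{u})\vert = \sup_{j} x^{j}\vert\partial_{j}f(\bm{x})\vert \leq C$ via the chain rule and integration of $\nabla g$ along segments --- is precisely the intended routine argument, carried out completely. The one point worth flagging is an imprecision in the lemma's own formulation rather than in your proof: a Lipschitz function $g$ on $\R^{d}$ need not be $\mathcal{C}^{1}$, whereas the paper's definition of log-Lipschitz includes membership in $\mathcal{C}^{1}(\Eos)$, so the first equivalence must be read within the class of $\mathcal{C}^{1}$ functions, exactly as your use of the diffeomorphism to transfer $\mathcal{C}^{1}$ regularity implicitly does.
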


% It can be seen as a generalization in higher dimension of the following basic
% equality, available for $a, b \geq 0$.
% \[
%   \esp\big[f(ax \oplus bZ)\big] = F_{Z}(ab^{-1}x)f(ax) + \esp\big[f(bZ)\ind_{\lb Z \geq ab^{-1}x\rb}\big]
% \]
% coupled with the de Haan decomposition $\eqref{3_chaos_MS}$ for
% $Z \sim \mathcal{F}(1)$. We will take $a = e^{-t}$ and $b = 1 - e^{-t}$, so
% that $ab^{-1} = 1/(e^{t} - 1)$.

% \begin{lemma}
%   Denote by $\dif\mu = T_{*}(\dif \rho_{1}\otimes \dif\nu)$ the exponent
%   measure of the max-stable distribution $\mathcal{MS}(1, \nu)$ (see
%   \eqref{plr_trsf}). Let $\bm{Z}$ be a random vector with this distribution.
%   We have for $f\in \L^{p}(\pms_{1 ,\nu})$ and $x \in \Eo$:
%   \begin{align*}\label{cases_Frcht_mltiD}
%\Pn_{t}f(\bm{x}) =\ & F_{\gamma_{t}\bm{Z}}(\bm{x})f(e^{-t}\bm{x})\\
%		&+ F_{\gamma_{t}\bm{Z}}(\bm{x})\gamma_{t}\int_{[\bm{0}, \bm{x}]^{c}}f\big(e^{-t}(\bm{x}\oplus r\bm{u})\big)\frac{1}{r^{2}}\dif r\dif \nu(\bm{u})\\
%		&+ F_{\gamma_{t}\bm{Z}}(\bm{x})\sum_{n=2}^{\infty}\frac{\gamma_{t}^{n}}{n!}\int_{([\bm{0}, \bm{x}]^{c})^{n}}f\big(e^{-t}(\bm{x}\oplus r_{1}\bm{u}_{1} \oplus \dots \oplus r_{n}\bm{u}_{n})\big)\prod_{i=1}^{n}\frac{1}{r_{i}^{2}}\dif r_{i}\dif \nu(\bm{u}_{i}),\numberthis
%\end{align*}
%where we abused notations by integrating over $[\bm{0}, \bm{x}]^{c}$, which
% denotes here the set $\lb (r, \bm{u}) \in \Epol,\ r\bm{u} \nleq \bm{x} \rb$,
% where
% \[
%   \Epol \coloneq \Rps\times \S_{+}^{d-1}.
% \]
% \end{lemma}

\begin{corollary}
  Let $\bm{Z} \sim \mathcal{MS}(1,\nu)$, with $\mu$ its exponent measure. Set
  $\gamma_{t} \coloneq e^{t} - 1$.
  \begin{enumerate}
    \item For any $f\in \L^{p}(\pms_{1 ,\nu})$ and $\bm{x} \in \Eos$, we have:
          \begin{align*}\label{4_chaos_Pn}
            \Pn_{t}f( & \bm{x}) \\ &= e^{-\gamma_{t}\mu[\bm{0}, \bm{x}]^{c}}f(e^{-t}\bm{x}) + \gamma_{t}e^{-\gamma_{t}\mu[\bm{0}, \bm{x}]^{c}}\int_{[\bm{0}, \bm{x}]^{c}}f\big(e^{-t}(\bm{x}\oplus \bm{y})\big)\dif \mu(\bm{y}) + R_{t}(\bm{x}),\numberthis
          \end{align*}
          where
          \[
            R_{t}(\bm{x}) \coloneq \esp\Big[f\Big(e^{-t}\big(\bm{x} \oplus \bigoplus_{i=1}^{N_{t, \bm{x}}}\bm{Y}_{i}\big)\Big)\ind_{\lb N_{t, \bm{x}} \geq 2 \rb}\Big]
          \]
          the random variable
          $N_{t, \bm{x}} \sim \mathcal{P}(\gamma_{t}\mu[\bm{0}, \bm{x}]^{c})$
          has the Poisson $\mathcal{P}(\gamma_{t}\mu[\bm{0}, \bm{x}]^{c})$, and
          the $\bm{Y}_{i}$ are $\iid$ random variables independent of
          $N_{t, \bm{x}}$ and whose distribution is given by
          \[
            \prob(\bm{Y}_{1} \in A) = \frac{1}{\mu[\bm{0}, \bm{x}]^{c}}\mu(A),\ A \in \mathcal{B}([\bm{0}, \bm{x}]^{c}).
          \]

    \item If $f \in \cilog$, then there exists $C > 0$ such that:
          \begin{align}\label{4_remainder}
            t^{-1}\Vert R_{t} \Vert_{\L^{2}(\pms_{1,\nu})} \leq Ct^{1 - \varepsilon}.
          \end{align}
          for all $\varepsilon \in (0,1)$.
  \end{enumerate}
\end{corollary}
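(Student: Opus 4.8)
The statement has two parts, and the plan treats them in turn.

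\emph{The first identity.} The idea is to rewrite $\Pn_{t}$ so that the chaos expansion \eqref{3_chaos_MS} applies verbatim. The key algebraic fact is the coordinatewise identity $e^{-t}\bm{x}\oplus(1-e^{-t})\bm{Z}=e^{-t}(\bm{x}\oplus\gamma_{t}\bm{Z})$, valid because $e^{-t}\gamma_{t}=1-e^{-t}$. Setting $h_{t}(\bm{w}):=f(e^{-t}\bm{w})$, this gives $\Pn_{t}f(\bm{x})=\esp[h_{t}(\bm{x}\oplus\gamma_{t}\bm{Z})]$, to which I would apply \eqref{3_chaos_MS} with scale $\sigma=\gamma_{t}$ and integrand $h_{t}$. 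The $n=0$ and $n=1$ terms of the resulting series are exactly the two explicit terms of the claimed expansion, and the tail $\sum_{n\ge 2}$ is by definition $R_{t}(\bm{x})$. To identify this tail with the announced expectation, I would reuse the observation from the proof of \eqref{3_chaos_MS}: the weights $e^{-\gamma_{t}\mu[\bm{0},\bm{x}]^{c}}\gamma_{t}^{n}/n!$ together with the normalized kernel $\mu(\cdot)/\mu[\bm{0},\bm{x}]^{c}$ on $[\bm{0},\bm{x}]^{c}$ are precisely the law of a Poisson count $N_{t,\bm{x}}\sim\mathcal{P}(\gamma_{t}\mu[\bm{0},\bm{x}]^{c})$ with $N_{t,\bm{x}}$ i.i.d.\ marks $\bm{Y}_{i}$; summing over $n\ge 2$ produces the stated form of $R_{t}$.

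\emph{The remainder bound \eqref{4_remainder}.} Here the plan is a pointwise estimate of $R_{t}(\bm{x})$ followed by integration against $\pms_{1,\nu}$. Writing $\bm{W}=\bm{x}\oplus\bigoplus_{i=1}^{N_{t,\bm{x}}}\bm{Y}_{i}$ and subtracting the constant $f(e^{-t}\bm{x})$, the log-Lipschitz bound \eqref{4_cilog_dif} gives $|f(e^{-t}\bm{W})-f(e^{-t}\bm{x})|\le C\sum_{j}(\log W^{j}-\log x^{j})\le C\sum_{i,j}(\log(Y_{i}^{j}/x^{j}))^{+}$; the crucial point is that, since $W^{j}\ge x^{j}$, only the coordinates of the $\bm{Y}_{i}$ that exceed $\bm{x}$ contribute, so no spurious divergence from small coordinates of $\bm{y}$ arises. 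Combining this with the elementary Poisson estimates $\prob(N_{t,\bm{x}}\ge 2)\le\tfrac12(\gamma_{t}\mu[\bm{0},\bm{x}]^{c})^{2}$ and $\esp[N_{t,\bm{x}}\ind_{\{N_{t,\bm{x}}\ge 2\}}]\le(\gamma_{t}\mu[\bm{0},\bm{x}]^{c})^{2}$ yields a bound of the shape $|R_{t}(\bm{x})|\le\gamma_{t}^{2}G(\bm{x})$, with $G(\bm{x})$ built from $(|f(\bm{1})|+C\Vert\log\bm{x}\Vert_{1})(\mu[\bm{0},\bm{x}]^{c})^{2}$ and $\mu[\bm{0},\bm{x}]^{c}\int_{[\bm{0},\bm{x}]^{c}}\Vert\log(\bm{x}\oplus\bm{y})-\log\bm{x}\Vert_{1}\dif\mu(\bm{y})$.

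It then remains to pass to the $\L^{2}$-norm and to convert powers of $\gamma_{t}$ into powers of $t$. Since $\gamma_{t}=e^{t}-1\le e^{2}t$ for $t\in(0,1)$, the prefactor $\gamma_{t}^{2}$ already gives $t^{-1}\Vert R_{t}\Vert_{\L^{2}(\pms_{1,\nu})}\le e^{2}t\,\Vert G\Vert_{\L^{2}(\pms_{1,\nu})}\le e^{2}\Vert G\Vert_{\L^{2}(\pms_{1,\nu})}\,t^{1-\varepsilon}$ once $G\in\L^{2}(\pms_{1,\nu})$. The moments $\esp[(\mu[\bm{0},\bm{Z}]^{c})^{k}]$ and $\esp[\Vert\log\bm{Z}\Vert_{1}^{k}]$ are finite for every $k$ by Lemma~\ref{prelim_esp_mu}, so by Cauchy--Schwarz the only genuinely delicate term in $\esp[G(\bm{Z})^{2}]$ is the one carrying the log-moment integral $\int_{[\bm{0},\bm{Z}]^{c}}\Vert\log(\bm{Z}\oplus\bm{y})\Vert_{1}\dif\mu(\bm{y})$, whose integrated version is exactly what the third assertion of Lemma~\ref{prelim_esp_mu} controls.

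I expect this integrability to be the main obstacle. The pointwise estimate is very lossy precisely where $\mu[\bm{0},\bm{x}]^{c}$ is large, i.e.\ for $\bm{x}$ near the boundary of $\Eos$, and passing from the $\L^{1}$-type bound supplied by Lemma~\ref{prelim_esp_mu} to the full $\L^{2}$-norm is not automatic. To be safe one splits $\Eos$ at the threshold $\gamma_{t}\mu[\bm{0},\bm{x}]^{c}=1$, using the sharp quadratic tail on the bulk and, on the complementary region $\{\mu[\bm{0},\bm{x}]^{c}>1/\gamma_{t}\}$, the crude bound $\prob(N_{t,\bm{x}}\ge 2)\le 1$ together with Markov's inequality $\prob(\mu[\bm{0},\bm{Z}]^{c}>1/\gamma_{t})\le\gamma_{t}^{k}\esp[(\mu[\bm{0},\bm{Z}]^{c})^{k}]$ for $k$ large. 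Optimizing this split against the growth of the log-moments is what produces the factor $t^{\varepsilon}$, and hence the exponent $1-\varepsilon$ rather than a clean $1$.
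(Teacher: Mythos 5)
Your proof of part~1 is the paper's own argument: both apply \eqref{3_chaos_MS} with $\sigma=\gamma_{t}$ to $\bm{w}\mapsto f(e^{-t}\bm{w})$, via the identity $e^{-t}\bm{x}\oplus(1-e^{-t})\bm{Z}=e^{-t}(\bm{x}\oplus\gamma_{t}\bm{Z})$, and then read the resulting series as a Poisson mixture with normalized kernel $\mu(\cdot)/\mu[\bm{0},\bm{x}]^{c}$. For part~2, however, you take a genuinely different route. The paper decouples $f$ from $\ind_{\lb N_{t,\bm{x}}\geq 2\rb}$ by H\"older's inequality with exponents $(p,q)$, identifies the law of $\bm{x}\oplus\bigoplus_{i=1}^{N_{t,\bm{x}}}\bm{Y}_{i}$ with that of $\bm{x}\oplus\gamma_{t}\bm{W}$, $\bm{W}\sim\mathcal{MS}(1,\nu)$, to bound the $p$-th log-moment, and ends with $\Vert R_{t}\Vert_{\L^{2}(\pms_{1,\nu})}\leq c\,\gamma_{t}^{2/q}$; the exponent $1-\varepsilon$, with $\varepsilon=(2-q)/q$, is exactly the H\"older loss. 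You instead subtract $f(e^{-t}\bm{x})$, use the log-Lipschitz bound (under which the factor $e^{-t}$ cancels inside the logarithms), dominate the increment by $\sum_{i,j}(\log Y_{i}^{j}-\log x^{j})_{+}$, and conclude with independence of $N_{t,\bm{x}}$ and the marks together with $\esp[N\ind_{\lb N\geq 2\rb}]\leq\lambda^{2}$, $\prob(N\geq 2)\leq\lambda^{2}/2$. This keeps the full quadratic factor $\gamma_{t}^{2}$ and therefore yields $t^{-1}\Vert R_{t}\Vert_{\L^{2}(\pms_{1,\nu})}=O(t)$, strictly stronger than \eqref{4_remainder}.

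The one step you flag as the main obstacle, namely $G\in\L^{2}(\pms_{1,\nu})$, is in fact not an obstacle, and neither your splitting device nor the third assertion of Lemma \ref{prelim_esp_mu} is needed (you misquote the latter anyway: your $G$ carries the difference $\Vert\log(\bm{x}\oplus\bm{y})-\log\bm{x}\Vert_{1}$, whereas the lemma controls $\Vert\log(\bm{Z}\oplus\bm{y})\Vert_{1}$ without subtraction and only after integration in $\bm{Z}$, so it could never give an $\L^{2}$ bound). The difference form admits a closed expression: by the polar decomposition and the moment constraint \eqref{prelim_moment_constraints},
\begin{align*}
\int_{[\bm{0},\bm{x}]^{c}}\sum_{j=1}^{d}\big(\log y^{j}-\log x^{j}\big)_{+}\dif\mu(\bm{y})
&=\sum_{j=1}^{d}\int_{\S_{+}^{d-1}}\int_{x^{j}/u^{j}}^{\infty}\log\Big(\frac{ru^{j}}{x^{j}}\Big)\frac{\dif r}{r^{2}}\dif\nu(\bm{u})\\
&=\sum_{j=1}^{d}\frac{1}{x^{j}}\int_{\S_{+}^{d-1}}u^{j}\dif\nu(\bm{u})=\sum_{j=1}^{d}\frac{1}{x^{j}},
\end{align*}
using $\int_{1}^{\infty}s^{-2}\log s\dif s=1$. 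Hence $G(\bm{x})\leq C\big((1+\Vert\log\bm{x}\Vert_{1})(\mu[\bm{0},\bm{x}]^{c})^{2}+\mu[\bm{0},\bm{x}]^{c}\sum_{j}1/x^{j}\big)$, and every factor has moments of all orders under $\pms_{1,\nu}$: $\Vert\log\bm{Z}\Vert_{1}$ and $\mu[\bm{0},\bm{Z}]^{c}$ by Lemma \ref{prelim_esp_mu}, and $\sum_{j}1/Z^{j}$ because $1/Z^{j}\sim\mathcal{E}(1)$ (note also $\mu[\bm{0},\bm{Z}]^{c}\leq\mu[\bm{0},\bm{1}]^{c}\sum_{j}1/Z^{j}$ by homogeneity), so Cauchy--Schwarz finishes. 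Thus $\Vert R_{t}\Vert_{\L^{2}(\pms_{1,\nu})}\leq C\gamma_{t}^{2}\leq C't^{2}$ for $t\in(0,1]$, and your argument closes with a cleaner bound than the paper's; the fallback split should simply be dropped, as it is both too vague to assess as sketched and superfluous.
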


\begin{proof}

  1. By identity \eqref{3_chaos_MS}:
  \begin{align*}
    \frac{\gamma_{t}^{n}}{n!}\int_{([\bm{0}, \bm{x}]^{c})^{n}}f\Big(e^{-t}\big(\bm{x}\oplus & \bm{y}_{1} \oplus \dots \oplus \bm{y}_{n}\big)\Big)\prod_{i=1}^{n}\dif \mu(\bm{y}_{i})                                                                                                                                                        \\
                                                                                            & = \gamma_{t}^{n}\frac{(\mu[\bm{0}, \bm{x}]^{c})^{n}}{n!}\int_{([\bm{0}, \bm{x}]^{c})^{n}}f\Big(e^{-t}\big(\bm{x}\oplus \bm{y}_{1} \oplus \dots \oplus \bm{y}_{n}\big)\Big)\prod_{i=1}^{n}\frac{\dif \mu(\bm{y}_{i})}{\mu[\bm{0}, \bm{x}]^{c}} \\
                                                                                            & = e^{\gamma_{t}\mu[\bm{0},\bm{x}]^{c}}\esp\Big[f\Big(e^{-t}\big(\bm{x} \oplus \bigoplus_{i=1}^{N_{t, \bm{x}}} \bm{Y}_{i}\big)\Big)\ind_{\lb N_{t, \bm{x}} = n \rb}\Big].
  \end{align*}

  2. Let $t \in (0, 1]$. We start by checking that
  $\Vert \prob(N_{t, \cdot} \geq 2) \Vert_{\L^{2}(\pms_{1, \nu})} = O(t^{2})$
  when $t$ goes to $0^{+}$, \textit{i.e.} that
  \begin{align}\label{4_Ot_prob}
    \esp\big[\big(\prob(N_{t, \bm{Z}} \geq 2\/ \bm{Z})\big)^{2}\big] = O(t^{4}).
  \end{align}
  For $\bm{x} \in \Eos$, this is clear:
  \[
    \prob(N_{t, \bm{x}} \geq 2) = \gamma_{t}^{2}e^{-\gamma_{t}\mu[\bm{0}, \bm{x}]^{c}}\sum_{n=0}^{\infty}\gamma_{t}^{n}\frac{(\mu[\bm{0}, \bm{x}]^{c})^{n+2}}{(n+2)!} \leq \gamma_{t}^{2}(\mu[\bm{0}, \bm{x}]^{c})^{2}.
  \]
  By lemma \eqref{prelim_esp_mu}, we know that $(\mu[\bm{0}, \bm{Z}]^{c})^{2}$
  is integrable, hence the asymptotic relation \eqref{4_Ot_prob}. As a result,
  \eqref{4_remainder} is true if $f$ is bounded. Thus, we will assume that
  \[
    \vert f(\bm{x}) \vert \leq C\Vert\log \bm{x} \Vert_{1}.
  \]
  This entails that
  \begin{align*}
    \Big\vert \esp\Big[f\Big(e^{-t}\big(\bm{x} \oplus \bigoplus_{i=1}^{N_{t, \bm{x}}}\bm{Y}_{i}\big)\Big)\ind_{\lb N_{t, \bm{x}} \geq 2 \rb}\Big] \Big\vert & \leq C\sum_{j=1}^{d}\esp\Big[\Big\vert\log\Big(e^{-t}\big(x^{j} \oplus \bigoplus_{i=1}^{N_{t, \bm{x}}}Y^{j}_{i}\big)\Big)\Big\vert\ind_{\lb N_{t, \bm{x}} \geq 2 \rb}\Big]                         \\
                                                                                                                                                            & \leq C\sum_{j=1}^{d}\esp\Big[\Big\vert\log\Big(e^{-t}\big(x^{j} \oplus \bigoplus_{i=1}^{N_{t, \bm{x}}}Y^{j}_{i}\big)\Big)\Big\vert^{p}\Big]^{\frac{1}{p}}\prob(N_{t, \bm{x}} \geq 2)^{\frac{1}{q}} \\
                                                                                                                                                            & = C\sum_{j=1}^{d}\esp\Big[\big\vert\log\big(e^{-t}(x^{j} \oplus \gamma_{t}W^{j})\big)\big\vert^{p}\Big]^{\frac{1}{p}}\prob(N_{t, \bm{x}} \geq 2)^{\frac{1}{q}},
  \end{align*}
  the penultimate line resulting from Hölder's inequality for some
  $p, q \in (1, +\infty)$ to be determined and such that $p^{-1} + q^{-1} = 1$.
  The last line is simply the de Haan-LePage decomposition of the $j$-th
  coordinate of $\bm{x} \oplus \gamma_{t}\bm{W}$, with
  $\bm{W} \sim \mathcal{MS}(1, \nu)$. The final expectation in the former
  display is finite because
  \begin{align}\label{4_remainder_<}
    \esp\Big[\big\vert\log\big(e^{-t}(x^{j} \oplus \gamma_{t}W^{j})\big)\big\vert^{p}\ind_{\lb \gamma_{t}W^{j} \leq x^{j} \rb}\Big] \leq \big\vert t - \log x^{j}\big\vert^{p},
  \end{align}
  while on the complementary set we have instead:
  \begin{align*}\label{4_remainder_>}
    \esp\Big[\big\vert\log\big(e^{-t}(x^{j} \oplus \gamma_{t}W^{j})\big)\big\vert^{p} & \ind_{\lb \gamma_{t}W^{j} > x^{j} \rb}\Big]                                                                                             \\ &= \esp\Big[\big\vert\log\big((1-e^{-t})W^{j})\big)\big\vert^{p}\ind_{\lb \gamma_{t}W^{j} > x^{j} \rb}\Big]\\
                                                                                      & \leq 2^{p-1}\Big(\esp\big[\vert\log W^{j}\vert^{p}\big] + \vert \log(1-e^{-t}) \vert^{p}\big(1 - e^{\frac{\gamma_{t}}{x^{j}}}\big)\Big) \\
                                                                                      & \leq 2^{p-1}\Big(\esp\big[\vert\log W^{j}\vert^{p}\big] + \frac{\gamma_{t}}{x^{j}}\vert \log(1-e^{-t}) \vert^{p}\Big). \numberthis
  \end{align*}
  The right-hand sides of both \eqref{4_remainder_<} and \eqref{4_remainder_>}
  are square-integrable functions of $x^{j}$ with respect to $\pms_{1,\nu}$.
  Thus, taking squares and replacing $\bm{x}$ by
  $\bm{Z} \sim \mathcal{MS}(1, \nu)$ and independent of $\bm{W}$, one finds
  after integrating with respect to $\bm{Z}$:
  \begin{align*}
    \Vert R_{t} \Vert^{2}_{\L^{2}(\pms_{1,\nu})} & = \esp\Big[\esp\Big[f\Big(e^{-t}\big(\bm{Z} \oplus \bigoplus_{i=1}^{N_{t, \bm{Z}}}\bm{Y}_{i}\big)\Big)\ind_{\lb N_{t, \bm{Z}} \geq 2 \rb} \/ \bm{Z}\Big]^{2}\Big] \leq c\gamma_{t}^{\frac{4}{q}},
  \end{align*}
  for some constant $c > 0$. When $q$ belongs to $(1, 2)$,
  $\varepsilon = (2 - q)/q$ describes $(0,1)$, giving us the desired conclusion.
\end{proof}

The \textit{generator} $\Ln$ of $(\Pn_{t})_{t\geq 0}$ is defined as
\[
  \Ln f \coloneq \underset{t \to 0^{+}}{\lim}\frac{\Pn_{t}f - f}{t},
\]
where the convergence takes place in norm $\L^{2}(\pms_{1,\nu})$. The set of
functions $f$ such that the previous limit $\Ln f$ exists is called the
\textit{domain} of $\Ln$ and will be denoted by $\dom(\Ln)$. For more about
those notions, we refer to \cite{Ethier86}. Our next results proves that
$\cilog$ is included in $\dom(\Ln)$.

\begin{proposition} Let $\bm{Z} \sim \mathcal{MS}(1, \nu)$, with exponent
  measure $\mu$. The Markov semi-group $(\Pn_{t})_{t\geq 0}$ has generator
  $\Ln$, given for any $\ f\in \cilog$ by:
  \begin{align}\label{4_L}
    \Ln f(\bm{x}) = -\langle \bm{x},\nabla f(\bm{x})\rangle + \int_{\Eo}\big(f(\bm{x}\oplus \bm{y}) - f(\bm{x})\big)\dif\mu(\bm{y}),\ \bm{x} \in \Eos
  \end{align}
  and where $\langle \cdot,\cdot \rangle$ denotes the standard Euclidean inner
  product on $\R^{d}$.
\end{proposition}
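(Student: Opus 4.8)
The plan is to differentiate the chaos expansion \eqref{4_chaos_Pn} at $t = 0^{+}$ and to upgrade the resulting pointwise limits to convergence in $\L^{2}(\pms_{1,\nu})$. Abbreviating $m(\bm{x}) \coloneq \mu[\bm{0},\bm{x}]^{c}$ and recalling $\gamma_{t} = e^{t}-1$, the corollary rewrites the difference quotient as
\[
  \frac{\Pn_{t}f(\bm{x}) - f(\bm{x})}{t} = \frac{e^{-\gamma_{t}m(\bm{x})}}{t}\big(f(e^{-t}\bm{x}) - f(\bm{x})\big) + \frac{e^{-\gamma_{t}m(\bm{x})}-1}{t}f(\bm{x}) + \frac{B_{t}(\bm{x})}{t} + \frac{R_{t}(\bm{x})}{t},
\]
where $B_{t}(\bm{x}) \coloneq \gamma_{t}e^{-\gamma_{t}m(\bm{x})}\int_{[\bm{0},\bm{x}]^{c}}f\big(e^{-t}(\bm{x}\oplus\bm{y})\big)\dif\mu(\bm{y})$ and $R_{t}$ is the remainder of \eqref{4_chaos_Pn}. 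The remainder is immediately disposed of: estimate \eqref{4_remainder} gives $t^{-1}\Vert R_{t}\Vert_{\L^{2}(\pms_{1,\nu})}\to 0$, so $R_{t}/t \to 0$ in $\L^{2}(\pms_{1,\nu})$.

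For the gradient term I would write, by the fundamental theorem of calculus,
\[
  f(e^{-t}\bm{x}) - f(\bm{x}) = -\int_{0}^{t}\langle e^{-s}\bm{x}, \nabla f(e^{-s}\bm{x})\rangle\dif s,
\]
and use the characterization \eqref{4_cilog_def} of $\cilog$, namely $x^{j}\vert\partial_{j}f(\bm{x})\vert \leq C$, to bound the integrand uniformly by $Cd$. Hence $t^{-1}\big(f(e^{-t}\bm{x}) - f(\bm{x})\big)$ is dominated by the constant $Cd$ and tends pointwise to $-\langle\bm{x},\nabla f(\bm{x})\rangle$; since $e^{-\gamma_{t}m(\bm{x})}\leq 1$ tends to $1$ and $\pms_{1,\nu}$ is a probability measure, bounded convergence yields convergence to $-\langle\bm{x},\nabla f(\bm{x})\rangle$ in $\L^{2}(\pms_{1,\nu})$.

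For the jump part I would recombine the second term with $B_{t}/t$ so as to reconstruct the increment $f(e^{-t}(\bm{x}\oplus\bm{y})) - f(\bm{x})$. Using that $\int_{[\bm{0},\bm{x}]^{c}}\dif\mu = m(\bm{x})$, one finds
\[
  \frac{e^{-\gamma_{t}m(\bm{x})}-1}{t}f(\bm{x}) + \frac{B_{t}(\bm{x})}{t} = \frac{\gamma_{t}}{t}e^{-\gamma_{t}m(\bm{x})}\int_{[\bm{0},\bm{x}]^{c}}\big(f(e^{-t}(\bm{x}\oplus\bm{y})) - f(\bm{x})\big)\dif\mu(\bm{y}) + \varepsilon_{t}(\bm{x}),
\]
where $\varepsilon_{t}(\bm{x}) = t^{-1}f(\bm{x})\big(\gamma_{t}m(\bm{x})e^{-\gamma_{t}m(\bm{x})} + e^{-\gamma_{t}m(\bm{x})} - 1\big)$. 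A Taylor expansion of $a\mapsto ae^{-a}+e^{-a}-1$ at $a = \gamma_{t}m(\bm{x})$ shows $\varepsilon_{t}(\bm{x}) = O\big(t\,m(\bm{x})^{2}f(\bm{x})\big)$, which vanishes in $\L^{2}(\pms_{1,\nu})$ by Cauchy--Schwarz and the moment bounds of Lemma \ref{prelim_esp_mu}. The log-Lipschitz property \eqref{4_cilog_dif} bounds the surviving increment on $[\bm{0},\bm{x}]^{c}$ by $C\sum_{j}\big(\log^{+}(y^{j}/x^{j}) + t\big)$, and since $\gamma_{t}/t \to 1$ and $e^{-\gamma_{t}m(\bm{x})}\to 1$, this term tends pointwise to $\int_{[\bm{0},\bm{x}]^{c}}\big(f(\bm{x}\oplus\bm{y}) - f(\bm{x})\big)\dif\mu(\bm{y})$. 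As $\bm{x}\oplus\bm{y} = \bm{x}$ whenever $\bm{y}\in[\bm{0},\bm{x}]$, this integral equals the integral over $\Eo$ appearing in \eqref{4_L}.

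The main obstacle is the promotion of this last pointwise limit to an $\L^{2}(\pms_{1,\nu})$ limit, because, unlike the gradient term, the jump term is not uniformly bounded. The natural dominating function is $g(\bm{x}) = m(\bm{x}) + \int_{[\bm{0},\bm{x}]^{c}}\sum_{j}\log^{+}(y^{j}/x^{j})\dif\mu(\bm{y})$, and the crux is to show $g\in\L^{2}(\pms_{1,\nu})$. Expanding $\esp[g(\bm{Z})^{2}]$ as a double $\mu$-integral, applying Fubini and Cauchy--Schwarz, and using $\int_{\Eo}\ind_{\{\bm{y}\nleq\bm{Z}\}}\dif\mu(\bm{y}) = m(\bm{Z})$ (which has finite moments of every order by Lemma \ref{prelim_esp_mu}), the finiteness reduces to the integrals $\int_{\Eo}\esp[\Vert\log(\bm{Z}\oplus\bm{y})\Vert_{1}^{k}\ind_{\{\bm{y}\nleq\bm{Z}\}}]\dif\mu(\bm{y})<\infty$ supplied by the third estimate of Lemma \ref{prelim_esp_mu}, the remaining boundary and tail behaviour being controlled through the explicit polar form \eqref{prelim_polar} of $\mu$. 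Once $g\in\L^{2}(\pms_{1,\nu})$ is secured, dominated convergence closes the jump term, and summing the three limits gives exactly \eqref{4_L}.
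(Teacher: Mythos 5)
Your proposal is correct, and it rests on the same two pillars as the paper's own proof: the expansion \eqref{4_chaos_Pn} with the remainder bound \eqref{4_remainder}, and the moment estimates of Lemma \ref{prelim_esp_mu}. The differences are tactical but real. First, the grouping: you absorb $t^{-1}\big(e^{-\gamma_{t}\mu[\bm{0},\bm{x}]^{c}}-1\big)f(\bm{x})$ into the jump block so as to reconstruct full increments $f\big(e^{-t}(\bm{x}\oplus\bm{y})\big)-f(\bm{x})$, whereas the paper moves the term $t\,\mu[\bm{0},\bm{Z}]^{c}f(\bm{Z})$ (coming from $\int\big(f(\bm{Z}\oplus\bm{y})-f(\bm{Z})\big)\dif\mu(\bm{y})$) into the drift block and keeps its jump block as a difference of two integrals of $f(\cdot\oplus\bm{y})$. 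Second, the limit mechanism: you argue qualitatively (fundamental theorem of calculus plus the uniform bound \eqref{4_cilog_def} and bounded convergence for the drift; dominated convergence with the dominating function $g$ for the jump), whereas the paper is quantitative, extracting $O(t^{2})$-type bounds from $\vert e^{-x}-1+x\vert\leq x^{2}/2$, Taylor's formula and Cauchy--Schwarz, which yields an explicit rate rather than mere convergence. What your route buys is economy: the drift term becomes essentially trivial and no rates need tracking. What it costs is that everything hinges on $g\in\L^{2}(\pms_{1,\nu})$, which you only sketch; note however that this is not an extra burden relative to the paper, which needs the exactly analogous fact that $\mu[\bm{0},\bm{Z}]^{c}\int_{[\bm{0},\bm{Z}]^{c}}f\big(e^{-t}(\bm{Z}\oplus\bm{y})\big)\dif\mu(\bm{y})$ is square-integrable and likewise disposes of it in one line; in both proofs the real content is the third estimate of Lemma \ref{prelim_esp_mu} combined with the polar decomposition \eqref{prelim_polar}, exactly as you indicate. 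One small unstated step on your side: your bound on $\varepsilon_{t}$ requires $\esp\big[f(\bm{Z})^{2}(\mu[\bm{0},\bm{Z}]^{c})^{4}\big]<\infty$, so you should record that $f\in\cilog$ implies $\vert f(\bm{x})\vert\leq C\big(1+\Vert\log\bm{x}\Vert_{1}\big)$ (the paper handles this via its preliminary reduction to $\vert f(\bm{x})\vert\leq C\Vert\log\bm{x}\Vert_{1}$), after which Cauchy--Schwarz and the lemma finish that term as you claim.
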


\begin{proof} Let $f \in \cilog$. As in the proof of \eqref{4_remainder}, the
  result is easier to prove if $f$ is bounded, so we will also assume that
  $\vert f(\bm{x}) \vert \leq C\Vert \log\bm{x}\Vert_{1}$. Denote provisionally
  by $\mathscr{L}$ the right-hand side of \eqref{4_L}. We must prove that
  \[
    \Big\Vert \frac{\Pn_{t}f - f}{t} - \mathscr{L} f \Big\Vert^{2}_{\L^{2}(\pms_{1,\nu})} \underset{t \to 0^{+}}{\longrightarrow} 0,
  \]
  where $\bm{Z} \sim \mathcal{MS}(1, \nu)$. We have:
  \begin{align}\label{4_gene_dcpt}
    \Big\Vert \frac{\Pn_{t}f - f}{t} - \mathscr{L} f \Big\Vert^{2}_{\L^{2}(\pms_{1,\nu})} \leq C\big(A + B + t^{-2}\Vert R_{t} \Vert^{2}_{\L^{2}(\pms_{1,\nu})}\big)
  \end{align}
  for some constant $C > 0$, with $R_{t}$ the remainder term in identity
  \eqref{4_remainder},
  \begin{align*}
    A \coloneq \frac{1}{t^{2}}\esp\Big[\Big(e^{-\gamma_{t}\mu[\bm{0}, \bm{Z}]^{c}}f(e^{-t}\bm{Z}) - f(\bm{Z}) + t\langle \bm{Z}, \nabla f(\bm{Z}) \rangle \Big)^{2}\Big]
  \end{align*}
  and
  \begin{align*}
    B \coloneq \frac{1}{t^{2}}\esp\Big[\Big( \gamma_{t}e^{-\gamma_{t}\mu[\bm{0}, \bm{Z}]^{c}}\int_{[\bm{0}, \bm{Z}]^{c}}f\big(e^{-t}(\bm{Z}\oplus \bm{y})\big)\dif\mu(\bm{y}) - \int_{[\bm{0}, \bm{Z}]^{c}}f(\bm{Z} \oplus \bm{y})\dif\mu(\bm{y}) \Big)^{2}\Big]
  \end{align*}

  1. Observe that
  \begin{multline*}
    e^{-\gamma_{t}\mu[\bm{0}, \bm{Z}]^{c}}f(e^{-t}\bm{Z}) - f(\bm{Z}) + t\big\langle \bm{Z}, \nabla f(\bm{Z})\big\rangle + t\mu[\bm{0}, \bm{Z}]^{c}f(\bm{Z})\\
    = \big(e^{-\gamma_{t}\mu[\bm{0}, \bm{Z}]^{c}} - 1 + \gamma_{t}\mu[\bm{0}, \bm{Z}]^{c}\big)f(e^{-t}\bm{Z}) + (\gamma_{t} - t)\mu[\bm{0}, \bm{Z}]^{c}\\
    + f(e^{-t}\bm{Z}) - f(\bm{Z}) + t\big\langle \bm{Z}, \nabla f(\bm{Z})\big\rangle
  \end{multline*}
  Clearly $\gamma_{t} - t$ is of order $t^{2}$, while the inequality
  \[
    \vert e^{-x} - 1 + x \vert \leq \frac{x^{2}}{2},\ x \geq 0
  \]
  implies that the term between parentheses is bounded by:
  \begin{align*}
    \frac{1}{t^{2}}\esp\Big[\Big(\big(e^{-\gamma_{t}\mu[\bm{0}, \bm{Z}]^{c}} - 1 + \gamma_{t}\mu[\bm{0}, \bm{Z}]^{c}\big)f(e^{-t}\bm{Z}) & \Big)^{2}\Big]                                                                                                          \\
                                                                                                                                         & \leq \frac{1}{2t^{2}}C^{2}\gamma_{t}^{4}\esp\Big[(\mu[\bm{0}, \bm{Z}]^{c})^{4}\Vert t - \log \bm{Z} \Vert_{1}^{2}\Big].
  \end{align*}
  A Cauchy-Schwartz argument coupled with lemma \ref{prelim_esp_mu} show that
  this last expectation is finite. Taylor's formula applied to the class
  $\mathscr{C}^{1}$ function $f$ between $e^{-t}\bm{Z}$ and $\bm{Z}$ gives us
  the existence of a function $h_{t} : \R^{d} \to \R^{d}$ such that:
  \begin{align*}
    f(e^{-t}\bm{Z}) = f(\bm{Z}) + (e^{-t} - 1)\big\langle \bm{Z} + (e^{-t} - 1)\big\langle \bm{Z}, h_{t}(e^{-t}\bm{Z})\big\rangle
  \end{align*}
  with $h_{t}(e^{-t}\bm{Z})$ vanishing as $t$ goes to $0$. Because $f$ is
  log-Lipschitz, the following inequality holds
  \[
    (1 - e^{-t})^{2}\esp\big[\big\langle \bm{Z}, h_{t}(e^{-t}\bm{Z})\big\rangle^{2}\big] \leq 2\big(t^{2} + (1-e^{-t})^{2}\big).
  \]
  Therefore, a dominated convergence argument yields
  \[
    \frac{1}{t^{2}}\esp\big[\big(f(e^{-t}\bm{Z}) - f(\bm{Z}) + (1-e^{-t})\big\langle \bm{Z}, \nabla f(\bm{Z})\big\rangle\big)^{2}\big] = \Big(\frac{1 - e^{-t}}{t}\Big)^{2}\esp\big[\big(h_{t}(e^{-t}\bm{Z})\big)^{2}\big] \underset{t \to 0^{+}}{\longrightarrow} 0.
  \]
  Replacing $1-e^{-t}$ by $t$ before the inner product yields the same bound, so
  that all in all:
  \[
    A \leq C\frac{1}{t^{2}}\gamma_{t}^{4} + C\big(t^{2} + (1-e^{-t})^{2}\big).
  \]

  2. We have the decomposition
  \begin{align*}
    \gamma_{t}e^{-\gamma_{t}\mu[\bm{0}, \bm{Z}]^{c}}\int_{[\bm{0}, \bm{Z}]^{c}}f\big(e^{-t}(\bm{Z}\oplus \bm{y})\big) & \dif\mu(\bm{y}) - t\int_{[\bm{0}, \bm{Z}]^{c}}f(\bm{Z} \oplus \bm{y})\dif\mu(\bm{y})                                                             \\
                                                                                                                      & = \big(\gamma_{t}e^{-\gamma_{t}\mu[\bm{0}, \bm{Z}]^{c}} - t\big)\int_{[\bm{0}, \bm{Z}]^{c}}f\big(e^{-t}(\bm{Z}\oplus \bm{y})\big)\dif\mu(\bm{y}) \\
                                                                                                                      & \quad + t\Big(\int_{[\bm{0}, \bm{Z}]^{c}}f\big(e^{-t}(\bm{Z}\oplus \bm{y})\big) - f(\bm{Z} \oplus \bm{y})\dif\mu(\bm{y})\Big)
  \end{align*}
  The second part is bounded by:
  \[
    t\Big\vert\int_{[\bm{0}, \bm{Z}]^{c}}f\big(e^{-t}(\bm{Z}\oplus \bm{y})\big) - f(\bm{Z} \oplus \bm{y})\dif\mu(\bm{y})\Big\vert \leq t^{2}\mu[\bm{0}, \bm{Z}]^{c},
  \]
  while the first is of order $t^{2}$ as well, since
  \[
    \gamma_{t}e^{-\gamma_{t}\mu[\bm{0}, \bm{Z}]^{c}} - t \underset{t \to 0^{+}}{\sim} -t^{2}\mu[\bm{0}, \bm{Z}]^{c}.
  \]
  Besides,
  $\mu[\bm{0},\bm{Z} ]^{c}\int_{[\bm{0}, \bm{Z}]^{c}}f(e^{-t}(\bm{Z}\oplus \bm{y}))\dif\mu(\bm{y})$
  is square-integrable thanks to the fact that $f$ is log-Lipschitz and
  Cauchy-Schwarz inequality, hence:
  \[
    B \leq Ct^{2}.
  \]
\end{proof}

The right-inverse of $\Ln$ is well-defined if $f \in \cilog$ and
$\esp[f(\bm{Z})] = 0$. To prove this, we first need a lemma.

\begin{lemma}\label{4_cilog_invar}
  Let $f \in \cilog$. Then $\Pn_{t}f \in \cilog$.
\end{lemma}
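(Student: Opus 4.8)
The plan is to check the two defining requirements of $\cilog$ for $\Pn_t f$ separately: that it lies in $\mathcal{C}^1(\Eos)$, and that it obeys the derivative bound in the characterization \eqref{4_cilog_def}. First note that $\Pn_t f$ is well-defined, since a log-Lipschitz function has at most logarithmic growth, $|f(\bm{x})| \le |f(\bm{1})| + C\Vert \log \bm{x}\Vert_1$, which together with Lemma~\ref{prelim_esp_mu} places $f$ in every $\L^p(\pms_{1,\nu})$. The crucial structural observation is that differentiating the expectation \eqref{4_Pt_Frcht_mltvr} in the variable $x^j$ only perturbs the $j$-th coordinate of the argument, because $\oplus$ acts componentwise; and although $s \mapsto \max(e^{-t}s,\,(1-e^{-t})Z^j)$ has a kink, its location depends on $\bm{Z}$, whose marginal $Z^j$ has a continuous Fréchet law, so the crossing event $\{e^{-t}x^j = (1-e^{-t})Z^j\}$ is $\prob$-negligible.

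Concretely, fix $\bm{x}\in\Eos$ and $j\in\lbra 1,d\rbra$. For the difference quotient in the direction $\bm{e}_j$ I would combine the fact that $s\mapsto\max(e^{-t}s,a)$ is $e^{-t}$-Lipschitz with the log-Lipschitz estimate $|f(\bm{w}')-f(\bm{w})|\le C\Vert\log\bm{w}'-\log\bm{w}\Vert_1$. Since the perturbed and unperturbed $j$-th coordinates both stay above $e^{-t}x^j/2$ for $|h|<x^j/2$, this gives, uniformly in $\bm{Z}$,
\[
  \Big|\tfrac{1}{h}\big(f(e^{-t}(\bm{x}+h\bm{e}_j)\oplus(1-e^{-t})\bm{Z}) - f(e^{-t}\bm{x}\oplus(1-e^{-t})\bm{Z})\big)\Big| \le \frac{2C}{x^j}.
\]
For every $\bm{Z}$ off the null crossing event the maximum is smooth near $x^j$, so the quotient converges pointwise to $e^{-t}\partial_j f(e^{-t}\bm{x}\oplus(1-e^{-t})\bm{Z})\,\ind_{\{e^{-t}x^j>(1-e^{-t})Z^j\}}$. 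Dominated convergence then yields
\[
  \partial_j \Pn_t f(\bm{x}) = e^{-t}\,\esp\big[\partial_j f(e^{-t}\bm{x}\oplus(1-e^{-t})\bm{Z})\,\ind_{\{e^{-t}x^j>(1-e^{-t})Z^j\}}\big].
\]

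Continuity of these partials is the next step: as $\bm{x}\to\bm{x}_0$ the integrand converges almost surely (the crossing set for $\bm{x}_0$ is again null) and, using $|\partial_j f(\bm{w})|\le C/w^j\le Ce^t/x^j$ on the relevant event together with $x^j$ bounded below near $x_0^j>0$, it is dominated by a constant on a neighborhood; a further dominated-convergence argument gives $\partial_j\Pn_t f\in\mathcal{C}^0(\Eos)$, hence $\Pn_t f\in\mathcal{C}^1(\Eos)$. The log-Lipschitz bound is then immediate from the derivative formula: on $\{e^{-t}x^j>(1-e^{-t})Z^j\}$ the $j$-th coordinate of the argument equals $e^{-t}x^j$, so there $|\partial_j f(\cdots)|\le C/(e^{-t}x^j)$, whence
\[
  x^j\,|\partial_j\Pn_t f(\bm{x})| \le x^j e^{-t}\,\esp\Big[\tfrac{C}{e^{-t}x^j}\,\ind_{\{e^{-t}x^j>(1-e^{-t})Z^j\}}\Big] \le C,
\]
which is exactly \eqref{4_cilog_def} with the same constant. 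The only genuine obstacle is the non-smoothness of $\oplus$; it is entirely absorbed by the observation that the kink set is $\prob$-negligible and that log-Lipschitzness supplies the integrable dominating function needed to both differentiate and pass to the limit under the expectation.
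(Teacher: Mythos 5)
Your proof is correct and follows essentially the same route as the paper's: differentiate under the expectation via a dominated-convergence argument (the kink of $\oplus$ being harmless because the Fréchet law is diffuse), arrive at the formula $\partial_j \Pn_t f(\bm{x}) = e^{-t}\esp\big[(\partial_j f)(e^{-t}\bm{x}\oplus(1-e^{-t})\bm{Z})\ind_{\lb x^j \geq \gamma_t Z^j\rb}\big]$, and read off the bound $x^j|\partial_j\Pn_t f(\bm{x})|\le C$ from it. The only cosmetic difference is your continuity argument for the partials (dominated convergence along $\bm{x}\to\bm{x}_0$, versus the paper's right-/left-continuity trick for the indicator), but both rest on the same diffuseness observation.
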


\begin{proof}
  The function $\bm{x} \mapsto f(e^{-t}\bm{x} \oplus (1-e^{-t})\bm{Z})$ is
  $\pms_{1,\nu}$-a.s. differentiable because $\pms_{1,\nu}$ is diffuse. For any
  $j \in \lbra 1,d \rbra$ and $x^{j} \in [a, b] \subseteq \Rps$, one has
  \begin{align*}
    \big\vert\partial_{j}f\big(e^{-t}\bm{x} \oplus (1- & e^{-t})\bm{Z}\big)\big\vert                                                                                                                                                             \\
                                                       & = e^{-t}\big\vert(\partial_{j}f)\big(e^{-t}\bm{x} \oplus (1-e^{-t})\bm{Z}\big)\big\vert\ind_{\lb x^{j} \geq \gamma_{t}Z^{j}\rb}                                                         \\
                                                       & \leq \frac{1}{x^{j}}\big(e^{-t}x^{j} \oplus (1-e^{-t})Z^{j}\big)\big\vert(\partial_{j}f)\big(e^{-t}\bm{x} \oplus (1-e^{-t})\bm{Z}\big)\big\vert\ind_{\lb x^{j} \geq \gamma_{t}Z^{j}\rb} \\
                                                       & \leq \frac{1}{a}\cdotp
  \end{align*}
  By a dominated convergence argument, we deduce that $\Pn_{t}f$ is
  differentiable. Thanks to the previous display, one sees that $\Pn_{t}f$ is
  log-Lipschitz, with partial derivatives equal to
  \begin{align}\label{4_partial_j}
    \partial_{j}\Pn_{t}f(\bm{x}) = e^{-t}\esp\big[(\partial_{j}f)\big(e^{-t}\bm{x} \oplus (1-e^{-t})\bm{Z}\big)\ind_{\lb x^{j} \geq \gamma_{t}Z^{j}\rb}\big].
  \end{align}
  The continuity of $\partial_{j}\Pn_{t}f$ is once again a consequence of the
  fact that $\pms_{1,\nu}$ is diffuse:
  \[
    \ind_{\lb x^{j} \geq \gamma_{t}Z^{j}\rb} = 1 - \ind_{\lb x^{j} \leq \gamma_{t}Z^{j}\rb}\ \pms_{1,\nu}\text{-a.s.}
  \]
  the left-hand side being right-continuous, while the right-hand side is
  left-continuous.
\end{proof}

\begin{proposition}\label{4_Ln-1}
  Let $f \in \cilog$. Then there exists a function denoted by
  $\Ln^{-1}f \in \dom(\Ln)$ such that
  \[
    \Ln(\Ln^{-1}f) = f - \esp[f(Z)]\ \pms_{1,\nu}\text{-a.s.}
  \]
  with $\bm{Z} \sim \mathcal{MS}(1, \nu)$. It is given by:
  \begin{align}\label{4_Ln-1_eq}
    \Ln^{-1}f = -\int_{0}^{\infty}\big(\Pn_{t}f - \esp[f(\bm{Z})]\big)\dif t,
  \end{align}
  where the integral is defined as the limit in norm $\L^{2}(\pms_{1,\nu})$ of
  $\int_{0}^{n}(\Pn_{t}f - \esp[f(\bm{Z})]) \dif t$ when $n$ goes to infinity.
  Besides, $\Ln^{-1}f$ is differentiable on $\Eos$ and the following inequality
  holds for some $C > 0$ independent of $\bm{x}$:
  \begin{align}\label{4_diff_Ln-1}
    x^{j}\vert\partial_{j}\Ln^{-1}f(\bm{x})\vert \leq C\int_{0}^{\infty} e^{-\frac{\gamma_{t}}{x^{j}}} \dif t = \int_{0}^{\infty}\frac{x^{j}}{x^{j}t + 1}e^{-t} \dif t,\ j \in \lbra 1,d \rbra.
  \end{align}
\end{proposition}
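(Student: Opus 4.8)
The plan is to read \eqref{4_Ln-1_eq} as the usual potential-operator formula and to run the standard three-step programme: first show the defining integral converges in $\L^2(\pms_{1,\nu})$, then identify its image under $\Ln$, and finally differentiate under the integral sign. Throughout, write $\tilde f \coloneq f - \esp[f(\bm{Z})]$, so that $\Pn_t f - \esp[f(\bm{Z})] = \Pn_t \tilde f$ by stationarity (Lemma~\ref{4_stationarity+}) and linearity. The whole proof hinges on an \emph{exponential} $\L^2$-decay of $\Pn_t\tilde f$, which I would obtain by diagonalising $\Pn_t$ on the Poisson chaoses. First I would establish the commutation relation
\[
  D^{\oplus}_{\bm{x}_1,\dots,\bm{x}_n}\Pn_t f = \Pn_t\big(D^{\oplus}_{e^{-t}\bm{x}_1,\dots,e^{-t}\bm{x}_n}f\big),
\]
by induction from the one-step identity $D^{\oplus}_{\bm{x}}\Pn_t f(\bm{w}) = \esp\big[D^{\oplus}_{e^{-t}\bm{x}}f\big(e^{-t}\bm{w}\oplus(1-e^{-t})\bm{Z}\big)\big]$, itself a direct consequence of \eqref{4_Pt_Frcht_mltvr} and $e^{-t}(\bm{w}\oplus\bm{x}) = e^{-t}\bm{w}\oplus e^{-t}\bm{x}$. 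Taking $\esp[\,\cdot\,(\bm{Z})]$ and invoking stationarity then gives $T^{\oplus}_n\Pn_t f(\bm{x}_1,\dots,\bm{x}_n) = T^{\oplus}_n f(e^{-t}\bm{x}_1,\dots,e^{-t}\bm{x}_n)$.

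Next I would use the $\alpha=1$ homogeneity \eqref{prelim_homo} of $\mu$, which yields $\int_{\Eo}h(e^{-t}\bm{x})\dif\mu(\bm{x}) = e^{-t}\int_{\Eo}h\dif\mu$, and hence $\langle T^{\oplus}_n\Pn_t f, T^{\oplus}_n\Pn_t f\rangle_n = e^{-nt}\langle T^{\oplus}_n f, T^{\oplus}_n f\rangle_n$. Plugging this into the covariance identity \eqref{3_cov_id_max1} and bounding $e^{-nt}\leq e^{-t}$ for $n\geq 1$ gives
\[
  \Vert\Pn_t\tilde f\Vert^2_{\L^2(\pms_{1,\nu})} = \var\big(\Pn_t f(\bm{Z})\big) = \sum_{n=1}^{\infty}\frac{e^{-nt}}{n!}\langle T^{\oplus}_n f, T^{\oplus}_n f\rangle_n \leq e^{-t}\var\big(f(\bm{Z})\big).
\]
Since $\cilog\subseteq\L^2(\pms_{1,\nu})$ by Lemma~\ref{prelim_esp_mu} (the log-Lipschitz growth of $f$ and the finiteness of $\esp[\Vert\log\bm{Z}\Vert_1^k]$), the map $t\mapsto\Pn_t\tilde f$ is $\L^2$-integrable on $(0,\infty)$, so the truncated integrals $-\int_0^n\Pn_t\tilde f\dif t$ are Cauchy in $\L^2$ and \eqref{4_Ln-1_eq} defines $\Ln^{-1}f\in\L^2(\pms_{1,\nu})$.

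For the second step I would argue exactly as in the classical Ornstein-Uhlenbeck setting. Writing $g\coloneq\Ln^{-1}f$ and using that the bounded operator $\Pn_s$ commutes with the $\L^2$-convergent integral, $\Pn_s g = -\int_s^\infty\Pn_v\tilde f\dif v$, whence $\Pn_s g - g = \int_0^s\Pn_v\tilde f\dif v$. Dividing by $s$ and letting $s\to0^+$, the strong continuity $\Pn_v\tilde f\to\tilde f$ in $\L^2$ (which holds because $f\in\cilog\subseteq\dom(\Ln)$ forces $\Pn_v f\to f$, and $\Pn_v$ fixes constants) shows $s^{-1}(\Pn_s g - g)\to\tilde f$; this is precisely $g\in\dom(\Ln)$ with $\Ln g = \tilde f = f - \esp[f(\bm{Z})]$ $\pms_{1,\nu}$-a.s.

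For the third step, differentiation under the integral sign is justified locally by \eqref{4_partial_j} combined with the log-Lipschitz bound $y^j|\partial_j f(\bm{y})|\leq C$: on the event $\{x^j\geq\gamma_t Z^j\}$ the $j$-th coordinate of $e^{-t}\bm{x}\oplus(1-e^{-t})\bm{Z}$ equals $e^{-t}x^j$, so
\[
  x^j\big|\partial_j\Pn_t f(\bm{x})\big| \leq e^{-t}x^j\cdot\frac{C}{e^{-t}x^j}\,\prob\big(x^j\geq\gamma_t Z^j\big) = C\,\prob\big(Z^j\leq x^j/\gamma_t\big) = C\,e^{-\gamma_t/x^j},
\]
the last equality being the Fréchet $\mathcal{F}(1)$ c.d.f. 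For $\bm{x}$ in a neighbourhood of any point of $\Eos$ this is dominated by an integrable function of $t$, so $\Ln^{-1}f$ is $\mathcal{C}^1$ (hence differentiable), with $\partial_j\Ln^{-1}f = -\int_0^\infty\partial_j\Pn_t f\dif t$ and the constant $\esp[f(\bm{Z})]$ dropping out; integrating the pointwise bound and substituting $\gamma_t = x^j u$ (so $\dif t = x^j\,\dif u/(x^j u+1)$) yields \eqref{4_diff_Ln-1}. I expect the genuine obstacle to be the first step: without the chaos-diagonalisation $T^{\oplus}_n\Pn_t f = T^{\oplus}_n f(e^{-t}\,\cdot)$ and the homogeneity scaling, one has only pointwise ergodicity and no integrable decay rate; once the spectral-gap-type estimate $\var(\Pn_t f(\bm{Z}))\leq e^{-t}\var(f(\bm{Z}))$ is in hand, the remaining two steps are routine.
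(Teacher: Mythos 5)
Your proof is correct, and its second and third steps coincide with the paper's own argument: the resolvent identification via $\Pn_s\Ln^{-1}f-\Ln^{-1}f=\int_0^s\Pn_v\tilde f\,\dif v$ followed by the difference quotient, and the differentiation under the integral sign based on \eqref{4_partial_j} together with the bound $x^j\vert\partial_j\Pn_tf(\bm{x})\vert\le C e^{-\gamma_t/x^j}$ and the substitution $\gamma_t=x^ju$, are exactly what the paper does (your sign bookkeeping is in fact cleaner than the paper's, whose intermediate displays drop a minus sign and the centering). The genuine divergence is in the first step, the integrable $\L^2$-decay of $\Pn_t\tilde f$. The paper obtains it by a bare-hands computation: the log-Lipschitz bound reduces everything to a univariate pair $W,Z$ of unit Fréchet variables, one splits on the event $\{\gamma_tZ\le W\}$, uses $\prob(\gamma_tZ>W)=1-e^{-t}$ and the finiteness of Gumbel moments, and gets an explicit integrable majorant of $\Vert\Pn_tf-\esp[f(\bm{Z})]\Vert^2_{\L^2(\pms_{1,\nu})}$. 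You instead prove the intertwining $T_n^{\oplus}\Pn_tf=T_n^{\oplus}f(e^{-t}\,\cdot)$, feed it into the Fock-space covariance identity \eqref{3_cov_id_max1}, and use the $1$-homogeneity of $\mu$ (one factor $e^{-t}$ per integration variable) to obtain the spectral-gap estimate $\var\big(\Pn_tf(\bm{Z})\big)\le e^{-t}\var\big(f(\bm{Z})\big)$. This buys more than the paper's estimate: it diagonalises $\Pn_t$ chaos by chaos (eigenvalue $e^{-nt}$ on the $n$-th chaos), it extends to all of $\L^2(\pms_{1,\nu})$ by density rather than being tied to $\cilog$, and it yields at once the exponential ergodicity $\Vert\Pn_tf-\esp[f(\bm{Z})]\Vert_{\L^2}\le e^{-t/2}\Vert f-\esp[f(\bm{Z})]\Vert_{\L^2}$, which the paper only derives at the end of Section 4.1 from the carré du champ and the max-id Poincaré inequality; the price is having to justify the exchanges of $D^{\oplus}$, $\Pn_t$ and expectations, where the paper's elementary route is more economical. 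Two details you should make explicit: Lemma \ref{4_stationarity+} is stated for non-negative functions, so invoking $\esp[\Pn_tg(\bm{Z})]=\esp[g(\bm{Z})]$ with $g=D^{\oplus}_{e^{-t}\bm{x}_1,\dots,e^{-t}\bm{x}_n}f$ requires the (immediate, by linearity) extension to signed integrable $g$; and, like the paper, you tacitly identify the $\L^2$-limit \eqref{4_Ln-1_eq} with the pointwise integral $-\int_0^\infty\Pn_t\tilde f(\bm{x})\,\dif t$, which needs $t\mapsto\Pn_t\tilde f(\bm{x})$ to be absolutely integrable for each fixed $\bm{x}$ (true, by the same case-splitting that drives the paper's $\L^2$ bound) before one may differentiate under the integral.
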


\begin{proof}
  Let $\bm{W}, \bm{Z}$ be \iid random vectors with common distribution
  $\mathcal{MS}(1, \nu)$. First, for $f \in \cilog$, we have thanks to Jensen's
  inequality:
  \begin{align*}
    \Vert \Pn_{t}f - \esp[f(\bm{Z})] \Vert^{2}_{\L^{2}(\pms_{1,\nu})} & \leq \esp\Big[\Big(\sum_{j=1}^{d}\big\vert \log\big(e^{-t}W^{j} \oplus (1-e^{-t})Z^{j}\big) - \log Z^{j} \big\vert\Big)^{2}\Big] \\
                                                                      & \leq dC\sum_{j=1}^{d}\esp\Big[\big(\log\big(e^{-t}W^{j} \oplus (1-e^{-t})Z^{j}\big) - \log Z^{j} \big)^{2}\Big]                  \\
                                                                      & = d^{2}C\esp\Big[\big(\log\big(e^{-t}W \oplus (1-e^{-t})Z\big) - \log Z \big)^{2}\Big],
  \end{align*}
  where $W, Z$ are two \iid random variables with distribution $\mathcal{F}(1)$.
  Depending on whether $\gamma_{t}Z \leq W$ or $\gamma_{t}Z > W$, the previous
  expectation simplifies as
  \begin{align*}\label{4_norm_P_t}
    \esp\Big[\big(\log\big(e^{-t}W & \oplus (1-e^{-t})Z\big) - \log Z \big)^{2}\Big]                                                                                             \\
                                   & = \esp\Big[\big(\log e^{-t}W - \log Z \big)^{2}\ind_{\lb\gamma_{t}Z \leq W\rb}\Big] + \big(\log(1-e^{-t})\big)^{2}\prob(\gamma_{t}Z > W)    \\
                                   & \leq 3\esp\Big[\big(t^{2} + (\log W)^{2} + (\log Z)^{2}\big)\ind_{\lb\gamma_{t}Z \leq W\rb}\Big] + (1 - e^{-t})\big(\log(1-e^{-t})\big)^{2} \\
                                   & \leq 3\big(t^{2}e^{-t} + Ce^{-\frac{t}{2}}\big) + (1 - e^{-t})\big(\log(1-e^{-t})\big)^{2}, \numberthis
  \end{align*}
  for some constant $C > 0$. We have used Cauchy-Schwarz inequality as well as
  the fact that $\exp(-1/Z)$ has the standard uniform distribution, so that
  \[
    \prob(\gamma_{t}Z > W) = \esp\big[e^{-\frac{1}{\gamma_{t}Z}}\big] = \frac{\gamma_{t}}{\gamma_{t} + 1} = 1 - e^{-t}.
  \]
  The right-hand side of \eqref{4_norm_P_t} is an integrable function of $t$ on
  $\R_{+}$. Furthermore, $(\int_{0}^{n}\Pn_{t}f^{*}\dif t)_{n\geq 0}$ is a
  Cauchy sequence of $\L^{2}(\pms_{1,\nu})$, with $f^{*} = f - \esp[f(\bm{Z})]$:
  first, $\int_{0}^{n}\Pn_{t}f^{*}\dif t$ has a sense in $\L^{2}(\pms_{1,\nu})$,
  since $t \mapsto \Pn_{t}f^{*}$ is continuous for that topology. Next we have
  \[
    \Big\Vert\int_{0}^{n+m}\Pn_{t}f^{*}\dif t - \int_{0}^{n}\Pn_{t}f^{*}\dif t \Big\Vert_{\L^{2}(\pms_{1,\nu})} \leq \int_{n}^{n+m}\Vert \Pn_{t}f^{*} \Vert_{\L^{2}(\pms_{1,\nu})} \underset{n,m \to \infty}{\longrightarrow} 0,
  \]
  as the remainder of a converging integral. Let us denote by
  \[
    \int_{0}^{\infty} \Pn_{t}f^{*} \dif t \coloneq \underset{n \to \infty}{\lim}\int_{0}^{n}\Pn_{t}f^{*} \dif t
  \]
  the limit of that sequence. By continuity of $\Pn_{t}$, one has:
  \[
    \Pn_{s}\Big(\int_{0}^{\infty} \Pn_{t}f^{*} \dif t\Big) = \underset{n \to \infty}{\lim}\Pn_{s}\Big(\int_{0}^{n}\Pn_{t}f^{*} \dif t\Big) = \int_{0}^{\infty}\Pn_{t+s}f^{*} \dif t = \int_{s}^{\infty}\Pn_{t}f^{*} \dif t.
  \]
  From the last display, one deduces
  \begin{align*}
    \frac{\Pn_{s}\Big(\int_{0}^{\infty} \Pn_{t}f^{*} \dif t \Big) - \int_{0}^{\infty} \Pn_{t}f^{*} \dif t}{s} = \frac{1}{s}\int_{0}^{s}\Pn_{t}f \dif t \underset{t \to 0^{+}}{\longrightarrow} f.
  \end{align*}
  This means exactly that $\Ln(\int_{0}^{\infty} \Pn_{t}f^{*} \dif t) = f$.

  Inequality \eqref{4_diff_Ln-1} is a straightforward consequence of
  \eqref{4_Ln-1_eq} and \eqref{4_partial_j}:
  \begin{align*}
    x^{j}\vert\partial_{j}\Ln^{-1}f(\bm{x})\vert \leq C\int_{0}^{\infty} \vert\partial_{j}\Pn_{t}f(\bm{x})\vert\dif t \leq C\int_{0}^{\infty} \prob(\gamma_{t}Z \leq x^{j})\dif t,
  \end{align*}
  since $\Pn_{t}f$ belongs to $\cilog$.
\end{proof}

The integral operator in the generator $\Ln$ satisfies several properties. We
need two lemmas first.

\begin{lemma}
  Define the transformation $T$ by
  \begin{align*}
    \begin{array}{ccccc}
      T & : & \Eos \times \Eo  & \longrightarrow & \Eos                 \\
        &   & (\bm{x}, \bm{y}) & \longmapsto     & \bm{x}\oplus \bm{y}.
    \end{array}
  \end{align*}
  Then $T_{*}(\pms_{1,\nu} \otimes \mu)$ is absolutely continuous with respect
  to $\pms_{1, \nu}$.
\end{lemma}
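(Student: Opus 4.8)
The plan is to test the pushforward against an arbitrary Borel set $A\subseteq\Eos$ with $\pms_{1,\nu}(A)=0$ and to show it carries no mass. Writing $\bm Z\sim\mathcal{MS}(1,\nu)$ for a random vector with law $\pms_{1,\nu}$, the definition of the pushforward gives
\[
  T_{*}(\pms_{1,\nu}\otimes\mu)(A)=\esp\Big[\int_{\Eo}\ind_{A}(\bm Z\oplus\bm y)\dif\mu(\bm y)\Big].
\]
I would split the inner integral according to whether $\bm y\le\bm Z$ or $\bm y\nleq\bm Z$. On $\{\bm y\le\bm Z\}$ one has $\bm Z\oplus\bm y=\bm Z$, so that contribution equals $\ind_{A}(\bm Z)\,\mu[\bm 0,\bm Z]$, a non-negative function of $\bm Z$ that vanishes $\pms_{1,\nu}$-almost surely because $\prob(\bm Z\in A)=0$; its expectation is therefore $0$ (with the usual convention $0\cdot\infty=0$), and this part can be discarded.

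It remains to handle the record part $\esp\big[\int_{\{\bm y\nleq\bm Z\}}\ind_{A}(\bm Z\oplus\bm y)\dif\mu(\bm y)\big]$. Here I would use the de Haan--LePage representation \eqref{prelim_LePage_1} to realize $\bm Z=\m(\eta)$ for a Poisson process $\eta$ of intensity $\mu$ on $\Eo$, and apply the Mecke equation to the non-negative functional $h(\bm y,\phi)=\ind_{A}(\m(\phi))\,\ind\{\bm y\nleq\m(\phi-\delta_{\bm y})\}$, which is legitimate since $\mu$ is $\sigma$-finite. Because $\m(\eta+\delta_{\bm y})=\bm Z\oplus\bm y$ by \eqref{3_cup_oplus} and $(\eta+\delta_{\bm y})-\delta_{\bm y}=\eta$, the left-hand side of Mecke reproduces exactly the record part, while the right-hand side becomes
\[
  \esp\Big[\ind_{A}(\bm Z)\sum_{\bm y\in\eta}\ind\{\bm y\nleq\m(\eta-\delta_{\bm y})\}\Big].
\]

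The crux, and the step I expect to be the main obstacle, is a uniform bound on the record count $R(\eta):=\sum_{\bm y\in\eta}\ind\{\bm y\nleq\m(\eta-\delta_{\bm y})\}$; without it the naive Mecke computation would only produce the useless bound $\infty\cdot\pms_{1,\nu}(A)$, since $\mu(\Eo)=\infty$. A point $\bm y\in\eta$ contributes to $R(\eta)$ if and only if its $j$-th coordinate strictly exceeds those of all other points for some $j$, and for each fixed $j$ at most one point can be such a strict maximizer, whence $R(\eta)\le d$ almost surely. To make this rigorous I must first verify that the coordinatewise maxima are genuinely attained: by the polar decomposition and the moment constraint \eqref{prelim_moment_constraints}, $\mu(\{\bm y:\ y^{j}>c\})=c^{-1}<\infty$ for every $c>0$, so only finitely many points exceed any level in coordinate $j$ and the maximum is achieved. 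With $R(\eta)\le d$ established, the Mecke right-hand side is at most $d\,\esp[\ind_{A}(\bm Z)]=d\,\pms_{1,\nu}(A)=0$. Combining this with the vanishing of the $\{\bm y\le\bm Z\}$ part yields $T_{*}(\pms_{1,\nu}\otimes\mu)(A)=0$, which is precisely the asserted absolute continuity.
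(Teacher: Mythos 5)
Your proof is correct, but it takes a genuinely different route from the paper's. The paper never leaves the configuration space: it invokes the closability of the discrete gradient (a standard consequence of the Campbell--Mecke formula), applies it with $\bar f=\ind_A$ and $\bar g=0$ for a $\prob_{\eta}$-null set $A$, concludes that the map $(\phi,\bm y)\mapsto\phi+\delta_{\bm y}$ pushes $\prob_{\eta}\otimes\mu$ onto a measure absolutely continuous with respect to $\prob_{\eta}$, and then specializes to events depending only on $\m(\phi)$. You instead work directly on $\Eos$: you split according to $\bm y\le\bm Z$ versus $\bm y\nleq\bm Z$, dispose of the first piece by the pointwise convention $0\cdot\infty=0$, and control the record piece by the Mecke equation together with the combinatorial bound $R(\eta)\le d$ on the number of strict coordinate records. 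Every step of yours is sound: the reduced-form Mecke application is legitimate for non-negative measurable integrands, and the injectivity argument (each contributing point is the strict maximizer in some coordinate, and each coordinate admits at most one strict maximizer) does give $R(\eta)\le d$.

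One remark, though: the step you single out as the crux is not actually needed. After Mecke, the sum side is $\esp\big[\ind_A(\bm Z)\,R(\eta)\big]$, and since $\ind_A(\bm Z)=0$ almost surely and $R(\eta)\ge 0$, this expectation vanishes by the very same convention $0\cdot\infty=0$ you used for the other piece; no bound on $R$ is required, so the ``naive'' computation you dismiss does work. In fact the splitting itself can be skipped: applying the Mecke equation to $g(\bm y,\psi)=\ind_A\big(\m(\psi)\oplus\bm y\big)$ and using \eqref{3_cup_oplus} gives at once
\[
  T_{*}(\pms_{1,\nu}\otimes\mu)(A)=\esp\Big[\sum_{\bm y\in\eta}\ind_A\big(\m(\eta)\big)\Big]=\esp\big[\ind_A(\bm Z)\,\eta(\Eo)\big]=0,
\]
which is precisely the content of the closability lemma the paper cites. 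What your longer route buys in exchange is a quantitative refinement absent from the paper: your bound shows that the restriction of $T_{*}(\pms_{1,\nu}\otimes\mu)$ to the record event $\{\bm y\nleq\bm Z\}$ is dominated by $d\,\pms_{1,\nu}$, i.e.\ admits a Radon--Nikodym density bounded by $d$, whereas the paper's argument (and the one-line version above) is purely qualitative.
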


\begin{proof}
  It is well-known that the discrete gradient is \textit{closable}: if
  $\bar{f}, \bar{g} : \mathfrak{N}_{\Eo} \to \R$ are two functions on the space
  of configurations of $\Eo$, then the implication
  \[
    \bar{f}(\phi) = \bar{g}(\phi) \dif\prob_{\eta}(\phi)-\text{a.s.} \Longrightarrow D_{\bm{y}}\bar{f}(\phi) = D_{\bm{y}}\bar{g}(\phi) \dif(\prob_{\eta}\otimes \mu)(\phi, \bm{y})-\text{a.s.}
  \]
  holds, where $\prob_{\eta}$ denotes the distribution of a Poisson process
  $\eta$ on $\Eo$ with intensity measure $\mu$. This is a consequence of the
  Campbell-Mecke formula, see \cite{Decreusefond22} for instance. One deduces
  that if $\bar{f}(\phi) = \bar{g}(\phi)$ $\dif\prob_{\eta}(\phi)$-a.s., then
  $\bar{f}(\phi + \delta_{\bm{y}}) = \bar{g}(\phi + \delta_{\bm{y}}) \dif(\prob_{\eta}\otimes \mu)(\phi, \bm{y})-\text{a.s.}$.
  Taking $\bar{f} = \ind_{A}$, with $\prob_{\eta}(A) = 0$, and $\bar{g} = 0$,
  one infers that
  \[
    \int_{\Eo}\esp\big[\ind_{A}(\eta + \delta_{\bm{y}})\big] \dif\mu(\bm{y}) = 0,
  \]
  \textit{i.e.} that $T'_{*}(\prob_{\eta} \otimes \mu)$ is absolutely continuous
  with respect to $\prob_{\eta}$, with
  $T'(\phi, \bm{y}) \coloneq \phi + \delta_{\bm{y}}$. Specializing this result
  to functionals of the form $f = \bar{f} \circ \m$ and events $A$ depending
  only on $\m(\phi)$, we get the announced statement.
\end{proof}

\begin{lemma}
  Let $f \in \L^{\infty}(\pms_{1,\nu})$. Then
  $(\bm{x}, \bm{y}) \mapsto f(\bm{x} \oplus \bm{y})$ is bounded
  $(\pms_{1,\nu} \otimes \mu)$-a.s. by
  $\Vert f \Vert_{\L^{\infty}(\pms_{1,\nu})}$. Consequently, the integral
  \[
    \int_{\Eo}\big\vert f(\bm{x}\oplus \bm{y}) - f(\bm{x})\big\vert\dif\mu(\bm{y})
  \]
  is finite $\pms_{1,\nu}$-a.s and belongs to $\L^{p}(\pms_{1,\nu})$ for every
  $p \in [1, +\infty)$.
\end{lemma}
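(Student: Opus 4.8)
The plan is to transfer the essential-supremum bound on $f$ along the map $T(\bm{x}, \bm{y}) = \bm{x} \oplus \bm{y}$ and then to exploit the fact that the increment $f(\bm{x}\oplus\bm{y}) - f(\bm{x})$ vanishes as soon as $\bm{y} \leq \bm{x}$, so that the domain of integration collapses to $[\bm{0}, \bm{x}]^{c}$, which carries finite $\mu$-mass for $\bm{x} \in \Eos$.

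First I would establish the boundedness assertion. Writing $N \coloneq \lb \bm{z} \in \Eos : \vert f(\bm{z})\vert > \Vert f \Vert_{\L^{\infty}(\pms_{1,\nu})}\rb$, the definition of the essential supremum gives $\pms_{1,\nu}(N) = 0$. The previous lemma states that $T_{*}(\pms_{1,\nu}\otimes\mu)$ is absolutely continuous with respect to $\pms_{1,\nu}$, hence $T^{-1}(N)$ is $(\pms_{1,\nu}\otimes\mu)$-null; this is exactly the statement that $\vert f(\bm{x}\oplus\bm{y})\vert \leq \Vert f \Vert_{\L^{\infty}(\pms_{1,\nu})}$ for $(\pms_{1,\nu}\otimes\mu)$-almost every $(\bm{x}, \bm{y})$.

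Next, a Fubini argument applied to the null set $T^{-1}(N)$ yields, for $\pms_{1,\nu}$-almost every $\bm{x}$, the bound $\vert f(\bm{x}\oplus\bm{y})\vert \leq \Vert f \Vert_{\L^{\infty}(\pms_{1,\nu})}$ for $\mu$-almost every $\bm{y}$. Combining this with $\vert f(\bm{x})\vert \leq \Vert f \Vert_{\L^{\infty}(\pms_{1,\nu})}$ $\pms_{1,\nu}$-a.s.\ and the triangle inequality gives $\vert f(\bm{x}\oplus\bm{y}) - f(\bm{x})\vert \leq 2\Vert f \Vert_{\L^{\infty}(\pms_{1,\nu})}$. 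Since $\bm{x}\oplus\bm{y} = \bm{x}$ whenever $\bm{y} \leq \bm{x}$, the integrand is supported on $[\bm{0}, \bm{x}]^{c}$, whence
\[
  \int_{\Eo}\big\vert f(\bm{x}\oplus\bm{y}) - f(\bm{x})\big\vert\dif\mu(\bm{y}) \leq 2\Vert f \Vert_{\L^{\infty}(\pms_{1,\nu})}\,\mu[\bm{0}, \bm{x}]^{c}.
\]
The first lemma of the excerpt ensures $\mu[\bm{0}, \bm{x}]^{c} < +\infty$ precisely on $\Eos$, which is the support of $\pms_{1,\nu}$, so the integral is finite $\pms_{1,\nu}$-almost surely.

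Finally, taking $\L^{p}(\pms_{1,\nu})$ norms in the displayed inequality bounds the $\L^{p}$ norm of the integral by $2\Vert f \Vert_{\L^{\infty}(\pms_{1,\nu})}\,\esp\big[(\mu[\bm{0}, \bm{Z}]^{c})^{p}\big]^{1/p}$, with $\bm{Z} \sim \mathcal{MS}(1,\nu)$; this is finite for integer $p$ directly by Lemma~\ref{prelim_esp_mu}, and for arbitrary $p \in [1, +\infty)$ by monotonicity of $\L^{p}$-norms, bounding by the $\lceil p \rceil$-th moment. The only genuinely delicate point is the transfer of the essential bound in the first step, which rests entirely on the absolute-continuity lemma established just above; everything else is a routine consequence of the increment vanishing off $[\bm{0}, \bm{x}]^{c}$ together with the moment estimates of Lemma~\ref{prelim_esp_mu}.
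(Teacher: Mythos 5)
Your proof is correct and takes essentially the same route as the paper's: the a.s.\ boundedness of $(\bm{x},\bm{y}) \mapsto f(\bm{x}\oplus\bm{y})$ is transferred through the absolute-continuity lemma, the increment is observed to vanish off $[\bm{0},\bm{x}]^{c}$, and the moment bounds of Lemma~\ref{prelim_esp_mu} on $\mu[\bm{0},\bm{Z}]^{c}$ close the argument. The only cosmetic differences are that the paper runs a Jensen inequality on the normalized restriction of $\mu$ where you use the direct constant bound $2\Vert f\Vert_{\L^{\infty}(\pms_{1,\nu})}$ on the support (both yield the identical estimate $2^{p}\Vert f\Vert_{\L^{\infty}(\pms_{1,\nu})}^{p}\esp\big[(\mu[\bm{0},\bm{Z}]^{c})^{p}\big]$), and that you are in fact more careful than the paper in passing from the integer moments of Lemma~\ref{prelim_esp_mu} to arbitrary $p \in [1,+\infty)$.
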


\begin{proof}
  The previous lemma implies that if $f$ is bounded $\pms_{1,\nu}$-a.s. by
  $\Vert f \Vert_{\L^{\infty}(\pms_{1, \nu})}$, then so is
  $(\bm{x}, \bm{y}) \mapsto f(\bm{x} \oplus \bm{y})$
  $(\pms_{1,\nu} \otimes \mu)$-a.s., hence the first part of the result. By
  Jensen's inequality
  \begin{align*}\label{4_Dn_inequality}
    \esp\Big[\Big(\int_{\Eo} \big\vert f(\bm{Z} \oplus \bm{y}) - f(\bm{Z})\big\vert & \dif\mu(\bm{y})\Big)^{p}\Big]                                                                                                                                         \\ &\leq \int_{\Eo} \esp\big[(\mu[\bm{0}, \bm{Z}]^{c})^{p-1}\big\vert f(\bm{Z} \oplus \bm{y}) - f(\bm{Z})\big\vert^{p}\big]\dif\mu(\bm{y})\\
                                                                                    & = \int_{\Eo} \esp\big[(\mu[\bm{0}, \bm{Z}]^{c})^{p-1}\big\vert f(\bm{Z} \oplus \bm{y}) - f(\bm{Z})\big\vert^{p} \ind_{\lb \bm{y} \nleq \bm{Z}\rb}\big]\dif\mu(\bm{y}) \\
                                                                                    & \leq 2^{p}\Vert f \Vert_{\L^{\infty}(\pms_{1,\nu})}^{p}\esp\big[(\mu[\bm{0},\bm{Z}]^{c})^{p} \big], \numberthis
  \end{align*}
  and we know that the last expectation is finite thanks to lemma
  \ref{prelim_esp_mu}.
\end{proof}

\begin{proposition}
  For $f \in \L^{\infty}(\pms_{1,\nu})$ and $\bm{x}\in \Eos$, set:
  \begin{align*}
    \Dn f(\bm{x}) \coloneq \int_{\Eo}\big(f(\bm{x}\oplus \bm{y}) - f(\bm{x})\big)\dif\mu(\bm{y}) = \int_{\Epol}\big(f(\bm{x}\oplus r\bm{u}) - f(\bm{x})\big)\frac{1}{r^{2}}\dif r \dif\nu(\bm{u}).
  \end{align*}
  The operator $\Dn$ is continuous from $\L^{\infty}(\pms_{1,\nu})$ to
  $\L^{p}(\pms_{1, \nu})$ for every $p \in [1, \infty)$:
  \[
    \Vert \Dn f\Vert_{\L^{p}(\pms_{1,\nu})} \leq 2\Vert \mu[\bm{0}, \cdot]^{c}\Vert_{\L^{p}(\pms_{1,\nu})}\Vert f\Vert_{\L^{\infty}(\pms_{1,\nu})}.
  \]
  It admits the following alternative expressions on $\cilog$.
  \begin{align}
    \Dn f(\bm{x}) & = \int_{\Eo} \big\langle \bm{y},\nabla f(\bm{x}\oplus \bm{y})\big\rangle_{\bm{x}} \dif\mu(\bm{y}),\label{4_Dn_alt}                         \\
                  & = \sum_{j=1}^{d}\int_{\lb ru^{j} > x^{j}\rb} u^{j}\partial_{j}f(\bm{x}\oplus r\bm{u})\frac{1}{r}\dif r\dif \nu(\bm{u})\label{4_Dn_alt_pol}
  \end{align}
  where for all $\bm{x} \in \Eos$ and $\bm{y}, \bm{z} \in \Eo$
  \[
    \langle \bm{y}, \bm{z} \rangle_{\bm{x}} \coloneq \sum_{j=1}^{d}y^{j}z^{j}\ind_{\lb y^{j} > x^{j} \rb}
  \]
  and $\lb ru^{j} > x^{j}\rb$ is the subset of $\Epol$ of $(r, \bm{u})$ such
  that $ru^{j} > x^{j}$.
\end{proposition}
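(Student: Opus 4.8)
The plan is to handle the continuity estimate and the two alternative expressions by separate arguments: the former rests on a pointwise $\L^{\infty}$ bound, while the latter follows from an integration by parts in the radial variable of the polar decomposition.

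For the norm estimate, the decisive observation is that the integrand of $\Dn f$ vanishes on the box $[\bm{0},\bm{x}]$: if $\bm{y}\le\bm{x}$ then $\bm{x}\oplus\bm{y}=\bm{x}$, so $f(\bm{x}\oplus\bm{y})-f(\bm{x})=0$, and the integral reduces to one over $[\bm{0},\bm{x}]^{c}$. On that set I bound both $f(\bm{x}\oplus\bm{y})$ and $f(\bm{x})$ by $\Vert f\Vert_{\L^{\infty}(\pms_{1,\nu})}$ --- legitimate $(\pms_{1,\nu}\otimes\mu)$-almost everywhere by the preceding lemma --- to get the pointwise inequality $\vert\Dn f(\bm{x})\vert\le 2\Vert f\Vert_{\L^{\infty}(\pms_{1,\nu})}\,\mu[\bm{0},\bm{x}]^{c}$. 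Taking the $\L^{p}(\pms_{1,\nu})$ norm and invoking Lemma~\ref{prelim_esp_mu} for the finiteness of $\Vert\mu[\bm{0},\cdot]^{c}\Vert_{\L^{p}(\pms_{1,\nu})}$ yields the displayed bound; continuity then follows at once from the linearity of $\Dn$.

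For the alternative expressions I would work in polar coordinates and establish \eqref{4_Dn_alt_pol} first, since \eqref{4_Dn_alt} is merely its rewriting via $\dif\mu(\bm{y})=r^{-2}\dif r\dif\nu(\bm{u})$ together with the identity $\langle\bm{y},\nabla f(\bm{x}\oplus\bm{y})\rangle_{\bm{x}}=\sum_{j}ru^{j}\partial_{j}f(\bm{x}\oplus r\bm{u})\ind_{\{ru^{j}>x^{j}\}}$. Fixing $\bm{u}\in\S_{+}^{d-1}$ and $\bm{x}\in\Eos$, I set $g(r)\coloneq f(\bm{x}\oplus r\bm{u})$. Since $f\in\mathcal{C}^{1}(\Eos)$ and $r\mapsto\bm{x}\oplus r\bm{u}$ is continuous and piecewise $\mathcal{C}^{1}$ (with kinks only at the finitely many $r=x^{j}/u^{j}$), $g$ is absolutely continuous with $g'(r)=\sum_{j=1}^{d}u^{j}\partial_{j}f(\bm{x}\oplus r\bm{u})\ind_{\{ru^{j}>x^{j}\}}$, because $\tfrac{d}{dr}\max(x^{j},ru^{j})=u^{j}\ind_{\{ru^{j}>x^{j}\}}$. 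Integrating by parts in the inner integral $\int_{0}^{\infty}(g(r)-g(0))r^{-2}\dif r$ against $\dif(-r^{-1})$ produces $\int_{0}^{\infty}r^{-1}g'(r)\dif r$, which after integration against $\nu$ is precisely \eqref{4_Dn_alt_pol}.

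The main work --- and the step where the log-Lipschitz hypothesis enters decisively --- is justifying this integration by parts. As $r\to0^{+}$ the boundary term vanishes because, $\bm{x}\in\Eos$ having strictly positive coordinates, $ru^{j}<x^{j}$ for all $j$ once $r$ is small, whence $g(r)=g(0)$. As $r\to\infty$ the term $r^{-1}(g(r)-g(0))$ vanishes since log-Lipschitzness gives $\vert g(r)-g(0)\vert\le C\Vert\log(\bm{x}\oplus r\bm{u})-\log\bm{x}\Vert_{1}=O(\log r)$. For the integrability of the resulting integrand I use the characterization \eqref{4_cilog_def}, i.e.\ $x^{j}\vert\partial_{j}f(\bm{x})\vert\le C$: on $\{ru^{j}>x^{j}\}$ this gives $r^{-1}u^{j}\vert\partial_{j}f(\bm{x}\oplus r\bm{u})\vert\le C r^{-2}$, integrable in $r$ over $(x^{j}/u^{j},\infty)$ and, after integration against $\nu$ using the moment constraint $\int u^{j}\dif\nu=1$, finite. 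This simultaneously legitimizes the boundary analysis and the Fubini interchange of the $r$-integration, the finite sum over $j$, and the $\nu$-integration, completing the identification of both \eqref{4_Dn_alt} and \eqref{4_Dn_alt_pol}.
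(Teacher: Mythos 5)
Your proof is correct and follows essentially the same route as the paper's: the norm bound comes from the pointwise estimate $\vert \Dn f(\bm{x})\vert \le 2\Vert f\Vert_{\L^{\infty}(\pms_{1,\nu})}\mu[\bm{0},\bm{x}]^{c}$ (resting on the preceding almost-everywhere boundedness lemma, exactly as in the paper's inequality \eqref{4_Dn_inequality}), and the alternative expressions come from the polar decomposition plus an integration by parts in the radial variable, with the lower boundary term killed by $\bm{x}\in\Eos$ and the upper one by log-Lipschitzness. The only differences are cosmetic --- you establish \eqref{4_Dn_alt_pol} before \eqref{4_Dn_alt} rather than after, and you spell out the boundary-term and integrability justifications that the paper leaves implicit.
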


\begin{proof}
  The continuity of $\Dn$ is a straightforward consequence of
  \eqref{4_Dn_inequality}. This operator can be expressed as an integral on
  either $\Eo$ and $\Epol$ thanks to the polar decomposition. Using the latter
  we find:
  \begin{align*}
    \Dn f(\bm{x}) & = \int_{\lb r\bm{u} \nleq \bm{x} \rb}\big(f(\bm{x}\oplus r\bm{u}) - f(\bm{x})\big)\frac{1}{r^{2}}\dif r\dif \nu(\bm{u})                           \\
                  & = \int_{\S_{+}^{d-1}}\int_{\min\frac{\bm{x}}{\bm{u}}}^{\infty}\big(f(\bm{x}\oplus r\bm{u}) - f(\bm{x})\big)\frac{1}{r^{2}}\dif r\dif \nu(\bm{u}).
  \end{align*}
  because $r\bm{u}$ is not less than $\bm{x}$ only if $r$ is greater than the
  smallest coordinate of $\bm{x}/\bm{u}$. An integration-by-parts yields:
  \begin{align*}
    \int_{\S_{+}^{d-1}}\int_{\min\frac{\bm{x}}{\bm{u}}}^{\infty}\big(f(\bm{x}\oplus r\bm{u}) - f(\bm{x})\big)\frac{1}{r^{2}}\dif r\dif \nu(\bm{u}) & = \int_{\S_{+}^{d-1}}\int_{\min\frac{\bm{x}}{\bm{u}}}^{\infty} \big\langle \bm{u},\nabla f(\bm{x} \oplus r\bm{u})\big\rangle_{\bm{x}} \frac{1}{r}\dif r\dif \nu(\bm{u}) \\
                                                                                                                                                   & = \int_{\lb r\bm{u} \nleq \bm{x} \rb} \big\langle r\bm{u},\nabla f(\bm{x}\oplus r\bm{u})\big\rangle_{\bm{x}} \frac{1}{r^{2}}\dif r\dif \nu(\bm{u})                      \\
                                                                                                                                                   & = \int_{\Epol} \big\langle r\bm{u},\nabla f(\bm{x}\oplus r\bm{u})\big\rangle_{\bm{x}} \frac{1}{r^{2}}\dif r\dif \nu(\bm{u}) \numberthis                                 \\
                                                                                                                                                   & = \int_{\Eo} \big\langle \bm{y},\nabla f(\bm{x}\oplus \bm{y})\big\rangle_{\bm{x}} \dif\mu(\bm{y}),
  \end{align*}
  hence equality \eqref{4_Dn_alt}.

  The second alternative expression \eqref{4_Dn_alt_pol} of $\Dn$ is an
  immediate consequence of \eqref{4_Dn_alt}, as well of the definition of
  $\langle \cdot,\cdot \rangle_{\bm{x}}$ and of the Euclidean inner product.
\end{proof}

\begin{example}

  Set $h_{\bm{z}} \coloneq \ind_{(-\bm{\infty}, \bm{z}]}$, for
  $\bm{z} \in \Eos$. This function belongs to $\L^{\infty}(\pms_{1,\nu})$ for
  every choice of angular measure $\nu$ and is a \textit{character} for the max
  operation $\oplus$ (see \cite{Davydov08}):
  \[
    h_{\bm{z}}(\bm{x}\oplus \bm{y}) = h_{\bm{z}}(\bm{x})h_{\bm{z}}(\bm{y}).
  \]
  Let $\bm{Z} \sim \mathcal{MS}(1, \nu)$. The function $h_{\bm{z}}$ satisfies
  \[
    \Dn h_{\bm{z}}(\bm{x}) = -h_{\bm{z}}(\bm{x})\int_{\Eo}\big(1 - \ind_{(-\bm{\infty}, \bm{z}]}(\bm{y})\big) \dif\nu(\bm{y}) = -\mu[\bm{0}, \bm{z}]^{c}h_{\bm{z}}(\bm{x}),
  \]
  so that $h_{\bm{z}}$ is an eigenfunction of $\Dn$, with associated eigenvalue
  $\lambda_{z} = -\mu[\bm{0}, \bm{z}]^{c} \leq 0$.
  % On the other hand, $h_{\bm{z}}$ is "almost" an eigenfunction of $\Pn_{t}$:
  % \[
  %   \Pn_{t}h_{\bm{z}}(\bm{x}) = F_{\bm{Z}}(\bm{z})^{1-e^{-t}}\ind_{(-\bm{\infty}, e^{t}\bm{z}]}(\bm{x}) = F_{\bm{Z}}(\bm{z})^{1-e^{-t}}h_{e^{t}\bm{z}}(\bm{x}).
  % \]
\end{example}

Depending on the angular measure, the regularity of $\Dn f$ changes drastically,
even for smooth $f$, as the next examples show.

\begin{example} For the sake of clarity, assume that the reference norm is the
  infinity norm $\Vert\cdot\Vert_{\infty}$ on $\R^{d}$ and $f \in \cic$.
  \begin{itemize}
    \item[-] In the case of complete independence,
          $\nu = \sum_{j=1}^{d}\delta_{\bm{e}_{j}}$, where $\bm{e}_{j}$ is the
          $j$-th vector of the canonical basis of $\R^{d}$, so that:
          \begin{align*}
            \Dn f(\bm{x}) & = \sum_{j=1}^{d}\int_{0}^{\infty}\big(f(\bm{x} \oplus r\bm{e}_{j}) - f(\bm{x})\big)\frac{1}{r^{2}}\dif r           \\
                          & = \sum_{j=1}^{d}\int_{x^{j}}^{\infty} \partial_{j}f(\bm{x} \oplus r\bm{e}_{j})\frac{1}{r}\dif r,\ \bm{x} \in \Eos.
          \end{align*}
          Notice that $\Dn f$ is still infinitely differentiable with respect to
          each $x^{j}$. This stems from the specific shape of the angular
          measure.

    \item[-] On the other hand, in the case of complete dependence,
          \textit{i.e.} $\nu = \delta_{\bm{1}}$, we get:
          \begin{align*}
            \Dn f(\bm{x}) & = \int_{\min \bm{x}}^{\infty}\big(f(\bm{x} \oplus r\bm{1}) - f(\bm{x})\big)\frac{1}{r^{2}} \dif r              \\
                          & = \sum_{j=1}^{d}\int_{x^{j}}^{\infty} \partial_{j}f(\bm{x} \oplus r\bm{1})\frac{1}{r}\dif r,\ \bm{x} \in \Eos,
          \end{align*}
          where $\bm{1} = (1,\dots,1)$. $\Dn f$ remains differentiable with
          respect to each $x^{j}$ once but not more in general.
  \end{itemize}
\end{example}

It is well-known that the Gaussian Ornstein-Uhlenbeck semi-group
$(P_{t})_{t\geq 0}$ satisfies the following commutation rule:
\begin{align}\label{4_Bakry}
  \nabla P_{t}f(\bm{x}) = e^{-t}P_{t}\nabla f(\bm{x}),\ \bm{x} \in \R^{d},\ t \geq 0
\end{align}
where $f$ belongs to (say) the Schwartz class $\mathcal{S}(\R^{d})$ and $\nabla$
denotes the gradient operator. A similar relation holds true for the MSOU
semi-group, although the gradient is replaced with the operator $\Dn$.

\begin{proposition}
  The operator $\Dn$ satisfies the following.
  \begin{enumerate}
    \item (Commutation rule) For all $f \in \L^{\infty}(\pms_{1},\nu)$, we have:
          \begin{align}\label{4_comm_Frcht_d}
            \Dn \Pn_{t}f = e^{-t}\Pn_{t}\Dn f,\ t \geq 0.
          \end{align}
    \item (Infinitesimal commutation rule) For all
          $f \in \mathcal{C}^{1}_{c}(\Eo)$, the following identity holds true
          \begin{align}\label{4_inf_comm_Frcht_d} [\Ln, \Dn]f = \Dn f,
          \end{align}
          where $[A, B] = A \circ B - B \circ A$ if $A$ and $B$ are two
          operators.

  \end{enumerate}
\end{proposition}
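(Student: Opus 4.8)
The plan is to establish the two identities in turn: first the finite commutation rule \eqref{4_comm_Frcht_d}, and then to read off the infinitesimal rule \eqref{4_inf_comm_Frcht_d} from it by a commutator/differentiation argument.

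For \eqref{4_comm_Frcht_d} I would expand both sides from the definitions of $\Pn_t$ and $\Dn$ and match them. On one side, $\Dn\Pn_t f(\bm x)=\int_{\Eo}\esp\big[f\big(e^{-t}(\bm x\oplus\bm y)\oplus(1-e^{-t})\bm Z\big)-f\big(e^{-t}\bm x\oplus(1-e^{-t})\bm Z\big)\big]\dif\mu(\bm y)$. The decisive algebraic observation is that $\oplus$ commutes with scalar multiplication, so $e^{-t}(\bm x\oplus\bm y)=e^{-t}\bm x\oplus e^{-t}\bm y$ and the integrand becomes a function of $e^{-t}\bm y$. The one analytic input is then the homogeneity of the exponent measure \eqref{prelim_homo}, which in the form $\int_{\Eo}g(e^{-t}\bm y)\dif\mu(\bm y)=e^{-t}\int_{\Eo}g(\bm y)\dif\mu(\bm y)$ (obtained from the polar decomposition \eqref{prelim_polar} by the substitution $s=e^{-t}r$) lets me pull the scalar $e^{-t}$ out of the $\bm y$-integral. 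On the other side, $e^{-t}\Pn_t\Dn f(\bm x)=e^{-t}\,\esp\big[\int_{\Eo}\big(f(e^{-t}\bm x\oplus(1-e^{-t})\bm Z\oplus\bm y)-f(e^{-t}\bm x\oplus(1-e^{-t})\bm Z)\big)\dif\mu(\bm y)\big]$, and after interchanging $\esp$ and $\int\dif\mu$ by Fubini and invoking the commutativity of $\oplus$ the two expressions coincide. The absolute integrability required both for the change of variables and for Fubini is exactly what the lemma preceding the definition of $\Dn$ supplies, namely that the inner integral lies in $\L^1(\pms_{1,\nu})$ whenever $f\in\L^\infty$.

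For the infinitesimal rule I would first reduce the computation to its algebraic core. Writing $\mathcal A f(\bm x)\coloneq\langle\bm x,\nabla f(\bm x)\rangle$, formula \eqref{4_L} reads $\Ln=-\mathcal A+\Dn$, so $[\Ln,\Dn]=[-\mathcal A+\Dn,\Dn]=[\Dn,\Dn]-[\mathcal A,\Dn]=[\Dn,\mathcal A]$; this is convenient precisely because the $\Dn^{2}$ terms cancel formally and never need to be controlled. It then suffices to prove $[\Dn,\mathcal A]f=\Dn f$, that is $\Dn(\mathcal A f)-\mathcal A(\Dn f)=\Dn f$. I would get this from an exact intertwining of $\Dn$ with the dilation group $S_\lambda f\coloneq f(\lambda\,\cdot)$: rerunning the change-of-variables computation of the previous paragraph with $\lambda$ in place of $e^{-t}$ gives $\Dn S_\lambda=\lambda\,S_\lambda\Dn$ for every $\lambda>0$. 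Since $\mathcal A$ is the generator of $(S_\lambda)$ at $\lambda=1$, i.e. $\tfrac{\dif}{\dif\lambda}\big|_{\lambda=1}S_\lambda f=\mathcal A f$, differentiating $\Dn S_\lambda f=\lambda S_\lambda\Dn f$ at $\lambda=1$ yields $\Dn\mathcal A f=\Dn f+\mathcal A\Dn f$, which is exactly $[\Dn,\mathcal A]f=\Dn f$. The same conclusion can alternatively be reached by differentiating \eqref{4_comm_Frcht_d} at $t=0$, but the dilation route is cleaner since it never invokes $\dom(\Ln)$.

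The main obstacle is the analytic bookkeeping rather than the algebra. Differentiating under the $\mu$-integral at $\lambda=1$ must be justified, and here the hypothesis $f\in\mathcal C^{1}_{c}(\Eo)$ is what saves the day: the compact support of $f$ confines the effective range of the $\bm y$-integration to a bounded region and furnishes a $\mu$-integrable dominating function for $\partial_\lambda\big(f(\lambda\bm x\oplus\lambda\bm y)-f(\lambda\bm x)\big)$ uniformly for $\lambda$ near $1$. One must also keep in mind that $\Dn f$ need not be bounded near the boundary of $\Eos$ (as the complete-dependence example already indicates), which is the reason I route the argument through the reduction $[\Ln,\Dn]=[\Dn,\mathcal A]$ that avoids ever forming $\Dn^{2}f$; the surviving terms $\mathcal A\Dn f$ and $\Dn\mathcal A f$ are then handled pointwise on $\Eos$ using the differentiability of $\Dn f$ and the alternative representation \eqref{4_Dn_alt} established earlier.
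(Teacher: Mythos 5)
Your proof of the commutation rule \eqref{4_comm_Frcht_d} is correct and follows essentially the paper's own route: the paper expands $\Dn\Pn_t f$ and $\Pn_t\Dn f$, interchanges $\esp$ and the $\mu$-integral by Fubini, and absorbs the factor $e^{-t}$ through the radial change of variable $r\mapsto e^{-t}r$ in polar coordinates, which is exactly your homogeneity identity $\int_{\Eo}g(e^{-t}\bm y)\dif\mu(\bm y)=e^{-t}\int_{\Eo}g(\bm y)\dif\mu(\bm y)$ written on $\Eo$ instead of $\Epol$.

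For the infinitesimal rule \eqref{4_inf_comm_Frcht_d} you take a genuinely different route. The paper deduces it in three lines from part 1: by \eqref{4_comm_Frcht_d}, $[\Pn_t,\Dn]=\Pn_t\Dn-\Dn\Pn_t=(1-e^{-t})\Pn_t\Dn$, hence $[\Ln,\Dn]f=\lim_{t\to0^+}t^{-1}[\Pn_t-\mathrm{Id},\Dn]f=\lim_{t\to0^+}\tfrac{1-e^{-t}}{t}\Pn_t\Dn f=\Dn f$; this is short but treats $\Ln$ as a limit of difference quotients, needs strong continuity of $(\Pn_t)_{t\geq0}$ on $\L^2(\pms_{1,\nu})$ together with $\Dn f\in\L^2$, and silently interchanges $\Dn$ with the $t\to0^+$ limit when identifying $\Dn\Ln f$ with $\lim_t t^{-1}\Dn(\Pn_t-\mathrm{Id})f$. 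You instead use the explicit formula \eqref{4_L} to reduce $[\Ln,\Dn]$ to $[\Dn,\mathcal A]$, where $\mathcal Af=\langle\cdot,\nabla f\rangle$, and prove $[\Dn,\mathcal A]=\Dn$ by differentiating the deterministic intertwining $\Dn S_\lambda=\lambda S_\lambda\Dn$ at $\lambda=1$ --- a static version of the same homogeneity mechanism. This buys exactly what you claim (no $\dom(\Ln)$, no semi-group limits), at the price of invoking \eqref{4_L} on $\mathcal C^1_c(\Eo)$, which is legitimate since such $f$ restricted to $\Eos$ lies in $\cilog$. Two points need tightening. First, the cancellation of the $\Dn^2f$ terms is not purely formal: for the expansion $[\Ln,\Dn]f=[\Dn,\mathcal A]f$ to be meaningful, $\Dn(\Dn f)$ must be finite. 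It is, and it deserves one line: since $\bm x\oplus\bm y\geq\bm x$ implies $\mu[\bm 0,\bm x\oplus\bm y]^c\leq\mu[\bm 0,\bm x]^c$, one has $\vert\Dn f(\bm x\oplus\bm y)\vert\leq 2\Vert f\Vert_\infty\,\mu[\bm 0,\bm x]^c$ uniformly in $\bm y$, whence $\vert\Dn(\Dn f)(\bm x)\vert\leq 4\Vert f\Vert_\infty(\mu[\bm 0,\bm x]^c)^2<\infty$ on $\Eos$. Second, compact support of $f$ does \emph{not} confine the $\bm y$-integration to a bounded region: for large $\bm y$ the integrand equals $-f(\lambda\bm x)$, and its $\lambda$-derivative equals $-\langle\bm x,\nabla f(\lambda\bm x)\rangle$, neither of which need vanish. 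What makes domination work is that the integrand vanishes on $\{\bm y\leq\bm x\}$, so the integration is effectively over $[\bm 0,\bm x]^c$ --- unbounded as a set but of finite $\mu$-mass --- combined with the bound $\vert\langle\bm x\oplus\bm y,\nabla f(\lambda(\bm x\oplus\bm y))\rangle\vert\leq C$ furnished by the compact support of $\nabla f$. With these two repairs your argument is complete; note also that the radial differentiability of $\Dn f$ needed to form $\mathcal A\Dn f$ does not have to be assumed, as it falls out of the identity $\Dn S_\lambda f=\lambda S_\lambda\Dn f$ itself, whose left-hand side you have shown to be differentiable in $\lambda$.
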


\begin{proof}

  1. If $f \in \L^{\infty}(\pms_{1,\nu})$, then $\Dn f$ belongs to
  $\L^{p}(\pms_{1, \nu})$ for every $p \in [1, \infty)$, so the composition
  $\Pn_{t}\Dn f$ is well-defined. We find:
  \begin{align*}
    \Pn_{t} & \Dn f(\bm{x})                                                                                                                                                                    \\ &= \esp\Big[(\Dn f)\big(e^{-t}\bm{x}\oplus (1-e^{-t})\bm{Z}\big)\Big]\\
            & = \esp\Big[\int_{\Epol}\Big(f\big(e^{-t}\bm{x}\oplus (1-e^{-t})\bm{Z}\oplus r\bm{u}\big) - f(e^{-t}\bm{x}\oplus (1-e^{-t})\bm{Z})\Big)\frac{1}{r^{2}}\dif r\dif \nu(\bm{u})\Big] \\
            & = \int_{\Epol}\esp\Big[f\big(e^{-t}\bm{x}\oplus r\bm{u}\oplus (1-e^{-t})\bm{Z}\big) - f(e^{-t}\bm{x}\oplus (1-e^{-t})\bm{Z})\Big]\frac{1}{r^{2}}\dif r\dif \nu(\bm{u}).
  \end{align*}
  On the other hand, a change of variable yields:
  \begin{align*}
    \Dn & \Pn_{t}f(\bm{x})                                                                                                                                                                       \\ &= \int_{\Epol}\Big(\Pn_{t}f(\bm{x}\oplus r\bm{u}) - \Pn_{t}f(\bm{x})\Big)\frac{1}{r^{2}}\dif r\dif \nu(\bm{u})\\
        & = \int_{\Epol}\esp\Big[\Big(f\big(e^{-t}(\bm{x}\oplus r\bm{u})\oplus (1-e^{-t})\bm{Z}\big) - f(e^{-t}\bm{x}\oplus (1-e^{-t})\bm{Z})\Big)\frac{1}{r^{2}}\dif r\dif \nu(\bm{u})\Big]     \\
        & = e^{-t}\int_{\Epol}\esp\Big[\Big(f\big(e^{-t}\bm{x}\oplus r\bm{u}\oplus (1-e^{-t})\bm{Z}\big) - f(e^{-t}\bm{x}\oplus (1-e^{-t})\bm{Z})\Big)\frac{1}{r^{2}}\dif r\dif \nu(\bm{u})\Big] \\
        & = e^{-t}\Pn_{t}\Dn f(\bm{x}).
  \end{align*}

  2. Equation \eqref{4_Dn_alt_pol} makes it clear that $\Ln\Dn$ and $\Dn\Ln$ are
  well-defined for every choice of angular measure. The same goes for $\Pn\Dn$
  and $\Dn\Pn$ thanks to the commutation rule. We write
  \begin{align*}
    [\Ln, \Dn] & = \underset{t \to 0^{+}}{\lim}\frac{1}{t}[\Pn_{t} - \mathrm{Id}, \Dn]      \\
               & = \underset{t \to 0^{+}}{\lim}\frac{1}{t}[\Pn_{t}, \Dn]                    \\
               & = \underset{t \to 0^{+}}{\lim}\frac{1}{t}\big(\Pn_{t}\Dn - \Dn\Pn_{t}\big) \\
               & = \underset{t \to 0^{+}}{\lim}\frac{1 - e^{-t}}{t}\Pn_{t}\Dn = \Dn.
  \end{align*}
\end{proof}

The second commutation rule between $\Ln$ and $\Dn$ corresponds to an
"infinitesimal version" of \eqref{4_comm_Frcht_d}, to quote the expression of
\cite{Chafaï06}, page 6.

The operator $\Dn$ is part of a functional characterization of simple max-stable
distributions.

\begin{proposition}

  Let $\bm{Z}$ be a random vector with support in $\Eos$ and whose margins all
  admit a logarithmic moment and a negative first moment:
  \[
    \esp\big[\vert \log Z^{j} \vert\big] < +\infty\ \text{and\
    } \esp\Big[\frac{1}{Z^{j}}\Big] < +\infty,\ j = 1,\dots,d.
  \]
  Then $\bm{Z}$ is a simple max-stable random vector with angular measure $\nu$
  if and only if
  \begin{align}\label{4_Stein_id}
    \esp\big[\langle \bm{Z}, \nabla f(\bm{Z}) \rangle\big] = \esp\big[\Dn f(\bm{Z})\big]
  \end{align}
  for all $f \in \cilog$.
\end{proposition}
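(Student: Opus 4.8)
The key preliminary remark is that, by the generator computation \eqref{4_L}, the operator decomposes as $\Ln f(\bm{x}) = -\langle \bm{x},\nabla f(\bm{x})\rangle + \Dn f(\bm{x})$, so the asserted identity \eqref{4_Stein_id} is exactly $\esp[\Ln f(\bm{Z})] = 0$ for all $f\in\cilog$. The two moment hypotheses are precisely what make both sides finite: $|\langle \bm{x},\nabla f(\bm{x})\rangle|\le dC$ is bounded for $f\in\cilog$, while the polar expression \eqref{4_Dn_alt_pol} combined with the moment constraints \eqref{prelim_moment_constraints} gives $|\Dn f(\bm{x})|\le C\sum_{j}1/x^{j}$, which is $\pms_{\bm{Z}}$-integrable exactly because $\esp[1/Z^{j}]<+\infty$ (the logarithmic moment playing the analogous role for the integrability of $f$ itself).

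The direct implication is then immediate. If $\bm{Z}\sim\mathcal{MS}(1,\nu)$, stationarity (Lemma \ref{4_stationarity+}) gives $\esp[\Pn_{t}f(\bm{Z})]=\esp[f(\bm{Z})]$ for every $t$. Dividing $\Pn_{t}f-f$ by $t$ and letting $t\to 0^{+}$, the convergence holds in $\L^{2}(\pms_{1,\nu})$, hence in $\L^{1}(\pms_{1,\nu})$ since $\pms_{1,\nu}$ is a probability measure; taking expectations yields $\esp[\Ln f(\bm{Z})]=\lim_{t\to 0^{+}}t^{-1}\esp[\Pn_{t}f(\bm{Z})-f(\bm{Z})]=0$, which is \eqref{4_Stein_id}.

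For the converse the plan is a semi-group interpolation carried out \emph{pointwise} on $\Eos$, because one cannot assume a priori that $\pms_{\bm{Z}}$ is absolutely continuous with respect to $\pms_{1,\nu}$. Fix a bounded $g\in\cilog$ and set $\psi(t)\coloneq\esp[\Pn_{t}g(\bm{Z})]$. By Lemma \ref{4_cilog_invar} one has $\Pn_{s}g\in\cilog$ for every $s\ge 0$, so the hypothesis applies to $f=\Pn_{s}g$ and yields $\esp[\Ln\Pn_{s}g(\bm{Z})]=0$ for all $s$. I would then establish the pointwise Duhamel identity $\Pn_{t}g(\bm{x})-g(\bm{x})=\int_{0}^{t}\Ln\Pn_{s}g(\bm{x})\,\mathrm{d}s$ for $\bm{x}\in\Eos$, which follows from the chaos expansion of $\Pn_{s}g$ (first part of the preceding Corollary) exactly as in the derivation of \eqref{4_L}, now read pointwise instead of in $\L^{2}(\pms_{1,\nu})$. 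Integrating against $\pms_{\bm{Z}}$ and interchanging the two integrations gives $\psi(t)-\psi(0)=\int_{0}^{t}\esp[\Ln\Pn_{s}g(\bm{Z})]\,\mathrm{d}s=0$, so $\psi$ is constant. Letting $t\to\infty$ and invoking ergodicity, $\Pn_{t}g(\bm{x})\to\esp_{\pms_{1,\nu}}[g]$ pointwise, and since $g$ is bounded, dominated convergence gives $\esp[g(\bm{Z})]=\esp_{\pms_{1,\nu}}[g]$ for every bounded $g\in\cilog$. Because $f\in\cilog$ if and only if $f\circ\exp$ is Lipschitz on $\R^{d}$, the bounded members of $\cilog$ form a measure-determining family on $\Eos$, so this last equality forces $\pms_{\bm{Z}}=\pms_{1,\nu}$, i.e.\ $\bm{Z}\sim\mathcal{MS}(1,\nu)$.

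The main obstacle is the Fubini interchange in the converse, which must be justified uniformly in $s$ and against the \emph{unknown} law $\pms_{\bm{Z}}$. This is where the quantitative content of Lemma \ref{4_cilog_invar} is decisive: its proof shows $x^{j}|\partial_{j}\Pn_{s}g(\bm{x})|\le C$ with $C$ independent of $s$, whence $|\Ln\Pn_{s}g(\bm{x})|\le dC+C\sum_{j}1/x^{j}$ uniformly for $s\in[0,t]$. This majorant is $\pms_{\bm{Z}}$-integrable precisely by the negative-first-moment hypothesis, and it is exactly what licenses both the pointwise differentiation of $s\mapsto\Pn_{s}g(\bm{x})$ and the exchange of $\int_{0}^{t}$ with $\esp[\,\cdot\,]$; everything else is routine semi-group and dominated-convergence bookkeeping.
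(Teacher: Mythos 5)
Your proof is correct; the converse follows essentially the paper's own strategy, but your direct implication takes a genuinely different route. The paper proves the direct implication by a computation on the Poisson space: it applies the Campbell-Mecke formula to the map $\bm{y}\mapsto\big\langle\bm{y},\nabla g\big(\m(\eta)\oplus\bm{y}\big)\big\rangle_{\m(\eta)}$ together with the representation \eqref{4_Dn_alt}, and identifies $\esp[\Dn g(\bm{Z})]$ with $\esp[\langle\bm{Z},\nabla g(\bm{Z})\rangle]$ by observing which atoms of $\eta$ actually realize the coordinates of $\m(\eta)$. You instead deduce the identity from results already established: the generator theorem \eqref{4_L} (convergence of $t^{-1}(\Pn_{t}f-f)$ in $\L^{2}(\pms_{1,\nu})$, hence in $\L^{1}$) combined with stationarity (Lemma \ref{4_stationarity+}), so that $\esp[\Ln f(\bm{Z})]=0$, which splits into \eqref{4_Stein_id} because the two terms of $\Ln f$ are separately integrable ($\langle\bm{x},\nabla f(\bm{x})\rangle$ bounded, $\vert\Dn f(\bm{x})\vert\le C\sum_{j}1/x^{j}$ via the moment constraint \eqref{prelim_moment_constraints}). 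Both are valid; the paper's argument is self-contained on the Poisson space and exhibits the identity as an avatar of Mecke's characterization, whereas yours is shorter and reuses the semi-group machinery. For the converse, the paper argues exactly as you do: apply the hypothesis to $\Pn_{t}f\in\cilog$ (Lemma \ref{4_cilog_invar}), deduce $\frac{\dif}{\dif t}\esp[\Pn_{t}f(\bm{Z})]=0$, and conclude by ergodicity; but it compresses the analytic justification into the phrase ``a dominated convergence argument''. Your version makes explicit what that argument requires: pointwise differentiability of $s\mapsto\Pn_{s}g(\bm{x})$ (which indeed follows from the chaos expansion read pointwise, since for fixed $\bm{x}\in\Eos$ the remainder is $O(t^{2})$ up to logarithmic factors), the $s$-uniform domination $\vert\Ln\Pn_{s}g(\bm{x})\vert\le dC+C\sum_{j}1/x^{j}$, which is where the hypothesis $\esp[1/Z^{j}]<+\infty$ enters against the \emph{unknown} law of $\bm{Z}$, and a measure-determining class of bounded log-Lipschitz functions to pass from $\esp[g(\bm{Z})]=\esp[g(\bm{Z}')]$, $\bm{Z}'\sim\mathcal{MS}(1,\nu)$, to equality of laws. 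This is a more careful rendering of the paper's idea rather than a different proof, and it is a worthwhile sharpening, since the $\L^{2}(\pms_{1,\nu})$ convergence invoked throughout the paper cannot by itself control expectations under $\pms_{\bm{Z}}$.
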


\begin{proof}
  We start with the direct implication. Let $\bm{Z} \sim \mathcal{MS}(1, \nu)$
  and $\mu$ its exponent measure. We must show \eqref{4_Stein_id} for any $g$
  satisfying the assumptions of the theorem. Let $\eta$ be a Poisson process on
  $\Eo$ with intensity measure $\mu$. By Campbell-Mecke's formula (see
  \cite{Last17}) applied to the mapping
  \[
    \bm{y} \longmapsto \big\langle \bm{y},\nabla g\big(\m(\eta)\oplus \bm{y}\big)\big\rangle_{\m(\eta)}.
  \]
  and identity \eqref{4_Dn_alt}, we see that
  \begin{align*}
    \esp\big[\Dn g(\bm{Z})\big] = \esp\big[\Dn g\big(\m(\eta)\big)\big] & = \int_{\Eo} \esp\Big[\big\langle \bm{y}, \nabla g\big(\m(\eta)\oplus \bm{y}\big)\big\rangle_{\m(\eta)}\Big] \dif \mu(\bm{y})                   \\
                                                                        & = \esp\Big[\int_{\Eo} \big\langle \bm{y}, \nabla g\big(\m(\eta)\oplus \bm{y}\big)\big\rangle_{\m(\eta - \delta_{\bm{y}})}\dif \eta(\bm{y})\Big] \\
                                                                        & = \esp\Big[\int_{\Eo} \big\langle \bm{y}, \nabla g\big(\m(\eta)\big)\big\rangle_{\m(\eta - \delta_{\bm{y}})}\dif \eta(\bm{y})\Big]              \\
                                                                        & = \esp\big[\big\langle \m(\eta),\nabla g\big(\m(\eta)\big)\big\rangle\big] = \esp\big[\langle \bm{Z}, \nabla g(\bm{Z}) \rangle\big],
  \end{align*}
  giving us the announced identity. The penultimate equality comes from the fact
  that there is no point $\bm{y}$ in $\eta$ such that some coordinate of
  $\bm{y}$ is greater than the corresponding one of $\m(\eta)$. The last
  identity follows by observing that for every $j \in \lbra 1,d \rbra$, the only
  $\bm{y} \in \eta$ such that $y^{j}$ is greater than the $j$-th coordinate of
  $\m(\eta - \delta_{(r,\bm{u})})$ for some $j$ correspond to the ones giving
  the $j$-th coordinate of $\m(\eta)$.

  We turn to the reverse implication. The assumption on the marginals of
  $\bm{Z}$ ensures that both sides of \eqref{4_Stein_id} are finite. Notice that
  the latter identity reads also as $\esp[\Ln f(\bm{Z})] = 0$ for
  $f \in \cilog$. A dominated convergence argument then gives that
  \[
    \frac{\dif}{\dif t}\esp\big[\Pn_{t}f(\bm{Z})\big] = \esp\big[\Ln\Pn_{t}f(\bm{Z})\big] = 0
  \]
  thanks to \eqref{4_Stein_id}, because $\Pn_{t}f$ is log-Lipschitz due to
  theorem \ref{4_cilog_invar}. As a result, we deduce that for every $t \geq 0$:
  \[
    \esp\big[\Pn_{t}f(\bm{Z})\big] = \esp[f(\bm{Z})].
  \]
  As $(\Pn_{t})_{t\geq 0}$ is ergodic, its only invariant measure is
  $\prob_{1,\nu}$. This implies that $\bm{Z} \sim \mathcal{MS}(1,\nu)$.
\end{proof}

A pseudo Leibniz rule holds for $\Dn$:

\begin{proposition}[Pseudo Leibniz rule]
  For every $f,g \in \cilog$ and $x \in \Eos$:
  \begin{align}\label{4_pseudo_Leibniz}
    \Dn (fg)(\bm{x}) = \Dn f(\bm{x})g(\bm{x}) & + f(\bm{x})\Dn g(\bm{x})                                                                                            \\
                                              & + \int_{\Eo}\big(f(\bm{x} \oplus \bm{y})-f(\bm{x})\big)\big(g(\bm{x}\oplus \bm{y}) - g(\bm{x})\big)\dif\nu(\bm{y}).
  \end{align}
\end{proposition}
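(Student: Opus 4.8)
The plan is to reduce \eqref{4_pseudo_Leibniz} to a pointwise identity holding \emph{before} integration, and then integrate against $\mu$. For fixed $\bm{x}\in\Eos$ and $\bm{y}\in\Eo$, expanding $f(\bm{x}\oplus\bm{y})g(\bm{x}\oplus\bm{y})-f(\bm{x})g(\bm{x})$ and adding and subtracting the two mixed products yields the corrected Leibniz rule for the $\oplus$-gradient
\[
  D^{\oplus}_{\bm{y}}(fg)(\bm{x}) = D^{\oplus}_{\bm{y}}f(\bm{x})\,g(\bm{x}) + f(\bm{x})\,D^{\oplus}_{\bm{y}}g(\bm{x}) + D^{\oplus}_{\bm{y}}f(\bm{x})\,D^{\oplus}_{\bm{y}}g(\bm{x}).
\]
This is nothing but the usual product rule for the discrete gradient on the configuration space, transported through the relation $\m(\phi+\delta_{\bm{y}})=\m(\phi)\oplus\bm{y}$ of \eqref{3_cup_oplus}, which identifies $D^{\oplus}_{\bm{y}}$ with the Poisson discrete gradient.

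Integrating this identity against $\mu$ over $\Eo$ then gives \eqref{4_pseudo_Leibniz}: the first two summands integrate to $\Dn f(\bm{x})\,g(\bm{x})$ and $f(\bm{x})\,\Dn g(\bm{x})$, which are finite since $\Dn f(\bm{x})$ and $\Dn g(\bm{x})$ are well-defined pointwise for $f,g\in\cilog$ through the alternative expressions \eqref{4_Dn_alt}--\eqref{4_Dn_alt_pol}, while the third summand produces the correction term of \eqref{4_pseudo_Leibniz}, namely $\int_{\Eo}\bigl(f(\bm{x}\oplus\bm{y})-f(\bm{x})\bigr)\bigl(g(\bm{x}\oplus\bm{y})-g(\bm{x})\bigr)\dif\mu(\bm{y})$.

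The one point that requires care is that $fg$ need not belong to $\cilog$: from $\partial_{j}(fg)=(\partial_{j}f)g+f(\partial_{j}g)$ one only gets $x^{j}\vert\partial_{j}(fg)(\bm{x})\vert \le C\bigl(\vert f(\bm{x})\vert+\vert g(\bm{x})\vert\bigr)$, which grows logarithmically, so the mapping properties of $\Dn$ cannot be applied to $fg$ directly and the splitting of the integral must be justified by absolute convergence. I would do this with the log-Lipschitz estimate $\vert D^{\oplus}_{\bm{y}}f(\bm{x})\vert \le C\sum_{j=1}^{d}(\log y^{j}-\log x^{j})^{+}$ together with the polar decomposition $\mu=T_{*}(\rho_{1}\otimes\nu)$, under which $\dif\mu(\bm{y})=r^{-2}\dif r\,\dif\nu(\bm{u})$ for $\bm{y}=r\bm{u}$. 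Each $\oplus$-gradient vanishes unless $\bm{y}\nleq\bm{x}$, so every integrand is supported on $r>\min_{j}(x^{j}/u^{j})$, a strictly positive cutoff that removes any singularity near $r=0$ and below which there is nothing to integrate; near the cutoff the integrand is continuous. As $r\to\infty$ each gradient grows only like $\log r$, so the cross term is $O\bigl((\log r)^{2}/r^{2}\bigr)$ and the single-gradient terms are $O(\log r/r^{2})$, all $\rho_{1}$-integrable.

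The main obstacle is thus this convergence/splitting step rather than the algebra: once the polar decomposition and the logarithmic growth estimate are in place, the three integrals converge absolutely, linearity applies, and \eqref{4_pseudo_Leibniz} follows. The same computation also shows as a by-product that $\Dn(fg)$ is well-defined pointwise on $\Eos$ even though $fg\notin\cilog$ in general.
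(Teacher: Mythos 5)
Your proposal is correct and follows essentially the same route as the paper: the pointwise product identity for the max-increment $D^{\oplus}_{\bm{y}}(fg) = (D^{\oplus}_{\bm{y}}f)g + f(D^{\oplus}_{\bm{y}}g) + (D^{\oplus}_{\bm{y}}f)(D^{\oplus}_{\bm{y}}g)$, followed by integration against $\mu$, with absolute convergence of the cross term secured by the log-Lipschitz bound and the polar decomposition — exactly the estimate the paper uses. Your extra care about the splitting (since $fg$ need not lie in $\cilog$) is a point the paper passes over silently but your treatment of it is sound, and note that both you and the paper's proof produce $\dif\mu(\bm{y})$ in the correction term, confirming that the $\dif\nu(\bm{y})$ appearing in the statement is a typo.
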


\begin{proof}
  Thanks to inequality \eqref{4_cilog_def}, we see that there exists $C > 0$
  such that:
  \begin{align*}
    \big\vert f(\bm{x} \oplus \bm{y})-f(\bm{x})\big\vert\big \vert g(\bm{x}\oplus \bm{y}) - g(\bm{x})\big\vert & \leq C\Vert \log (\bm{x}\oplus \bm{y}) - \log \bm{x} \Vert_{1}^{2} \\
                                                                                                               & \leq dC\sum_{j=1}^{d}(\log y^{j} - \log x^{j})^{2}_{+}
  \end{align*}
  The polar decomposition makes it clear that this last function in
  $\mu$-integrable over $\Eo$. Next, an easy computation yields:
  \begin{align*}
    \big(\Dn(fg) - f\Dn g - g\Dn f\big)(\bm{x}) & = \int_{\Eo}\big(f(\bm{x}\oplus \bm{y})g(\bm{x}\oplus \bm{y}) - f(\bm{x})g(\bm{x})\big) \dif\mu(\bm{y})                 \\
                                                & \quad - f(\bm{x})\int_{\Eo}\big(g(\bm{x}\oplus \bm{y}) - g(\bm{x})\big)\dif\mu(\bm{y})                                  \\
                                                & \quad - g(\bm{x})\int_{\Eo}\big(f(\bm{x}\oplus \bm{y}) - f(\bm{x})\big)\dif\mu(\bm{y})                                  \\
                                                & = \int_{\Eo}\big(f(\bm{x} \oplus \bm{y}) - f(\bm{x})\big)\big(g(\bm{x} \oplus \bm{y}) - g(\bm{x})\big) \dif\mu(\bm{y}).
  \end{align*}
\end{proof}

The \textit{carré du champ} operator associated to $(\Pn_{t})_{t\geq 0}$ is
denoted by
\[
  \Gamma_{1, \nu}(f, g) \coloneq \frac{1}{2}\big(\Ln(fg) - f\Ln g - g\Ln f\big),\ f, g \in \cilog.
\]
The corresponding \textit{Dirichlet form} is
\[
  \mathcal{E}_{1, \nu}(f, g) \coloneq \frac{1}{2}\esp\big[\Gamma_{1, \nu}(f, g)(\bm{Z}) \big],\ f, g \in \cilog.
\]
For more about the Bakry-Émery theory, we refer to \cite{Bakry14} and the
references therein.
\begin{lemma}
  We have for $\bm{x} \in \Eos$ and $f, g \in \cilog$:
  \[
    \Gamma_{1, \nu}(f, g)(\bm{x}) = \frac{1}{2}\int_{\Eo}\big(f(\bm{x}\oplus \bm{y}) - f(\bm{x})\big)\big(g(\bm{x}\oplus \bm{y}) - g(\bm{x})\big) \dif\mu(\bm{y}).
  \]
  Consequently, if $\bm{Z} \sim \mathcal{MS}(1, \nu)$, we have:
  \[
    \mathcal{E}_{1, \nu}(f) = \frac{1}{2}\int_{\Eo}\esp\big[\big(f(\bm{Z}\oplus \bm{y}) - f(\bm{Z})\big)^{2}\big] \dif\mu(\bm{y}).
  \]
\end{lemma}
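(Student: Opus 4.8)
The plan is to read off the carré du champ directly from the structure of the generator~\eqref{4_L}, which splits as $\Ln f(\bm{x}) = A f(\bm{x}) + \Dn f(\bm{x})$ with the first-order drift $A f(\bm{x}) \coloneq -\langle \bm{x}, \nabla f(\bm{x})\rangle = -\sum_{j=1}^{d} x^{j}\partial_{j}f(\bm{x})$ and the nonlocal part $\Dn$. First I would observe that $A$ is a genuine derivation: being a first-order differential operator, it obeys the ordinary Leibniz rule $A(fg) = f\,Ag + g\,Af$, so that $A(fg) - f\,Ag - g\,Af = 0$. Hence the drift contributes nothing to the carré du champ, and only the nonlocal part remains to be analysed.

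Next I would invoke the pseudo Leibniz rule~\eqref{4_pseudo_Leibniz}, valid for $f,g \in \cilog$, in the form
\[
  \Dn(fg)(\bm{x}) - f(\bm{x})\Dn g(\bm{x}) - g(\bm{x})\Dn f(\bm{x}) = \int_{\Eo}\big(f(\bm{x}\oplus \bm{y}) - f(\bm{x})\big)\big(g(\bm{x}\oplus \bm{y}) - g(\bm{x})\big)\dif\mu(\bm{y}).
\]
Adding the vanishing drift contribution gives $\Ln(fg) - f\,\Ln g - g\,\Ln f = \int_{\Eo} D^{\oplus}_{\bm{y}}f(\bm{x})\,D^{\oplus}_{\bm{y}}g(\bm{x})\dif\mu(\bm{y})$, and dividing by two yields exactly the announced expression for $\Gamma_{1,\nu}(f,g)(\bm{x})$.

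For the Dirichlet form I would specialize to $g = f$, so that $\Gamma_{1,\nu}(f,f)(\bm{x}) = \tfrac{1}{2}\int_{\Eo}\big(f(\bm{x}\oplus \bm{y}) - f(\bm{x})\big)^{2}\dif\mu(\bm{y})$, then take the expectation under $\bm{Z}\sim \mathcal{MS}(1,\nu)$ prescribed by the definition of $\mathcal{E}_{1,\nu}$ and apply Fubini's theorem to interchange the expectation with the $\mu$-integral, which produces the stated identity. The only point requiring care is the integrability bookkeeping: applying~\eqref{4_pseudo_Leibniz} presupposes the $\mu$-integrability of the product of increments, and the Fubini step needs $\int_{\Eo}\esp\big[\big(D^{\oplus}_{\bm{y}}f(\bm{Z})\big)^{2}\big]\dif\mu(\bm{y}) < +\infty$. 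For $f \in \cilog$ both follow from the log-Lipschitz bound~\eqref{4_cilog_dif} together with the moment estimates of Lemma~\ref{prelim_esp_mu}; in particular its last assertion, controlling $\int_{\Eo}\esp\big[\Vert \log(\bm{Z}\oplus \bm{y})\Vert_{1}^{k}\ind_{\lb \bm{y} \nleq \bm{Z}\rb}\big]\dif\mu(\bm{y})$, is precisely what guarantees finiteness here. Beyond this routine verification the statement is an immediate algebraic consequence of the derivation property of the drift and of the pseudo Leibniz rule, so I anticipate no genuine obstacle.
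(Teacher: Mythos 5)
Your proof is correct and follows essentially the same route as the paper: the same decomposition $\Ln = \mathfrak{d}_{1,d} + \Dn$ with the observation that the dilation drift is a derivation and hence does not contribute, followed by the pseudo Leibniz rule~\eqref{4_pseudo_Leibniz} and evaluation at $\bm{Z}$ for the Dirichlet form. Your added attention to the Fubini step and the integrability guaranteed by the log-Lipschitz bound and Lemma~\ref{prelim_esp_mu} is a welcome refinement of details the paper leaves implicit.
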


\begin{proof}
  The purpose of the \textit{carré du champ} operator is to measure how far the
  generator $\Ln$ is from being a derivation, \textit{i.e.} from satisfying the
  Leibniz rule $(fg)' = fg' + gf'$. We know that
  \begin{align}\label{4_d+D}
    \Ln = \mathfrak{d}_{1, d} + \Dn,
  \end{align}
  where
  $\mathfrak{d}_{\alpha, d}f(\bm{x}) \coloneq -\alpha^{-1}\langle \bm{x}, \nabla f(\bm{x}) \rangle $
  is the generator of the $d$-dimensional dilation semi-group
  $(\mathfrak{p}_{t}^{\alpha, d})_{t\geq 0}$ defined by
  \[
    \mathfrak{p}_{t}^{\alpha, d}f(\bm{x}) \coloneq f\big(e^{-\frac{t}{\alpha}}\bm{x}\big),
  \]
  for every $\alpha \in \R^{*}$. One easily checks that
  $\mathfrak{d}_{\alpha, d}$ is a derivation and thus does not contribute to the
  \textit{carré du champ} operator:
  \begin{align*}
    2\Gamma_{1, \nu}(f, g)(\bm{x}) & = \big(\Ln(fg) - f\Ln g - g\Ln f\big)(\bm{x})                                                                       \\
                                   & = \big((\mathfrak{d}_{1, d} + \Dn)(fg) - f(\mathfrak{d}_{1, d} + \Dn)g - g(\mathfrak{d}_{1, d} + \Dn)f\big)(\bm{x}) \\
                                   & = \big(\Dn(fg) - f\Dn g - g\Dn f\big)(\bm{x}),
  \end{align*}
  which yields the result thanks to \eqref{4_pseudo_Leibniz}. The second
  identity stems from the fact that
  $\mathcal{E}_{1, \nu}(f) = \esp[\Gamma_{1, \nu}(f,f)(\bm{Z})]$.
\end{proof}

We deduce from the previous lemma and Poincaré inequality
\ref{3_Poincaré_max-id} for max-id random variables that $(\Pn_{t})t_{\geq 0}$
satisfies a Poincaré inequality with constant $2$. The exponential convergence
of the semi-group to its stationary measure is equivalent to the Poincaré
inequality, as exposed in \cite{Bakry14}.

\begin{proposition}
  Let $\bm{Z} \sim \mathcal{MS}(1, \nu)$ be a max-stable random vector, and
  $f \in \mathbf{L}^{2}(\pms_{1, \nu})$. Then we have:
  \begin{align*}
    \var\big(f(\bm{Z})\big) \leq 2\mathcal{E}_{1, \nu}(f).
  \end{align*}
  Thus $(\Pn_{t})_{t\geq 0}$ converges exponentially fast to its stationary
  measure $\pms_{1, \nu}$ in $\mathbf{L}^{2}(\pms_{1, \nu})$:
  \begin{align*}
    \Vert \Pn_{t}f - \esp[f(\bm{Z})] \Vert_{\mathbf{L}^{2}(\pms_{1, \nu})} \leq e^{-\frac{t}{2}}\Vert f - \esp[f(\bm{Z})] \Vert_{\mathbf{L}^{2}(\pms_{1, \nu})},
  \end{align*}
  for all $t \geq 0$ and $f \in \cilog$.
\end{proposition}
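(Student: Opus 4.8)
The plan is to handle the two assertions separately: first the spectral-gap inequality, which drops out of results already proved, and then the exponential decay, which I would obtain from it by a Grönwall argument on the evolution of the variance. For the variance bound, I would simply combine the max-id Poincaré inequality \eqref{3_Poincaré_max-id}, specialized to $\bm{\ell} = \bm{0}$ (so that $\El = \Eo$), namely
\[
  \var\big(f(\bm{Z})\big) \leq \int_{\Eo}\esp\big[\big(f(\bm{Z}\oplus \bm{y}) - f(\bm{Z})\big)^{2}\big]\dif\mu(\bm{y}),
\]
valid for every $f \in \mathbf{L}^{2}(\pms_{1,\nu})$, with the preceding lemma, which identifies the right-hand side as exactly $2\mathcal{E}_{1, \nu}(f)$. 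This gives $\var(f(\bm{Z})) \leq 2\mathcal{E}_{1, \nu}(f)$ immediately.

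For the exponential decay I would set $f^{*} \coloneq f - \esp[f(\bm{Z})]$ and study $\psi(t) \coloneq \Vert \Pn_{t}f - \esp[f(\bm{Z})]\Vert^{2}_{\mathbf{L}^{2}(\pms_{1,\nu})}$. By stationarity (Lemma \ref{4_stationarity+}) one has $\Pn_{t}f - \esp[f(\bm{Z})] = \Pn_{t}f^{*}$ with $\esp[\Pn_{t}f^{*}(\bm{Z})]=0$, so $\psi(t) = \var(\Pn_{t}f^{*}(\bm{Z}))$. Since $f^{*} \in \cilog \subseteq \dom(\Ln)$, the orbit $t \mapsto \Pn_{t}f^{*}$ is $\mathbf{L}^{2}$-differentiable with derivative $\Ln\Pn_{t}f^{*}$, and Lemma \ref{4_cilog_invar} keeps $\Pn_{t}f^{*}$ inside $\cilog$, so differentiating yields
\[
  \psi'(t) = 2\big\langle \Pn_{t}f^{*}, \Ln\Pn_{t}f^{*}\big\rangle_{\mathbf{L}^{2}(\pms_{1,\nu})}.
\]
The key algebraic step I would establish next is the identity $\esp[g\Ln g(\bm{Z})] = -\mathcal{E}_{1, \nu}(g)$ for $g \in \cilog$. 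It follows from the pointwise relation $g\Ln g = \tfrac{1}{2}\Ln(g^{2}) - \Gamma_{1, \nu}(g,g)$, itself a consequence of the pseudo-Leibniz rule \eqref{4_pseudo_Leibniz} together with the decomposition $\Ln = \mathfrak{d}_{1, d} + \Dn$ and the fact that $\mathfrak{d}_{1,d}$ is a derivation, provided $\esp[\Ln(g^{2})(\bm{Z})] = 0$; the latter is precisely the Stein identity \eqref{4_Stein_id} applied to $g^{2}$, equivalently the invariance $\esp[\Ln h(\bm{Z})] = 0$ of $\pms_{1,\nu}$.

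Granting this identity, I get $\psi'(t) = -2\mathcal{E}_{1, \nu}(\Pn_{t}f^{*})$. Inserting the Poincaré inequality just proved, applied to the centered function $\Pn_{t}f^{*}$, gives $\psi(t) = \var(\Pn_{t}f^{*}(\bm{Z})) \leq 2\mathcal{E}_{1, \nu}(\Pn_{t}f^{*}) = -\psi'(t)$, that is the differential inequality $\psi'(t) \leq -\psi(t)$. Grönwall's lemma then yields $\psi(t) \leq e^{-t}\psi(0)$, and taking square roots gives the announced bound with rate $e^{-t/2}$.

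The step I expect to be delicate is justifying $\esp[\Ln(g^{2})(\bm{Z})] = 0$ for $g = \Pn_{t}f^{*}$, because $g^{2}$ need not lie in $\cilog$: one checks $x^{j}|\partial_{j}(g^{2})| = 2|g|\,x^{j}|\partial_{j}g|$, which is controlled only when $g$ is bounded, so the Stein identity does not apply verbatim. I would circumvent this by first proving the decay for $f \in \cic \subseteq \cilog$, where $g^{2}$ is again log-Lipschitz and all integrability is transparent, and then extending to arbitrary $f \in \cilog$ (indeed to all of $\mathbf{L}^{2}(\pms_{1,\nu})$) by density of $\cic$ and the fact that both sides of the decay inequality are $\mathbf{L}^{2}$-continuous in $f$, since $\Pn_{t}$ is a contraction. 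I would emphasize that, in contrast with the classical equivalence invoked for symmetric semi-groups in \cite{Bakry14}, this argument never uses self-adjointness of $(\Pn_{t})_{t\geq 0}$: it relies only on the invariance of $\pms_{1,\nu}$ and on the carré du champ, which automatically symmetrizes the generator.
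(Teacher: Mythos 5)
Your proof is correct, and it actually does more than the paper, which prints no proof at all for this proposition. For the bound $\var(f(\bm{Z}))\leq 2\mathcal{E}_{1,\nu}(f)$ you argue exactly as the paper intends: the max-id Poincaré inequality \eqref{3_Poincaré_max-id} with $\bm{\ell}=\bm{0}$, combined with the lemma identifying $\mathcal{E}_{1,\nu}(f)=\esp[\Gamma_{1,\nu}(f,f)(\bm{Z})]=\frac12\int_{\Eo}\esp[(f(\bm{Z}\oplus\bm{y})-f(\bm{Z}))^2]\dif\mu(\bm{y})$. For the exponential decay, however, the paper merely appeals to the Poincaré/exponential-convergence equivalence ``as exposed in \cite{Bakry14}''; that equivalence is established there for \emph{symmetric} Markov semi-groups, while the paper itself proves in a remark that $(\Pn_{t})_{t\geq 0}$ is not self-adjoint, so the citation alone is not airtight. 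Your Grönwall argument supplies precisely the missing justification: writing $f^{*}=f-\esp[f(\bm{Z})]$ and $\psi(t)=\Vert\Pn_{t}f^{*}\Vert^{2}_{\L^{2}(\pms_{1,\nu})}$, the dissipation identity $\psi'(t)=2\langle \Pn_{t}f^{*},\Ln\Pn_{t}f^{*}\rangle_{\L^{2}(\pms_{1,\nu})}=-2\mathcal{E}_{1,\nu}(\Pn_{t}f^{*})$ rests only on the invariance $\esp[\Ln h(\bm{Z})]=0$ (the Stein identity \eqref{4_Stein_id}, or equivalently differentiating Lemma \ref{4_stationarity+}), not on reversibility; together with the Poincaré inequality applied to $\Pn_{t}f^{*}$ this gives $\psi'\leq-\psi$ and the rate $e^{-t/2}$. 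Your handling of the one delicate point is also sound: $g^{2}$ need not be log-Lipschitz for $g\in\cilog$, so you first take $f$ bounded (e.g. $f\in\cic$), for which $g=\Pn_{t}f^{*}$ is bounded and belongs to $\cilog$ by Lemma \ref{4_cilog_invar}, whence $g^{2}\in\cilog\subseteq\dom(\Ln)$, and then extend to all of $\L^{2}(\pms_{1,\nu})$ by density and contractivity of $\Pn_{t}$. Two small remarks: the identity $g\Ln g=\frac12\Ln(g^{2})-\Gamma_{1,\nu}(g,g)$ is simply the definition of the carré du champ, the pseudo-Leibniz rule \eqref{4_pseudo_Leibniz} being needed only to compute $\esp[\Gamma_{1,\nu}(g,g)(\bm{Z})]$ as the Dirichlet form; and you consistently use the normalization $\mathcal{E}_{1,\nu}(f)=\esp[\Gamma_{1,\nu}(f,f)(\bm{Z})]$ of the paper's lemma rather than the extra factor $\frac12$ appearing in the paper's displayed definition of $\mathcal{E}_{1,\nu}$ --- which is indeed the normalization under which both the constant $2$ and the rate $e^{-t/2}$ come out as stated.
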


\subsection{The general case}\label{4_3}

We wish to extend the definition of $\Pn_{t}$ to arbitrary max-stable
distributions. Let us introduce a few notations. First, for
$\Psi : \R^{d} \to \R^{d}$, denote by $T_{\Psi}$ the map defined by
$T_{\Psi}f \coloneq f\circ \Psi$ for all $f:\R^{d}\to \R$. It is clear that if
$\Psi$ is invertible, then $T_{\Psi}^{-1} = T_{\Psi^{-1}}$. Define
\begin{align*}
  \begin{array}{ccccc}
    \psi_{\alpha} & : & x & \longmapsto & \begin{cases}
                                            x^{\alpha} \quad    & \text{if\ } \alpha > 0  \\
                                            \exp x \quad        & \text{if\ } \alpha = 0  \\
                                            (-x)^{\alpha} \quad & \text{if\ } \alpha < 0.
                                          \end{cases}
  \end{array}
\end{align*}
and for $\bm{\alpha} \in \R^{d}$:
\begin{align*}
  \begin{array}{ccccc}
    \Psi_{\bm{\alpha}} & : & \bm{x} & \longmapsto & \big(\psi_{\alpha^{1}}(x^{1}),\dots,\psi_{\alpha^{d}}(x^{d})\big).
  \end{array}
\end{align*}
his transformation is clearly bijective, as well as non-decreasing with respect
to each coordinate. If $\bm{\alpha} = \alpha\bm{1}$ for some $\alpha \in \R$, we
note $\Psi_{\alpha}$ instead of $\Psi_{\alpha\bm{1}}$ for short. We also set
$T_{\alpha} \coloneq T_{\Psi_{\alpha}}$.

With those notations, a basic result in extreme-value theory (proposition 5.10.
in \cite{Resnick87}) can be stated as follows: if $\bm{Z}$ is a max-stable
random vector, then there exists a unique $\bm{\alpha} \in \R^{d}$ such that:
\[
  \Psi_{\bm{\alpha}}(\bm{Z}) \eqdis \mathcal{MS}(1, \nu).
\]
for some angular measure $\nu$. The distribution of $\bm{Z}$ will be denoted by
$\mathcal{MS}(\bm{\alpha}, \nu)$. This notation is consistent with the one
introduced in the preliminaries for simple max-stable random vectors.

An essential property of $T_{\bm{\alpha}}$ is the following.
\begin{proposition}
  For every $\bm{\alpha} \in \R^{d}$ and $p \in [1, +\infty]$, the application
  $T_{\bm{\alpha}}$ is an isometry from $\L^{p}(\pms_{\alpha, \nu})$ to
  $\L^{p}(\pms_{1, \nu})$:
  \[
    \Vert T_{\bm{\alpha}}f \Vert_{\L^{p}(\pms_{\alpha, \nu})} = \Vert f \Vert_{\L^{p}(\pms_{1, \nu})},
  \]
  for every $f \in \L^{p}(\pms_{1, \nu})$.
\end{proposition}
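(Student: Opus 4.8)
The plan is to reduce the whole statement to the elementary transfer (change-of-variables) formula for pushforward measures, the only substantive input being that, by the very definition of $\mathcal{MS}(\bm{\alpha}, \nu)$, the map $\Psi_{\bm{\alpha}}$ transports $\pms_{\alpha,\nu}$ onto $\pms_{1,\nu}$. Concretely, let $\bm{Z} \sim \mathcal{MS}(\bm{\alpha}, \nu)$, so that $\bm{W} \coloneq \Psi_{\bm{\alpha}}(\bm{Z}) \sim \mathcal{MS}(1, \nu)$ has law $\pms_{1,\nu}$; equivalently $(\Psi_{\bm{\alpha}})_{*}\pms_{\alpha,\nu} = \pms_{1,\nu}$. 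Since each coordinate map $\psi_{\alpha^{j}}$ is a strictly monotone bijection of its domain onto $\Rps$, the map $\Psi_{\bm{\alpha}}$ is a measurable bijection with measurable inverse, so a set $A$ is $\pms_{1,\nu}$-null if and only if $\Psi_{\bm{\alpha}}^{-1}(A)$ is $\pms_{\alpha,\nu}$-null. This two-sided null-set correspondence already ensures that $T_{\bm{\alpha}}$ is well defined on equivalence classes: if $f = g$ $\pms_{1,\nu}$-a.s., then $f \circ \Psi_{\bm{\alpha}} = g \circ \Psi_{\bm{\alpha}}$ $\pms_{\alpha,\nu}$-a.s.

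For finite $p$ the identity is then immediate. Writing the $\L^{p}(\pms_{\alpha,\nu})$-norm as an expectation against $\bm{Z} \sim \pms_{\alpha,\nu}$ and applying the transfer formula through $\Psi_{\bm{\alpha}}$,
\[
  \Vert T_{\bm{\alpha}} f \Vert_{\L^{p}(\pms_{\alpha,\nu})}^{p} = \esp\big[\vert f(\Psi_{\bm{\alpha}}(\bm{Z})) \vert^{p}\big] = \esp\big[\vert f(\bm{W}) \vert^{p}\big] = \Vert f \Vert_{\L^{p}(\pms_{1,\nu})}^{p},
\]
since $\bm{W} \sim \pms_{1,\nu}$; taking $p$-th roots gives the stated equality, and in particular $T_{\bm{\alpha}} f \in \L^{p}(\pms_{\alpha,\nu})$ whenever $f \in \L^{p}(\pms_{1,\nu})$.

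For $p = \infty$ I would argue directly from the null-set correspondence rather than from an integral: for every $c \geq 0$ the level set $\lb \vert f \vert > c \rb$ is $\pms_{1,\nu}$-null if and only if its preimage $\Psi_{\bm{\alpha}}^{-1}\lb \vert f \vert > c \rb = \lb \vert T_{\bm{\alpha}} f \vert > c \rb$ is $\pms_{\alpha,\nu}$-null. Hence the two essential suprema coincide, i.e. $\Vert T_{\bm{\alpha}} f \Vert_{\L^{\infty}(\pms_{\alpha,\nu})} = \Vert f \Vert_{\L^{\infty}(\pms_{1,\nu})}$. Finally, because $\Psi_{\bm{\alpha}}$ is bijective, $T_{\bm{\alpha}}$ is invertible with $T_{\bm{\alpha}}^{-1} = T_{\Psi_{\bm{\alpha}}^{-1}}$, so the norm-preserving map is onto and we obtain an isometric isomorphism between the two $\L^{p}$ spaces.

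There is essentially no analytic obstacle here: the entire content is carried by the pushforward relation $(\Psi_{\bm{\alpha}})_{*}\pms_{\alpha,\nu} = \pms_{1,\nu}$ supplied by the definition of $\mathcal{MS}(\bm{\alpha},\nu)$. The only two points deserving a line of care are the $p = \infty$ case, which requires the \emph{two-sided} null-set correspondence and hence the bi-measurability of $\Psi_{\bm{\alpha}}$, and the verification that $T_{\bm{\alpha}}$ descends to equivalence classes; both are handled at once by the fact that $\Psi_{\bm{\alpha}}$ is a coordinatewise strictly monotone bijection.
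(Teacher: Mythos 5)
Your proof is correct: the paper states this proposition without any proof, treating it as an immediate consequence of the definition of $\mathcal{MS}(\bm{\alpha},\nu)$, and your argument via the pushforward relation $(\Psi_{\bm{\alpha}})_{*}\pms_{\bm{\alpha},\nu} = \pms_{1,\nu}$ together with the transfer formula is precisely the intended (and essentially the only natural) route. Your additional care with the $p=\infty$ case and with well-definedness on equivalence classes fills in details the paper leaves implicit; note only that the paper's prose direction (``from $\L^{p}(\pms_{\alpha,\nu})$ to $\L^{p}(\pms_{1,\nu})$'') is inconsistent with its own displayed formula, and your proof correctly follows the formula, mapping $\L^{p}(\pms_{1,\nu})$ into $\L^{p}(\pms_{\bm{\alpha},\nu})$.
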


Using this application, we can extend the definition of the max-stable
Ornstein-Uhlenbeck to every max-stable random vector.

% Now, using the stability relation \eqref{prelim_stability}, we could define a
% semi-group by taking
% \begin{align}\label{4_P_t}
%\mathbf{Q}^{\alpha, \nu}_{t}f(\bm{x}) \coloneq \esp\big[f\big(e^{-t}\bm{x} \oplus (1-e^{-\alpha t})^{\frac{1}{\alpha}}\bm{Z}\big)\big]
%\end{align}
%with $\bm{Z} \sim \mathcal{MS}(\alpha, \nu)$. When $\alpha = 1$, we find back
% our original semi-group $(\Pn_{t})_{t\geq 0}$. However we will use a different
% normalization:

\begin{definition}[Generalized max-stable Ornstein-Uhlenbeck semi-group]
  Let $\bm{\alpha}$ belong to $\R^{d}$. The \textit{generalized max-stable
    Ornstein-Uhlenbeck semi group} $(\Panbm_{t})_{t\geq 0}$ is defined on
  $\L^{p}(\pms_{\bm{\alpha}, \nu})$ for $p \in [1, +\infty]$ by setting
  \begin{align}\label{4_TPTinv}
    \Panbm_{t} \coloneq T_{\bm{\alpha}}\Pn_{t}T_{\bm{\alpha}}^{-1},\ t\geq 0.
  \end{align}
\end{definition}

With this definition, it is easy to check that $(\Panbm_{t})_{t\geq 0}$ is a
Markov semi-group. Using the isometry property of $T_{\bm{\alpha}}$, one finds
the generator $\Lanbm$ of this semi-group.
\begin{proposition}
  For every $\bm{\alpha} \in \R^{d}$ and $f \in T_{\bm{\alpha}}^{-1}\cilog$, we
  have
  \[
    \Lanbm f(\bm{x}) = \big(T_{\bm{\alpha}}\mathfrak{d}_{1}T_{\bm{\alpha}}^{-1}\big) + \Danbm f(\bm{x}),\ \bm{x} \in T_{\bm{\alpha}}^{-1}\Eos.
  \]
  where $\mathfrak{d}_{1}f(\bm{x}) = -\langle \bm{x}, \nabla f(\bm{x}) \rangle$,
  and $\Danbm \coloneq T_{\bm{\alpha}}\Dn T_{\bm{\alpha}}^{-1}$. Let $\mu$ be
  the exponent measure of $\bm{Z} \sim \mathcal{MS}(\bm{\alpha}, \nu)$, the
  latter having support in $\El$ for some
  $\bm{\ell} \in [-\bm{\infty}, +\bm{\infty})$, then one has:
  \[
    \Danbm f(\bm{x}) = \int_{\El} \big(f(\bm{x}\oplus \bm{y}) - f(\bm{x})\big) \dif\mu(\bm{y}).
  \]
\end{proposition}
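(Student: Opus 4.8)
The plan is to use that $(\Panbm_t)_{t\geq 0}$ is, by its very definition \eqref{4_TPTinv}, the conjugate of $(\Pn_t)_{t\geq 0}$ by the isometry $T_{\bm{\alpha}}$, so that the generator and its integral part transport by the same conjugation. First I would fix an $f$ with $g := T_{\bm{\alpha}}^{-1}f \in \cilog$ and write, using $\Panbm_t = T_{\bm{\alpha}}\Pn_t T_{\bm{\alpha}}^{-1}$ and $f = T_{\bm{\alpha}}T_{\bm{\alpha}}^{-1}f$,
\[
  \frac{\Panbm_t f - f}{t} = T_{\bm{\alpha}}\,\frac{\Pn_t g - g}{t}.
\]
Since $T_{\bm{\alpha}}$ is a linear isometry of $\L^2(\pms_{1,\nu})$ onto $\L^2(\pms_{\bm{\alpha},\nu})$, it is in particular continuous, so the limit on the left exists in $\L^2(\pms_{\bm{\alpha},\nu})$ precisely when the limit on the right exists in $\L^2(\pms_{1,\nu})$; and because $g \in \cilog \subseteq \dom(\Ln)$, the latter converges to $\Ln g$. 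Hence $\Lanbm f = T_{\bm{\alpha}}\Ln T_{\bm{\alpha}}^{-1}f$. Substituting the decomposition $\Ln = \mathfrak{d}_{1,d} + \Dn$ of \eqref{4_d+D} and distributing the conjugation over the sum gives $\Lanbm f = T_{\bm{\alpha}}\mathfrak{d}_{1,d}T_{\bm{\alpha}}^{-1}f + T_{\bm{\alpha}}\Dn T_{\bm{\alpha}}^{-1}f$, which is exactly the first displayed identity, the second summand being $\Danbm$ by definition (here $\mathfrak{d}_1 = \mathfrak{d}_{1,d}$, the extra subscript merely recording the dimension).

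For the integral representation of $\Danbm$, the key observation is that $\Psi_{\bm{\alpha}}$ acts coordinatewise through strictly increasing maps $\psi_{\alpha^j}$, and therefore commutes with the coordinatewise maximum: $\Psi_{\bm{\alpha}}(\bm{a}\oplus\bm{b}) = \Psi_{\bm{\alpha}}(\bm{a})\oplus\Psi_{\bm{\alpha}}(\bm{b})$, and the same holds for $\Psi_{\bm{\alpha}}^{-1}$ since the inverse of an increasing bijection is again increasing. Unfolding the definition with $g = T_{\bm{\alpha}}^{-1}f = f\circ\Psi_{\bm{\alpha}}^{-1}$, I would write
\[
  \Danbm f(\bm{x}) = (\Dn g)\big(\Psi_{\bm{\alpha}}(\bm{x})\big) = \int_{\Eo}\big(g(\Psi_{\bm{\alpha}}(\bm{x})\oplus\bm{y}') - g(\Psi_{\bm{\alpha}}(\bm{x}))\big)\dif\mu_1(\bm{y}'),
\]
where $\mu_1$ denotes the exponent measure of $\mathcal{MS}(1,\nu)$. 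Applying the commutation relation inside $g = f\circ\Psi_{\bm{\alpha}}^{-1}$ turns the integrand into $f(\bm{x}\oplus\Psi_{\bm{\alpha}}^{-1}(\bm{y}')) - f(\bm{x})$, since $g(\Psi_{\bm{\alpha}}(\bm{x})\oplus\bm{y}') = f(\bm{x}\oplus\Psi_{\bm{\alpha}}^{-1}(\bm{y}'))$ and $g(\Psi_{\bm{\alpha}}(\bm{x})) = f(\bm{x})$.

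It then remains to change variables $\bm{y} = \Psi_{\bm{\alpha}}^{-1}(\bm{y}')$ and to identify the pushforward $(\Psi_{\bm{\alpha}}^{-1})_*\mu_1$ with the exponent measure $\mu$ of $\bm{Z}\sim\mathcal{MS}(\bm{\alpha},\nu)$. This identification is the one step that calls for genuine justification rather than algebra, and is where I expect the only real subtlety; I would argue it from the de Haan--LePage representation. Writing $\bm{Z}_1 = \bigoplus_i \bm{y}_i' \sim \mathcal{MS}(1,\nu)$ for a Poisson process of intensity $\mu_1$, the relation $\Psi_{\bm{\alpha}}(\bm{Z})\eqdis\bm{Z}_1$ together with the $\oplus$-commutation yields $\bm{Z}\eqdis\Psi_{\bm{\alpha}}^{-1}(\bm{Z}_1) = \bigoplus_i\Psi_{\bm{\alpha}}^{-1}(\bm{y}_i')$; the mapped atoms $\Psi_{\bm{\alpha}}^{-1}(\bm{y}_i')$ form a Poisson process of intensity $(\Psi_{\bm{\alpha}}^{-1})_*\mu_1$, so by uniqueness of the exponent measure this pushforward equals $\mu$, and the variable $\bm{y}$ ranges over $\Psi_{\bm{\alpha}}^{-1}(\Eo) = \El$. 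Substituting back gives $\Danbm f(\bm{x}) = \int_{\El}(f(\bm{x}\oplus\bm{y}) - f(\bm{x}))\dif\mu(\bm{y})$, as claimed. The integrability guaranteeing that each displayed integral is finite is supplied by $g \in \cilog$ and by the continuity bounds already established for $\Dn$ on $\cilog$.
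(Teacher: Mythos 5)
Your proof is correct and follows exactly the route the paper intends: the paper states this proposition without any proof, remarking only that "using the isometry property of $T_{\bm{\alpha}}$, one finds the generator," and your argument is precisely that conjugation argument carried out in full. The two points you elaborate --- transporting the $\L^{2}$-limit through the continuous isometry to get $\Lanbm = T_{\bm{\alpha}}\Ln T_{\bm{\alpha}}^{-1}$ before splitting $\Ln = \mathfrak{d}_{1,d} + \Dn$, and identifying the pushforward $(\Psi_{\bm{\alpha}}^{-1})_{*}\mu_{1}$ with the exponent measure of $\mathcal{MS}(\bm{\alpha},\nu)$ via the $\oplus$-equivariance of $\Psi_{\bm{\alpha}}^{-1}$, the Poisson mapping theorem and uniqueness in the de Haan--LePage representation --- are exactly the details the paper glosses over, and both are handled correctly.
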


We explicit the expression of $\Panbm_{t}f$ in the case
$\bm{\alpha} = \alpha\bm{1}$ for some $\alpha \in \R$. In that case, we write
$\Pan_{t}$ instead of $\Panbm_{t}$. This notation is consistent with the one we
used for the standard MSOU semi-group $(\Pn_{t})_{t\geq 0}$.

\begin{example} Let $\alpha \in \R$ and $\bm{Z} \sim \mathcal{MS}(\alpha, \nu)$.
  There are three cases.

  1. $\alpha > 0$: The marginals of $\bm{Z}$ are all Fréchet
  $\mathcal{F}(\alpha)$ and $\Psi_{\alpha}^{-1}\Eos = \Eos$. Also,
  $T_{\alpha}^{-1}\cilog = \cilog$ and since
  $\bm{Z}^{\alpha} \sim \mathcal{MS}(1, \nu)$, one has
  \begin{align*}
    \Pan_{t}f(\bm{x}) & = \big(T_{\alpha}\Pn T_{\alpha}^{-1}\big)f(\bm{x})                                                        \\
                      & = \big(\Pn T_{\alpha}^{-1}\big)f(\bm{x}^{\alpha})                                                         \\
                      & = \esp\big[\big(T_{\alpha}^{-1}f\big)\big(e^{-t}\bm{x}^{\alpha}\oplus (1-e^{-t})\bm{Z}^{\alpha}\big)\big] \\
                      & = \esp\big[f\big(e^{-\frac{t}{\alpha}}\bm{x} \oplus (1-e^{-t})^{\frac{1}{\alpha}}\bm{Z}\big)\big].
  \end{align*}
  The generator $\Lan$ of this semi-group is given by
  \begin{align*}\label{4_Lan}
    \Lan f(\bm{x}) & = -\frac{1}{\alpha}\langle \bm{x},\nabla f(\bm{x}) \rangle + \frac{1}{\alpha}\int_{\Epol}\big\langle r\bm{u}^{1/\alpha}, \nabla f(\bm{x}\oplus r\bm{u}^{1/\alpha}) \big\rangle_{\bm{x}} \frac{\alpha}{r^{\alpha + 1}}\dif r \dif\nu(\bm{u})                               \\
                   & = -\frac{1}{\alpha}\langle \bm{x},\nabla f(\bm{x}) \rangle + \frac{1}{\alpha}\int_{(\S_{+}^{d-1})^{1/\alpha}}\int_{\Rps}\big\langle r\bm{v}, \nabla f(\bm{x}\oplus r\bm{v}) \big\rangle_{\bm{x}} \frac{\alpha}{r^{\alpha + 1}}\dif r \dif\nu_{\alpha}(\bm{v}),\numberthis
  \end{align*}
  where $\nu_{\alpha}$ is the pushforward measure of $\nu$ by
  $\Psi_{\alpha}(\bm{x}) = \bm{x}^{\alpha}$ and $(\S_{+}^{d-1})^{1/\alpha}$ the
  set of elements of the form $\bm{v} = \bm{u}^{1/\alpha}$ for some
  $\bm{u} \in \S_{+}^{d-1}$.

  2. $\alpha = 0$: The marginals of $\bm{Z}$ are all standard Gumbel
  $\mathcal{G}(0,1)$, with c.d.f. $x \mapsto \exp(-\exp(-x))$ on $\R$. We see
  that $\Psi_{0}^{-1}\Eos = E^{*}_{-\bm{\infty}} = \R^{d}$ and
  $T_{0}^{-1}\cilog = \mathcal{C}_{\text{Lip}}^{1}(\R^{d})$, the space of class
  $\mathcal{C}^{1}$ Lipschitz functions on $\R^{d}$. The semi-group
  $(\mathbf{P}^{0, \nu}_{t})_{t\geq 0}$ can be expressed as
  \[
    \mathbf{P}^{0, \nu}_{t}f(\bm{x}) = \esp\big[f\big((\bm{x} - t) \oplus (\bm{Z} + \log(1-e^{-t}))\big)\big]
  \]
  and its generator equals
  \begin{align*}
    \mathscr{L}_{0, \nu}f(\bm{x}) & = -\langle \bm{1}, \nabla f(\bm{x}) \rangle + \int_{ \S_{+}^{d-1}}\int_{\R}\big\langle (r\bm{1} + \log \bm{v}), \nabla f(\bm{x}\oplus (r\bm{1} + \log \bm{v})) \big\rangle_{\bm{x}} e^{-r}\dif r \dif\nu(\bm{v}) \\
                                  & = -\langle \bm{1}, \nabla f(\bm{x}) \rangle + \int_{\log \S_{+}^{d-1}}\int_{\R}\big\langle (r\bm{1} + \bm{v}), \nabla f(\bm{x}\oplus (r\bm{1} + \bm{v})) \big\rangle_{\bm{x}} e^{-r}\dif r \dif\nu_{0}(\bm{v}),
  \end{align*}
  where $\nu_{0}$ is the pushforward measure of $\nu$ by $\Psi_{0} = \exp$ and
  $\log \S_{+}^{d-1}$ the set of elements of the form $\bm{v} = \log \bm{u}$ for
  some $\bm{u} \in \S_{+}^{d-1}$.
  % Notice that $\bm{v}$ may have coordinates equal to $-\infty$, but whenever
  % this happens, the corresponding coordinates of
  % $\I{\bm{x}}{r\bm{1} + \bm{v}}$ are equal to $0$, cancelling those terms.

  3. $\alpha < 0$: The marginals of $\bm{Z}$ are all negative Weilbull
  $\mathcal{W}(\alpha)$ with c.d.f. $x \mapsto \exp((-x)^{-\alpha})$ on
  $\R_{-}$, so that $\Psi_{\alpha}^{-1}\Eos = \R_{-}^{d}$ and
  $T_{\alpha}^{-1}\cilog = \mathcal{C}_{\text{log}}^{1}(\R_{-}^{d})$. The
  semi-group takes the form
  \begin{align*}
    \Pan_{t}f(\bm{x}) & = \esp\big[f\big(e^{-\frac{t}{\alpha}}\bm{x} \oplus (1-e^{-t})^{\frac{1}{\alpha}}\bm{Z}\big)\big]
  \end{align*}
  while its generator $\Lan$ is
  \begin{align*}
    \Lan f(\bm{x}) & = -\frac{1}{\alpha}\langle \bm{x},\nabla f(\bm{x}) \rangle - \frac{1}{\alpha}\int_{\S_{+}^{d-1}}\int_{\R_{-}}\big\langle r\bm{u}^{1/\alpha}, \nabla f(\bm{x}\oplus (-r\bm{u}^{1/\alpha})) \big\rangle_{\bm{x}} \frac{\alpha}{(-r)^{\alpha + 1}}\dif r \dif\nu(\bm{u}) \\
                   & = -\frac{1}{\alpha}\langle \bm{x},\nabla f(\bm{x}) \rangle - \frac{1}{\alpha}\int_{-(\S_{+}^{d-1})^{1/\alpha}}\int_{\R_{-}}\big\langle r\bm{v}, \nabla f(\bm{x}\oplus r\bm{v}) \big\rangle_{\bm{x}} \frac{\alpha}{(-r)^{\alpha + 1}}\dif r \dif\nu_{\alpha}(\bm{v}).
  \end{align*}
  Those expressions are formally the same as in the case $\alpha > 0$, although
  the definition sets are different.

\end{example}

The fact that $\mathcal{MS}(\bm{\alpha}, \nu)$ is an invariant measure of
$(\Panbm_{t})_{t\geq 0}$, as well as the ergodicity of this semi-group are other
easy consequences of \eqref{4_TPTinv} and the properties of
$(\Pn_{t})_{t\geq 0}$. The operator $\Dan$ also satisfies the commutation rule:
\begin{align*}
  \Danbm \Panbm & = \big(T_{\bm{\alpha}}\Dn T_{\bm{\alpha}}^{-1}\big)\big(T_{\bm{\alpha}}\Pn_{t} T_{\bm{\alpha}}^{-1} \big) \\
                & = T_{\bm{\alpha}}\Dn \Pn_{t} T_{\bm{\alpha}}^{-1}                                                         \\
                & = e^{-t}T_{\bm{\alpha}}\Pn_{t} \Dn T_{\bm{\alpha}}^{-1}                                                   \\
                & = e^{-t}\Panbm_{t} \Danbm.
\end{align*}
Notice that $\bm{\alpha}$ does not appear in the exponential. Equation
\eqref{4_inf_comm_Frcht_d} holds true as well:
\[ [\Lanbm, \Danbm] = \Danbm.
\]
Likewise, one can easily retrieve the pseudo-Leibniz rule as well as the
Poincaré inequality stated at the end of the previous section for
$(\Pn_{t})_{t\geq 0}$.

We conclude this section by noticing that one can extend the previous
construction in at least two directions. First, the min-stable distributions:
this amounts to replacing $\psi_{\alpha}$ by $\psi_{\alpha}(x^{-1})$ if
$\alpha \neq 0$, or by $\psi_{0}(-x)$ otherwise. For example, since the
exponential distribution with unit parameter $\mathcal{E}(1)$ is min-stable, a
Markov semi-group admitting this law as its invariant measure is given by:
\[
  \mathbf{P}_{t}f(x) = \esp\big[f\big(e^{t}x \odot (1-e^{-t})^{-1}Z\big)\big],\ x \geq 0
\]
where $Z \sim \mathcal{E}(1)$.

Second, one can try and apply those ideas to max-id distributions. In dimension
$1$, this case is the most general possible: any random variable $Z$ is max-id.
Assume for simplicity that $F_{Z}$ is invertible and take
$\psi(x) = -1/\log F_{Z}(x)$. This function is defined on the support of $Z$ and
one has
\[
  \psi(Z) \sim \mathcal{F}(1).
\]
Consequently, the operators
\[
  \mathbf{P}_{t}f(x) = T_{\psi}\Pn_{t}T_{\psi}^{-1}
\]
form a Markov semi-group whose stationary measure is the distribution of $Z$.
For instance, if $Z\sim \mathcal{U}[0,1]$, then $\mathbf{P}_{t}f$ takes the
following form:
\[
  \mathbf{P}_{t}f(x) = \esp\Big[f\big(x^{e^{t}} \oplus U^{\frac{1}{1-e^{-t}}}\big)\Big],\ x \in [0,1]
\]
where $U \sim \mathcal{U}[0,1]$. A more convoluted expression arises for the
logistic distribution with c.d.f. $(1+e^{-x})^{-1}$ on $\R$:
\[
  \mathbf{P}_{t}f(x) = \esp\Big[f\Big(-\log\big((1+e^{-x})^{t}-1\big) \oplus -\log\big(e^{\frac{1}{(1-e^{-t})Z}} - 1\big)\Big)\Big],\ x \in \R
\]
since $-\log(e^{1/Z}-1)$ has the logistic distribution if $Z$ has the unit
Fréchet distribution. This time we cannot give a Mehler formula for
$\mathbf{P}_{t}f(x)$ by using a random variable having the target logistic
distribution. This stems from the fact that the distribution of a maximum of two
\iid logistic random variables is not easily expressed in terms of one logistic
distribution. In higher dimensions, things become even more difficult, as not
every max-id distribution can be realized as a monotone function of a max-stable
random vector. A possibility is then to restrict one's attention to \textit{self
  max-decomposable distributions}, paralleling the approach of Arras and Houdré
in \cite{Arras19}. This path is currently being investigated by the authors.
% One can also notice that we would have found much less explicit expressions
% were the c.d.f. of the target distribution not available in a closed form. A
% case in point is the Gaussian distribution for example. However, the maximum
% of two \iid uniform random variables $U_{1}, U_{2}$ has the same distribution
% as $\sqrt{U}_{1}$, which explains the simpler expression of the former
% semi-group. In the same way, the \textit{minimum} of two \iid Pareto
% $\mathcal{VP}(1)$ random variables $X_{1}, X_{2}$ has the same distribution as
% $\sqrt{X_{1}}$, so a semi-group quantifying this distribution would be:
% \[
%   P_{t}f(x) = \esp\Big[f\big(x^{e^{t}} \odot X^{\frac{1}{1-e^{-t}}}\big)\Big],\ x \in [1, +\infty),
% \]
% where $X \sim \mathcal{VP}(1)$. Here we have used the \textit{non-increasing}
% transformation $z \mapsto \exp(1/z)$ to change from the Fréchet distribution
% to the Pareto one, hence the presence of the minimum operator $\odot$ instead
% of $\oplus$.

\subsection{Specialization to the univariate case}

In this subsection we focus on the case $d = 1$ and assume $\alpha > 0$. The
univariate case when $\alpha = 0$ is studied in \cite{Costaceque24_cpn} and
applied to the coupon collector problem. The case of the negative Weibull
distribution ($\alpha < 0$) is formally similar to the one studied in this
subsection, although with heavier notations due to omnipresence of minus signs.

In the univariate case and with our choice of normalization, the only possible
angular measure is the Dirac mass at $1$, so we will note $\Pa_{t}$ instead of
$\Pan_{t}$. The same goes for the associated operators $\La$ and $\Da$. Recall
that $\gamma_{t} = e^{t} - 1$.

Inspired by the classic identity \eqref{4_Bakry}, we have proved a commutation
relation between $\Da$ and $\Pa_{t}$. When replacing $\Da$ by the gradient, we
find instead the next result.

\begin{proposition}\label{4_Frcht_Pt}
  Let $f$ be a $\mathcal{C}^{1}(\Rps)$-class function, such that $f$ and $f'$
  are integrable on $\Rps$ with respect to Lebesgue measure. Then we have the
  following: $\Pa_{t}f$ is differentiable and:
  \begin{align}\label{4_Frcht_deriv}
    (\Pa_{t}f)'(x) = e^{-\frac{t}{\alpha}}e^{-\frac{\gamma_{t}}{x^{\alpha}}}f'\big(e^{-\frac{t}{\alpha}}x\big)
  \end{align}
  Consequently $\Pa_{t}f$ satisfies:
  \begin{align}\label{4_Frcht_alt}
    \Pa_{t}f(x) = -e^{-\frac{t}{\alpha}}\int_{x}^{\infty}e^{-\frac{\gamma_{t}}{r^{\alpha}}}f'\big(e^{-\frac{t}{\alpha}}r\big)\dif r.
  \end{align}
\end{proposition}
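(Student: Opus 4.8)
The plan is to exploit the explicit Mehler-type representation of $\Pa_t$ established in the preceding example. In the univariate case with $\alpha>0$ and $\nu=\delta_1$, the vector $\bm{Z}$ reduces to a single Fréchet $\mathcal{F}(\alpha)$ variable $Z$ with c.d.f.\ $F(z)=e^{-z^{-\alpha}}$, so that, writing $a:=e^{-t/\alpha}$ and $b:=(1-e^{-t})^{1/\alpha}$,
\[
  \Pa_t f(x)=\esp\big[f\big(\max(ax,bZ)\big)\big].
\]
First I would split this expectation according to whether $bZ\le ax$ or $bZ>ax$. Since $\max(ax,bz)=ax$ on $\{z\le ax/b\}$ and equals $bz$ otherwise, this yields
\[
  \Pa_t f(x)=f(ax)\,F(ax/b)+\int_{ax/b}^{\infty}f(bz)\,\mathrm{d}F(z).
\]
A one-line computation gives $b^{\alpha}/a^{\alpha}=(1-e^{-t})/e^{-t}=\gamma_t$, hence $F(ax/b)=e^{-(b/(ax))^{\alpha}}=e^{-\gamma_t/x^{\alpha}}$, which is already the factor appearing in the claimed formula.

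Next I would differentiate this explicit expression in $x$, which simultaneously establishes the differentiability of $\Pa_t f$. Setting $c(x)=ax/b$, so $c'(x)=a/b$, the product rule gives $\frac{\mathrm{d}}{\mathrm{d}x}\big[f(ax)F(c(x))\big]=a f'(ax)F(c(x))+f(ax)F'(c(x))\frac{a}{b}$, while the fundamental theorem of calculus applied to the integral (legitimate since $z\mapsto f(bz)F'(z)$ is continuous and absolutely integrable, $f$ being bounded and $F'$ a density) contributes $-f(b\,c(x))F'(c(x))\frac{a}{b}$. The decisive observation is that $b\,c(x)=ax$, so $f(b\,c(x))=f(ax)$ and the two $F'$-terms cancel exactly, leaving
\[
  (\Pa_t f)'(x)=a f'(ax)\,F(ax/b)=e^{-t/\alpha}e^{-\gamma_t/x^{\alpha}}f'\big(e^{-t/\alpha}x\big),
\]
which is precisely \eqref{4_Frcht_deriv}.

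For the integral representation \eqref{4_Frcht_alt}, I would integrate this derivative from $x$ to $\infty$ and check that $\Pa_t f(x)\to 0$ as $x\to\infty$. The integrability hypotheses force $f$ to have a limit at $+\infty$, because $f(x)=f(x_0)+\int_{x_0}^{x}f'$; this limit must be $0$ by integrability of $f$, and the same argument at $0^+$ shows $f$ is bounded on $\Rps$. Bounded convergence then gives $\Pa_t f(x)=\esp[f(\max(ax,bZ))]\to 0$, since $\max(ax,bZ)\to\infty$ pointwise. Consequently $\Pa_t f(x)=-\int_{x}^{\infty}(\Pa_t f)'(r)\,\mathrm{d}r$ produces the stated formula, the integral converging absolutely because $e^{-\gamma_t/r^{\alpha}}\le 1$ and $r\mapsto f'(e^{-t/\alpha}r)$ is integrable.

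The computation is largely bookkeeping; the only genuine points requiring care are the exact cancellation of the boundary terms in the differentiation (which rests on $b\,c(x)=ax$) and the decay $\Pa_t f(\infty)=0$, which I would derive from the integrability of $f$ and $f'$ rather than take for granted.
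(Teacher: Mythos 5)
Your proof is correct and takes essentially the same route as the paper's: both split $\esp\big[f(\max(ax,bZ))\big]$ according to which term realizes the maximum, compute the probability factor $e^{-\gamma_t/x^{\alpha}}$, differentiate the resulting explicit expression and use the exact cancellation of the boundary terms, then recover \eqref{4_Frcht_alt} from the fundamental theorem of calculus. The only (welcome) difference is that you explicitly justify the decay $\Pa_{t}f(x)\to 0$ as $x\to\infty$ from the integrability of $f$ and $f'$, a point the paper leaves implicit.
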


\begin{proof}

  Because $f$ is bounded on $[x, +\infty]$ for every $x > 0$, we see that
  $f \in \L^{1}(\pms_{1})$, so that $\Pa_{t}f$ is well-defined.

  Let $x > 0$. Conditioning on whether $Z \leq \gamma_{t}^{1/\alpha}x$ or not,
  we find the decomposition
  \begin{align*}\label{4_cases_Frcht}
    \Pa_{t}f(x) & = \esp\big[f\big(e^{-\frac{t}{\alpha}}x\big)\ind_{\lb Z \leq \gamma_{t}^{1/\alpha}x \rb}\big] + \esp\big[f\big((1-e^{-t})^{\frac{1}{\alpha}}Z\big)\ind_{\lb Z > \gamma_{t}^{1/\alpha}x \rb}\big]                       \\
                & = f\big(e^{-\frac{t}{\alpha}}x\big)e^{-\frac{\gamma_{t}}{x^{\alpha}}} + \gamma_{t}\int_{x}^{\infty}f\big(e^{-\frac{t}{\alpha}}z\big)e^{-\frac{\gamma_{t}}{z^{\alpha}}}\frac{\alpha}{z^{\alpha + 1}}\dif z. \numberthis
  \end{align*}
  To prove equation \eqref{4_Frcht_deriv}, we simply differentiate that last
  expression with respect to $x$
  \begin{align*}
    (\Pa_{t}f)'(x) & = \frac{\dif}{\dif x}\Big(f\big(e^{-\frac{t}{\alpha}}x\big)e^{-\frac{\gamma_{t}}{x^{\alpha}}} + \gamma_{t}\int_{x}^{\infty}f\big(e^{-\frac{t}{\alpha}}z\big)e^{-\frac{\gamma_{t}}{z^{\alpha}}}\frac{\alpha}{z^{\alpha + 1}}\dif z\Big)                                                                                \\
                   & = e^{-\frac{t}{\alpha}}f'\big(e^{-\frac{t}{\alpha}}x\big)e^{-\frac{\gamma_{t}}{x^{\alpha}}} + \gamma_{t}f\big(e^{-\frac{t}{\alpha}}x\big)e^{-\frac{\gamma_{t}}{x^{\alpha}}}\frac{\alpha}{x^{\alpha + 1}} - \gamma_{t}f\big(e^{-\frac{t}{\alpha}}x\big)e^{-\frac{\gamma_{t}}{x^{\alpha}}}\frac{\alpha}{x^{\alpha + 1}} \\
                   & = e^{-\frac{t}{\alpha}}e^{-\frac{\gamma_{t}}{x^{\alpha}}}f'\big(e^{-\frac{t}{\alpha}}x\big).
  \end{align*}

  The integral in the right-hand side of \eqref{4_Frcht_alt} exists because $f'$
  is integrable on $[x, +\infty)$ and for any $x > 0$. Identity
  \eqref{4_Frcht_alt} is a direct consequence of the previous display and of the
  fundamental theorem of calculus.

\end{proof}

\begin{proposition}\label{4_Frcht_gnrt}
  Let $f \in \cilogi $ and $x \in \Rps$ and $Z \sim \mathcal{F}(\alpha)$. Let
  $Y$ have the Pareto distribution $\mathcal{F}(\alpha)$, with density:
  \[
    x \mapsto \frac{\alpha}{x^{\alpha + 1}}\ind_{[1, +\infty)}(x).
  \]
  The generator $\La$ of the univariate MSOU semi-group satisfies:
  \begin{align}
    \La f(x) & = -\frac{1}{\alpha}xf'(x) + \frac{1}{x^{\alpha}}\esp\big[f(xY) - f(x)\big] \label{4_Frcht_gnrtr_alt1}      \\
             & = -\frac{1}{\alpha}xf'(x) + \frac{1}{\alpha}\frac{1}{x^{\alpha-1}}\esp[Yf'(xY)]. \label{4_Frcht_gnrt_alt2}
  \end{align}
\end{proposition}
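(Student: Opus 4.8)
The plan is to read off both identities as specializations of the already-established generator formula \eqref{4_Lan} to the univariate case $d=1$, $\alpha>0$, where the only admissible angular measure is $\nu=\delta_1$, and to dispose of the radial integral by the single substitution $r=xs$ which turns the exponent measure into the stated Pareto density. Writing \eqref{4_Lan} in the form $\La f(x)=-\tfrac1\alpha x f'(x)+\Da f(x)$, where $\Da$ denotes its integral part, the dilation term $-\tfrac1\alpha x f'(x)$ already matches the common summand in both \eqref{4_Frcht_gnrtr_alt1} and \eqref{4_Frcht_gnrt_alt2}, so it suffices to identify $\Da f$ with the two integral expressions. For this I record the two representations that the preceding results make available: the jump form $\Da f(x)=\int_x^\infty\bigl(f(y)-f(x)\bigr)\frac{\alpha}{y^{\alpha+1}}\dif y$, obtained from $\Da f(x)=\int_0^\infty(f(x\oplus y)-f(x))\dif\mu(y)$ together with $f(x\oplus y)=f(x)$ for $y\le x$ and the exponent measure $\dif\mu(y)=\alpha y^{-\alpha-1}\dif y$ of $\mathcal{F}(\alpha)$; and the gradient form $\Da f(x)=\int_x^\infty f'(r)\,r^{-\alpha}\dif r$, which is exactly what \eqref{4_Lan} yields once one notes that in dimension one $u=u^{1/\alpha}=1$ and $\langle r,f'(x\oplus r)\rangle_x=rf'(r)\ind_{\lb r>x\rb}$.

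For \eqref{4_Frcht_gnrtr_alt1}, I start from the jump form and substitute $y=xs$, so that $\dif y=x\dif s$, the lower limit $y=x$ becomes $s=1$, and $\tfrac{\alpha}{y^{\alpha+1}}=\tfrac{\alpha}{x^{\alpha+1}s^{\alpha+1}}$. This gives $\Da f(x)=\tfrac{1}{x^\alpha}\int_1^\infty\bigl(f(xs)-f(x)\bigr)\tfrac{\alpha}{s^{\alpha+1}}\dif s$, and the remaining integral is precisely $\esp\bigl[f(xY)-f(x)\bigr]$ for $Y$ with density $\tfrac{\alpha}{s^{\alpha+1}}\ind_{[1,+\infty)}(s)$, the Pareto law in the statement. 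Restoring the dilation term yields the first identity.

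For \eqref{4_Frcht_gnrt_alt2}, I instead start from the gradient form and substitute $r=xs$, obtaining $\Da f(x)=x^{1-\alpha}\int_1^\infty f'(xs)\,s^{-\alpha}\dif s$. Since $\esp[Yf'(xY)]=\int_1^\infty sf'(xs)\tfrac{\alpha}{s^{\alpha+1}}\dif s=\alpha\int_1^\infty f'(xs)s^{-\alpha}\dif s$, the remaining integral equals $\tfrac1\alpha\esp[Yf'(xY)]$, and thus $\Da f(x)=\tfrac{1}{\alpha x^{\alpha-1}}\esp[Yf'(xY)]$, which is the second identity after restoring the dilation term. One could alternatively pass from the first identity to the second by an integration by parts in the jump form, with $v=-r^{-\alpha}$; the only analytic point requiring care is then the vanishing of the boundary term at $r=+\infty$, which holds because $f\in\cilogi$ grows at most logarithmically while $r^{-\alpha}\to0$, so that $r^{-\alpha}(f(r)-f(x))\to0$. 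Since both representations of $\Da$ are already in hand, this integration by parts is not strictly needed and no genuine obstacle remains beyond the bookkeeping of the change of variables.
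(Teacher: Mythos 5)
Your proof is correct and follows essentially the same route as the paper: the jump form of $\Da$ with the substitution $r = xs$ (the paper's $u = r/x$) to identify the Pareto expectation in \eqref{4_Frcht_gnrtr_alt1}, and the gradient form coming from \eqref{4_Dn_alt}/\eqref{4_Lan} with the same substitution for \eqref{4_Frcht_gnrt_alt2}. Your version is in fact slightly more explicit than the paper's, which leaves the final change of variables and the identification $\int_x^\infty r f'(r)\dif\rho_\alpha(r) = \alpha x^{1-\alpha}\esp[Yf'(xY)]$ implicit.
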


\begin{proof}
  Recall that $\rho_{\alpha}$ has been defined at \eqref{prelim_rho}. The change
  of variable $u = r/x$ gives immediately:
  \[
    \Da f(x) = \int_{x}^{\infty}\big(f(r) - f(x)\big)\frac{\alpha}{r^{\alpha + 1}}\dif r = \frac{1}{x^{\alpha}}\int_{1}^{\infty}\big(f(xu) - f(x)\big) \dif\rho_{\alpha}(u).
  \]
  Identifying the Pareto distribution $\mathcal{VP}(\alpha)$, we obtain the
  announced result just as easily. Next, identity \eqref{4_Dn_alt} yields
  \[
    \Da f(x) = \int_{x}^{\infty}f'(r)\frac{1}{r^{\alpha}}\dif r = \frac{1}{\alpha}\int_{x}^{\infty}rf'(r)\dif \rho_{\alpha}(r).
  \]
\end{proof}

We now give two covariance identities for $(\Pa_{t})_{t\geq 0}$.

\begin{proposition}[Covariance identities]
  Let $f, g \in \cilogi$ and $Z \sim \mathcal{F}(\alpha)$.
  \begin{enumerate}

    \item Let $Y \sim \mathcal{VP}(\alpha)$ be a random variable with Pareto
          distribution, independent of $Z$. Then:
          \begin{align}\label{4_Frcht_cov_id1}
            \langle \La f,g \rangle_{\mathbf{L}^{2}(\pms_{\alpha})} = -\frac{1}{\alpha^{2}}\esp\big[YZ^{2}f'(YZ)g'(Z)\big].
          \end{align}

    \item Assume further that $f$ has zero mean: $\esp[f(Z)] = 0$. Then:
          \begin{align}\label{4_Frcht_cov_id2}
            \langle f,g \rangle_{\mathbf{L}^{2}(\pms_{\alpha})} = -\frac{1}{\alpha^{2}}\esp\big[YZ^{2}(\La^{-1}f)'(YZ)g'(Z)\big].
          \end{align}

  \end{enumerate}
\end{proposition}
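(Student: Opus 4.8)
The plan is to prove the first covariance identity \eqref{4_Frcht_cov_id1} by a direct computation that converts the product $YZ^{2}f'(YZ)$ into the operator $\Da f$, and then to obtain the second identity \eqref{4_Frcht_cov_id2} by simply applying the first to the function $\La^{-1}f$. Throughout I write $p(z)=\frac{\alpha}{z^{\alpha+1}}e^{-z^{-\alpha}}$ for the density of $Z\sim\mathcal{F}(\alpha)$ and $F(z)=e^{-z^{-\alpha}}$ for its c.d.f.

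For \eqref{4_Frcht_cov_id1} I would start from the right-hand side. Since $Y\sim\mathcal{VP}(\alpha)$ is independent of $Z$, I condition on $Z=z$ and evaluate the inner expectation over $Y$. Using the density $\frac{\alpha}{y^{\alpha+1}}\ind_{[1,\infty)}(y)$ and the substitution $r=yz$, the powers of $z$ cancel and the inner integral collapses to
\[
  \esp\big[Yz^{2}f'(Yz)\big]=\alpha z^{\alpha+1}\int_{z}^{\infty}f'(r)r^{-\alpha}\,\mathrm{d}r=\alpha z^{\alpha+1}\Da f(z),
\]
the last equality being the expression for $\Da$ obtained in the proof of Proposition~\ref{4_Frcht_gnrt}. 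Taking the outer expectation over $Z$ and using $\alpha z^{\alpha+1}p(z)=\alpha^{2}F(z)$ yields
\[
  -\frac{1}{\alpha^{2}}\esp\big[YZ^{2}f'(YZ)g'(Z)\big]=-\int_{0}^{\infty}g'(z)\,\Da f(z)\,F(z)\,\mathrm{d}z.
\]

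The core step is then an integration by parts on this integral, moving the derivative off $g$. From the fundamental theorem of calculus one has $(\Da f)'(z)=-f'(z)z^{-\alpha}$, and a direct check gives the algebraic identity $z^{-\alpha}F(z)=\frac{z}{\alpha}p(z)$. Integrating by parts and inserting these two relations transforms the right-hand side into
\[
  -\int_{0}^{\infty}g'(z)\,\Da f(z)\,F(z)\,\mathrm{d}z=\int_{0}^{\infty}\Big(-\tfrac{1}{\alpha}z f'(z)+\Da f(z)\Big)g(z)\,p(z)\,\mathrm{d}z.
\]
By the decomposition $\La f(x)=-\tfrac{1}{\alpha}xf'(x)+\Da f(x)$ recorded in \eqref{4_Frcht_gnrtr_alt1}, the bracketed integrand equals $\La f(z)$, so the right-hand side is exactly $\esp[\La f(Z)g(Z)]=\langle\La f,g\rangle_{\mathbf{L}^{2}(\pms_{\alpha})}$, establishing \eqref{4_Frcht_cov_id1}. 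For the second identity I would apply \eqref{4_Frcht_cov_id1} with $f$ replaced by $\La^{-1}f$: the hypothesis $\esp[f(Z)]=0$ guarantees, by the univariate analogue of Proposition~\ref{4_Ln-1}, that $\La^{-1}f$ is well-defined, belongs to $\cilogi$, and satisfies $\La(\La^{-1}f)=f$. The left-hand side $\langle\La(\La^{-1}f),g\rangle$ then collapses to $\langle f,g\rangle$, giving \eqref{4_Frcht_cov_id2}.

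The step I expect to be the main obstacle is the justification of the integration by parts: I must check that the boundary term $[\,g(z)\Da f(z)F(z)\,]_{0}^{\infty}$ vanishes and that all integrals converge absolutely. Since $f,g\in\cilogi$ satisfy $x|f'(x)|\le C$ and hence $|g(x)|=O(|\log x|)$, one gets $\Da f(z)=O(z^{-\alpha})$ at both endpoints, while $F(z)$ decays like $e^{-z^{-\alpha}}$ as $z\to0^{+}$ and is bounded by $1$ as $z\to\infty$; combined with $\alpha>0$ this drives the boundary term to zero and secures the integrability needed to make the formal manipulations rigorous.
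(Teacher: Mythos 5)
Your part 1 is correct and is essentially the paper's own proof run in reverse: the paper starts from $\langle \La f,g \rangle_{\mathbf{L}^{2}(\pms_{\alpha})}$, splits $\La$ into the drift part plus $\Da$ with $\Da f(x)=\int_{x}^{\infty}f'(r)r^{-\alpha}\dif r$, and integrates by parts (integrating the Fréchet density, differentiating the rest) so that the drift term cancels and the remaining double integral is recognized, after the change of variables $y'=y/x$, as $-\alpha^{-2}\esp\big[YZ^{2}f'(YZ)g'(Z)\big]$; you perform the identical integration by parts starting from the right-hand side. Your boundary-term and absolute-convergence checks are sound.

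Part 2, however, has a genuine gap. You justify applying part 1 to $\La^{-1}f$ by claiming that (the univariate analogue of) Proposition \ref{4_Ln-1} gives $\La^{-1}f\in\cilogi$. It does not: that proposition only yields the weaker derivative bound \eqref{4_diff_Ln-1}, which in the univariate $\alpha$-Fréchet setting reads $x\vert(\La^{-1}f)'(x)\vert\le C\int_{0}^{\infty}e^{-\gamma_{t}x^{-\alpha}}\dif t$, and this right-hand side is unbounded in $x$ (it grows logarithmically as $x\to\infty$), so $\La^{-1}f$ is not log-Lipschitz in general; the paper states this explicitly at the start of its part-2 proof. This is not a cosmetic point: without the uniform bound $x\vert(\La^{-1}f)'(x)\vert\le C$, your part-1 argument no longer gives either the absolute convergence of $\esp\big[YZ^{2}(\La^{-1}f)'(YZ)g'(Z)\big]$ or the vanishing of the boundary term in the integration by parts. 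The paper's proof of part 2 is devoted precisely to repairing this: starting from the bound above (normalized to $C=1$), it computes
\begin{align*}
  \esp\big[YZ\vert(\La^{-1}f)'(YZ)\vert\big]
  \le \int_{0}^{\infty}\esp\big[e^{-\gamma_{t}(YZ)^{-\alpha}}\big]\dif t
  = \esp\Big[\frac{Y^{\alpha}}{Y^{\alpha}-1}\log Y^{\alpha}\Big]
  = \frac{\pi^{2}}{6},
\end{align*}
so that $YZ^{2}(\La^{-1}f)'(YZ)g'(Z)$ is integrable (using that $Zg'(Z)$ is bounded), and only then argues that the part-1 computation extends when $f$ is replaced by $\La^{-1}f$. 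Your proof needs this integrability estimate (or an equivalent one), together with a re-verification of the integration by parts under the weaker derivative bound; as written, the reduction ``apply part 1 to $\La^{-1}f$'' rests on a false premise.
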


\begin{proof}

  1. Integrating the density of the Fréchet distribution and differentiating the
  rest in the second term below, one finds:
  \begin{align*}
    \langle \La f,g \rangle_{\mathbf{L}^{2}(\pms_{\alpha})} & = -\frac{1}{\alpha}\langle x\nabla f,g \rangle_{\mathbf{L}^{2}(\pms_{\alpha})} + \big\langle \Da f, g \big\rangle_{\mathbf{L}^{2}(\pms_{\alpha})}                                                                               \\
                                                            & = -\frac{1}{\alpha}\langle x\nabla f,g \rangle_{\mathbf{L}^{2}(\pms_{\alpha})} + \int_{0}^{\infty} \Big(\int_{x}^{\infty}f'(y)\frac{1}{y^{\alpha}}\dif y\Big) g(x) \frac{\alpha}{x^{\alpha + 1}}e^{-\frac{1}{x^{\alpha}}}\dif x \\
                                                            & = -\int_{0}^{\infty} \Big(\int_{1}^{\infty}\frac{1}{y^{\alpha}} f'(xy)\dif y\Big) g'(x) \frac{1}{x^{\alpha-1}}e^{-\frac{1}{x^{\alpha}}}\dif x                                                                                   \\
                                                            & = -\frac{1}{\alpha^{2}}\int_{0}^{\infty} \Big(\int_{1}^{\infty}yf'(xy) \frac{\alpha}{y^{\alpha + 1}}\dif y\Big) x^{2}g'(x) \frac{\alpha}{x^{\alpha + 1}} e^{-\frac{1}{x^{\alpha}}}\dif x                                        \\
                                                            & = -\frac{1}{\alpha^{2}}\esp\big[YZ^{2}f'(YZ)g'(Z)\big].
  \end{align*}
  We have used the change of variable $y' = y/x$ to obtain the fourth identity.

  2. This relation is a direct consequence of the first, by replacing $f$ by
  $\La^{-1}f$. However the latter does not belong to $\cilogi$, so it is not
  obvious that the right-hand side of \eqref{4_Frcht_cov_id2} makes sense. By
  adapting the proof of proposition \ref{4_Ln-1}, we find that there exists some
  $C > 0$:
  \[
    x\vert (\La^{-1}f)'(x) \vert \leq C\int_{0}^{\infty}e^{-\gamma_{t}x^{-\alpha}} \dif t.
  \]
  Assume $C = 1$ for ease of notations. From that inequality, one deduces
  \begin{align*}
    \esp\big[YZ\vert(\La^{-1}f)'(YZ)\vert\big] & \leq \int_{0}^{\infty}\esp\big[e^{-\gamma_{t}(YZ)^{-\alpha}}\big] \dif t           \\
                                               & = \int_{0}^{\infty}\esp\Big[\frac{Y^{\alpha}}{\gamma_{t} + Y^{\alpha}}\Big] \dif t \\
                                               & = \esp\Big[\int_{0}^{\infty}\frac{Y^{\alpha}}{e^{t} + Y^{\alpha} - 1}\Big] \dif t  \\
                                               & = \esp\Big[\frac{Y^{\alpha}}{Y^{\alpha} - 1}\log Y^{\alpha}\Big]                   \\
                                               & = \int_{0}^{\infty}\frac{\log (y+1)}{y(y+1)} \dif y = \frac{\pi^{2}}{6}.
  \end{align*}
  In particular, $YZ(\La^{-1}f)'(YZ)$ is integrable, and thus so is
  $YZ^{2}(\La^{-1}f)'(YZ)g'(Z)$, since $Zg'(Z)$ is bounded. Using the same
  arguments as in the previous point, one proves that \eqref{4_Frcht_cov_id1}
  remains valid when $f$ is replaced by $\La^{-1}f$, thus concluding the proof.
\end{proof}

It is well-known (\textit{e.g.} \cite{Nourdin12}) that the generator
$\mathcal{L}$ of the univariate Ornstein-Uhlenbeck semi-group satisfies
\begin{align}\label{4_deltaD_OU}
  \mathcal{L}f(x) = -xf'(x) + f''(x) = (\delta\circ\nabla) f(x),
\end{align}
where $\delta \coloneq -x + \nabla$ is known as the divergence operator. It is
equal to the adjoint of the usual derivative operator $\nabla$ with respect to
the scalar product $\langle f,g \rangle_{\mathbf{L}^{2}(\R, \gamma)}$ and
$\gamma$ denotes the standard normal distribution $\mathcal{N}(0,1)$. In
particular $\mathcal{L}$ is self-adjoint. As we already noticed, our generator
$\La$ does not share this property, but it nonetheless satisfies a similar
relation:
\begin{align}\label{4_deltaD_OUMS}
  \La f(x) = -\frac{1}{\alpha}xf'(x) + \frac{1}{\alpha}\int_{x}^{\infty}rf'(r)\dif \rho_{\alpha}(r) = (\delta_{\alpha} \circ \Da)f(x),
\end{align}
where the operator $\delta_{\alpha}$ is equal for $f\in \cilogi$ to
\[
  \delta_{\alpha}f(x) \coloneq (\alpha^{-1}x^{\alpha+1}\nabla + \mathrm{Id})f(x) = \alpha^{-1}x^{\alpha+1}f'(x) + f(x).
\]
This operator is actually a Stein operator, as proved in \cite{Bartholome13,
  Kusumoto20}, where it is denoted by $\mathcal{T}_{\alpha}$, up to a constant
$\alpha^{-1}$. Equality \eqref{4_deltaD_OUMS} makes a connection between their
operator and $\La$. The divergence $\delta_{\alpha}$ satisfies several
properties. The first one originates from \cite{Bartholome13}.

\begin{proposition}[Integration-by-parts formula]
  Let $f, g$ be as in theorem \ref{4_Frcht_Pt}, \textit{i.e.} of class
  $\mathcal{C}^{1}$, integrable and with first derivative integrable as well.
  Then we have
  \begin{align}\label{4_IPP_Swan}
    \langle \delta_{\alpha}f,g \rangle_{\mathbf{L}^{2}(\pms_{\alpha})} = -\big\langle f, \alpha^{-1}r^{\alpha + 1}\nabla g \big\rangle_{\mathbf{L}^{2}(\pms_{\alpha})}
  \end{align}
\end{proposition}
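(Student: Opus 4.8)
The plan is to expand both inner products against the explicit Lebesgue density of the Fréchet law $\pms_{\alpha}$, namely
\[
  p_{\alpha}(x) = \frac{\alpha}{x^{\alpha+1}}e^{-x^{-\alpha}}, \qquad x \in \Rps,
\]
and to exploit the fact that this density is exactly the derivative of the c.d.f. $F(x) = e^{-x^{-\alpha}}$. The crucial algebraic observation is that the weight $\alpha^{-1}x^{\alpha+1}$ appearing in $\delta_{\alpha}$ cancels the singular factor of $p_{\alpha}$:
\[
  \alpha^{-1}x^{\alpha+1}\,p_{\alpha}(x) = e^{-x^{-\alpha}} = F(x).
\]
With this, the left-hand side unfolds, using $\delta_{\alpha}f = \alpha^{-1}x^{\alpha+1}f' + f$, into
\[
  \langle \delta_{\alpha}f, g \rangle_{\mathbf{L}^{2}(\pms_{\alpha})} = \int_{\Rps} f'(x)g(x)F(x)\dif x + \int_{\Rps} f(x)g(x)p_{\alpha}(x)\dif x,
\]
whereas the right-hand side becomes $-\int_{\Rps} f(x)g'(x)F(x)\dif x$ by the same cancellation.

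Next I would observe that, since $p_{\alpha} = F'$, the sum of the three integrands appearing on the two sides is precisely the total derivative
\[
  (fgF)'(x) = f'(x)g(x)F(x) + f(x)g'(x)F(x) + f(x)g(x)F'(x).
\]
Consequently the desired identity \eqref{4_IPP_Swan} is equivalent to
\[
  \int_{\Rps} \frac{\D}{\dif x}\big(f(x)g(x)F(x)\big)\dif x = \big[f(x)g(x)F(x)\big]_{0^{+}}^{+\infty} = 0,
\]
so the whole statement reduces to the vanishing of a single boundary term.

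The remaining, and genuinely substantive, step is to justify this boundary evaluation together with the absolute convergence of the three integrals, and this is where the hypotheses inherited from Proposition~\ref{4_Frcht_Pt} are used. Since $f' \in \mathbf{L}^{1}(\Rps)$, the function $f$ admits finite limits at $0^{+}$ and $+\infty$; integrability of $f$ then forces $f(x)\to 0$ as $x\to+\infty$, and likewise for $g$. At $+\infty$ we have $F\to 1$ while $f,g\to 0$, so $fgF\to 0$; at $0^{+}$ the factors $f,g$ tend to finite limits while $F(x)=e^{-x^{-\alpha}}\to 0$ exponentially fast, so again $fgF\to 0$. These same limit arguments show that $f$ and $g$ are bounded on $\Rps$, which combined with $f',g'\in\mathbf{L}^{1}(\Rps)$ and $0\le F\le 1$ guarantees that each of the three integrals converges absolutely, legitimizing the splitting and the use of the fundamental theorem of calculus. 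I expect the main obstacle to be exactly this endpoint analysis—establishing the finite limits and controlling the behaviour of $fgF$ near $0$ and at infinity—rather than the algebra, which is immediate once the identity $\alpha^{-1}x^{\alpha+1}p_{\alpha}=F$ is noticed.
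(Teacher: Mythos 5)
Your proof is correct and complete. Note that the paper itself does not prove this proposition: it states it as originating from \cite{Bartholome13} and moves on, so your argument is a genuinely self-contained alternative to a citation. The route you take is the natural one, and its two pillars are sound: (i) the algebraic cancellation $\alpha^{-1}x^{\alpha+1}p_{\alpha}(x)=e^{-x^{-\alpha}}=F(x)$, which converts both sides into integrals against the c.d.f.\ and reduces the identity \eqref{4_IPP_Swan} to $\int_{\Rps}(fgF)'(x)\dif x=0$; and (ii) the endpoint analysis, where the hypotheses of proposition \ref{4_Frcht_Pt} are used exactly as intended: $f',g'\in\mathbf{L}^{1}(\Rps)$ gives finite limits of $f,g$ at $0^{+}$ and $+\infty$ (hence boundedness, hence absolute convergence of all three integrals since $0\le F\le 1$ and $\int F'=1$), integrability of $f,g$ upgrades the limits at $+\infty$ to zero, and the superexponential decay $F(x)=e^{-x^{-\alpha}}\to 0$ kills the boundary term at $0^{+}$ where $f,g$ need not vanish. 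Your proof also makes transparent why integrability of $f$ and $g$ themselves (and not merely of their derivatives) is part of the hypothesis: it is what forces $fgF\to 0$ at infinity, where $F\to 1$ provides no help. The one thing worth making explicit, which you implicitly have, is that the boundedness of $f$ and $g$ together with $F'=p_{\alpha}$ being a probability density is what legitimizes the initial splitting of $\langle\delta_{\alpha}f,g\rangle_{\mathbf{L}^{2}(\pms_{\alpha})}$ into two separately convergent integrals.
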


Just like $\Da$, the operator $\delta_{\alpha}$ satisfies a commutation relation
with $\Pa_{t}$.

\begin{proposition}[Commutation relation for $\delta_{\alpha}$ - Fréchet case]
  Let $f$ be as in theorem \ref{4_Frcht_Pt}. Then we have:
  \begin{align}\label{4_cmtn_rltn2}
    \delta_{\alpha}\Pa_{t}f(x) = e^{t}\Pa_{t}\delta_{\alpha}f(x),\ x \in \Rps,\ t\geq 0.
  \end{align}
\end{proposition}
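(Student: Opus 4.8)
The plan is to verify \eqref{4_cmtn_rltn2} by computing both sides explicitly and matching them, using only the first-order information on $f$ that the hypotheses of Proposition~\ref{4_Frcht_Pt} provide. Write $s \coloneq e^{-\frac{t}{\alpha}}$, so that $s^{\alpha} = e^{-t}$. First I would expand the left-hand side with the definition of $\delta_{\alpha}$ and the derivative formula \eqref{4_Frcht_deriv}:
\[
  \delta_{\alpha}\Pa_{t}f(x) = \frac{1}{\alpha}x^{\alpha+1}(\Pa_{t}f)'(x) + \Pa_{t}f(x) = \frac{1}{\alpha}s\,x^{\alpha+1}e^{-\frac{\gamma_{t}}{x^{\alpha}}}f'(sx) + \Pa_{t}f(x).
\]
This is the target expression, and the whole point is to show that $e^{t}\Pa_{t}\delta_{\alpha}f(x)$ reduces to it.

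For the right-hand side the key device is the two-term ``Mehler'' decomposition of $\Pa_{t}$ obtained in the proof of Proposition~\ref{4_Frcht_Pt} (see~\eqref{4_cases_Frcht}), which I would apply not to $f$ but to the function $\delta_{\alpha}f$. The advantage over \eqref{4_Frcht_alt} is that this form only evaluates $\delta_{\alpha}f$ pointwise, so no second derivative of $f$ is ever required. Substituting $\delta_{\alpha}f(sz) = \alpha^{-1}s^{\alpha+1}z^{\alpha+1}f'(sz) + f(sz)$ and separating the part carrying $f$ from the part carrying $f'$, one sees that the $f$-part is precisely the two-term expansion of $\Pa_{t}f(x)$ and reassembles it, leaving
\[
  \Pa_{t}\delta_{\alpha}f(x) = \frac{1}{\alpha}s^{\alpha+1}x^{\alpha+1}e^{-\frac{\gamma_{t}}{x^{\alpha}}}f'(sx) + \Pa_{t}f(x) + \gamma_{t}s^{\alpha+1}\int_{x}^{\infty}e^{-\frac{\gamma_{t}}{z^{\alpha}}}f'(sz)\dif z.
\]

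The remaining step, and the one demanding the most care, is to evaluate the last integral. Writing $f'(sz) = s^{-1}\frac{\dif}{\dif z}f(sz)$ and integrating by parts against $e^{-\gamma_{t}/z^{\alpha}}$, whose derivative is $\gamma_{t}\alpha z^{-\alpha-1}e^{-\gamma_{t}/z^{\alpha}}$, I expect the integral to collapse to $-s^{-1}\Pa_{t}f(x)$: the boundary contribution at $x$ together with the resulting integral reconstitute $\Pa_{t}f(x)$, while the boundary term at $+\infty$ vanishes. This vanishing is the main obstacle and is exactly where the assumptions of Proposition~\ref{4_Frcht_Pt} enter: integrability of $f'$ on $\Rps$ forces $f$ to possess a limit at $+\infty$, and integrability of $f$ forces that limit to be $0$, whence $f(sz)e^{-\gamma_{t}/z^{\alpha}}\to 0$. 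I would also record that $\delta_{\alpha}f$ is integrable enough for \eqref{4_cases_Frcht} to apply and for the integration by parts to be legitimate.

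Feeding $\int_{x}^{\infty}e^{-\gamma_{t}/z^{\alpha}}f'(sz)\dif z = -s^{-1}\Pa_{t}f(x)$ back in gives
\[
  \Pa_{t}\delta_{\alpha}f(x) = \frac{1}{\alpha}s^{\alpha+1}x^{\alpha+1}e^{-\frac{\gamma_{t}}{x^{\alpha}}}f'(sx) + \bigl(1 - \gamma_{t}s^{\alpha}\bigr)\Pa_{t}f(x).
\]
Finally I would simplify the scalars: since $\gamma_{t}s^{\alpha} = (e^{t}-1)e^{-t} = 1 - e^{-t}$, the coefficient of $\Pa_{t}f$ is $e^{-t}$, and since $e^{t}s^{\alpha+1} = e^{t}e^{-t}s = s$, multiplying through by $e^{t}$ reproduces exactly the expression for $\delta_{\alpha}\Pa_{t}f(x)$ found at the outset. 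This establishes the commutation relation \eqref{4_cmtn_rltn2}.
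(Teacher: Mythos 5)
Your proof is correct and follows essentially the same route as the paper: both compute $\delta_{\alpha}\Pa_{t}f$ via the derivative formula \eqref{4_Frcht_deriv}, expand $\Pa_{t}\delta_{\alpha}f$ through the decomposition \eqref{4_cases_Frcht}, and then identify the leftover integral $\int_{x}^{\infty}e^{-\gamma_{t}/z^{\alpha}}f'(sz)\dif z$ with $-s^{-1}\Pa_{t}f(x)$ before matching scalars. The only cosmetic difference is that you re-derive this last identity by integration by parts (with a careful justification that $f$ vanishes at infinity), whereas the paper simply invokes its already-established formula \eqref{4_Frcht_alt}.
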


\begin{proof}
  We compute each side of the equality, starting with $\delta_{\alpha}\Pa_{t}f$:
  \begin{align*}
    \delta_{\alpha}\Pa_{t}f(x) & = \alpha^{-1}x^{\alpha + 1}f'\big(e^{-\frac{t}{\alpha}}x\big)e^{-\frac{t}{\alpha}}e^{-\frac{\gamma_{t}}{x^{\alpha}}} + \Pa_{t}f(x),
  \end{align*}
  thanks to identity \eqref{4_Frcht_deriv}, while the second part is equal to:
  \begin{align*}
    \Pa_{t}\delta_{\alpha}f(x) & = \alpha^{-1}\Pa_{t}(x^{\alpha + 1}\nabla f)(x) + \Pa_{t}f(x).
  \end{align*}
  Using decomposition \eqref{4_cases_Frcht}, one finds:
  \begin{align*}
    \Pa_{t}\delta_{\alpha}f(x) & = \Pa_{t}f(x) + \alpha^{-1}\Pa_{t}(x^{\alpha + 1}\nabla f)(x)                                                                                                                                                                                                                  \\
                               & = \Pa_{t}f(x) + e^{-(\alpha + 1)\frac{t}{\alpha}}\Big(\alpha^{-1}x^{\alpha + 1}f'\big(e^{-\frac{t}{\alpha}}x\big)e^{-\frac{\gamma_{t}}{x^{\alpha}}} + \alpha^{-1}\gamma_{t}\int_{x}^{\infty}f'\big(e^{-\frac{t}{\alpha}}z\big)e^{-\frac{\gamma_{t}}{z^{\alpha}}}\dif z(x)\Big) \\
                               & = \Pa_{t}f(x) + e^{-(\alpha + 1)\frac{t}{\alpha}}\Big(\alpha^{-1}x^{\alpha + 1}f'\big(e^{-\frac{t}{\alpha}}x\big)e^{-\frac{\gamma_{t}}{x^{\alpha}}} - \gamma_{t}e^{\frac{t}{\alpha}}\Pa_{t}f(x)\Big)                                                                           \\
                               & = \Pa_{t}f(x) + (e^{-t}-1)\Pa_{t}f(x) + e^{-(\alpha + 1)\frac{t}{\alpha}}\alpha^{-1}x^{\alpha + 1}f'\big(e^{-\frac{t}{\alpha}}x\big)e^{-\frac{\gamma_{t}}{x^{\alpha}}}                                                                                                         \\
                               & = e^{-t}\delta_{\alpha}\Pa_{t}f(x).
  \end{align*}
  We have used \eqref{4_Frcht_alt} to recognize $\Pa_{t}f$ at the third line.
\end{proof}

% Notice that the term $e^{-t}$ in the right-hand side of \eqref{4_cmtn_rltn2}
% has been replaced by $e^{t}$. A shorter, but formal, proof of that result
% consists in using the commutation relation for $\Da$ together with the
% well-known fact that $\La$ and $\Pa_{t}$ commute:
% \begin{align*}
%\La\circ\Pa_{t} = \Pa_{t}\circ\La &\Longleftrightarrow \delta_{\alpha}\circ\Da\circ\Pa_{t} = (\Pa_{t}\circ\delta_{\alpha})\circ\Da\\
%		&\Longleftrightarrow (e^{-t}\delta_{\alpha}\circ\Pa_{t})\circ\Da = (\Pa_{t}\circ\delta_{\alpha})\circ\Da.
%\end{align*}
%By "simplifying" by $\Da$, one recovers the previous result.

Denote by $[A, B] \coloneq A\circ B - B\circ A$ the commutator between two
endomorphisms of $\cilogi $. It serves as a tool to measure the lack of
commutativity between $A$ and $B$ since $[A, B] = 0$ (the null operator) if and
only if $A$ and $B$ commute. The commutator plays a fundamental role in quantum
mechanics, see for instance \cite{Hall13}. The next identities show that $\La$,
$\Da$, $\delta_{\alpha}$ and $\mathrm{Id}$ span a Lie algebra.

\begin{proposition}[Commutator identities]
  For the functions satisfying the assumptions of theorem \ref{4_Frcht_Pt}, we
  have the following relations:
  \begin{align}\label{4_C-A} [\delta_{\alpha}, \Da] = \mathrm{Id}.
  \end{align}
  \begin{align}\label{4_inf_comm} [\La, \Da] = \Da.
  \end{align}
  \begin{align}\label{4_thrd_comm} [\delta_{\alpha}, \La] = \delta_{\alpha}.
  \end{align}
  Furthermore, $\La$, $\Da$ and $\delta_{\alpha}$ satisfy the Jacobi identity:
  \begin{align}\label{4_Jcbi}
    \big[\La, [\Da, \delta_{\alpha}]\big] + \big[\Da, [\delta_{\alpha}, \La]\big] + \big[\delta_{\alpha}, [\La, \Da]\big] = 0.
  \end{align}
\end{proposition}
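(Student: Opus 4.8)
The plan is to reduce all four identities to the single commutator relation \eqref{4_C-A}, namely $[\delta_\alpha, \Da] = \mathrm{Id}$, exploiting the factorization $\La = \delta_\alpha \circ \Da$ already recorded in \eqref{4_deltaD_OUMS}. Once that one relation is in hand, \eqref{4_inf_comm} and \eqref{4_thrd_comm} follow by pure algebra in the associative algebra of operators acting on $\cilogi$, and the Jacobi identity \eqref{4_Jcbi} is automatic.

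First I would establish \eqref{4_C-A} by a direct computation on $f$ satisfying the hypotheses of Theorem~\ref{4_Frcht_Pt}. On one side, $\delta_\alpha \Da f = \La f$ is exactly \eqref{4_deltaD_OUMS}. On the other side, writing $\Da g(x) = \int_x^\infty g'(r)\,r^{-\alpha}\dif r$ with $g = \delta_\alpha f = \alpha^{-1}r^{\alpha+1}f' + f$, one expands $(\delta_\alpha f)'(r)\,r^{-\alpha} = \alpha^{-1}(\alpha+1)f'(r) + \alpha^{-1} r f''(r) + r^{-\alpha}f'(r)$ and integrates term by term over $[x,\infty)$. The last term reproduces $\Da f(x)$; the first gives $-\alpha^{-1}(\alpha+1)f(x)$; and an integration by parts turns the middle one into $\alpha^{-1}\bigl(f(x) - x f'(x)\bigr)$. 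Summing and simplifying yields $\Da\delta_\alpha f = \La f - f$, whence $[\delta_\alpha, \Da] f = \delta_\alpha\Da f - \Da\delta_\alpha f = f$.

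Then \eqref{4_inf_comm} and \eqref{4_thrd_comm} require no further analysis. Using $\La = \delta_\alpha\Da$ together with $[\delta_\alpha, \Da] = \mathrm{Id}$,
\begin{align*}
[\La, \Da] &= \delta_\alpha\Da\Da - \Da\delta_\alpha\Da = (\delta_\alpha\Da - \Da\delta_\alpha)\Da = [\delta_\alpha, \Da]\Da = \Da,\\
[\delta_\alpha, \La] &= \delta_\alpha\delta_\alpha\Da - \delta_\alpha\Da\delta_\alpha = \delta_\alpha(\delta_\alpha\Da - \Da\delta_\alpha) = \delta_\alpha[\delta_\alpha, \Da] = \delta_\alpha,
\end{align*}
which are precisely the desired relations. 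Finally, \eqref{4_Jcbi} holds identically for the commutator bracket of any associative algebra and so needs no computation; alternatively, substituting $[\Da,\delta_\alpha] = -\mathrm{Id}$, $[\delta_\alpha, \La] = \delta_\alpha$ and $[\La,\Da] = \Da$ into the three nested brackets gives $0 + (-\mathrm{Id}) + \mathrm{Id} = 0$, since $\mathrm{Id}$ is central.

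The only genuine obstacle is analytic rather than algebraic: ensuring that every composition above is well defined on the chosen test class and that the boundary terms in the integration by parts vanish. The middle term requires $rf'(r)\to 0$ and $f(r)\to 0$ as $r\to\infty$, which is exactly where the integrability of $f$ and $f'$ from Theorem~\ref{4_Frcht_Pt} enters; and since $\Da\delta_\alpha f$ a priori involves $f''$, I would either assume $f\in\mathcal{C}^2$ (working on a dense subclass and passing to the limit) or rewrite $\Da$ in its increment form $\Da g(x) = \int_x^\infty \bigl(g(r) - g(x)\bigr)\alpha r^{-\alpha-1}\dif r$ to keep only $f'$. These regularity bookkeeping points are the part to handle with care; the commutator algebra itself is immediate once \eqref{4_C-A} is proved.
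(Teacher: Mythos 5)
Your proof is correct, but it takes a genuinely different route from the paper's. The paper proves all three commutator relations by separate direct computations: it writes $\La = -\alpha^{-1}x\nabla + \Da$ (drift plus jump part), drops the identity component of $\delta_{\alpha}$ since it is central, and evaluates $[x^{\alpha+1}\nabla,\Da]$, $[x\nabla,\Da]$ and $[x^{\alpha+1}\nabla,x\nabla]$ one by one via integration by parts, exactly the kind of boundary-term bookkeeping you perform once for \eqref{4_C-A}. You instead exploit the multiplicative factorization $\La = \delta_{\alpha}\circ\Da$ from \eqref{4_deltaD_OUMS} --- which the paper establishes but never uses in this proof --- to reduce everything to the single canonical commutation relation $[\delta_{\alpha},\Da] = \mathrm{Id}$, after which \eqref{4_inf_comm} and \eqref{4_thrd_comm} follow by pure associative algebra ($[\delta_{\alpha}\Da,\Da]=[\delta_{\alpha},\Da]\Da$, etc.) and Jacobi is automatic. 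Your computation of $\Da\delta_{\alpha}f = \La f - f$ checks out, and your derived identities match the paper's. What your approach buys: it is more economical (one integral computation instead of three), and it makes structurally transparent why the algebra contains a Heisenberg subalgebra with $\La$ playing the role of a number operator $\delta_{\alpha}\Da$ --- a point the paper only observes after the fact via the basis $X_{2},X_{3},X_{4}$. What the paper's approach buys: each relation is verified independently on the test class, so it only ever composes two operators at a time, whereas your algebraic derivation iterates three (e.g.\ $\delta_{\alpha}\Da\Da$) and therefore leans harder on the domain-invariance issues you rightly flag (that $\Da f$ again satisfies the hypotheses under which the factorization $\La=\delta_{\alpha}\Da$ and formula \eqref{4_Dn_alt} are valid); both proofs gloss over the vanishing of boundary terms such as $rf'(r)\to 0$ at the same level of rigor, and you are in fact the more explicit of the two about it.
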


\begin{proof}
  We make use of equality \eqref{4_Frcht_gnrt_alt2}. Notice we can ignore the
  identity part in
  $\delta_{\alpha} = \alpha^{-1}x^{\alpha+1}\nabla + \mathrm{Id}$ since it
  commutes with $\Da$. Let $f\in \cilogi $ and $x \in \Rps$.
  \begin{align*}
    \alpha[\delta_{\alpha}, \Da]f(x) & = [x^{\alpha + 1}\nabla, \Da]f(x)                                                                                        \\
                                     & = x^{\alpha + 1}\nabla(\Da f)(x) - \Da(x^{\alpha + 1}f')(x)                                                              \\
                                     & = x^{\alpha + 1}\big(-x^{-\alpha}f'(x)\big) - \int_{x}^{\infty}\big(r^{\alpha + 1}f'(r)\big)'\frac{1}{r^{\alpha}} \dif r \\
                                     & = -xf'(x) + xf'(x) - \alpha\int_{x}^{\infty}f'(r)\dif r                                                                  \\
                                     & = \alpha f(x).
  \end{align*}
  The proof of the second identity is rather similar:
  \begin{align*}
    [\La, \Da]f(x) & = -\frac{1}{\alpha}[x\nabla, \Da]f(x)                                                                          \\
                   & = -\frac{1}{\alpha}x(\Da f)'(x) + \frac{1}{\alpha}\Da (xf')(x)                                                 \\
                   & = -\frac{1}{\alpha}x^{-\alpha} + \frac{1}{\alpha}\int_{x}^{\infty}\big(rf'(r)\big)'\frac{1}{r^{\alpha}} \dif r \\
                   & = \int_{x}^{\infty}f'(r)\frac{1}{r^{\alpha}} \dif r.
  \end{align*}
  The final identity is not much harder to prove thanks to the first relation:
  \begin{align*}
    \alpha[\delta_{\alpha}, \La]f(x) & = -\frac{1}{\alpha}[x^{\alpha + 1}\nabla, x\nabla]f(x) + [x^{\alpha + 1}\nabla, \Da]f(x)                    \\
                                     & = \alpha f(x) + \frac{1}{\alpha}[x\nabla, x^{\alpha + 1}\nabla]f(x)                                         \\
                                     & = \alpha f(x) + \frac{1}{\alpha}\Big(x\big(x^{\alpha + 1}f'(x)\big)' - x^{\alpha + 1}\big(xf'(x)\big)'\Big) \\
                                     & = \alpha f(x) + x^{\alpha + 1}f'(x).
  \end{align*}
  Finally, we have
  \begin{align*}
    \big[\La, [\Da, \delta_{\alpha}]\big] + \big[\Da, [\delta_{\alpha}, \La]\big] + \big[\delta_{\alpha}, [\La, \Da]\big] = [\Da, \delta_{\alpha}] + [\delta_{\alpha}, \Da] = 0.
  \end{align*}
\end{proof}

The Jacobi identity above is part of the definition of a \textit{Lie algebra}.
The next definitions are taken from \cite{Hall13} as well as \cite{Faraut08}. We
say that a $\R$-vector space $\mathfrak{g}$ with a bilinear map
$[\cdot, \cdot] : \mathfrak{g}\times \mathfrak{g} \to \mathfrak{g}$ is a real
\textit{Lie algebra} if it satisfies the following properties:
\begin{enumerate}
  \item \textit{(Anti-symmetry)} : $[x, y] = -[y, x]$ for all
        $x, y \in \mathfrak{g}$

  \item \textit{(Jacobi identity)} :
        $\big[x, [y, z]\big] + \big[y, [z, x]\big] + \big[z, [y, x]\big] = 0$
        for all $x, y, z \in \mathfrak{g}$.
\end{enumerate}
In that case, the set
$[\mathfrak{g}, \mathfrak{g}] \coloneq \lb [x,y],\ x,y \in \mathfrak{g} \rb$
equipped with $[\cdot, \cdot]$ is a Lie algebra as well. Set
$\mathfrak{g}_{0} \coloneq \mathfrak{g}$ and
\[
  \mathcal{D}^{k+1}(\mathfrak{g}) \coloneq \big[\mathcal{D}^{k}(\mathfrak{g}), \mathcal{D}^{k}(\mathfrak{g})\big],\ k \in \mathbb{N}.
\]
We call a Lie algebra \textit{solvable} if there exists some $k \in \mathbb{N}$
such that $\mathcal{D}^{k}(\mathfrak{g}) = \lb 0 \rb$. It is easy to check that
the vector space spanned by $\La$, $\Da$ and $\delta_{\alpha}$ with respect to
linear combinations and equipped with the commutator is a Lie algebra. Actually,
it is even solvable.

\begin{proposition}
  The vector space
  $\mathfrak{g}_{\alpha} \coloneq \mathrm{span}(\La, \Da, \delta_{a}, \mathrm{Id})$
  equipped with the commutator $[\cdot, \cdot]$ is a solvable Lie algebra.
\end{proposition}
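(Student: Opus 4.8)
The plan is to verify the two structural axioms in turn: first that $\mathfrak{g}_{\alpha}$ is genuinely a Lie algebra (closed under the bracket, with anti-symmetry and the Jacobi identity), and then that its derived series reaches $\lb 0 \rb$. Anti-symmetry and Jacobi come for free here, since the bracket is the operator commutator on $\cilogi $ and both identities hold in any associative algebra of endomorphisms; so the only real content of "Lie algebra" is closure under $[\cdot,\cdot]$.

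For closure I would simply tabulate every pairwise bracket of the four generators. The three nontrivial ones are precisely the relations already proved in the preceding proposition: $[\delta_{\alpha}, \Da] = \mathrm{Id}$ by \eqref{4_C-A}, $[\La, \Da] = \Da$ by \eqref{4_inf_comm}, and $[\delta_{\alpha}, \La] = \delta_{\alpha}$ by \eqref{4_thrd_comm} (so $[\La, \delta_{\alpha}] = -\delta_{\alpha}$ by anti-symmetry). All remaining brackets vanish because $\mathrm{Id}$ is central: $[\mathrm{Id}, \La] = [\mathrm{Id}, \Da] = [\mathrm{Id}, \delta_{\alpha}] = 0$. Since every such bracket lands back in $\mathrm{span}(\La, \Da, \delta_{\alpha}, \mathrm{Id})$, the space is stable under $[\cdot,\cdot]$ and is therefore a Lie subalgebra of the endomorphisms of $\cilogi $.

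Solvability then follows by reading the derived series off the same table. The first derived algebra $\mathcal{D}^{1}(\mathfrak{g}_{\alpha}) = [\mathfrak{g}_{\alpha}, \mathfrak{g}_{\alpha}]$ is spanned by the outputs $\Da$, $\delta_{\alpha}$ and $\mathrm{Id}$, so $\mathcal{D}^{1}(\mathfrak{g}_{\alpha}) = \mathrm{span}(\Da, \delta_{\alpha}, \mathrm{Id})$ and the generator $\La$ drops out. Within this smaller space the only surviving nonzero bracket is $[\Da, \delta_{\alpha}] = -\mathrm{Id}$, whence $\mathcal{D}^{2}(\mathfrak{g}_{\alpha}) = \mathrm{span}(\mathrm{Id})$. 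Finally $[\mathrm{Id}, \mathrm{Id}] = 0$ yields $\mathcal{D}^{3}(\mathfrak{g}_{\alpha}) = \lb 0 \rb$, which is exactly solvability, with terminating index $k = 3$.

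There is no serious obstacle in this argument: once the three commutator identities are available, the whole proof is bookkeeping on the span of four operators, and the derived series strictly decreases at each step. The only point meriting a word of care is that "Lie algebra" here is understood as a subalgebra of operators, so one should confirm that the four generators and all their brackets preserve $\cilogi $; this is ensured by the expression \eqref{4_Frcht_gnrt_alt2} for $\La$ together with the explicit forms of $\Da$ and $\delta_{\alpha}$, which the earlier results already show act on $\cilogi $.
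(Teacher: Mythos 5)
Your proof is correct and follows essentially the same route as the paper: the three commutator identities \eqref{4_C-A}, \eqref{4_inf_comm}, \eqref{4_thrd_comm} (plus centrality of $\mathrm{Id}$) give closure, and reading off the derived series yields $\mathcal{D}^{3}(\mathfrak{g}_{\alpha}) = \lb 0 \rb$. Your bookkeeping is in fact slightly more careful than the paper's, which writes $\mathcal{D}^{1}(\mathfrak{g}_{\alpha}) = \mathrm{span}(\Da, \delta_{\alpha})$ and overlooks that $[\delta_{\alpha}, \Da] = \mathrm{Id}$ places $\mathrm{Id}$ in the derived algebra; your version $\mathcal{D}^{1}(\mathfrak{g}_{\alpha}) = \mathrm{span}(\Da, \delta_{\alpha}, \mathrm{Id})$ is the accurate one, and the conclusion $\mathcal{D}^{3}(\mathfrak{g}_{\alpha}) = \lb 0 \rb$ is unchanged.
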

\begin{proof}
  We have already proved that $\mathfrak{g}_{\alpha}$ is a Lie algebra thanks to
  equality \eqref{4_Jcbi}. The fact it is solvable comes from noticing that
  $\mathcal{D}^{1}(\mathfrak{g}_{\alpha}) = [\mathfrak{g}_{\alpha}, \mathfrak{g}_{\alpha}] = \mathrm{span}(\Da, \delta_{\alpha})$,
  so that $\mathcal{D}^{3}(\mathfrak{g}_{\alpha}) = \lb 0 \rb$, thanks to
  identities \eqref{4_C-A}, \eqref{4_inf_comm} and \eqref{4_thrd_comm}.
\end{proof}

Lie algebras have been thoroughly classified, so where does
$\mathfrak{g}_{\alpha}$ sit in that classification? In \cite{MacCallum99}, a
complete classification of 4 dimensional Lie algebras is exposed. Setting:
\begin{align*}
  X_{1} & \coloneq \La                    \\
  X_{2} & \coloneq -\Da + \delta_{\alpha} \\
  X_{3} & \coloneq \Da + \delta_{\alpha}  \\
  X_{4} & \coloneq -2\mathrm{Id},
\end{align*}
the previous commutation relations become
\begin{align*}
  [X_{1}, X_{2}] & = -X_{3}           \\
  [X_{1}, X_{3}] & = -X_{2}           \\
  [X_{2}, X_{3}] & = X_{4}            \\
  [X_{i}, X_{4}] & = 0,\ i = 1, 2, 3.
\end{align*}
This matches the class U310 defined in \cite{MacCallum99} (p. 307), implying
that $\mathfrak{g}_{\alpha}$ is isomorphic to that Lie algebra. Notice also that
if we restrict ourselves to $X_{2}$, $X_{3}$ and $X_{4}$, we get the commutation
relations characteristic of the \textit{Heisenberg algebra}, so that
$\mathfrak{g}_{\alpha}$ contains a subalgebra isomorphic to it. The implications
of those results, if any, remain to study.

% Now, let us sum up the parallels we have drawn between the Ornstein-Uhlenbeck
% semi-group and its max-stable counterpart: the usual derivative $\nabla$ is
% replaced by $\Da$, while $\delta$ changes for $\delta_{\alpha}$. The generator
% of the OU semi-group $\mathcal{L}$ satisfies
% $\mathcal{L} = \delta \circ \nabla$ which turns into \eqref{4_deltaD_OUMS} in
% the Fréchet case. The commutation relation $\nabla P_{t} = e^{-t}P_{t}\nabla$
% becomes the equality \eqref{4_cmtn_rltn}. The divergence $\delta$ is the
% adjoint of the derivative operator w.r.t. the scalar product induced by the
% normal distribution, and $\delta_{\alpha}$ is the adjoint of
% $-x^{\alpha + 1}\nabla$ w.r.t. the scalar product associated with the Fréchet
% distribution. Finally, it is well-known that $[\delta, \nabla] = \mathrm{Id}$,
% an equality satisfied by $\delta_{\alpha}$ and $\Da$ as well. The relation
% $[\mathcal{L}, \nabla] = \nabla$ also holds true in the Fréchet case, this
% time as $[\La, \Da] = \Da$. Notice that the vector space spanned by
% $\mathcal{L}$, $\nabla$, $\delta$ and $\mathrm{Id}$ is also a solvable Lie
% algebra, thanks to the exact same arguments we used for
% $\mathfrak{g}_{\alpha}$.

In dimension $1$, the application $T_{\alpha}$ is actually a Lie algebra
isomorphism on the Lie algebra spanned by $\mathrm{span}(\Li, \Di, \delta_{1})$,
in the sense that:
\[
  T_{\alpha}[\phi_{1}, \phi_{2}] = \big[T_{\alpha}\phi_{1}, T_{\alpha}\phi_{2}\big],\ \phi_{1},\ \phi_{2} \in \mathrm{span}(\Li, \Di, \delta_{1}).
\]
We thus see that $\delta_{\alpha}$ satisfies
$\delta_{\alpha} = T_{\alpha}\delta_{1}T_{\alpha}^{-1}$.

We conclude this section by defining a Markov process whose semi-group is
$(\Pa_{t})_{t\geq 0}$. It will be expressed in terms of extremal integrals, as
defined in the subsection \ref{prelim_SEI} of the preliminaries.
% The main motivation of this section is to continue the analogy with the usual
% Ornstein-Uhlenbeck process, which can be described as a stochastic integral
% depending on a time parameter.

\begin{definition}
  The \textit{Fréchet process} is defined as :
  \[
    X_{t} \coloneq e^{-\frac{t}{\alpha}}X_{0}\oplus \mxint{0}{t}e^{-\frac{1}{\alpha}(t-s)}\dif M_{\alpha}(s).
  \]
  where $M_{\alpha}$ is a $\alpha$-Fréchet random sup-measure with Lebesgue
  control measure.

\end{definition}
Formally this process is the exact counterpart of the standard
Ornstein-Uhlenbeck semi-group, except that the addition is replaced by the
maximum, and the stochastic integral by the extremal integral.
% It remains to prove that this process is indeed Markovian and has the right
% semi-group:

\begin{proposition}
  The process $(X_{t})_{t\geq 0}$ is a Markov process and
  \[
    \esp\big[f(X_{t}) \/ X_{0} = x\big] = \Pa_{t}f(x),\ x \in \R_{+},\ f \in \cilogi.
  \]
\end{proposition}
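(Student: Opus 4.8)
The plan is to read the transition identity directly off the extremal Itô isometry and then to deduce the Markov property from a flow (cocycle) decomposition of $X_t$ in terms of $X_s$. First I would compute the scale parameter of the extremal integral in the definition of $X_t$. Since the control measure of $M_\alpha$ is Lebesgue measure, the isometry recalled in Subsection~\ref{prelim_SEI} gives
\[
  \Big\| \mxint{0}{t} e^{-\frac{1}{\alpha}(t-s)} \dif M_\alpha(s) \Big\|_\alpha^\alpha = \int_0^t e^{-(t-s)} \dif s = 1 - e^{-t},
\]
so this integral has law $\mathcal{F}\big(\alpha, (1-e^{-t})^{1/\alpha}\big)$, that is $\mxint{0}{t} e^{-\frac{1}{\alpha}(t-s)} \dif M_\alpha(s) \eqdis (1-e^{-t})^{1/\alpha} Z$ with $Z \sim \mathcal{F}(\alpha)$. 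As the driving sup-measure is independent of the initial value $X_0$, conditioning on $X_0 = x$ immediately yields
\[
  \esp\big[f(X_t) \mid X_0 = x\big] = \esp\big[f\big(e^{-t/\alpha} x \oplus (1-e^{-t})^{1/\alpha} Z\big)\big] = \Pa_t f(x),
\]
which is the announced transition formula.

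For the Markov property I would fix $0 \le s < t$ and split the extremal integral at $s$. Combining $\sigma$-sup-additivity with the max-linearity property (extraction of positive scalars through the integral), both from the proposition of Subsection~\ref{prelim_SEI}, one writes
\[
  \mxint{0}{t} e^{-\frac{1}{\alpha}(t-s')} \dif M_\alpha(s') = e^{-\frac{1}{\alpha}(t-s)} \mxint{0}{s} e^{-\frac{1}{\alpha}(s-s')} \dif M_\alpha(s') \oplus \mxint{s}{t} e^{-\frac{1}{\alpha}(t-s')} \dif M_\alpha(s').
\]
Pulling the same scalar $e^{-\frac{1}{\alpha}(t-s)}$ out of the term $e^{-t/\alpha} X_0$, the defining relation collapses to the cocycle identity
\[
  X_t = e^{-\frac{1}{\alpha}(t-s)} X_s \oplus \mxint{s}{t} e^{-\frac{1}{\alpha}(t-s')} \dif M_\alpha(s').
\]

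It then remains to introduce the natural filtration $\mathcal{F}_s \coloneq \sigma\big(X_0, (M_\alpha(A))_{A \subseteq [0,s]}\big)$ and to observe that $X_s$ is $\mathcal{F}_s$-measurable, whereas the increment $\mxint{s}{t} e^{-\frac{1}{\alpha}(t-s')} \dif M_\alpha(s')$ has integrand supported on $(s,t]$ and is hence independent of $\mathcal{F}_s$ by the independently scattered property of $M_\alpha$ (equivalently, the disjoint-support independence criterion). Thus $X_t$ is a fixed measurable function of $X_s$ and of a variable independent of the past, which is exactly the Markov property; since the increment has scale $(1-e^{-(t-s)})^{1/\alpha}$ by the same isometry computation, one concludes $\esp[f(X_t) \mid \mathcal{F}_s] = \Pa_{t-s} f(X_s)$, identifying $(\Pa_t)_{t\ge 0}$ as the transition semi-group. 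The only genuinely delicate point is the rigorous justification of these manipulations of the extremal integral — the splitting over the disjoint intervals $[0,s]$ and $(s,t]$ and the extraction of the scalar — which rest entirely on max-linearity and $\sigma$-sup-additivity; one should also check that each integrand lies in $\mathbf{L}^{\alpha}_{+}$ of the corresponding interval, which is immediate as $s' \mapsto e^{-(t-s')/\alpha}$ is bounded there.
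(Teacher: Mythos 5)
Your proof is correct, but it follows a genuinely different route from the paper's. The paper sets $\widetilde{\Pa_{t}}f(x) \coloneq \esp[f(X_{t}) \/ X_{0} = x]$ and verifies the Chapman--Kolmogorov identity $\widetilde{\Pa_{t}}\circ\widetilde{\Pa_{s}} = \widetilde{\Pa_{t+s}}$ at the level of distributions: it introduces an independent copy $M_{\alpha}'$ of the sup-measure to represent the composition, uses the isometry to identify $\mxint{0}{s}e^{-\frac{1}{\alpha}(s-u)}\dif M'_{\alpha}(u)$ in law with $\mxint{t}{t+s}e^{-\frac{1}{\alpha}(t+s-u)}\dif M_{\alpha}(u)$, and then merges the two integrals by sup-additivity; finally it identifies $\widetilde{\Pa_{t}} = \Pa_{t}$ by the same isometry computation you perform first. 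You instead work pathwise: you split the extremal integral at time $s$ (sup-additivity plus extraction of the scalar $e^{-\frac{1}{\alpha}(t-s)}$ by max-linearity) to obtain the cocycle identity $X_{t} = e^{-\frac{1}{\alpha}(t-s)}X_{s}\oplus \mxint{s}{t}e^{-\frac{1}{\alpha}(t-s')}\dif M_{\alpha}(s')$, and invoke the independently scattered property to see that the increment is independent of the natural filtration $\mathcal{F}_{s}$. What your approach buys is a strictly stronger conclusion: you get $\esp[f(X_{t}) \/ \mathcal{F}_{s}] = \Pa_{t-s}f(X_{s})$, i.e.\ Markovianity with respect to the full past, whereas the paper's argument conditions only on $X_{0}$ and establishes the semi-group property of the transition operators --- which, strictly speaking, is weaker than the Markov property itself, so your decomposition actually fills in the conditional-independence structure that the paper leaves implicit. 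What the paper's route buys is economy: it never needs to introduce filtrations or argue about measurability, staying entirely at the level of one-dimensional distributions of extremal integrals. One small caveat in your write-up: the independence of $M_{\alpha}$ from $X_{0}$, which both you and the paper use, is an implicit assumption of the definition of the Fréchet process and is worth stating explicitly.
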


\begin{proof}
  Let us note provisionally
  $\widetilde{\Pa_{t}}f(x) \coloneq \esp[f(X_{t}) \/ X_{0} = x]$. It is clear
  that for all non-negative $t$, $\widetilde{\Pa_{t}}$ is a linear operator and
  that $\widetilde{\Pa_{0}} = \mathrm{Id}$. Now we need to check that
  $\widetilde{\Pa_{t}}\circ \widetilde{\Pa_{s}} = \widetilde{\Pa_{t+s}}$:

  \begin{align*}
    (\widetilde{\Pa_{t}}\circ \widetilde{\Pa_{s}})f(x) & = \esp\Big[f\big(e^{-\frac{t}{\alpha}}X_{s} \oplus \mxint{0}{t}e^{-\frac{1}{\alpha}(t-u)}\dif M_{\alpha}(u)\big)\ \big\vert\ X_{0} = x\Big]                                                \\
                                                       & = \esp\Big[f\big(e^{-\frac{1}{\alpha}(t+s)}x \oplus \mxint{0}{t}e^{-\frac{1}{\alpha}(t+s-u)}\dif M_{\alpha}(u) \oplus \mxint{0}{s}e^{-\frac{1}{\alpha}(s-u)}\dif M'_{\alpha}(u)\big)\Big],
  \end{align*}
  where $M_{\alpha}'$ denotes an independent copy of $M_{\alpha}$. Furthermore, by
  the isometry property, we have that:
  \[
    \mxint{0}{s}e^{-(s-u)}\dif M_{\alpha}(u) \overset{\dif}{=} \mathcal{F}\Big(1,\big(\int_{0}^{s}e^{-(s-u)}\dif u\big)^{1/\alpha}\Big) = \mathcal{F}\Big(\alpha,\big(\int_{t}^{t+s}e^{-(t+s-u)}\dif u\big)^{1/\alpha}\Big).
  \]
  Consequently, injecting this result in the previous computation:
  \begin{align*}
    (\widetilde{\Pa_{t}}\circ \widetilde{\Pa_{s}})f(x) & = \esp\Big[f\big(e^{-\frac{1}{\alpha}(t+s)}x \oplus \mxint{0}{t}e^{-\frac{1}{\alpha}(t+s-u)}\dif M_{\alpha}(u) \oplus \mxint{0}{s}e^{-\frac{1}{\alpha}(s-u)}\dif M'_{\alpha}(u)\big)\Big] \\
                                                       & = \esp\Big[f\big(e^{-\frac{1}{\alpha}(t+s)}x \oplus \mxint{0}{t}e^{-(t+s-u)}\dif M_{\alpha}(u) \oplus \mxint{t}{t+s}e^{-\frac{1}{\alpha}(t+s-u)}\dif M'_{\alpha}(u)\big)\Big]             \\
                                                       & = \esp\Big[f\big(e^{-\frac{1}{\alpha}(t+s)}x \oplus \mxint{0}{t+s}e^{-\frac{1}{\alpha}(t+s-u)}\dif M_{\alpha}(u) \big)\Big]                                                               \\
                                                       & = \widetilde{\Pa_{t+s}}f(x).
  \end{align*}
  Furthermore, since $\int_{0}^{t}e^{-(t-s)}\dif s = 1-e^{-t}$, we have that
  $\int_{0}^{t}e^{-\frac{1}{\alpha}(t-u)}\dif M_{\alpha}(u) \overset{\dif }{=} \mathcal{F}\big(\alpha, (1-e^{-t})^{1/\alpha}\big)$,
  so
  \[
    \mxint{0}{t}e^{-\frac{1}{\alpha}(t-s)}\dif M_{\alpha}(s) \overset{\dif }{=} (1-e^{-t})^{\frac{1}{\alpha}}Z,
  \]
  where $Z$ is a random variable with Fréchet distribution $\mathcal{F}(\alpha)$.
  Therefore $(\widetilde{\Pa_{t}})_{t \geq 0} = (\Pa_{t})_{t\geq 0}$.

\end{proof}

\begin{remark}
  Using the terminology of \cite{Stoev05}, this process is an instance of an
  \textit{integral moving maximum process}:
  \[
    X_{t} = \mxint{\R_{+}}{}f(t-u)\dif M_{\alpha}(u),
  \]
  where $f \in \mathbf{L}_{+}^{\alpha}(\R_{+},\lambda)$ and $\lambda$ is the
  Lebesgue measure. Here $f = u \mapsto e^{-\frac{1}{\alpha}u}\ind_{\R_{+}}(u)$.
\end{remark}

We exhibit some sample paths of $X_{t}$ starting at $X_{0} = 3$, for different
values of $\alpha$.
\begin{center}
  \begin{figure}[h!]
    \centering \includegraphics[width=1.0\textwidth]{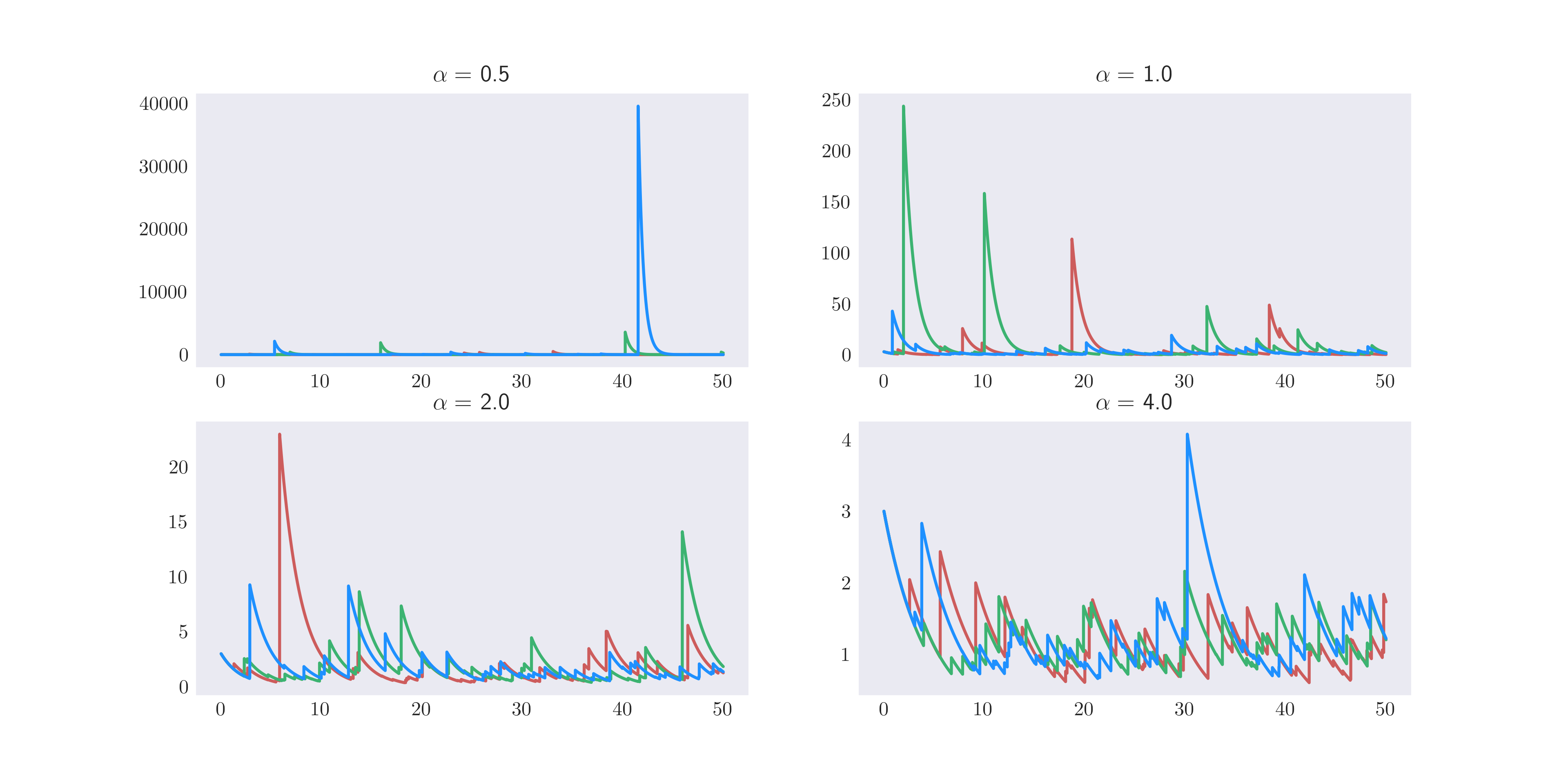}
    \caption{Four paths of $X_{t}$ for
      $\alpha \in \big\lb \frac{1}{2}, 1, 2, 4 \big\rb$}
    \label{ch_2_X_t}
  \end{figure}
\end{center}

The notion of extremal integral allows us to define another stochastic process
of interest here.

\begin{definition}

  Let $Z_{0}$ be a positive random variable and $\alpha$ a positive number. An
  $\alpha$-\textit{max-stable motion} $(Z_{t})_{t\geq 0}$ is a stochastic
  process such that there exists an $\alpha$-Fréchet random measure satisfying
  \[
    Z_{t} = Z_{0}\oplus\mxint{0}{t} 1 \dif M_{\alpha}(s),\ t\geq 0.
  \]
\end{definition}
The next proposition links max-stable motions to the Fréchet process.

\begin{proposition}

  $(Z_{t})_{t\geq 0}$ is a Markov process whose generator $\mathscr{K}_{\alpha}$
  is:
  \[
    \mathscr{K}_{\alpha}f(x)= \int_{x}^{\infty}\big(f(x\oplus r) - f(x)\big)\frac{\alpha}{r^{\alpha + 1}}\dif r = \Da f(x),
  \]
  for $x \in \Rps$ and $f \in \cilogi$.
\end{proposition}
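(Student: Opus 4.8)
The plan is to read off both assertions from the two defining features of the random sup-measure $M_\alpha$: it is independently scattered and $\sigma$-sup-additive. First I would set $W_r \coloneq \mxint{0}{r} 1 \dif M_\alpha(s) = M_\alpha([0,r])$ and record, straight from the $\alpha$-Fréchet property with Lebesgue control measure, that $W_r \sim \mathcal{F}(\alpha, r^{1/\alpha})$; equivalently its c.d.f.\ is $y \mapsto e^{-r/y^{\alpha}}$ on $\Rps$, with density $y \mapsto \frac{r\alpha}{y^{\alpha+1}} e^{-r/y^{\alpha}}$.

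For the Markov property I would use $\sigma$-sup-additivity to split, for $0 \le s \le t$,
\[
  Z_t = Z_0 \oplus M_\alpha([0,s]) \oplus M_\alpha((s,t]) = Z_s \oplus M_\alpha((s,t]),
\]
and then invoke the independently scattered property: the increment $M_\alpha((s,t])$ is independent of $\sigma(M_\alpha(A) : A \subseteq [0,s])$, hence of $Z_0$ and of $(Z_u)_{0 \le u \le s}$. This yields the Markov property together with the transition operator $\widetilde{\mathbf{P}}_r f(x) \coloneq \esp[f(x \oplus W_r)]$.

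To identify the generator I would compute $\widetilde{\mathbf{P}}_t f$ explicitly. Since $\oplus$ is the scalar maximum, conditioning on $\lb W_t \le x\rb$ versus $\lb W_t > x\rb$ gives
\[
  \widetilde{\mathbf{P}}_t f(x) = f(x) e^{-t/x^{\alpha}} + \int_x^{\infty} f(y) e^{-t/y^{\alpha}} \frac{t\alpha}{y^{\alpha+1}} \dif y.
\]
Subtracting $f(x)$, dividing by $t$ and sending $t \to 0^+$, the boundary term contributes $f(x)\,\tfrac{e^{-t/x^{\alpha}}-1}{t} \to -f(x)/x^{\alpha}$ while the integral tends to $\int_x^{\infty} f(y) \frac{\alpha}{y^{\alpha+1}} \dif y$. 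Using $\int_x^{\infty} \frac{\alpha}{y^{\alpha+1}} \dif y = x^{-\alpha}$ to rewrite $-f(x)/x^{\alpha}$, the two pieces combine into $\int_x^{\infty}\big(f(y) - f(x)\big)\frac{\alpha}{y^{\alpha+1}}\dif y$, which is exactly $\Da f(x)$ as established in Proposition~\ref{4_Frcht_gnrt}. Note that, compared with the Fréchet process of the previous definition, the absence of the dilation of $Z_0$ is precisely what removes the drift $\mathfrak{d}_{1}$ and leaves only the integral operator $\Da$.

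The main obstacle is the limit justification. For $f \in \cilogi$ the defining bound $x\vert f'(x)\vert \le C$ forces the logarithmic growth $\vert f(y)\vert = O(\log y)$, so $y \mapsto f(y)\frac{\alpha}{y^{\alpha+1}}$ is integrable on $[x,\infty)$ and dominated convergence (with dominating function $\vert f(y)\vert\frac{\alpha}{y^{\alpha+1}}$, as $e^{-t/y^{\alpha}} \le 1$) handles the integral term. If one wants the convergence in $\L^{2}(\pms_{\alpha})$ demanded by the paper's definition of the generator rather than merely the pointwise statement, the same logarithmic growth, together with the finiteness of the negative moments of $Z \sim \mathcal{F}(\alpha)$ (recall $Z^{-\alpha} = \mu[0,Z]^{c}$ is exponentially distributed), lets one reuse the Cauchy--Schwarz estimates already carried out for the term $B$ in the proof of the generator formula \eqref{4_L}, upgrading the pointwise limit to an $\L^{2}(\pms_{\alpha})$ one.
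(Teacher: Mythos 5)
Your proof is correct and follows essentially the same route as the paper: split the extremal integral over $[0,s]$ and $(s,t+s]$ to obtain the Markov property and the transition operator, write $\mathbf{Q}^{\alpha}_{t}f(x)$ explicitly by conditioning on whether the increment exceeds $x$, and pass to the limit to identify $\Da$. You merely fill in details the paper leaves implicit (the dominated-convergence justification of the pointwise limit and the $\L^{2}(\pms_{\alpha})$ upgrade, which the paper defers to the proof for $\Ln$), and your increment law $\mathcal{F}(\alpha, t^{1/\alpha})$ is the correct one --- the paper's shorthand ``$tZ$'' is a harmless slip, since its subsequent c.d.f.\ computation agrees with yours.
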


\begin{proof}

  Let $f$ be in $\cilogi$ and define
  $\mathbf{Q}^{\alpha}_{t}f(x) \coloneq \esp[f(Z_{t}) \/ X_{0} = x]$. We will
  contend ourselves with computing the generator of
  $(\mathbf{Q}^{\alpha}_{t})_{t\geq 0}$. The proof of the
  $\L^{2}(\pms_{\alpha})$-convergence of
  $t^{-1}(\mathbf{Q}^{\alpha}_{t} - \mathrm{Id})$ to $\mathscr{K}_{\alpha}$ is
  essentially the same as the one we gave for $\Ln$.

  By definition of $Z_{t}$, we have:
  \begin{align*}
    Z_{t+s} & = Z_{0}\oplus \mxint{0}{t+s} 1 \dif M_{\alpha}(u)                                        \\
            & = Z_{0}\oplus \mxint{0}{s} 1 \dif M_{\alpha}(u)\oplus\mxint{s}{t+s} 1 \dif M_{\alpha}(u) \\
            & \overset{\dif }{=} Z_{s}\oplus tZ,
  \end{align*}
  where $Z$ is a random variable with Fréchet distribution $\mathcal{F}(\alpha)$
  independent of $\sigma(Z_{u},\ u\leq s)$. This proves that $(Z_{t})_{t\geq 0}$
  is a Markov process with semi-group
  \[
    \mathbf{Q}^{\alpha}_{t}f(x) = \esp\big[f(x\oplus tZ)\big] = f(x)e^{-\frac{t}{x^{\alpha}}} + t\int_{x}^{\infty}f(r)e^{-\frac{t}{r^{\alpha}}}\frac{\alpha}{r^{\alpha+1}}\dif r,\ x > 0.
  \]
  An easy calculation yields the generator of the proposition.
\end{proof}

In other words, the generator of the Fréchet process writes as the generator of
the dilation semi-group plus the generator of an $\alpha$-max-stable motion.
This is similar to what is observed for the standard Ornstein-Uhlenbeck process,
where the max-stable motion is replaced by the Brownian motion, or more
generally with $\alpha$-stable Ornstein-Uhlenbeck processes (see
\cite{Samorodnitsky94}).

%%%%%%%%%%%
\printbibliography{}

\end{document}